\theoremstyle{plain}
\newtheorem{theorem}{Theorem}[section]
\newtheorem{lemma}[theorem]{Lemma}
\newtheorem{prop}[theorem]{Proposition}
\newtheorem{coro}[theorem]{Corollary}
\theoremstyle{definition}
\newtheorem{definition}[theorem]{Definition}
\newtheorem{example}[theorem]{Example}
\theoremstyle{remark}
\newtheorem{remark}[theorem]{Remark}
\numberwithin{equation}{section}
\def\H{\mathbb{H}}
\def\F{\mathcal{F}}
\def\N{\mathbb{N}}
\def\C{\mathbb{C}}
\def\R{\mathbb{R}}
\def\Z{\mathbb{Z}}
\def\P{\mathbb{P}}
\def\M{\mathcal{M}}
\def\SL{\mathrm{SL}(2,\mathbb{Z})}
\def\GL{\mathrm{GL}(2,\mathbb{Z})}
\def\Im{\mathrm{Im}}
\def\Re{\mathrm{Re}}
\def\val{\mathrm{val}}
\def\op{\mathrm{op}}
\newcommand{\bmatch}{\textit{b-match}}
\newcommand{\fmatch}{\textit{f-match}}
\def\Far{\F_{\frac{0}{1},\frac{1}{0}}}
\def\Fartwo{\F_{\frac{0}{1},\frac{1}{2}}}
\def\SLN{\mathrm{SL}(2,\N_0)}
\def\GLN{\mathrm{GL}(2,\N_0)}
\begin{document}
\title{A Lyapunov exponent attached to modular functions}

\author{P.~Bengoechea}
\address{Universitat de Barcelona, Departament de Matem\`atiques i Inform\`atica, Gran Via de les Corts Catalanes, 585, L'Eixample, 08007 Barcelona, Spain}
\email{bengoechea@ub.edu}
\thanks{P.~Bengoechea's research is supported by Ram\'on y Cajal grant RYC2020-028959-I}

\author{S.~Herrero}
\address{Universidad de Santiago de Chile, Dept.~de Matem\'atica y Ciencia de la Computaci\'on, Av.~Libertador Bernardo O'Higgins 3363, Santiago, Chile, and ETH, Mathematics Dept., CH-8092, Z\"urich, Switzerland}
\email{sebastian.herrero.m@gmail.com}
\thanks{S.~Herrero's research is supported by ANID/CONICYT FONDECYT Iniciaci\'on grant 11220567 and by SNF grant CRSK-2{\_}220746}

\author{\"O.~Imamo\={g}lu}
\address{ETH, Mathematics Dept., CH-8092, Z\"urich, Switzerland}
\email{ozlem@math.ethz.ch}
\thanks{\"O.~Imamo\=glu's research  is supported by SNF grant 200021-185014}

\begin{abstract}
   To each weakly holomorphic modular function $f\not \equiv 0$  for $\SL$, which is non-negative on the geodesic arc $\{e^{it} : \pi/3\leq t\leq 2\pi/3\}$,   we attach a $\GL$-invariant map $\Lambda_f:\P^1(\R)\to \R$ that generalizes the Lyapunov exponent function introduced by Spalding and Veselov.  We prove that it takes every value between $0$ and $\Lambda_f\left(\frac{1+\sqrt{5}}{2}\right)$ and it gives an increasing  convex function  on the Markov irrationalities when ordered using their parametrization by Farey fractions in $[0,1/2]$. In the case of quadratic irrationals $w$ with purely periodic continued fraction expansion, the value $\Lambda_f(w)$ equals the real part of the cycle integral of $f$ along the associated geodesic $C_w$ on the modular surface, normalized with the word length of the associated hyperbolic matrix $A_w$ as a word in the generators $T=\left(\begin{smallmatrix}
    1 & 1 \\ 0 & 1
\end{smallmatrix}\right)$ and $V=\left(\begin{smallmatrix}
    1 & 0 \\ 1 & 1
\end{smallmatrix}\right)$. These results are related to conjectures of Kaneko who observed several similar behavior for the cycle integrals of the modular $j$ function when normalized by the hyperbolic length of the geodesic $C_w$. 
\end{abstract}

\maketitle


\section{Introduction}

 In the papers \cite{SV17,SV18}, Spalding and Veselov defined and studied a function
 \[\Lambda: \P^1(\R) \to\R\]
 which they called the \emph{Lyapunov exponent} and showed, among other properties, the following:
\begin{itemize}[leftmargin=*]
\item For  almost all $x\in \R$, including all rational numbers, $\Lambda(x)=0$.
    \item For all $x\in \P^1(\R)$ and all $\left(\begin{smallmatrix}
        a & b \\ c & d
    \end{smallmatrix}\right)\in \GL$, $\Lambda\left(\frac{ax+b}{cx+d}\right)=\Lambda(x)$. 
    \item $\Lambda(\mathbb{P}^1(\mathbb{R}))=[0,\Lambda(\phi)]$ where $\phi=\frac{1+\sqrt{5}}{2}$ is the golden ratio and $\Lambda(\phi)=\log(\phi)=0.481\ldots$.
    \item If one denotes by $\M$ the set of Markov irrationalities, namely the set of real quadratic irrationalities constructed from $\phi=[\overline{1;1}]$ and the silver ratio $\psi=1+\sqrt{2}=[\overline{2;2}]$ by iterative conjunction of continued fraction expansions  (see Figure \ref{fig:Markov_tree}), then the restriction of $\Lambda$ to $\M$ satisfies $\Lambda(\M)\subset [\Lambda(\psi),\Lambda(\phi)]$ with $\Lambda(\psi)=\frac{\log(\psi)}{2}=0.440\ldots$.
\item When the Markov irrationalities $w\in \M$ are parametrized by the Farey fractions in $[0,1/2]$, the resulting map $p/q\mapsto \Lambda\left(w\left(p/q\right)\right)$ extends to a continuous, decreasing and convex function $\tilde{\Lambda}:[0,1/2]\to \R$.
\end{itemize}

In a completely different direction,  Kaneko defined in \cite{Kan09} a function $\val(w)$ for the ``values'' of the classical modular invariant $j$ on real quadratic irrationalities $w$. The real part of Kaneko's $\val$ function satisfies the same $\GL$-invariance as $\Lambda$ above, and he observed several Diophantine properties of this function. Among others, he conjectured:
\begin{itemize}[leftmargin=*]
\item For all quadratic irrationalities $w$, $\Re(\val(w))\in [\val(\phi),744]$ where $\val(\phi)=706.3248\ldots$.
\item When restricted to the Markov irrationalities, the real part of the $\val$ function satisfies  $\Re(\val(\M))\subset [\val(\phi),\val(\psi)]$ where $\val(\psi)=709.8928\ldots$.
\item The restriction of $\Re(\val(w))$ to $w\in \M$ is an increasing function (in Kaneko's original paper \cite{Kan09}, this  is stated in terms of an ``interlacing property'' on the Markov tree).
\end{itemize}
Some of Kaneko's conjectures were proved in \cite{BI19,BI20}.

If $w$ is a quadratic irrationality with purely periodic continued fraction expansion $w=[\overline{a_1;a_2,\ldots,a_\ell}]$, where we assume $n\geq 2$ even by doubling the period of $w$ is necessary, the value $\Lambda(w)$ of Spalding and Veselov's  function is
$$\Lambda(w)=\frac{\log(\varepsilon_w)}{a_1+\ldots+a_\ell},$$
where $2\log(\varepsilon_w)=\int_{C_w}1ds$ is the hyperbolic length of a closed geodesic $C_w$ on the modular surface\footnote{As usual $\H:=\{\tau \in \C:\Im(\tau)>0\}$ and $\SL$ acts on $\H$ by fractional linear transformations.} $\SL\backslash \H$ naturally associated to $w$, while the value of Kaneko's $\val$ function is 
$$\val(w)=\frac{\int_{C_w}jds}{2\log(\varepsilon_w)}.$$
Motivated by the results of Spalding and Veselov, as well as the conjectures of Kaneko, we define a Lyapunov exponent $\Lambda_f$ associated to any weakly holomorphic modular function $f$ for $\SL$ which is  real and non-negative on the geodesic arc $\{e^{it}:t \in [\pi/3,2\pi/3]\}$. In the case of $f=j$ and for a quadratic irrationality $w$ as above, our function is
\[\Lambda_j(w)=\frac{\Re\left(\int_{C_w}jds\right)}{a_1+\ldots+a_\ell}.\]
Noting that $a_1+\cdots +a_\ell$ is also the word length in $T:=\left(\begin{smallmatrix}
    1 & 1 \\ 0 & 1
\end{smallmatrix}\right)$ and $V:=\left(\begin{smallmatrix}
    1 & 0 \\ 1 & 1
\end{smallmatrix}\right)$ of the hyperbolic matrix $A_w=T^{a_1}V^{a_2}\cdots T^{a_{\ell-1}}V^{a_\ell}$ fixing $w$, the function $\Lambda_j$ can be interpreted as an analog of the function $\Re(\val(w))$ but normalized with the word length of the matrix $A_w$  instead of the length of geodesic $C_w$. 

In order to define $\Lambda_f(x)$ for a real number $x$, which is the main object of study of this paper, we introduce some definitions and notation. Given a hyperbolic matrix $A\in \SL$, let $\tilde{w},w\in \R$ denote the attracting and repelling  fixed points of $A$, respectively. Let $Q(x,y)=ax^2+bxy+cy^2$ be an integral, primitive, indefinite quadratic form  satisfying $Q(w,1)=Q(\tilde{w},1)=0$. Among the two quadratic forms $Q$ and $-Q$ we choose the one satisfying $\mathrm{sgn}(a)=\mathrm{sgn}(w-\tilde{w})$. With this convention, we have the formulas
\begin{equation*}
    w=\frac{-b+\sqrt{D}}{2a},\quad \tilde{w}=\frac{-b-\sqrt{D}}{2a},
\end{equation*}
where $D:=b^2-4ac>0$ is the discriminant of $Q$. Then, given a weakly holomorphic modular function $f$  for $\SL$ we put
\begin{equation}\label{eq:def_cycle_integral_intro}
    I_f(A):=\int_{\tau_0}^{A\tau_0}\frac{f(\tau)\sqrt{D}}{Q(\tau,1)}d\tau \quad (\text{any } \tau_0\in \H).
\end{equation}

To define $\Lambda_f(x)$ for $x\in (0,\infty)$ we will use the rational approximations of $x$ given by the Farey tree    with starting Farey fractions $\frac{0}{1}$ and $\frac{1}{0}$. More precisely, consider the infinite tree $\Far$ generated from $\frac{0}{1}$ and $\frac{1}{0}$ by taking \emph{Farey medians}
\[\frac{a}{b}\oplus \frac{c}{d}:=\frac{a+c}{b+d}\]
between pairs of Farey fractions $\frac{a}{b}<\frac{c}{d}$ as shown in Figure \ref{fig:Far}. We call $\frac{a}{b}\oplus \frac{c}{d}$ a \emph{direct descendant} of both $\frac{a}{b}$ and $\frac{c}{d}$, and call $\frac{a}{b}$ (resp. $\frac{c}{d}$) the \emph{left} (resp. \emph{right}) \emph{parent} of $\frac{a}{b}\oplus \frac{c}{d}$.
\begin{figure}[h!]
\centering
\begin{forest}
[,phantom [$\frac{0}{1}$,name=p1] [] [] [] [] [] [] 
[
[$\frac{1}{1}$,name=p2, no edge,tikz={\draw (p2.north)--(p1.south);}
[
$\frac{1}{2}$ 
[
$\frac{1}{3}$
[$\ldots$]
[$\ldots$]]
[
$\frac{2}{3}$
[$\ldots$]
[$\ldots$]]
]
[
$\frac{2}{1}$
[
$\frac{3}{2}$
[$\ldots$]
[$\ldots$]] 
[
$\frac{3}{1}$
[$\ldots$]
[$\ldots$]]]]]
[] [] [] [] [] []  [$\frac{1}{0}$,name=p3] tikz={\draw (p2.north)--(p3.south);}
]
\end{forest} \caption{Farey tree $\Far$.}\label{fig:Far}
\end{figure}

The sequence of sets 
\[\Far^{(0)} := \left\{ \frac{0}{1}, \frac{1}{0} \right\},   \quad   \Far^{(1)}  :=  \left\{ \frac{0}{1}, \frac{1}{1}, \frac{1}{0} \right\},  \quad   \Far^{(2)} :=  \left\{ \frac{0}{1}, \frac{1}{2}, \frac{1}{1},\frac{2}{1}, \frac{1}{0} \right\}, \quad \ldots\] 
is increasing and their union is $\mathbb{Q}^+\cup \{0,\infty\}$. We call $\Far^{(n)}$ the \emph{$n$-th level of the Farey tree $\Far$}, and Farey fractions that are consecutive in $\Far^{(n)}$ for some $n\geq 0$ are said to be \emph{neighbors}. For instance, $\frac{1}{2}$ and $\frac{1}{1}$ are neighbors whereas $\frac{1}{2}$ and $\frac{2}{1}$ are not. It is well known that two Farey fractions $\frac{a}{b}<\frac{c}{d}$ in $\Far$ are neighbors if and only if $ad-bc=-1$. 

Note that every pair of consecutive Farey fractions $\frac{a}{b},\frac{c}{d}$ in level $n\geq 1$ has either $\frac{a}{b}$ as a direct descendant of  $\frac{c}{d}$ or $\frac{c}{d}$ as a direct descendant of $\frac{a}{b}$. Hence, on the tree, either $\frac{c}{d}$ is above $\frac{a}{b}$ or $\frac{a}{b}$ is above $\frac{c}{d}$, respectively.

It is easy to see that if $\frac{a}{b}<\frac{c}{d}$ are two Farey fractions that are neighbors, then $A:=\left(\begin{smallmatrix}
    c & a \\ d & b
\end{smallmatrix}\right)$ is the unique matrix in $\SLN$ satisfying 
\(A(0)=\frac{a}{b},  A(1)=\frac{a+c}{b+d},  A(\infty)=\frac{c}{d}\).  
In the case where $\frac{c}{d}$ is a direct descendant of $\frac{a}{b}$ we have the diagram in Figure \ref{fig:diagram_A}. 

\begin{figure}[h!]
\centering
\begin{forest}
[{$A(0)=\frac{a}{b}$}
[$\ldots$
 [$\ldots$]
 [$\ldots$]
]
[{$A(\infty)  =  \frac{c}{d}$}
[{$A(1)= \frac{a+c}{b+d}$}]
[$\ldots$]
]
]
\end{forest}
\caption{Image of $\{0,1,\infty\}$ under the matrix $A=\left(\begin{smallmatrix}
    c & a \\ d & b
\end{smallmatrix}\right).$}\label{fig:diagram_A}
\end{figure}

We call $A=\left(\begin{smallmatrix}
    c & a \\ d & b
\end{smallmatrix}\right)$ the \emph{matrix associated to the Farey fraction} $\frac{a+c}{b+d}$. 
We label each edge of the Farey tree below $\frac{1}{1}$ with the matrix associated to the Farey fraction at the bottom of the edge. Figure \ref{fig:labeled_Far} shows the \emph{labeled Farey tree}. Since the monoid $\SLN$ is freely generated by $T$ and $V$, every matrix $A\neq I$ in $\SLN$ appears exactly once as a label in this tree.

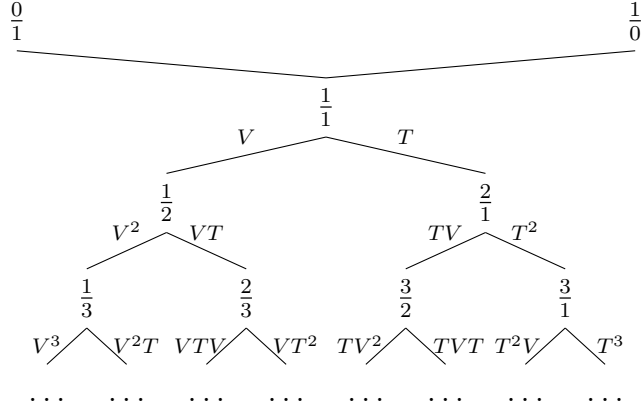
\begin{figure}[h!]
\centering
\begin{forest}
[,phantom [$\frac{0}{1}$,name=p1] [] [] [] [] [] [] 
[
[$\frac{1}{1}$,name=p2, no edge,tikz={\draw (p2.north)--(p1.south);}
[
$\frac{1}{2}$,edge label={node[midway,above]{\tiny{$V$}}}
[
$\frac{1}{3}$,edge label={node[midway,above]{\tiny{$V^2$}}}
[$\ldots$,edge label={node[above]{\tiny{$V^3$}}}]
[$\ldots$,edge label={node[above]{\tiny{\,\,\, $V^2T$}}}]]
[
$\frac{2}{3}$,edge label={node[midway,above]{\tiny{$VT$}}}
[$\ldots$,edge label={node[above]{\tiny{$VTV$\,\,\,}}}]
[$\ldots$,edge label={node[above]{\tiny{\,\,\, $VT^2$}}}]]
]
[
$\frac{2}{1}$,edge label={node[midway,above]{\tiny{$T$}}}
[
$\frac{3}{2}$,edge label={node[midway,above]{\tiny{$TV$}}}
[$\ldots$,edge label={node[above]{\tiny{$TV^2$\,\,\,}}}]
[$\ldots$,edge label={node[above]{\tiny{\,\,\,\,\, $TVT$}}}]] 
[
$\frac{3}{1}$,edge label={node[midway,above]{\tiny{$T^2$}}}
[$\ldots$,edge label={node[above]{\tiny{$T^2V$\, \,}}}]
[$\ldots$,edge label={node[above]{\tiny{\, $T^3$}}}]]]]]
[] [] [] [] [] []  [$\frac{1}{0}$,name=p3] tikz={\draw (p2.north)--(p3.south);}
]
\end{forest}\caption{Labeled Farey tree.}\label{fig:labeled_Far}
\end{figure}

Now, every irrational real number $x\in (0,\infty)$ is the limit of a unique sequence $(x_n)_{n\geq 0}$ of Farey fractions in $\Far$ starting at $x_0=\frac{1}{1}$ and with $x_{n+1}$ a direct descendant of $x_n$ for all $n\geq 0$. The corresponding path in $\Far$ has infinitely many left and right turns moving down the tree, with right turns corresponding to the matrix $V$ and left turns corresponding to $T$ (from the perspective of a viewer standing over the tree and facing south). Similarly, every rational number $\frac{p}{q}\in (0,\infty)$ is the limit of exactly two infinite paths in $\Far$. Indeed, starting at $\frac{1}{1}$   one can reach $\frac{p}{q}$ on the tree after finitely many right and left turns. At $\frac{p}{q}$ we can then make a right turn followed by infinitely many left turns or alternatively make a left turn followed by infinitely many right turns. For the moment, let us choose the unique path with infinitely many right turns.

\begin{example}
    For the rational number $\frac{2}{3}$ the associated sequence of Farey fractions is
    \[x_0=\frac{1}{1}, x_1=\frac{1}{2}, x_2=\frac{2}{3} \text{ and }x_{k+3}=\frac{3+2k}{4+3k}\, \, \text{ for }k\geq 0. \]
\end{example}

\begin{example}
        For the golden ratio $\phi=\frac{1+\sqrt{5}}{2}=1.61803\ldots$ the sequence of Farey fractions is
    \[x_0=\frac{1}{1}, x_1=\frac{2}{1}, x_2=\frac{3}{2} \text{ and }x_{k+3}=x_{k+2}\oplus x_{k+1}\, \, \text{ for }k\geq 0.  \]
\end{example}

To every infinite  path $(x_n)_{n\geq 0}$ in $\Far$ as above, we associate a sequence of matrices $(A_n)_{n\geq 0}$ in $\mathrm{SL}(2,\N)$ as follows: Put $A_0:=I$ and for $n\geq 1$ define
\[A_n:=\left\{\begin{array}{ll}
     A_{n-1}V& \text{if }x_{n}<x_{n-1},\\
    A_{n-1}T & \text{if }x_{n}>x_{n-1}.\\
\end{array} \right.\]
Then, $A_n$ is simply the matrix associated to the Farey fraction $x_n=A_n(1)$.  Observe that the matrices in the sequence $(A_n)_{n\geq 0}$ are hyperbolic except for only finitely many exceptions in the beginning. Hence, given $x$ there exists $n_0\geq 1$ such that for all $n\geq n_0$ the matrix $A_n$ is hyperbolic.

For an irrational number $x\in (0,\infty)$ with continued fraction  expansion \footnote{All the continued fractions appearing in this paper are simple.}
\[x=[a_1;a_2,\ldots ]=a_1+\dfrac{1}{a_2+\dfrac{1}{a_3+\dfrac{1}{\ddots}}},\]
the corresponding sequence $(A_n)_{n\geq 0}$ is given explicitly as follows: 
\begin{equation}\label{eq:hyp_matrix_seq_intro}
   A_n=\left\{
\begin{array}{ll}
T^n & \text{if }0\leq n\leq a_1, \\
    T^{a_1}V^{a_2}\cdots V^{a_i}T^{k} & \text{if $n=a_1+\ldots +a_i+k, 0\leq k\leq a_{i+1}$ and $i\geq 1$ even},  \\
    T^{a_1}V^{a_2}\cdots T^{a_i}V^{k} & \text{if $n=a_1+\ldots +a_i+k, 0\leq k\leq a_{i+1}$ and $i\geq 1$ odd}.
\end{array}
\right. 
\end{equation}

\begin{definition}\label{def:Lambda_f}
    Let $f\not \equiv 0$ be a weakly holomorphic modular function  for $\SL$, which is real and  non-negative on the geodesic arc $\{e^{it} : \pi/3\leq t\leq 2\pi/3\}$. For any real number  $x\in (0,\infty)$  with the associated  sequence of  hyperbolic matrices $(A_n)_{n\geq n_0}$ as above   we define the \emph{$f$-Lyapunov exponent} of $x$ as
\begin{equation}\label{eq:def_lambda_f_intro}
  \Lambda_f(x):=\limsup_{n\to \infty}\frac{\Re(I_f(A_n))}{n}.  
\end{equation}
\end{definition}

In the case that $f\equiv 1$ this is twice the Lyapunov exponent function $\Lambda(x)$ studied by Spalding and Veselov in \cite{SV17,SV18}. Along the paper, whenever we refer to $\Lambda_f$ we implicitly assume that $f$ is as in Definition \ref{def:Lambda_f}. Our first theorem is the following.

\begin{theorem}\label{thm:first_properties_Lambda_f}
  The limit superior in \eqref{eq:def_lambda_f_intro} is finite for all $x\in (0,\infty)$ and $\Lambda_f$ extends to a function $\Lambda_f:\P^1(\R)\to \R$ satisfying
   \[\Lambda_f\left(\frac{ax+b}{cx+d}\right)=\Lambda_f(x) \quad \text{for all }x\in \P^1(\R) \text{ and }\begin{pmatrix}
        a & b \\ c & d
    \end{pmatrix}\in \GL.\]
   Moreover, $\Lambda_f(x)=0$ for all $x\in \P^1(\mathbb{Q})$.
\end{theorem}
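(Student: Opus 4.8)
The plan is to establish the three claims in sequence, reducing the $\GL$-invariance and the vanishing on rationals to the boundedness statement.

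First I would prove finiteness of the limit superior. The key is a cocycle-type relation for $I_f$: for hyperbolic matrices $A,B$ one has $I_f(AB)=I_f(A)+I_f(A\cdot(\text{shift}))$, coming from splitting the path of integration $\tau_0\to AB\tau_0$ through $A\tau_0$ and using $\SL$-invariance of $f(\tau)\,d\tau/Q(\tau,1)$ (with the quadratic form transforming appropriately). Concretely, since $A_n=A_{n-1}T$ or $A_{n-1}V$, I would write $\Re(I_f(A_n))-\Re(I_f(A_{n-1}))$ as the real part of a single integral of $f(\tau)\sqrt{D_n}/Q_n(\tau,1)\,d\tau$ along the arc joining $A_{n-1}\tau_0$ to $A_n\tau_0$, and choose the base point $\tau_0$ to lie on the geodesic arc $\{e^{it}:\pi/3\le t\le 2\pi/3\}$ (or on its $\SL$-orbit) so that each incremental arc is a bounded piece of a geodesic on the modular surface meeting a fixed compact region. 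Boundedness of $f$ on a compact set (it is holomorphic on $\H$, meromorphic only at the cusp) then gives a uniform bound $|\Re(I_f(A_n))-\Re(I_f(A_{n-1}))|\le M$ independent of $n$, whence $\Re(I_f(A_n))=O(n)$ and the $\limsup$ in \eqref{eq:def_lambda_f_intro} is finite. The main obstacle here is bookkeeping: making precise that the incremental geodesic arcs project to a \emph{fixed} compact subset of $\SL\backslash\H$, which should follow from the explicit form \eqref{eq:hyp_matrix_seq_intro} of $A_n$ together with the fact that each new factor is a single $T$ or $V$, so the arc from $A_{n-1}\tau_0$ to $A_n\tau_0$ is $A_{n-1}$ applied to one of finitely many arcs; invariance of $|f(\tau)\,d\tau/Q(\tau,1)|$ under $\SL$ then transfers the bound downstairs.

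Next, for the vanishing on rationals: if $x=p/q\in(0,\infty)$, the path in $\Far$ reaching $p/q$ is eventually all left turns (in the chosen convention with infinitely many right turns — I would use the tail behaviour from \eqref{eq:hyp_matrix_seq_intro} with a finite continued fraction), so for large $n$ we have $A_n=A_{n_1}V^{n-n_1}$ (or $T^{n-n_1}$) for a fixed matrix $A_{n_1}$. Then $I_f(A_n)=I_f(A_{n_1})+\int_{A_{n_1}\tau_0}^{A_{n_1}V^{n-n_1}\tau_0} f(\tau)\sqrt{D_n}/Q_n(\tau,1)\,d\tau$, and the point is that $V^{m}\tau_0\to$ the parabolic fixed point $0$ as $m\to\infty$ along a path that, after applying $A_{n_1}$, descends into a fixed cusp neighbourhood; there $f$ grows at most polynomially in the height, while $\sqrt{D_n}/Q_n(\tau,1)$ decays, and the total contribution is $o(n)$ — indeed $O(1)$ or $O(\log n)$. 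Dividing by $n$ and taking $\limsup$ gives $\Lambda_f(p/q)=0$. (One must also check the value is independent of which of the two paths through $p/q$ one takes, or note it is $0$ for both by the same argument.)

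Finally, the $\GL$-invariance and the extension to all of $\P^1(\R)$: it suffices to check invariance under the generators of $\GL$, namely $T$, $V$, and an orientation-reversing element such as $\left(\begin{smallmatrix}0&1\\1&0\end{smallmatrix}\right)$ or $\left(\begin{smallmatrix}-1&0\\0&1\end{smallmatrix}\right)$, and to first define $\Lambda_f$ on $(0,\infty)$ and then push it to $\P^1(\R)$ by this invariance (the orbit of $(0,\infty)$ under $\GL$ is all of $\P^1(\R)$). For $x\mapsto T^{\pm1}x=x\pm1$ and $x\mapsto V^{\pm1}x=x/(1\pm x)$, applying such a transformation to $x$ changes the Farey path only by prepending or deleting a bounded initial segment of $T$'s or $V$'s, so the associated hyperbolic matrix sequence changes from $(A_n)$ to $(BA_{n+c})$ (or $(BA_{n-c})$) for a fixed $B\in\SL$ and fixed shift $c$; then $I_f(BA_{n+c})=I_f(B)+(\text{integral over }B\text{-translate of the arc }\tau_0\to A_{n+c}\tau_0)$, and by $\SL$-invariance of $f(\tau)\,d\tau/Q(\tau,1)$ the latter integral equals $I_f(A_{n+c})$ up to the bounded piece $I_f(B)$; so $\Re(I_f(BA_{n+c}))=\Re(I_f(A_{n+c}))+O(1)$, and dividing by $n$ and letting $n\to\infty$ kills the $O(1)$ and the finite index shift, giving $\Lambda_f(Tx)=\Lambda_f(x)$ etc. For the orientation-reversing generator one similarly checks the Farey path is reflected (swapping $T\leftrightarrow V$), and since the normalization of $Q$ versus $-Q$ was chosen precisely to make the sign of $a$ match $\mathrm{sgn}(w-\tilde w)$, the real part $\Re(I_f)$ is unchanged under this symmetry. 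I expect the first step — the uniform bound on the incremental integrals via projection to a compact part of the modular surface — to be the crux; the rest is a matter of tracking how Farey paths and the matrices $A_n$ transform and invoking the cocycle relation for $I_f$.
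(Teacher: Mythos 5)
Your overall structure (finiteness, then vanishing on rationals, then invariance by reduction to generators) is the paper's structure, but there are two concrete gaps that break the proposed argument as written.

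First, the ``cocycle relation.'' You write $\Re(I_f(A_n))-\Re(I_f(A_{n-1}))$ as the real part of a single integral of $f(\tau)\sqrt{D_n}/Q_n(\tau,1)\,d\tau$ along the arc from $A_{n-1}\tau_0$ to $A_n\tau_0$. This is not true: $I_f(A_{n-1})$ is the integral of $f(\tau)\sqrt{D_{n-1}}/Q_{n-1}(\tau,1)\,d\tau$ along the piece from $\tau_0$ to $A_{n-1}\tau_0$, and $Q_{n-1}$ is the form with roots the fixed points of $A_{n-1}$, which is a \emph{different} form from $Q_n$. So there is no telescoping of $I_f(A_n)-I_f(A_{n-1})$ into a single arc integral, and one cannot deduce invariance of $\Lambda_f$ under $T$ by ``dividing the $O(1)$ by $n$'': the difference $I_f(MA_n)-I_f(A_n)$ is a difference of integrals over entirely different paths against entirely different kernels. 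What the paper actually does (Theorem \ref{thm:almost_invariance_cycle_integrals}) is move both integrals to the fixed arc $\{e^{it}:\pi/3\le t\le 2\pi/3\}$ via the reduction representation of Lemma \ref{lem:indentity_cycle_int_BI}, and then directly compare the two resulting kernels $K_u(t)$ and $K_v(t)$ term by term using that $u$ and $v$ have continued fraction expansions differing only by a bounded perturbation of the first partial quotient (Proposition \ref{prop:general_comparison_K_u_m_1vsK_v_m_2}, which also gives a dependence on $x$ in the $O(1)$). This comparison of reduction cycles is the real content; the cocycle picture does not substitute for it. For the finiteness statement alone the splitting idea \emph{can} be salvaged — split $\int_{\tau_0}^{A_n\tau_0}$ into $n$ arcs all weighted by the \emph{same} kernel $\sqrt{D_n}/Q_n$ and bound each — and this is essentially what Lemma \ref{lem:indentity_cycle_int_BI} plus the trivial bound \eqref{trivial bound} achieves.

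Second, the vanishing on rationals. You claim that along the tail $A_{n_1}V^{n-n_1}$ the integral near the cusp contributes $o(n)$ because ``$f$ grows at most polynomially in the height.'' This is false: a weakly holomorphic $f$ with a pole at $\infty$ grows exponentially in $\Im\tau$. The reason the contribution is nevertheless only $O(\log n)$ is that the integrand is holomorphic in $\H$, so the contour can be deformed to the compact arc $\{e^{it}:\pi/3\le t\le 2\pi/3\}$ where $f$ is bounded; one then uses that the kernel terms $F(x,t)$ satisfy $F(x,t)=O(1/x)$, so the reduction cycle of a number with a partial quotient $a_i$ contributes $O(\log a_i)$ rather than $O(a_i)$ (Proposition \ref{prop:bound_Re(I_f(A))}). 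Without this refinement the trivial bound $O(n)$ is not $o(n)$ and the argument stalls. Relatedly, for the orientation-reversing generator the key input is not the sign convention on $Q$ but that $f$ is real on the arc, hence has real Fourier coefficients, which gives $I_f(MAM^{-1})=\overline{I_f(A)}$ for $\det M=-1$ (Lemma \ref{lem:elementary_prop_cycle_integrals}$(iv)$); you should state this explicitly rather than appeal to the choice of $Q$ versus $-Q$.
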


\begin{remark}
As in \cite{SV17} one can show that $\Lambda_f(x)=0$ for almost all $x\in \P^1(\R)$ with respect to the Lebesgue measure (see Remark \ref{rmk:vanishing_almost_everywhere} for details).
\end{remark}

Since $\Lambda_f(x)=0$ for $x\in\mathbb Q$ we can restrict ourselves to irrationals  $x\in (0,\infty)$. We first look  at values of the $f$-Lyapunov exponent in the special class of  quadratic irrationalities.  Recall that real quadratic irrationalities are characterized by the property of having (eventually) periodic continued fractions expansions.

\begin{theorem}\label{thm:Lambda_f_on_quad_irrationals}
   Let $x=[a_1;a_2,\ldots,a_r,\overline{b_1,\ldots,b_\ell}]$ be a quadratic irrationality with $\ell\geq 2$ even\footnote{The condition of $\ell$ being even is not restrictive since one can always double the period of a continued fraction expansion.}. 
   Then, we have
\[\Lambda_f(x)=
\frac{\Re(I_f(T^{b_1}V^{b_2}\cdots T^{b_{\ell-1}}V^{b_\ell}))}{b_1+\ldots+b_\ell}.\]
In particular, $\Lambda_f(x)>0$.
\end{theorem}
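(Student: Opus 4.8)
The plan is to reduce to a purely periodic continued fraction via the $\GL$-invariance of Theorem~\ref{thm:first_properties_Lambda_f}, compute the resulting limit by an additivity-and-averaging argument for cycle integrals, and then extract strict positivity from the hypothesis on $f$. For the reduction: with $y:=[\,\overline{b_1;b_2,\dots,b_\ell}\,]\in(1,\infty)$ and $p_k/q_k$ the convergents of $[a_1;\dots,a_r]$, one has $x=\left(\begin{smallmatrix} p_r & p_{r-1}\\ q_r & q_{r-1}\end{smallmatrix}\right)(y)$ with the matrix of convergents in $\GL$, so $\Lambda_f(x)=\Lambda_f(y)$ and it suffices to compute $\Lambda_f(y)$. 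Writing $L:=b_1+\dots+b_\ell$ and $B:=T^{b_1}V^{b_2}\cdots T^{b_{\ell-1}}V^{b_\ell}$, formula~\eqref{eq:hyp_matrix_seq_intro} applied to $y$ (using that $\ell$ is even) gives $A_{jL}=B^{j}$, and more generally $A_{jL+m}=B^{j}C_m$ for $0\le m<L$, where $C_m\in\SLN$ is the product of the first $m$ letters of the word $T^{b_1}V^{b_2}\cdots V^{b_\ell}$ (so $\{C_m\}$ is finite and $C_0=I$).

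The heart of the matter is to show $I_f(B^{j}C_m)=j\,I_f(B)+o(j)$ as $j\to\infty$, uniformly in $m$. Since $B^{j}$ and $B$ have the same fixed points, hence the same form $Q_B$ and discriminant $D_B$, telescoping the path $\tau_0\to B^{j}\tau_0$ along the points $B^{i}\tau_0$ and using $f(B\sigma)=f(\sigma)$ together with the invariance of $\tfrac{\sqrt{D_B}}{Q_B(\tau,1)}\,d\tau$ under the automorphisms of $Q_B$ gives the exact identity $I_f(B^{j})=j\,I_f(B)$; in particular $\tfrac{\Re(I_f(A_{jL}))}{jL}=\tfrac{\Re(I_f(B))}{L}$. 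For the general case, put $\sigma_0:=C_m\tau_0$ and split $\tau_0\to B^{j}\sigma_0$ as $\tau_0\to\sigma_0\to B\sigma_0\to\dots\to B^{j}\sigma_0$. Keeping every path inside a fixed compact subset of $\H$ and using $\bigl|\tfrac{\sqrt{D}}{Q(\tau,1)}\bigr|=\bigl|\tfrac{1}{\tau-w}-\tfrac{1}{\tau-\tilde w}\bigr|\le 2/\Im\tau$ for any hyperbolic form, the segment $\tau_0\to\sigma_0$ contributes $O(1)$; for the $i$-th remaining segment, the substitution $\tau=B^{i}\rho$ turns it into $\int_{\sigma_0}^{B\sigma_0} f(\rho)\,\tfrac{\sqrt{D_{M_i}}}{Q_{M_i}(\rho,1)}\,d\rho$ with $M_i:=B^{-i}(B^{j}C_m)B^{i}=B^{j-i}C_mB^{i}$, whose attracting and repelling fixed points converge to those of $B$ as $\min(i,j-i)\to\infty$, uniformly in $j$ and at exponential rate (being $B^{i}$-conjugates of $B^{j}C_m$). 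Thus each such segment tends to $\int_{\sigma_0}^{B\sigma_0}f\,\tfrac{\sqrt{D_B}}{Q_B(\rho,1)}\,d\rho=I_f(B)$ (independent of $\sigma_0$ by $B$-invariance of the integrand form), the $j$ segments are uniformly bounded, and all but boundedly many are within $\varepsilon$ of $I_f(B)$; a Cesàro estimate then yields $\sum_{i=0}^{j-1}(\text{segment}_i)=j\,I_f(B)+o(j)$. Hence $\tfrac{\Re(I_f(A_n))}{n}=\tfrac{j\Re(I_f(B))+o(j)}{jL+m}\to\tfrac{\Re(I_f(B))}{L}$, so the $\limsup$ in~\eqref{eq:def_lambda_f_intro} is in fact a limit, equal to $\tfrac{\Re(I_f(B))}{L}$ — the asserted formula. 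The step I expect to require the most care is the uniform-in-$j$ control of the fixed points of $M_i$ together with the Cesàro bookkeeping that keeps the error at $o(j)$; both become routine once the conjugacy relation $M_i=B^{-i}(B^{j}C_m)B^{i}$ is exploited.

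Finally, for $\Lambda_f(x)=\Re(I_f(B))/(b_1+\dots+b_\ell)>0$ one must show $\Re(I_f(B))>0$, and this is precisely where the hypothesis that $f$ is real, non-negative and $\not\equiv 0$ on $\{e^{it}:\pi/3\le t\le 2\pi/3\}$ enters. The plan is to express $\Re(I_f(A))$, for any hyperbolic word $A$ in $T$ and $V$, as a sum of contributions indexed by the syllables of $A$, each one — using the modularity of $f$ and the fact that $\tfrac{\sqrt{D}}{Q(\tau,1)}\,d\tau$ restricts to hyperbolic arclength along the geodesic — a non-negative integral of $f$ over an $\SL$-translate of a sub-arc of that geodesic arc; since a non-zero weakly holomorphic $f$ cannot vanish identically on any non-degenerate sub-arc (identity theorem), not all contributions can vanish, forcing $\Re(I_f(B))>0$. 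I expect this geometric decomposition to be the genuine obstacle; it is presumably the content of a lemma in the earlier sections, and the proof of Theorem~\ref{thm:Lambda_f_on_quad_irrationals} then reduces to combining it with the limit computation above.
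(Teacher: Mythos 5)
Your proposal is correct, and in fact it arrives at the formula by a route that is genuinely different from the paper's in the step that does the real work.

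Your reduction to the purely periodic case $y=[\,\overline{b_1;\dots,b_\ell}\,]$ via $\GL$-invariance, and the identification $A_{jL+m}=B^{j}C_m$ with $C_m=A_m$, coincide with the paper's setup. Where you diverge is in proving $I_f(B^{j}C_m)=j\,I_f(B)+O(1)$. The paper does this algebraically: it first establishes a general almost-additivity theorem (Theorem~\ref{thm:almost_additivity_for_badly_approximable}) for cycle integrals along Farey paths of any badly approximable number, with error $O_f(C)$ where $C$ bounds the partial quotients; the proof of that theorem goes through Lemma~\ref{lem:indentity_cycle_int_BI}, which rewrites $I_f(A)$ as an integral over the arc $\{e^{it}:t\in[\pi/3,2\pi/3]\}$ weighted by the kernel $K_w(t)$, and Proposition~\ref{prop:general_comparison_K_u_m_1vsK_v_m_2}, which compares $K$-sums for two purely periodic irrationals with many shared partial quotients. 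Theorem~\ref{thm:Lambda_f_on_quad_irrationals} is then an immediate corollary, since $A_{jL+m}=B^{j}A_m$ together with almost-additivity and $I_f(B^{j})=jI_f(B)$ (Lemma~\ref{lem:elementary_prop_cycle_integrals}$(ii)$) gives $I_f(A_{jL+m})=jI_f(B)+I_f(A_m)+O_f(C)$. You instead give a direct geometric argument: telescope the integration contour along the $B$-orbit of a basepoint, reduce each segment to a fixed compact arc from $\sigma_0$ to $B\sigma_0$ by the substitution $\tau=B^{i}\rho$, observe that the resulting integrand is governed by the form of $M_i=B^{j-i}C_m B^{i}$, and show that the attracting and repelling fixed points of $M_i$ converge (exponentially, in $\min(i,j-i)$) to those of $B$, so that each segment integral converges to $I_f(B)$ with summable error. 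This is a clean and self-contained argument for this particular theorem; what it does not buy you is the paper's Theorem~\ref{thm:almost_additivity_for_badly_approximable} in its full strength (arbitrary $m$, $n$, uniform $O(C)$ error for any badly approximable number), which the paper re-uses heavily in the proofs of Theorems~\ref{thm:first_properties_Lambda_f}, \ref{thm:values_of_Lambda_f}, and Corollary~\ref{cor:I(A_m+n)-I(A_n)_goes_to_zero}. The strict positivity is handled in the paper by Proposition~\ref{prop:formula_for_Re(I_f(A))} and Corollary~\ref{cor:Lambda_f(x)_is_non_negative}, which express $\Re(I_f(A))$ as $\int_{\pi/3}^{2\pi/3}f(e^{it})\sin(t)\bigl(S(w,t)+S(w^{\op},t)\bigr)\,dt$ with manifestly positive $S$; your plan to decompose it as a sum of non-negative contributions over $\SL$-translates of sub-arcs, and then invoke the identity theorem to rule out $f$ vanishing on the arc, is a legitimate alternative realization of the same idea.
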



We now look more closely to the set of values attained by the $f$-Lyapunov exponent. Recall that $\phi=\frac{1+\sqrt{5}}{2}$ denotes the golden ration.

\begin{theorem}\label{thm:values_of_Lambda_f}
  For every $x\in \P^1(\R)$ we have $0\leq \Lambda_f(x)\leq \Lambda_f(\phi)$. Moreover, every value in $[0,\Lambda_f(\phi)]$ is attained by $\Lambda_f$. Equivalently, $\Lambda_f(\P^1(\R))=[0,\Lambda_f(\phi)]$.
\end{theorem}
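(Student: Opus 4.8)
The plan is to prove the inclusions $\Lambda_f(\P^1(\R))\subseteq[0,\Lambda_f(\phi)]$ and $[0,\Lambda_f(\phi)]\subseteq\Lambda_f(\P^1(\R))$ separately, the first being the substantial one. As a warm‑up, note that for irrational $x$ every hyperbolic $A_n$ is the period matrix of the purely periodic quadratic irrationality $w_n$ whose continued fraction period is the word of $A_n$ (doubled if its length is odd), so Theorem~\ref{thm:Lambda_f_on_quad_irrationals} gives $\Re(I_f(A_n))=n\,\Lambda_f(w_n)>0$; hence $\Lambda_f(x)=\limsup_n\Lambda_f(w_n)\ge 0$. For rationals $\Lambda_f=0$ by Theorem~\ref{thm:first_properties_Lambda_f}. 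Likewise $\Lambda_f(\phi)=\Re(I_f(TV))/2>0$ by Theorem~\ref{thm:Lambda_f_on_quad_irrationals} applied to $\phi=[\overline{1;1}]$, and it is attained at $\phi$.

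For the upper bound, since $\Lambda_f(x)=\limsup_n\Lambda_f(w_n)$ with each $w_n$ a quadratic irrationality, it suffices to prove $\Lambda_f(w)\le\Lambda_f(\phi)$ for every quadratic irrationality $w$; by Theorem~\ref{thm:Lambda_f_on_quad_irrationals} this is the combinatorial inequality
\begin{equation*}
\Re\bigl(I_f(T^{b_1}V^{b_2}\cdots T^{b_{\ell-1}}V^{b_\ell})\bigr)\;\le\;(b_1+\cdots+b_\ell)\,\Lambda_f(\phi)\qquad(\ell\ge 2\text{ even},\ b_i\ge 1),
\end{equation*}
an extremality statement for the ``golden word'' $TV$. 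One route is thermodynamic: the map sending a purely periodic quadratic $w$ to $\Lambda_f(w)$ is the integral of a Hölder potential $\psi_f$ on $\{T,V\}^{\mathbb Z}$ (attached to $f$ via the geodesic flow, so that the Birkhoff sum of $\psi_f$ over any periodic orbit recovers $\Re(I_f)$) against the orbit's empirical measure, so $\sup_w\Lambda_f(w)\le\max_{\mu}\int\psi_f\,d\mu$ over shift‑invariant $\mu$, and one must identify the maximising measure as the golden orbit $\overline{\ldots TVTV\ldots}$, on which the average equals $\Lambda_f(\phi)$. A more hands‑on route is to deform an arbitrary word towards $\overline{TV}$ by elementary ``de‑concentration'' moves (replacing a factor containing a power $\ge 2$ by one of the same length with smaller powers) and check that no such move decreases $\Re(I_f)$. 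For $f\equiv 1$ both routes are classical — this is the statement, underlying Markov's theory and used by Spalding and Veselov, that the spectral radius of $T^{b_1}V^{b_2}\cdots$ is at most $\phi^{\,b_1+\cdots+b_\ell}$. For general $f$ the extra ingredient, and the place where the hypothesis that $f\ge 0$ on the bottom arc $\{e^{it}:\pi/3\le t\le 2\pi/3\}$ of the fundamental domain is genuinely used, is an analytic comparison: deviating from strict alternation forces the associated closed geodesic to spend a larger share of its renormalised length away from that arc — where $f$ may be negative — while the golden geodesic is precisely the one concentrated near it. I expect this extremality to be the main obstacle of the whole theorem.

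For surjectivity, since $\Lambda_f$ already takes the values $0$ (on $\P^1(\mathbb{Q})$) and $\Lambda_f(\phi)$ (at $\phi$), it remains to realise each $t\in(0,\Lambda_f(\phi))$. First, the values on quadratic irrationalities are dense in $[0,\Lambda_f(\phi)]$: for fixed $N$ let $x_{p,q}=[\overline{1;1,\dots,1,N,\dots,N}]$ have $2p$ ones and $2q$ copies of $N$, so by Theorem~\ref{thm:Lambda_f_on_quad_irrationals}
\begin{equation*}
\Lambda_f(x_{p,q})=\frac{\Re\bigl(I_f((TV)^p(T^NV^N)^q)\bigr)}{2p+2qN}
=\frac{2p\,\Lambda_f(\phi)+2qN\,\Lambda_f([\overline{N;N}])+O_N(1)}{2p+2qN},
\end{equation*}
the error being $O_N(1)$ uniformly as $p,q\to\infty$ because $I_f$ of a concatenation differs from the sum of the $I_f$'s of the blocks by a bounded amount while $I_f$ of a power is exactly a multiple (a shadowing estimate of the kind already used for Theorem~\ref{thm:Lambda_f_on_quad_irrationals}). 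Letting $p/q$ range over $\mathbb{Q}_{>0}$ fills a dense subset of $[\Lambda_f([\overline{N;N}]),\Lambda_f(\phi)]$, and $\Lambda_f([\overline{N;N}])=\Re(I_f(T^NV^N))/(2N)\to 0$ as $N\to\infty$ since $T^NV^N$ is hyperbolic with eigenvalue $\asymp N^2$, so this cycle integral is $O_f(\log N)$; hence these intervals exhaust $(0,\Lambda_f(\phi)]$. Finally, to obtain every value and not merely a dense set, pick for a given $t$ periodic words $P_k$ with $\Lambda_f(\overline{P_k})=t_k\to t$ and let $x$ have continued‑fraction word $P_1^{m_1}P_2^{m_2}\cdots$ with $m_k$ so large that $m_k|P_k|$ dominates both $\sum_{j<k}m_j|P_j|$ and the accumulated bounded junction errors; then for large $n$ the ratio $\Re(I_f(A_n))/n$ is a near‑convex combination of consecutive $t_j$'s, forcing $\Lambda_f(x)=\lim_k t_k=t$. (Choosing $t_k\to 0$ also gives an irrational with $\Lambda_f=0$.) This yields $\Lambda_f(\P^1(\R))=[0,\Lambda_f(\phi)]$.
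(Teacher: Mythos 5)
Your reduction of the upper bound to a statement about purely periodic quadratics is correct and matches the paper: since $A_n$ is the period matrix of a quadratic $w_n$ with $s(A_n)=n$, Theorem~\ref{thm:Lambda_f_on_quad_irrationals} gives $\Re(I_f(A_n))=n\,\Lambda_f(w_n)$, so the bound $\Lambda_f(x)\le\Lambda_f(\phi)$ for all $x$ is equivalent to $\Re(I_f(T^{b_1}V^{b_2}\cdots V^{b_\ell}))\le(b_1+\cdots+b_\ell)\Lambda_f(\phi)$.  But you then stop: you sketch a thermodynamic route and a ``de-concentration'' route without carrying out either, and explicitly write that you ``expect this extremality to be the main obstacle of the whole theorem.''  That obstacle is exactly the content of the theorem's first half, and your proposal leaves it open.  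The paper resolves it by Proposition~\ref{prop:formula_for_Re(I_f(A))}, which rewrites $\Re(I_f(A))$ as $\int_{\pi/3}^{2\pi/3}f(e^{it})\sin(t)\bigl(S(w,t)+S(w^{\op},t)\bigr)\,dt$ with $S$ a sum of values of the elementary kernel $F(x,t)=x/(1+x^2-2x\cos t)$ over the cycle of $w$; because $f\ge 0$ on the arc, the inequality reduces to the \emph{pointwise} bound $S(w,t)\le\frac{s(w)}{2}S(\phi,t)$ (Proposition~\ref{prop:S(w,t)_leq_golden_ratio}).  That bound is proved by a pairing argument — the only ``bad'' terms are those with the cyclic tail lying in $(1,\phi)$, each such term is $\Phi(x)$ for some $x>\phi$ appearing one step later in the cycle, and Lemma~\ref{lem:key_ineq_FT} gives $F(x,t)+F(\Phi(x),t)<2F(\phi,t)$ for $x>\phi$ (ultimately a one-variable polynomial inequality checked in the appendix).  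Your heuristic that the hypothesis $f\ge0$ is used because non-alternating words force the geodesic ``away from the bottom arc'' is not what happens: the cycle integral is \emph{entirely} over that arc, and positivity of $f$ is used only to pass from the pointwise kernel inequality to the inequality of integrals.

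For surjectivity your strategy (build a Farey word $P_1^{m_1}P_2^{m_2}\cdots$ from blocks with $\Lambda_f(\overline{P_k})\to t$, using almost additivity to control $\Re(I_f(A_n))/n$) is morally the same as the paper's, and the density step via $(TV)^p(T^NV^N)^q$ together with Proposition~\ref{prop:bound_Re(I_f(A))} giving $\Lambda_f([\overline{N;N}])\to 0$ is sound.  It is, however, looser than the paper's argument in two respects.  First, ``$\Lambda_f(x)=\lim_k t_k$'' needs care: as you note the junction errors from Theorem~\ref{thm:almost_additivity_for_badly_approximable} are $O(C)$ with $C$ the bound on partial quotients, so if the $P_k$ have unbounded partial quotients the accumulated error is not automatically negligible unless you either fix the alphabet (e.g.\ fix $N$ once $\Lambda_f([\overline{N;N}])<t$) or grow the $m_k$ fast enough in a way that you do not quantify.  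Second, you need $\limsup_k t_k=t$, not merely $t_k\to t$ with oscillation — a monotone approach is enough, but this should be stated.  The paper instead fixes a single pair of blocks $TV$ and $TV^a$ (with $a$ chosen once and for all so that $\Re(I_f(TV^a))/(a+1)<\lambda_0$) and oscillates the running ratio across $\lambda_0$ with minimal block lengths, using Corollary~\ref{cor:I(A_m+n)-I(A_n)_goes_to_zero} to control overshoot; this keeps the alphabet bounded and makes the limsup computation mechanical.  Your version could be made rigorous along these lines, but as written the bookkeeping of errors is handwaved.

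In summary: the reduction and the structure of the surjectivity argument are right, but the central inequality $\Lambda_f(w)\le\Lambda_f(\phi)$ for periodic $w$ — the new analytic content here beyond the Spalding–Veselov case $f\equiv 1$ — is not proved, so the proposal does not establish the theorem.
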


We now turn our attention to a   special class of  quadratic irrationalities, namely the Markov irrationalities. They arise as  the worst irrational numbers with respect to rational approximation. The celebrated theorem  of Markov (see \cite[Section 2.2]{Aig2013}) establishes an explicit bijection between the $\GL$-equivalence classes of these worst irrational numbers, and sorted Markov triples\footnote{A Markov triple is a triple $(a, b, c)$ of positive integers satisfying Markov’s equation
$a^2 + b^2 + c^2 = 3abc.$}.  Markov irrationalities (and the  sorted Markov triples)  can be arranged in an infinite  tree, the Markov tree.  In terms of their purely periodic continued expansions,  the Markov tree  is constructed as follows: starting with the golden ratio $\phi=[\overline{1;1}]$ and the silver ratio $\psi=[\overline{2;2}]$, the Markov tree is generated by using the  conjunction of purely periodic continued fraction expansions
\[[\overline{a_1;\ldots;a_r}]\odot [\overline{b_1;\ldots ,b_s}]:=[\overline{a_1;\ldots,a_r,b_1,\ldots;b_s}],\]
in the form $w_1,w_2\mapsto w_2\odot w_1$, as illustrated in Figure \ref{fig:Markov_tree}. Here we use the abbreviations $[\overline{1;1}]=[\overline{1_2}]$, $[\overline{2;2}]=[\overline{2_2}]$, $[\overline{2;2,1,1}]=[\overline{2_2,1_2}]$, etc.

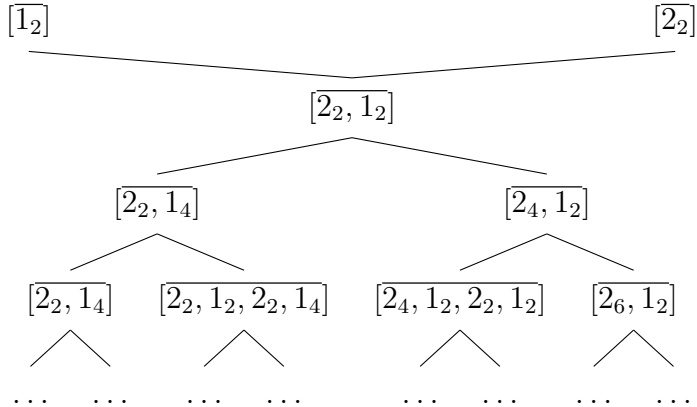
\begin{figure}[h!]
\centering
\begin{forest}
[,phantom [{$[\overline{1_2}]$},name=p1] [] [] [] [] [] [] 
[
[{$[\overline{2_2,1_2}]$},name=p2, no edge,tikz={\draw (p2.north)--(p1.south);}
[
{$[\overline{2_2,1_4}]$}
[
{$[\overline{2_2,1_4}]$}
[$\ldots$]
[$\ldots$]]
[
{$[\overline{2_2,1_2,2_2,1_4}]$}
[$\ldots$]
[$\ldots$]]
]
[
{$[\overline{2_4,1_2}]$}
[
{$[\overline{2_4,1_2,2_2,1_2}]$}
[$\ldots$]
[$\ldots$]] 
[
{$[\overline{2_6,1_2}]$}
[$\ldots$]
[$\ldots$]]]]]
[] [] [] [] [] []  [{$[\overline{2_2}]$},name=p3] tikz={\draw (p2.north)--(p3.south);}
]
\end{forest} \caption{The Markov tree.}\label{fig:Markov_tree}
\end{figure}

We denote by $\M$ the collection of all Markov irrationalities in this tree, which all have purely periodic continued fraction expansions. Our next theorem is the following. 

\begin{theorem}\label{thm:Lambda_f(M)}
 For every $w\in \M$, $\Lambda_f(w)\in [\Lambda_f(\psi),\Lambda_f(\phi)]$.   
\end{theorem}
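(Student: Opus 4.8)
The plan is to split the statement into the two bounds $\Lambda_f(w)\le \Lambda_f(\phi)$ and $\Lambda_f(w)\ge \Lambda_f(\psi)$. The first requires nothing new: Theorem~\ref{thm:values_of_Lambda_f} already gives $\Lambda_f(x)\le \Lambda_f(\phi)$ for every $x\in\P^1(\R)$, hence in particular for every $w\in\M$. So the entire content of the theorem is the lower bound $\Lambda_f(w)\ge \Lambda_f(\psi)$, and I would prove it by induction along the Markov tree of Figure~\ref{fig:Markov_tree}. Every $w\in\M$ has a purely periodic continued fraction expansion of even period length (after doubling if needed), so Theorem~\ref{thm:Lambda_f_on_quad_irrationals} applies and yields $\Lambda_f(w)=\Re(I_f(B_w))/\sigma_w$, where $B_w=T^{b_1}V^{b_2}\cdots T^{b_{\ell-1}}V^{b_\ell}$ is the matrix attached to the period $(b_1,\dots,b_\ell)$ and $\sigma_w=b_1+\dots+b_\ell$ is its word length. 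If $w=w_2\odot w_1$ is a node with $\odot$-parents $w_1,w_2$, then the period of $w$ is the concatenation of the periods of $w_2$ and $w_1$, so $B_w=B_{w_2}B_{w_1}$ and $\sigma_w=\sigma_{w_1}+\sigma_{w_2}$, whence
\[
\Lambda_f(w)=\frac{\Re\bigl(I_f(B_{w_2}B_{w_1})\bigr)}{\sigma_{w_1}+\sigma_{w_2}}.
\]

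The key step is a \emph{mediant inequality}: $\Lambda_f(w_2\odot w_1)\ge \min\bigl(\Lambda_f(w_1),\Lambda_f(w_2)\bigr)$, equivalently
\[
\Re\bigl(I_f(B_{w_2}B_{w_1})\bigr)\ \ge\ \min\!\left(\frac{\Re(I_f(B_{w_1}))}{\sigma_{w_1}},\,\frac{\Re(I_f(B_{w_2}))}{\sigma_{w_2}}\right)(\sigma_{w_1}+\sigma_{w_2}).
\]
Granting it, the induction runs as follows. At the top of the tree sit the roots $\phi$ and $\psi$, and $\Lambda_f(\psi)\le \Lambda_f(\phi)$ by Theorem~\ref{thm:values_of_Lambda_f}; in the Markov tree both $\odot$-parents of a node occur at strictly smaller depth (one is the tree-parent, the other a higher ancestor or a root), so by the inductive hypothesis each of $\Lambda_f(w_1),\Lambda_f(w_2)$ is $\ge \Lambda_f(\psi)$, and then the mediant inequality gives $\Lambda_f(w)\ge \min(\Lambda_f(w_1),\Lambda_f(w_2))\ge \Lambda_f(\psi)$. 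This closes the proof.

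The main obstacle is precisely the mediant inequality, and its difficulty is that there is no exact additivity $\Re(I_f(B_{w_2}B_{w_1}))=\Re(I_f(B_{w_1}))+\Re(I_f(B_{w_2}))$ to lean on: already for $f\equiv1$ one has $\Re(I_1(B_w))=2\log\varepsilon_w$ with $\varepsilon_w$ the larger eigenvalue of $B_w$, and $\varepsilon_w$ fails to be multiplicative under the matrix products occurring in the tree, so the concatenation in fact loses a bounded amount. I would attack the inequality by decomposing $\Re(I_f(B_w))$ into a sum of uniformly bounded per-syllable contributions, obtained by cutting the closed geodesic attached to $B_w$ along the walls of the standard fundamental domain — the same decomposition that underlies the limsup in Definition~\ref{def:Lambda_f} — and by writing the period of any $w\in\M$ as a Christoffel word in the two super-syllables $P=TV$ (partial quotients $1,1$) and $Q=T^2V^2$ (partial quotients $2,2$), so that $\phi$ corresponds to $P^{\infty}$ and $\psi$ to $Q^{\infty}$. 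The task then becomes to control the bounded discrepancy between $\Re(I_f(B_{w_2}B_{w_1}))$ and $\Re(I_f(B_{w_1}))+\Re(I_f(B_{w_2}))$ well enough that the resulting quotient never drops below $\Lambda_f(\psi)$; I expect the hypothesis that $f$ is real and non-negative on the arc $\{e^{it}:\pi/3\le t\le 2\pi/3\}$ to be exactly what fixes the sign of the relevant correction terms, just as it is what makes $\phi$ and $\psi$ the extremizers in Theorems~\ref{thm:Lambda_f_on_quad_irrationals} and~\ref{thm:values_of_Lambda_f}. Alternatively, if the stronger structural result is established first, Theorem~\ref{thm:Lambda_f(M)} follows immediately, since a function that is monotone (and convex) in the Farey-fraction parametrization of $\M$ attains its extreme values $\Lambda_f(\psi)$ and $\Lambda_f(\phi)$ at the two endpoints — but proving that monotonicity and convexity is at least as hard as the mediant inequality above.
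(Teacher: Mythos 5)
Your reduction is correct as far as it goes: the upper bound is indeed immediate from Theorem~\ref{thm:values_of_Lambda_f}, Theorem~\ref{thm:Lambda_f_on_quad_irrationals} gives the formula $\Lambda_f(w)=\Re(I_f(B_w))/s(w)$, and the concatenation $B_{w_2\odot w_1}=B_{w_2}B_{w_1}$, $s(w_2\odot w_1)=s(w_1)+s(w_2)$ is the right structural identity. The mediant inequality you isolate is also a true statement (it follows a posteriori from the monotonicity proved in Theorem~\ref{thm:Lambda_f_convexity}), and the induction scheme would close if you had it. The problem is that you do not prove it, and the route you sketch for proving it will not work as stated.

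The sketch proposes to control the \emph{bounded} discrepancy between $\Re(I_f(B_{w_2}B_{w_1}))$ and $\Re(I_f(B_{w_1}))+\Re(I_f(B_{w_2}))$. The almost-additivity of Theorem~\ref{thm:almost_additivity_for_badly_approximable} does give an $O(1)$ error of this type, and this suffices for the asymptotic statements in Theorems~\ref{thm:Lambda_f_on_quad_irrationals} and~\ref{thm:values_of_Lambda_f} because there one divides by $n\to\infty$. But for the mediant inequality at a fixed node $w=w_2\odot w_1$ the error is divided only by the \emph{finite} quantity $s(w_1)+s(w_2)$, and the margin you would need to absorb it, namely $\min(\Lambda_f(w_1),\Lambda_f(w_2))-\Lambda_f(\psi)$, tends to $0$ as you descend the Markov tree (the $\Lambda_f$-values of consecutive nodes accumulate toward $\Lambda_f(\psi)$). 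A uniformly bounded error with an unfavorable sign would therefore eventually overwhelm a shrinking margin, so you would need a sign-definite and quantitatively sharp version of the discrepancy estimate, which is precisely the hard part and is not supplied.

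The paper avoids this difficulty entirely: it never induces along the tree and never compares $\Re(I_f(B_{w_2}B_{w_1}))$ to $\Re(I_f(B_{w_1}))+\Re(I_f(B_{w_2}))$. Instead it proves the \emph{pointwise} inequality $S(w,t)\ge\frac{s(w)}{4}S(\psi,t)$ and likewise for $S(w^{\op},t)$ (Proposition~\ref{prop:S(w,t)_geq_silver_ratio}), so that the lower bound $\Lambda_f(w)\ge\Lambda_f(\psi)$ falls out of Proposition~\ref{prop:formula_for_Re(I_f(A))} directly, with no error term to absorb. The argument uses that every cyclic permutation of a Markov irrational with partial quotients in $\{1,2\}$ lies in $[\phi,\psi)$, groups the terms of $S(w,t)$ according to whether the preceding partial quotient is $1$ or $2$ and whether the index is odd or even, and then applies the monotonicity of $x\mapsto F(x,t)+F(\Phi(x),t)+F(\Psi(x),t)$ on $[\phi,\infty)$ and of $x\mapsto F(x,t)+F(\Phi(x),t)$ on $[4/3,\infty)$ (Lemmas~\ref{lem:key_ineq_FT} and~\ref{lem:key_ineq_FT2}). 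This exploits the arithmetic structure of Markov periods (partial quotients come in equal pairs) in a way that a generic almost-additivity argument cannot. So there is a genuine gap: the mediant inequality is the real content of the theorem, and a different, non-inductive argument is needed to establish it.
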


In order to state our next result, we  introduce a natural parametrization of Markov irrationalities by Farey fractions in $[0,1/2]$: Let $\Fartwo$ denote the subtree of $\Far$ containing only the Farey fractions in $[0,1/2]$. Then, there is a unique monotonic increasing\footnote{The fact that bijection is increasing is also a consequence of Lemma \ref{lem: monotonicity Markov tree and opposites} in Section \ref{sec: convexity criteria}.} bijection $w:[0,1/2]\cap \mathbb{Q}\to \M$ with $w\left(\frac{0}{1}\right)= [\overline{1_2}]$, $w\left(\frac{1}{2}\right)= [\overline{2_2}]$, and satisfying
$$w\left(\frac{a}{b}\oplus \frac{c}{d}\right)=w\left(\frac{c}{d}\right)\odot w\left(\frac{a}{b}\right)$$
for all Farey fractions $\frac{a}{b}<\frac{c}{d}$ in $\Fartwo$ that are neighbors. We call this the Farey parametrization of $\M$. It is easy to see that this parametrization satisfies the following useful property\footnote{This property is the reason why we have chosen $\Fartwo$, among all possible Farey trees, to parametrize Markov irrationalities.}: if $p/q$ is a Farey fraction in $[0,1/2]$ and $w(p/q)=[\overline{a_1;\ldots,a_{\ell}}]$ with $\ell\geq 2$ even and minimal, then $a_1+\ldots+a_\ell=2q$. In particular, by Theorem \ref{thm:Lambda_f_on_quad_irrationals} we obtain
\[\Lambda_f\left(w\left(p/q\right)\right)=\frac{\Re(I_f(T^{a_1}\cdots V^{a_\ell}))}{2q}.\]
With this, we can now define the new function $\tilde \Lambda_f:[0,1/2]\cap \mathbb{Q}\to \R^+$ as
\[\tilde \Lambda_f\left(p/q\right):=\Lambda_f\left(w\left(p/q\right)\right).\]
The next theorem shows that $\tilde \Lambda_f$ has surprisingly good topological properties.  

\begin{theorem}\label{thm:Lambda_f_convexity}
    The map $\tilde \Lambda_f:[0,1/2]\cap \mathbb{Q}\to \R^+$ extends to a continuous, decreasing and convex function $\tilde \Lambda_f:[0,1/2]\to \R^+$.
\end{theorem}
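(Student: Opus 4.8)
Write \(g(p/q):=\Re(I_f(A_{p/q}))\), so that \(\tilde\Lambda_f(p/q)=g(p/q)/(2q)\) by the property recorded just before Theorem~\ref{thm:Lambda_f_convexity}. The plan is to reduce the statement to a comparison of the cycle integrals \(\Re(I_f(\cdot))\) at every Farey mediant and then feed these into the combinatorial convexity criterion of Section~\ref{sec: convexity criteria}. Let \(\frac ab<\frac cd\) be neighbours in \(\Fartwo\) (so \(bc-ad=1\)) with mediant \(m=\frac{a+c}{b+d}\). Since \(w(m)=w(\frac cd)\odot w(\frac ab)\) and both periods have even length, concatenating the period words gives \(A_m=A_{c/d}\,A_{a/b}\), while the denominators add. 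Using \(m-\frac ab=\frac1{b(b+d)}\), \(\frac cd-m=\frac1{d(b+d)}\) and \(\frac cd-\frac ab=\frac1{bd}\), a short computation shows that the value at \(m\) of the chord joining \((\tfrac ab,\tilde\Lambda_f(\tfrac ab))\) and \((\tfrac cd,\tilde\Lambda_f(\tfrac cd))\) equals exactly \(\frac{g(a/b)+g(c/d)}{2(b+d)}\). Hence
\[
\tilde\Lambda_f(m)\le(\text{the chord at }m)\iff \Re(I_f(A_{c/d}A_{a/b}))\le \Re(I_f(A_{c/d}))+\Re(I_f(A_{a/b})),
\]
the convexity condition at \(m\); and \(\tilde\Lambda_f(\frac cd)\le\tilde\Lambda_f(m)\le\tilde\Lambda_f(\frac ab)\) is equivalent to \((b+d)\Re(I_f(A_{a/b}))\ge b\,\Re(I_f(A_{c/d}A_{a/b}))\) together with \(d\,\Re(I_f(A_{c/d}A_{a/b}))\ge(b+d)\Re(I_f(A_{c/d}))\), the monotonicity conditions at \(m\). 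Everything thus comes down to proving these inequalities among cycle integrals.

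Granting them, the extension is produced as follows. Let \(\Lambda^{(n)}\colon[0,1/2]\to\R\) be the piecewise linear interpolant of \(\tilde\Lambda_f\) through the fractions of level \(n\) of \(\Fartwo\); the endpoint values are the fixed numbers \(\Lambda_f(\phi)\ge\Lambda_f(\psi)\ge 0\) by Theorem~\ref{thm:Lambda_f(M)}. The convexity condition at mediants says precisely that each newly inserted vertex lies on or below \(\Lambda^{(n)}\), so \(\Lambda^{(n+1)}\le\Lambda^{(n)}\) pointwise; together with the monotonicity conditions this is exactly the input of the convexity criterion of Section~\ref{sec: convexity criteria} (Lemma~\ref{lem: monotonicity Markov tree and opposites}), which yields that the \(\Lambda^{(n)}\) decrease to a non-increasing convex limit \(F\) with \(0\le F\le\Lambda_f(\phi)\) agreeing with \(\tilde\Lambda_f\) on \(\mathbb Q\cap[0,1/2]\). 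A finite convex function is continuous on the interior; continuity at the two endpoints follows by squeezing, since \(w(1/n)=[\overline{2_2,1_{2(n-2)}}]\) and \(w\!\left(\tfrac{n-1}{2n-1}\right)=[\overline{2_{2(n-1)},1_2}]\), so that \(\tilde\Lambda_f(1/n)\to\Lambda_f(\phi)=F(0)\) and \(\tilde\Lambda_f\!\left(\tfrac{n-1}{2n-1}\right)\to\Lambda_f(\psi)=F(1/2)\) by the estimates of the previous sections. Then \(F\) is the desired extension, unique by density of \(\mathbb Q\cap[0,1/2]\).

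The hard part — and the step I expect to be the main obstacle — is proving the cycle-integral inequalities, above all the subadditivity \(\Re(I_f(A_{c/d}A_{a/b}))\le\Re(I_f(A_{c/d}))+\Re(I_f(A_{a/b}))\). The starting point is the identity \(\frac{f(\tau)\sqrt D}{Q(\tau,1)}\,d\tau=f(\tau)\,d\log\frac{\tau-w}{\tau-\tilde w}\), so that \(I_f(A)=\int_{\tau_0}^{A\tau_0}f\,d\log\frac{\tau-w_A}{\tau-\tilde w_A}\) for any \(\tau_0\), and \(\Re(I_f(A))=\int_{C_A}f\,ds\) depends only on the \(\SL\)-conjugacy class of \(A\) (and is unchanged by \(A\mapsto A^{-1}\)). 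For \(M=A_{c/d}A_{a/b}\) one moves the contour onto the axis of \(M\) and compares it, triangle by triangle of the Farey tessellation that it crosses, with the concatenation of one fundamental arc of the axis of \(A_{c/d}\) and one of the axis of \(A_{a/b}\): because \(w(m)=w(\tfrac cd)\odot w(\tfrac ab)\) involves no digit cancellation, the cutting sequences match and the discrepancy is a bounded ``corner'' near the join. The hypothesis that \(f\) is real and non-negative on the arc \(\{e^{it}:\pi/3\le t\le 2\pi/3\}\) is exactly what forces this corner term to have the correct sign. The monotonicity inequalities follow from the same comparison together with the reversal symmetry \(\Re(I_f(A))=\Re(I_f(A^{\mathrm T}))\) (that is, \(\tilde\Lambda_f\) is unchanged when a period is reversed, since \(T^{\mathrm T}=V\)) and the mirror symmetry of \(\Fartwo\) and of the Markov tree, which is again the content of Lemma~\ref{lem: monotonicity Markov tree and opposites}. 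The delicacy of this comparison is real: already for \(f\equiv 1\) the inequalities hold only with very small margins, so no crude trace or geodesic-length bound suffices and the corner term must be controlled precisely; in the case \(f\equiv 1\) the argument recovers the convexity statement of Spalding and Veselov.
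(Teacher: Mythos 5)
Your proof proposal shares the high-level skeleton with the paper (reduce to inequalities among cycle integrals at the level of the Markov/Farey tree, then pass to the limit of piecewise linear interpolants and handle the endpoints), but the central combinatorial reduction has a genuine gap and the key analytic inequality is not actually proved.

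The gap is this: you reduce the convexity of $\tilde\Lambda_f$ to the single-family condition that the value at each Farey mediant $m=\frac ab\oplus\frac cd$ lies on or below the chord joining its two \emph{direct} parents, i.e.\ the subadditivity
$\Re(I_f(A_{c/d}A_{a/b}))\le \Re(I_f(A_{c/d}))+\Re(I_f(A_{a/b}))$.
This is the $k=0$ case of the paper's Corollary~\ref{cor:equivalent_formulations_convexity_for_Lambda_f} only. It is \emph{not} sufficient for convexity of the piecewise linear limits $\Lambda^{(n)}$. The passage in your proposal (``each newly inserted vertex lies on or below $\Lambda^{(n)}$, so $\Lambda^{(n+1)}\le\Lambda^{(n)}$ pointwise; together with the monotonicity conditions this is exactly the input of the convexity criterion'') is exactly where the argument breaks: pointwise decrease of the interpolants does not yield convexity of the limit. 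If you insert a new vertex $m_1$ between $a$ and $b$ strictly below the chord, the slope on $(m_1,b)$ increases relative to $(a,b)$; simultaneously inserting $m_2$ between $b$ and $c$ below that chord decreases the slope on $(b,m_2)$ relative to $(b,c)$; nothing in the $k=0$ data controls whether slope$(m_1,b)\le$ slope$(b,m_2)$, so a corner at $b$ can fail convexity. The paper avoids this by checking three-point convexity at \emph{every} consecutive triple $\frac{p_1}{q_1}<\frac{p_2}{q_2}<\frac{p_3}{q_3}$ at each level $\Fartwo^{(n)}$ (Lemma~\ref{lem:3-convexity_implies_convexity}), and such a triple is governed by the parameter $k\ge0$ measuring how much earlier the middle fraction entered the tree (Proposition~\ref{prop: convexity reduction}). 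That translates to the full family of triangle inequalities
$(2k+1)S(w_2,t)< S(w_1,t)+S(w_3,t)$ and $(2k+1)S(w_2^{\op},t)< S(w_1^{\op},t)+S(w_3^{\op},t)$
for all $k\ge0$ (Theorem~\ref{thm:triangluar_ineq_for_consecutive}), of which your subadditivity is only the boundary case. Also note that Lemma~\ref{lem: monotonicity Markov tree and opposites}, which you cite as ``the convexity criterion of Section~\ref{sec: convexity criteria}'', is an ordering lemma about conjunctions of Markov irrationalities; the actual convexity criterion is the combination of Lemma~\ref{lem:3-convexity_implies_convexity}, Proposition~\ref{prop: convexity reduction} and Corollary~\ref{cor:equivalent_formulations_convexity_for_Lambda_f}.

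Beyond the reduction, the analytic heart is also left open. Your sketch (move the contour onto the axis of $M=A_{c/d}A_{a/b}$, compare cutting sequences of the Farey tessellation, and argue that the positivity hypothesis on $f$ forces the ``corner'' term near the join to have the right sign) is a plausible heuristic, but it is not a proof, and you yourself flag that the margins are thin so crude bounds are useless. The paper's route is entirely different and concrete: it uses the formula $\Re(I_f(A))=\int_{\pi/3}^{2\pi/3}f(e^{it})\sin(t)\big(S(w,t)+S(w^{\op},t)\big)\,dt$ from Proposition~\ref{prop:formula_for_Re(I_f(A))} together with the precise monotonicity Lemmas~\ref{lem:key_ineq_FT} and \ref{lem:key_ineq_FT2} for $F$, $F+F\circ\Phi$, $F+F\circ\Phi+F\circ\Psi$, and $F\circ\Phi\circ\Psi$ on suitable intervals, matched against a careful splitting of $(2k+1)S(w_2,t)$ into two consecutive blocks (one compared to $S(w_1,t)$, the other to $S(w_3,t)$) using back/front matching of continued-fraction periods (Lemmas~\ref{lem:back_matching_formula}, \ref{lem: comparison of cyclic permutations}, \ref{lem:triang_ineq_on_pairs}). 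Finally, for monotonicity, the paper does not need your separate mediant inequalities: a convex function on $[0,1/2]$ with a global maximum at $0$ and a global minimum at $1/2$ (Theorem~\ref{thm:Lambda_f(M)}) is automatically non-increasing, and the strict minimum at $\psi$ upgrades this to strictly decreasing (Lemmas~\ref{lem:conv_and_global_min_implies_decreasing}, \ref{lem:extension_of_convex}). Your endpoint-continuity argument is in the right spirit and matches the paper's.
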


\begin{remark}
    Note that $\Lambda_f$ and $\tilde \Lambda_f$ do not coincide in $[0,1/2]$. For instance, 
    $\Lambda_f(x)=0$ for every rational number $x\in [0,1/2]$ (by Theorem \ref{thm:first_properties_Lambda_f}) but $\tilde\Lambda_f(x)=\Lambda_f(w(x))\geq \Lambda_f(\psi)>0$. 
\end{remark}

For $f=j$ two plots of $\tilde\Lambda_j$ are shown in Figure \ref{fig: plots Lambdaj}. The values are between $\Lambda_j([\overline{2;2}])=625.68084367\ldots$ and $\Lambda_j([\overline{1;1}])=679.78370521\ldots$.

\begin{figure}[h!]
    \centering
    \subfloat{\includegraphics[width=0.45\linewidth]{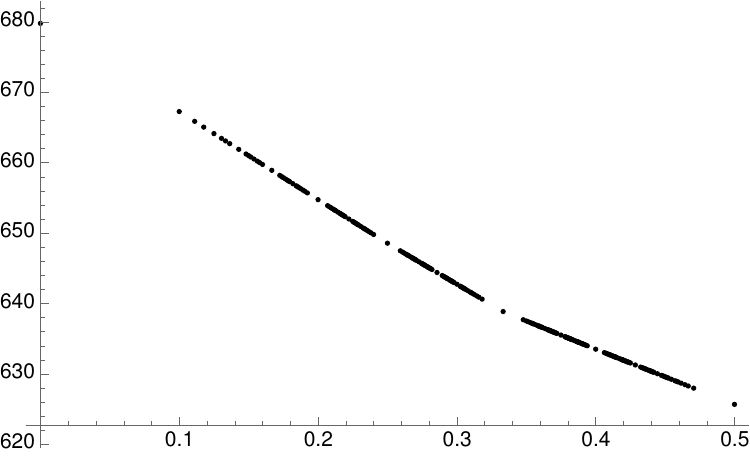}} \quad \quad 
    \subfloat{\includegraphics[width=0.45\textwidth]{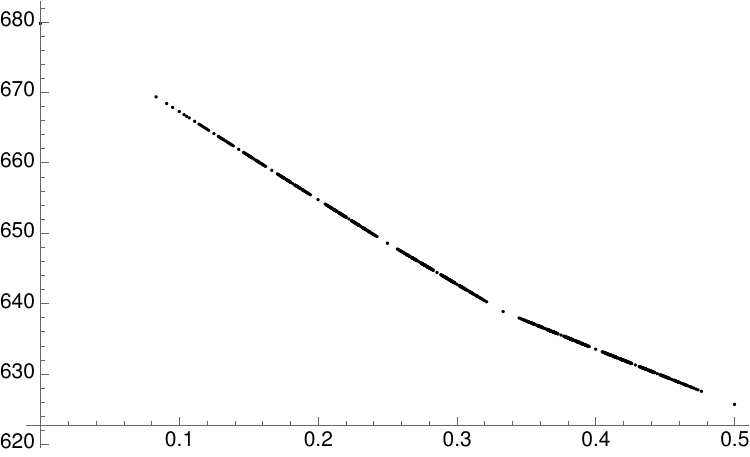}} 
\caption{Values of $\tilde \Lambda_j$ on the first 257 points in $\Fartwo$ on the left plot, and on the first 1025 points on the right plot.} \label{fig: plots Lambdaj}
\end{figure}

In the case $f=1$, our results specialize to the results of Spalding and Veselov on $\Lambda_1=2 \Lambda$. For the corresponding result  in the case of Theorem \ref{thm:values_of_Lambda_f}, they observe that the largest eigenvalue of a hyperbolic matrix can be compared to a matrix norm and   exploit the multiplicativity of such a norm, whereas in the case of Theorems \ref{thm:Lambda_f(M)} and \ref{thm:Lambda_f_convexity} they use that the Markov geodesics correspond to shortest simple geodesics on the \emph{modular torus} $\mathbb T=G\backslash \H$ with $G$ the commutator subgroup of $\SL$. They then make use of  the properties of the \emph{Federer--Gromov stable norm} on the first homology group of $\mathbb T$ as well as properties of a function due to Fock \cite{Foc98,FG07}. The connection of Fock's function to the stable norm was explained and used by Sorrentino and Veselov in \cite{SoV19} to study also the differentiability properties of Fock's function. 

\begin{remark}
    The relation between Markov numbers and the lengths of closed simple geodesics on $\mathbb{T}$ goes back to  Cohn \cite{Coh55} and Gorshkov \cite{Gor77}, and has been exploited by many people, notably by Series \cite{Ser85a,Ser85b}, McShane \cite{McS91}, McShane and Rivin \cite{MR95a,MR95b}, among others; see, e.g., \cite[p.~111]{Aig2013} and references therein.
\end{remark}

In our case of general modular function $f$,  since we do not have an obvious  natural  norm associated to $\Lambda_f$, we instead work directly with the definitions of cycle integrals and exploit the action of $\GL$  on the continued fraction expansion of quadratic irrationalities.

Finally, we present an immediate application of Theorem \ref{thm:Lambda_f_convexity} to Kaneko's  $\val$ function. Since
\[\Re(\val(w))=\frac{\Lambda_j(w)}{\Lambda_1(w)} \quad \text{for all quadratic irrationals }w,\]
if we use the Farey parametrization of the Markov irrationalities, as above, to define
\[\widetilde \val (p/q):=\Re(\val (w(p/q))) \quad \text{for }p/q\in [0,1/2]\cap \mathbb{Q},\]
then we obtain the following result.

\begin{coro}\label{coro:extended-val}
    $\widetilde{\val}$ extends to a continuous function $\widetilde \val:[0,1/2]\to \R^+$.
\end{coro}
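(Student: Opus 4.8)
The plan is to exhibit $\widetilde{\val}$ as the quotient of two functions whose topological properties are already established, and to control the denominator away from zero. Concretely, recall that $\Re(\val(w)) = \Lambda_j(w)/\Lambda_1(w)$ for every quadratic irrationality $w$, and that $\Lambda_1 = 2\Lambda$ is (twice) the classical Spalding--Veselov Lyapunov exponent, corresponding to the choice $f \equiv 1$, which indeed satisfies the hypotheses of Definition \ref{def:Lambda_f}. Hence for $p/q \in [0,1/2]\cap\mathbb{Q}$ we have
\[
\widetilde{\val}(p/q) = \frac{\Lambda_j(w(p/q))}{\Lambda_1(w(p/q))} = \frac{\tilde\Lambda_j(p/q)}{\tilde\Lambda_1(p/q)}.
\]
By Theorem \ref{thm:Lambda_f_convexity} applied with $f = j$ and with $f \equiv 1$, both $\tilde\Lambda_j$ and $\tilde\Lambda_1$ extend to continuous (indeed decreasing and convex) functions on all of $[0,1/2]$. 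The corollary will follow once we check that the extended denominator $\tilde\Lambda_1:[0,1/2]\to\R^+$ has no zeros, so that the quotient of the two continuous extensions is itself continuous on $[0,1/2]$ and agrees with $\widetilde{\val}$ on the dense subset $[0,1/2]\cap\mathbb{Q}$.

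First I would establish the positivity of $\tilde\Lambda_1$ on the closed interval. On the rationals this is immediate: for $p/q \in [0,1/2]\cap\mathbb{Q}$ the value $\tilde\Lambda_1(p/q) = \Lambda_1(w(p/q))$ is the Lyapunov exponent of a Markov irrationality, and by Theorem \ref{thm:Lambda_f(M)} (with $f\equiv 1$) we have $\tilde\Lambda_1(p/q) \in [\Lambda_1(\psi), \Lambda_1(\phi)]$ with $\Lambda_1(\psi) > 0$; equivalently this is the content of Theorem \ref{thm:Lambda_f_on_quad_irrationals} which gives $\Lambda_f(x) > 0$ for any quadratic irrationality. Since the continuous extension $\tilde\Lambda_1$ is a limit of these values and is itself convex and decreasing, its minimum on $[0,1/2]$ is attained at the right endpoint $1/2$; and one checks $\tilde\Lambda_1(1/2) \geq \Lambda_1(\psi) > 0$ either by continuity from the dense set of rationals accumulating at $1/2$ (all of whose $\tilde\Lambda_1$-values are $\geq \Lambda_1(\psi)$) or directly since $w(1/2) = [\overline{2_2}] = \psi$. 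Therefore $\tilde\Lambda_1(x) \geq \Lambda_1(\psi) > 0$ for all $x \in [0,1/2]$.

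With a uniform positive lower bound on the denominator, define $\widetilde{\val}:[0,1/2]\to\R^+$ by $\widetilde{\val}(x) := \tilde\Lambda_j(x)/\tilde\Lambda_1(x)$. This is a quotient of continuous functions with non-vanishing denominator, hence continuous on $[0,1/2]$; it is positive since $\tilde\Lambda_j \geq \Lambda_j(\psi) > 0$ by the same argument applied to $f = j$ (using Theorem \ref{thm:Lambda_f(M)} and Theorem \ref{thm:Lambda_f_on_quad_irrationals}); and it restricts on $[0,1/2]\cap\mathbb{Q}$ to $\tilde\Lambda_j(p/q)/\tilde\Lambda_1(p/q) = \widetilde{\val}(p/q)$, so it is the desired continuous extension. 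I do not expect any serious obstacle here: the only point requiring care is the non-vanishing of $\tilde\Lambda_1$ at the boundary, and this is handled cleanly by the monotonicity and convexity in Theorem \ref{thm:Lambda_f_convexity} together with the positivity from Theorems \ref{thm:Lambda_f_on_quad_irrationals} and \ref{thm:Lambda_f(M)}. (Note that, unlike $\tilde\Lambda_f$, the extended $\widetilde{\val}$ need not be monotone or convex, since a quotient of such functions generally is not — which is consistent with the plots in Figure \ref{fig: plots Lambdaj} and with Kaneko's observations; we claim only continuity.)
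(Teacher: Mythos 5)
Your proposal is correct and follows exactly the route the paper intends: the corollary is stated as an ``immediate application'' of Theorem~\ref{thm:Lambda_f_convexity} via the quotient formula $\Re(\val(w))=\Lambda_j(w)/\Lambda_1(w)$, and your argument fills in the details the authors left implicit (continuity of $\tilde\Lambda_j$ and $\tilde\Lambda_1$, plus the uniform lower bound $\tilde\Lambda_1\geq \Lambda_1(\psi)>0$ coming from Theorem~\ref{thm:Lambda_f(M)} and the monotonicity of the extension). Nothing to add.
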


\begin{remark}
\begin{enumerate}
    \item 
Kaneko also conjectured that  $\Re(\val)$ has an ``interlacing property'' on the Markov tree. This is equivalent to the assertion that   $\widetilde{\val} : [0,1/2] \to \R$ is a monotone increasing function. This would  imply in particular Kaneko's conjecture that for  any Markov quadratic $w$, $\Re(\val(w))\in [\val(\phi), \val(\psi)]$. Unfortunately the interlacing conjecture of Kaneko does not follow   from  the monotonicity of $\Lambda_j$ and $\Lambda_1$, since both functions are monotone decreasing. However we do get that
       \[650.1095\ldots=\frac{\Lambda_j(\psi)}{\Lambda_1(\phi)}\leq \Re(\val(w))\leq \frac{\Lambda_j(\phi)}{\Lambda_1(\psi)}=771.2776\ldots,\text{ for all }w\in \M.\]
       Compare with $\val(\phi)=706.3248\ldots$ and $\val(\psi)=709.8928\ldots$. 
       We hope to come back to this problem in the near future.
       \item Corollary \ref{coro:extended-val} gives a new proof of \cite[Theorem 1.1]{BI19} for $\Re(\val(w))$. 
    \item  Murakami \cite{Mur21} also studied limit values of cycle integrals normalized by hyperbolic length, in particular of Kaneko's $\val$ function, along certain sequences of quadratics irrationalities. Murakami's work however goes in a different direction.
       \end{enumerate}
\end{remark} 
\subsection{Outline of the paper}
The rest of the paper is organized as follows. In the next section  we introduce the notation and give the  properties of the cycle integrals that we will need for the proofs of our main theorems. In Section \ref{sec:first_properties_of_Lambda}  we prove Theorems \ref{thm:first_properties_Lambda_f} and \ref{thm:Lambda_f_on_quad_irrationals}. In  Section \ref{sec:Values attained by Lambda_f}  we prove Theorem\ref{thm:values_of_Lambda_f}. In the last two sections we restrict to the Markov irrationalities and prove Theorems \ref{thm:Lambda_f(M)} and Theorem\ref{thm:Lambda_f_convexity}.

\section*{Acknowledgments}
The authors thank Prof.~Veselov for comments and references on the connection between Fock's function and Federer--Gromov stable norm.

\section{Cycle integrals}\label{sec:cycle_integrals}

As a convention, we extend the definition of $I_f(A)$ given in \eqref{eq:def_cycle_integral_intro} for  a non-hyperbolic $A\in \SL$    by putting $I_f(A):=0$ for such  $A$.

\subsection{Elementary properties}

We start with the following remark which will be used several times. 

\begin{remark}\label{rmk:mod_functions_with_real_fourier_coefficients}
   Given a weakly holomophic modular function $f:\H\to \C$ for $\SL$, the following three conditions are easily seen to be equivalent:
   \begin{enumerate}
       \item[$(i)$] $f$ has real Fourier coefficients.
       \item[$(ii)$] $f(-\overline{\tau})=\overline{f(\tau)}$ for all $\tau\in \H$.
       \item[$(iii)$] $f(e^{it})\in \R$ for all $t\in [\pi/3,2\pi/3]$.
   \end{enumerate}
\end{remark}

 The next lemma collects well known properties of the cycle integrals of modular functions. We skip its proof since it is similar to the one given in \cite{Kan09} for the $j$ function.
 
\begin{lemma}\label{lem:elementary_prop_cycle_integrals}
    Let $f$ be a weakly holomophic modular function for $\SL$ and $A\in \SL$. Then, the following properties hold:
    \begin{enumerate}
        \item[$(i)$] $I_f(-A)=I_f(A^{-1})=I_f(A)$.
        \item[$(ii)$]  $I_f(A^n)=|n|I_f(A)$ for all $n\in \Z$.
        \item[$(iii)$]  $I_f(MAM^{-1})=I_f(A)$ for all $M\in \SL$.
        \item[$(iv)$]  If $f$ has real Fourier coefficients, then $I_f(MAM^{-1})= \overline{I_f(A)}$ for all $M\in \GL$ with $\det(M)=-1$.
    \end{enumerate}
\end{lemma}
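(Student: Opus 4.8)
The plan is to begin by rewriting the integrand in \eqref{eq:def_cycle_integral_intro} in a more transparent form. Factoring $Q(\tau,1)=a(\tau-w)(\tau-\tilde w)$, using partial fractions, and using the identity $a(w-\tilde w)=\sqrt D$ (which holds precisely because of our sign convention $\mathrm{sgn}(a)=\mathrm{sgn}(w-\tilde w)$ for $Q$), one obtains
\[
\frac{f(\tau)\sqrt D}{Q(\tau,1)}\,d\tau \;=\; f(\tau)\left(\frac{1}{\tau-w}-\frac{1}{\tau-\tilde w}\right)d\tau \;=\; f(\tau)\,d\log\frac{\tau-w}{\tau-\tilde w}\;=:\;\omega_{f,A}.
\]
This is a holomorphic $1$-form on the simply connected domain $\H$ (since $f$ is holomorphic on $\H$ and $w,\tilde w\in\R$), so its integral between two points depends only on the endpoints; moreover, since $I_f(A)$ is independent of $\tau_0$, the form $\omega_{f,A}$ is $A$-invariant ($A^*\omega_{f,A}=\omega_{f,A}$), and $\omega_{f,A}$ depends on $A$ only through the ordered pair (attracting, repelling) $=(w,\tilde w)$ of its fixed points, i.e.\ through $A$ as a Möbius transformation. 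All four assertions will be read off from how this pair and the form $\omega_{f,A}$ transform; the non-hyperbolic instances are trivial because $-A$, $A^{-1}$, $A^{n}$ ($n\neq 0$), $MAM^{-1}$ are hyperbolic exactly when $A$ is.

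For $(i)$: $A$ and $-A$ define the same Möbius transformation, so $I_f(-A)=I_f(A)$ is immediate; $A^{-1}$ interchanges the attracting and repelling fixed points, so $\omega_{f,A^{-1}}=-\omega_{f,A}$, and choosing base point $\tau_0=A\sigma_0$ gives $I_f(A^{-1})=\int_{A\sigma_0}^{\sigma_0}(-\omega_{f,A})=\int_{\sigma_0}^{A\sigma_0}\omega_{f,A}=I_f(A)$. For $(ii)$ with $n\ge 1$ and $A$ hyperbolic, $A^{n}$ has the same fixed points with the same roles as $A$, so $\omega_{f,A^{n}}=\omega_{f,A}=:\omega$, and telescoping the path $\tau_0\to A^{n}\tau_0$ through the points $A^{k}\tau_0$ together with $A$-invariance of $\omega$ yields $I_f(A^{n})=\sum_{k=0}^{n-1}\int_{A^{k}\tau_0}^{A^{k+1}\tau_0}\omega=\sum_{k=0}^{n-1}\int_{\tau_0}^{A\tau_0}\omega=nI_f(A)$; the cases $n\le -1$ and $n=0$ then follow from $(i)$ and the convention $I_f(I)=0$.

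For $(iii)$, assume $A$ hyperbolic; then $MAM^{-1}$ is hyperbolic with attracting/repelling fixed points $Mw,M\tilde w$ (conjugation by the homeomorphism $M$ of $\H$), so $\omega_{f,MAM^{-1}}=f(\tau)\,d\log\frac{\tau-Mw}{\tau-M\tilde w}$. Pulling this back along $\tau=M\sigma$ and using both that $f$ is modular and the elementary identity
\[
M\sigma-Mw=\frac{\det(M)\,(\sigma-w)}{(m_{21}\sigma+m_{22})(m_{21}w+m_{22})},\qquad M=\begin{pmatrix}m_{11}&m_{12}\\ m_{21}&m_{22}\end{pmatrix},
\]
(and its analogue with $\tilde w$), the constant prefactors cancel under $d\log$, so $M^*\omega_{f,MAM^{-1}}=\omega_{f,A}$ and hence $I_f(MAM^{-1})=\int_{M^{-1}\tau_0}^{AM^{-1}\tau_0}\omega_{f,A}=I_f(A)$ by base-point independence. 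For $(iv)$, write $J=\left(\begin{smallmatrix}1&0\\0&-1\end{smallmatrix}\right)$; any $M\in\GL$ with $\det M=-1$ factors as $M=M_0J$ with $M_0:=MJ\in\SL$ (as $J^2=I$), so $MAM^{-1}=M_0(JAJ)M_0^{-1}$ and by $(iii)$ it suffices to prove $I_f(JAJ)=\overline{I_f(A)}$. As a Möbius transformation $JAJ$ equals $c\circ A\circ c$, where $c(\tau)=-\bar\tau$ maps $\H$ to itself, so $JAJ$ is hyperbolic (when $A$ is) with fixed points $-w,-\tilde w$ and $\omega_{f,JAJ}=f(\tau)\,d\log\frac{\tau+w}{\tau+\tilde w}$. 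I would then write $I_f(A)=\int_0^1 f(\gamma(t))\big(\tfrac1{\gamma(t)-w}-\tfrac1{\gamma(t)-\tilde w}\big)\gamma'(t)\,dt$ for a path $\gamma$ from $i$ to $Ai$ in $\H$, take the complex conjugate, and substitute $\delta(t):=-\overline{\gamma(t)}=c(\gamma(t))$, which is a path in $\H$ from $i$ to $c(Ai)=JAJ(i)$; using $\overline{\gamma(t)}=-\delta(t)$, $\overline{\gamma'(t)}=-\delta'(t)$, that $w,\tilde w$ are real, and the identity $\overline{f(\tau)}=f(-\bar\tau)$ from Remark~\ref{rmk:mod_functions_with_real_fourier_coefficients}, a direct manipulation turns $\overline{I_f(A)}$ into $\int_\delta f(\tau)\,d\log\frac{\tau+w}{\tau+\tilde w}=I_f(JAJ)$.

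The step I expect to be the main obstacle is the sign and orientation bookkeeping: keeping track of how the ordered pair (attracting, repelling) of fixed points — hence the convention $\mathrm{sgn}(a)=\mathrm{sgn}(w-\tilde w)$ and thus the sign of $\omega_{f,A}$ — transforms under $A\mapsto A^{-1}$, $A\mapsto MAM^{-1}$ and, above all, $A\mapsto JAJ$; and, in $(iv)$, carrying out the anti-holomorphic change of variables $\tau\mapsto-\bar\tau$ in combination with the Fourier-coefficient characterization of Remark~\ref{rmk:mod_functions_with_real_fourier_coefficients} without introducing a spurious sign.
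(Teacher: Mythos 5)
Your proof is correct. The paper itself does not give a proof of this lemma, deferring to Kaneko's paper \cite{Kan09} for the analogous statement for $j$; your partial-fraction reformulation $\omega_{f,A}=f(\tau)\,d\log\frac{\tau-w}{\tau-\tilde w}$ and the observation that $\omega_{f,A}$ depends only on the ordered pair of attracting/repelling fixed points (rather than on the choice of representative $Q$) is exactly the standard mechanism behind that proof, and all four items are read off correctly: $(i)$ from the sign flip under $w\leftrightarrow\tilde w$ plus base-point independence, $(ii)$ from telescoping with $A$-invariance, $(iii)$ from the M\"obius identity with $\det M=1$, and $(iv)$ via $M=M_0J$, $M_0\in\SL$, reduction to $JAJ=c\circ A\circ c$ and the anti-holomorphic substitution $\tau\mapsto-\bar\tau$ combined with Remark~\ref{rmk:mod_functions_with_real_fourier_coefficients}. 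The only cosmetic point worth flagging: "since $I_f(A)$ is independent of $\tau_0$, the form $\omega_{f,A}$ is $A$-invariant" reverses the natural direction of inference --- $A$-invariance follows directly from $f(A\tau)=f(\tau)$ together with the fact that $A$ fixes $w$ and $\tilde w$, and base-point independence is then a consequence; but since the fact itself is true and is used correctly in the telescoping step, this does not affect the validity of the argument.
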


\begin{remark}\label{rmk:starting_with_T_ending_V}
It is well known\footnote{This follows, e.g., from \cite[(II) in p.~4]{LZ97}} that every hyperbolic matrix $A\in \SL$ is, up to a sign, conjugated in $\SL$ to a matrix in $\SLN$, hence conjugated to an element of the form $\pm T^{a_1}V^{a_2}\cdots V^{a_\ell}$ with $a_1,\ldots,a_\ell\geq 0$. Hence, if $f$ is a weakly holomorphic modular function for $\SL$ with real Fourier coefficients, then from Lemma \ref{lem:elementary_prop_cycle_integrals} and the identity
\begin{equation}\label{eq:STS=V}
    \begin{pmatrix}
    0 & 1 \\ 1 & 0
\end{pmatrix}T\begin{pmatrix}
    0 & 1 \\ 1 & 0
\end{pmatrix}^{-1}=V
\end{equation}
we deduce that, 
for the computation of $\Re(I_f(A))$ 
we can restrict ourselves to hyperbolic matrices of the form $A=T^{a_1}V^{a_2}\cdots V^{a_\ell}$ with $a_1,\ldots,a_\ell\geq 1$ and $\ell\geq 2$ even. 
\end{remark}

\subsection{Cycle of a purely periodic quadratic irrational}

Let $\ell \geq 2$ even, $a_1,\ldots,a_\ell\geq 1$ integers and $A=T^{a_1}V^{a_2}\cdots T^{a_{\ell-1}}V^{a_\ell}$. The repelling and attracting fix points of $A^{-1}$ are 
\[w=[\overline{a_1;\ldots,a_\ell}] \, \text{ and } \, \tilde{w}=-1/[\overline{a_\ell;\ldots,a_1}],\]
respectively. In particular, $\tilde{w}<0<w$, so we refer to $w$ and $\tilde{w}$ as the positive and negative fixed points of $A$, respectively.

The action of $T^{-1}$ on $[\overline{a_1;\ldots,a_\ell}]$ gives $[a_1-1;\overline{a_2,\ldots,a_\ell,a_1}]$ hence 
\(T^{-a_1}[\overline{a_1;\ldots,a_\ell}]=[0;\overline{a_2,\ldots,a_\ell,a_1}].\)
Similarly, the action of $V^{-1}$ on $[0;\overline{a_2,\ldots,a_\ell,a_1}]$ gives $[0;a_2-1,\overline{a_3,\ldots,a_\ell,a_1,a_2}]$ hence 
\(V^{-a_2}[0;\overline{a_2,\ldots,a_\ell,a_1}]=[0;0,\overline{a_3,\ldots,a_\ell,a_1,a_2}]=[\overline{a_3;\ldots,a_\ell,a_1,a_2}].\)
Acting by $T^{-1}$ and $V^{-1}$ iteratively in this way gives a sequence of positive real quadratic irrationals $(w^{(k)})_{k\geq 1}$ with
	\begin{equation}\label{eq:w_cyclic_sequence}
		w^{(1)}=w,\qquad	w^{(k+1)}=\left\{\begin{array}{ll}
			T^{-1} (w^{(k)}) &\text{if }w^{(k)}> 1 ,\\
			V^{-1} (w^{(k)}) &\text{if }0<w^{(k)}< 1.
		\end{array}\right.
	\end{equation}
This sequence is cyclic of length 
\begin{equation}\label{sn}
s(A)=s(w):= a_1+\ldots+a_\ell.
\end{equation}
We denote by $\tilde{w}^{(k)}$ the Galois conjugate of $w^{(k)}$ and define
\begin{eqnarray*}
K_{w,r,s }(t)&:=& \sum_{k=r}^{s}\dfrac{1}{e^{it}-w^{(k)}}  - \dfrac{1}{e^{it}-\tilde w^{(k)}} \quad \text{for }s\geq r\geq 1, \text{ and}\\
K_w(t)&:=&K_{w,1,s(w)}(t).
\end{eqnarray*}

\begin{remark}\label{remark:shape_of_wk_and_conjugates}
Each term $w^{(k)}$ is of the form
\begin{equation}\label{eq:w^k}
    \begin{cases}
    [j;\overline{a_{i+1},a_{i+2},\ldots,a_\ell,a_1,\ldots,a_i}] & \text{with }1\leq j\leq a_i \text{ and odd }i\in\left\{1,\ldots,\ell\right\}, \text{ or}\\
[0;j,\overline{a_{i+1},a_{i+2},\ldots,a_\ell,a_1,\ldots,a_i}] & \text{with }1\leq j\leq a_i \text{ and even }i\in\left\{1,\ldots,\ell\right\}.
\end{cases}\end{equation}
Similarly, each term $\tilde w^{(k)}$ is of the form
\begin{equation}\label{eq:tilde_w^k}
    \begin{cases}
    -[a_{i}-j;\overline{a_{i-1},\ldots,a_1,a_\ell,\ldots,a_{i}}] & \text{with }1\leq j\leq a_i \text{ and odd }i\in\left\{1,\ldots,\ell\right\}, \text{ or}\\
-[0;a_i-j,\overline{a_{i-1},\ldots,a_1,a_\ell,\ldots,a_i}] & \text{with }1\leq j\leq a_i \text{ and even }i\in\left\{1,\ldots,\ell\right\}.
\end{cases}
\end{equation} 
\end{remark}

We have the following result which is a consequence of \cite[Lemma 4.1]{BI20}.

\begin{lemma}\label{lem:indentity_cycle_int_BI}
Let $A=T^{a_1}V^{a_2}\cdots T^{a_{\ell-1}}V^{a_\ell}$ with $n\geq 2$ even and $a_1,\ldots,a_\ell\geq 1$, and $(w^{(k)})_{k\geq 0}$ the sequence given by \eqref{eq:w_cyclic_sequence}. Then:
\begin{equation}
I_f(A)=\int_{\pi/3}^{2\pi/3} f(e^{it})  K_{w}(t) ie^{it}dt.  
\end{equation}
\end{lemma}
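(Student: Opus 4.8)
The strategy is to start from the definition of the cycle integral $I_f(A) = \int_{\tau_0}^{A\tau_0} \frac{f(\tau)\sqrt{D}}{Q(\tau,1)}d\tau$ and to deform the path of integration, using the quoted \cite[Lemma 4.1]{BI20}, into a sum of integrals over the geodesic arc $\{e^{it}:\pi/3\le t\le 2\pi/3\}$. The key algebraic input is the partial fraction decomposition of the integrand: writing $Q(\tau,1)=a(\tau-w)(\tau-\tilde w)$ with $w,\tilde w$ the fixed points of $A$ and $D = a^2(w-\tilde w)^2$, one gets
\[
\frac{\sqrt{D}}{Q(\tau,1)} = \frac{1}{\tau-w} - \frac{1}{\tau - \tilde w},
\]
with the sign convention on $a$ exactly guaranteeing the plus/minus pattern. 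So $I_f(A) = \int_{\tau_0}^{A\tau_0} f(\tau)\left(\frac{1}{\tau-w}-\frac{1}{\tau-\tilde w}\right)d\tau$.

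Next I would use the $\SL$-invariance of the integrand's defining form together with $f$ being modular to break the single cycle integral into $s(A) = a_1+\cdots+a_\ell$ pieces, one for each step of the sequence $(w^{(k)})$ produced by \eqref{eq:w_cyclic_sequence}. Concretely, $A$ is conjugate to each of the matrices obtained by cyclically shifting the word $T^{a_1}V^{a_2}\cdots V^{a_\ell}$, and stripping off one generator $T^{-1}$ or $V^{-1}$ at a time moves $w^{(k)}$ to $w^{(k+1)}$. Under such a generator $g \in \{T,V\}$, the change of variables $\tau \mapsto g^{-1}\tau$ sends $\frac{d\tau}{\tau-w^{(k)}} - \frac{d\tau}{\tau-\tilde w^{(k)}}$ to the same expression with $w^{(k+1)},\tilde w^{(k+1)}$, and leaves $f$ invariant. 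Telescoping, $I_f(A)$ becomes a sum over $k=1,\dots,s(A)$ of integrals of $f(\tau)\left(\frac{1}{\tau-w^{(k)}}-\frac{1}{\tau-\tilde w^{(k)}}\right)$ over a path obtained by applying the corresponding group element to the original contour. The point of \cite[Lemma 4.1]{BI20} is that this composite contour can be chosen (by Cauchy's theorem, since $f$ is holomorphic on $\H$ and the only singularities of the integrand are the real points $w^{(k)},\tilde w^{(k)}$, which lie off $\H$) to be precisely the geodesic arc from $e^{i\pi/3}$ to $e^{2i\pi/3}$, traversed once, with the integrand now being $f(e^{it}) K_w(t)$ after summing the $s(A)$ partial-fraction terms into $K_w(t) = \sum_{k=1}^{s(w)}\left(\frac{1}{e^{it}-w^{(k)}} - \frac{1}{e^{it}-\tilde w^{(k)}}\right)$. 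Parametrizing $\tau = e^{it}$ gives $d\tau = ie^{it}dt$, yielding
\[
I_f(A) = \int_{\pi/3}^{2\pi/3} f(e^{it}) K_w(t)\, ie^{it}\, dt,
\]
which is the claim.

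The main obstacle — and the part that genuinely rests on \cite[Lemma 4.1]{BI20} rather than on soft manipulations — is justifying the contour deformation: one must check that the sum of the $s(A)$ transported arcs is homotopic in $\C \setminus \{\text{poles}\}$ to the single geodesic arc, accounting correctly for winding around the real poles $w^{(k)},\tilde w^{(k)}$, and that no boundary/residue contributions are picked up (in particular that the endpoints match up so the telescoping is exact and the arcs concatenate into a closed-up path minus the geodesic). A secondary technical point is that $f$ is only \emph{weakly} holomorphic, so it has a pole at the cusp; but since the entire deformation stays in a compact part of $\H$ bounded away from $i\infty$, this causes no trouble, and this is presumably why \cite{BI20} states the result in the form we are quoting. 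Once the contour is pinned down, the rest is the bookkeeping of partial fractions and the change of variables, which the lemma statement already absorbs into the definition of $K_w$.
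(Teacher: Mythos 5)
The paper does not supply its own proof of this lemma; it records it as ``a consequence of [BI20, Lemma~4.1],'' so there is no in-paper argument to compare against. Your sketch is a correct reconstruction of the standard unfolding argument that underlies that citation, and the structure (partial fractions, telescoping via the $\SL$-equivariance of the form $\left(\frac{1}{\tau-w}-\frac{1}{\tau-\tilde w}\right)d\tau$, collapsing each piece onto the geodesic arc by path-independence) is the right one.

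Two points are worth making explicit, both of which you gesture at under ``the endpoints match up'' but do not pin down. First, the mechanism that makes all $s(A)$ pulled-back pieces land on the \emph{same} arc is that one should take $\tau_0 = e^{2i\pi/3}$, and then use the elementary but essential fact that \emph{both} generators send this point to $e^{i\pi/3}$: $T(e^{2i\pi/3}) = e^{2i\pi/3}+1 = e^{i\pi/3}$ and $V(e^{2i\pi/3}) = e^{2i\pi/3}/(e^{2i\pi/3}+1) = e^{i\pi/3}$. With this choice, writing $A = g_1\cdots g_{s(A)}$ ($g_i\in\{T,V\}$) and pulling the $k$-th segment of $\int_{\tau_0}^{A\tau_0}$ back by $g_1\cdots g_{k-1}$, every piece becomes an integral from $e^{2i\pi/3}$ to $e^{i\pi/3}$ with the form attached to $(w^{(k)},\tilde w^{(k)})$; no homotopy or winding bookkeeping is actually needed beyond path-independence in $\H$. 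Second, there is a sign subtlety you silently pass over: the introduction labels $w$ as the repelling fixed point, under which $\frac{\sqrt{D}}{Q(\tau,1)} = \frac{1}{\tau-w}-\frac{1}{\tau-\tilde w}$ exactly as you wrote, but Section~2.2 and the definition of $K_w$ take $w=w^{(1)}=[\overline{a_1;\ldots,a_\ell}]>0$, the \emph{attracting} fixed point of $A$. With that labeling the partial fraction has the opposite sign, and this flip is compensated by the orientation of the arc (you obtain $\int_{e^{2i\pi/3}}^{e^{i\pi/3}}$, which reverses to $\int_{\pi/3}^{2\pi/3}$). As a blind sketch this cancellation is easy to miss, but it is harmless. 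Your remark about weak holomorphy posing no difficulty, because the whole deformation lives in a compact region of $\H$, is correct and worth keeping.
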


Using Lemma \ref{lem:indentity_cycle_int_BI} we now prove the following simple estimate.

\begin{lemma}\label{lem:trivial_bound_I_f(A)}
Let $A=T^{a_1}V^{a_2}\cdots T^{a_{\ell-1}}V^{a_\ell}$ with $\ell\geq 2$ even, $a_1,a_{\ell}\geq 0$ and $a_2,\ldots,a_{\ell-1}\geq 1$. Then, we have
\[I_f(A)=O(a_1+\ldots+a_\ell)\]
with an absolute implicit constant. In particular, for all $x\in (0,\infty)$ the limit superior \eqref{eq:def_lambda_f_intro} is finite.
\end{lemma}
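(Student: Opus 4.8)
The goal is to bound $I_f(A)$ linearly in $a_1 + \dots + a_\ell$, and the natural tool is Lemma \ref{lem:indentity_cycle_int_BI}, which writes $I_f(A)$ as the integral over the fixed arc $\gamma = \{e^{it} : t \in [\pi/3, 2\pi/3]\}$ of $f(e^{it}) K_w(t) i e^{it}$. Since $\gamma$ is a fixed compact arc and $f$ is weakly holomorphic (hence holomorphic on a neighborhood of $\gamma$, which avoids the cusp), the quantity $\sup_{t \in [\pi/3, 2\pi/3]} |f(e^{it})|$ is a finite absolute constant depending only on $f$. So the whole problem reduces to showing
\[
\int_{\pi/3}^{2\pi/3} |K_w(t)|\, dt = O(a_1 + \dots + a_\ell)
\]
with an absolute implied constant. (One must be slightly careful that Lemma \ref{lem:indentity_cycle_int_BI} is stated for $a_1, a_\ell \geq 1$ while here we allow $a_1, a_\ell \geq 0$; but if $a_1 = 0$ or $a_\ell = 0$ the word $T^{a_1}V^{a_2}\cdots V^{a_\ell}$ is either non-hyperbolic — in which case $I_f(A) = 0$ by convention and there is nothing to prove — or after cyclic reduction/relabeling it is conjugate to a shorter word of the admissible form with strictly smaller $a_1 + \dots + a_\ell$, so the general case follows from the case $a_1, a_\ell \geq 1$.)

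**Key step: bounding $\int |K_w(t)|\,dt$.** Recall $K_w(t) = \sum_{k=1}^{s(w)} \left( \frac{1}{e^{it} - w^{(k)}} - \frac{1}{e^{it} - \tilde w^{(k)}} \right)$, a sum of $s(w) = a_1 + \dots + a_\ell$ terms. It suffices to prove that each summand has $L^1$-norm on $[\pi/3, 2\pi/3]$ bounded by an absolute constant, and for this I would show the stronger uniform bound: there is an absolute $C > 0$ such that
\[
\left| \frac{1}{e^{it} - w^{(k)}} - \frac{1}{e^{it} - \tilde w^{(k)}} \right| = \frac{|w^{(k)} - \tilde w^{(k)}|}{|e^{it} - w^{(k)}|\,|e^{it} - \tilde w^{(k)}|} \leq C
\]
for all $t \in [\pi/3, 2\pi/3]$ and all $k$. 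Here is where Remark \ref{remark:shape_of_wk_and_conjugates} is essential: every $w^{(k)}$ is of the form $[j; \dots]$ with $j \geq 1$ (so $w^{(k)} \geq 1$) or $[0; j, \dots]$ with $j \geq 1$ (so $0 < w^{(k)} \leq 1$), hence $w^{(k)} \in (0, \infty)$ is bounded away from neither $0$ nor $\infty$ uniformly — but what matters is that $w^{(k)} > 0$, so $w^{(k)}$ stays a definite distance from the arc $\gamma$, which lives in the left half-plane-ish region $\operatorname{Re} \leq 1/2$. More precisely, for $t \in [\pi/3, 2\pi/3]$ we have $\operatorname{Re}(e^{it}) \in [-1/2, 1/2]$ and $\operatorname{Im}(e^{it}) \geq \sqrt{3}/2$. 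Meanwhile $\tilde w^{(k)} < 0$ by the sign conventions (it is minus a positive continued fraction). So $|e^{it} - w^{(k)}| \geq \operatorname{Im}(e^{it}) \geq \sqrt{3}/2$ when... no — $|e^{it} - w^{(k)}| \geq |\operatorname{Im}(e^{it})| = \sin t \geq \sqrt{3}/2$ always, since $w^{(k)}$ is real. Likewise $|e^{it} - \tilde w^{(k)}| \geq \sqrt{3}/2$. That handles the denominator from below. For the numerator $|w^{(k)} - \tilde w^{(k)}| = \sqrt{D}/|a|$ where $D, a$ are the discriminant and leading coefficient of the relevant quadratic form — and the point is that although $\sqrt{D}$ can be large, it is exactly compensated.

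**The actual mechanism.** The cleanest route: note $\frac{1}{e^{it} - w^{(k)}} - \frac{1}{e^{it} - \tilde w^{(k)}} = \frac{(\tilde w^{(k)} - w^{(k)})}{(e^{it} - w^{(k)})(e^{it} - \tilde w^{(k)})}$, and recognize $(e^{it} - w^{(k)})(e^{it} - \tilde w^{(k)}) = \frac{Q^{(k)}(e^{it}, 1)}{a^{(k)}}$ where $Q^{(k)}(x,y) = a^{(k)}x^2 + b^{(k)}xy + c^{(k)}y^2$ is the primitive integral form with roots $w^{(k)}, \tilde w^{(k)}$, so the summand equals $\frac{-\sqrt{D^{(k)}}}{Q^{(k)}(e^{it},1)} \cdot \frac{a^{(k)}}{a^{(k)}}$... i.e. exactly $\frac{\sqrt{D^{(k)}}}{Q^{(k)}(e^{it},1)}$ up to sign. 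Now $|Q^{(k)}(e^{it},1)| = |a^{(k)}|\,|e^{it} - w^{(k)}|\,|e^{it} - \tilde w^{(k)}|$. Since $w^{(k)} > 0$ and $\tilde w^{(k)} < 0$ and $\operatorname{Im}(e^{it}) = \sin t$, I claim $|e^{it} - w^{(k)}|^2 = (\cos t - w^{(k)})^2 + \sin^2 t \geq \sin^2 t + (\text{something}) \geq \sin^2 t$, and more usefully $|e^{it} - w^{(k)}|^2 \geq 1 - 2w^{(k)}\cos t + (w^{(k)})^2 \geq (w^{(k)})^2 \cdot (\text{const})$ when $w^{(k)}$ is large (since $\cos t \leq 1/2$, we get $\geq 1 - w^{(k)} + (w^{(k)})^2 \geq \frac{3}{4}(w^{(k)})^2$ for $w^{(k)} \geq 2$, say), and $\geq \text{const}$ when $w^{(k)}$ is small (it's $\geq \sin^2(\pi/3) = 3/4$). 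Symmetrically $|e^{it} - \tilde w^{(k)}|^2 \geq \frac{3}{4}\max(1, (\tilde w^{(k)})^2)$ since $\tilde w^{(k)} < 0$ and $\cos t \geq -1/2$. Therefore $|a^{(k)}|\,|e^{it}-w^{(k)}|\,|e^{it}-\tilde w^{(k)}| \gg |a^{(k)}| \cdot \max(1, |w^{(k)}|) \cdot \max(1, |\tilde w^{(k)}|) \gg \sqrt{D^{(k)}}$, because $\sqrt{D^{(k)}} = |a^{(k)}|\,|w^{(k)} - \tilde w^{(k)}| \leq |a^{(k)}|(|w^{(k)}| + |\tilde w^{(k)}|) \leq 2|a^{(k)}|\max(1,|w^{(k)}|)\max(1,|\tilde w^{(k)}|)$. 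Hence each summand is $O(1)$ uniformly in $t$ and $k$, with an absolute constant. Summing the $s(w) = a_1 + \dots + a_\ell$ terms and integrating over the fixed-length arc gives $\int_{\pi/3}^{2\pi/3}|K_w(t)|\,dt = O(a_1 + \dots + a_\ell)$, and multiplying by $\sup_\gamma |f|$ finishes the bound $I_f(A) = O(a_1 + \dots + a_\ell)$. The "in particular" follows since for $x \in (0,\infty)$ with partial quotients $a_1, a_2, \dots$, the matrix $A_n$ from \eqref{eq:hyp_matrix_seq_intro} is (conjugate to, after minor bookkeeping) a word $T^{a_1}V^{a_2}\cdots$ with exponent sum $\leq a_1 + \dots + a_m + k = n$, so $\Re(I_f(A_n)) = O(n)$ and the $\limsup$ in \eqref{eq:def_lambda_f_intro} is finite.

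**Main obstacle.** The genuinely delicate point is the uniform bound on the individual summands of $K_w(t)$: one must exploit that $\sqrt{D^{(k)}}$ in the numerator is cancelled by the leading coefficient $|a^{(k)}|$ hidden in $|Q^{(k)}(e^{it},1)|$, using crucially that $w^{(k)} > 0 > \tilde w^{(k)}$ (from Remark \ref{remark:shape_of_wk_and_conjugates}) so that the arc $\gamma$ — which has $\operatorname{Re} \in [-1/2, 1/2]$ — is quantitatively separated from both roots in a way that scales correctly with their size. Getting the constants genuinely absolute (independent of $k$, $w$, and $f$ except through $\sup_\gamma|f|$) requires care with the two regimes "$w^{(k)}$ large" versus "$w^{(k)}$ small" and likewise for $\tilde w^{(k)}$, but each is elementary.
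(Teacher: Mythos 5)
Your proof is correct, but it takes a considerably more roundabout route than the paper does, and you actually pass right by the simple argument at one point. You observe that $|e^{it}-w^{(k)}|\geq \sin t\geq \sqrt{3}/2$ and $|e^{it}-\tilde w^{(k)}|\geq \sqrt{3}/2$ for all $t\in[\pi/3,2\pi/3]$ and all real $w^{(k)},\tilde w^{(k)}$ — this is exactly the paper's inequality \eqref{trivial bound}. At that point you need only apply the triangle inequality \emph{before} combining the two reciprocals into a single fraction:
\[
\left|\frac{1}{e^{it}-w^{(k)}}-\frac{1}{e^{it}-\tilde w^{(k)}}\right|\leq \frac{1}{|e^{it}-w^{(k)}|}+\frac{1}{|e^{it}-\tilde w^{(k)}|}\leq \frac{4}{\sqrt{3}},
\]
so $|K_w(t)|\leq \frac{4}{\sqrt{3}}\,s(w)$ and the linear bound is immediate. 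Instead, you insist on keeping the two poles combined, which forces you to worry about the numerator $|w^{(k)}-\tilde w^{(k)}|=\sqrt{D^{(k)}}/|a^{(k)}|$ potentially being large, and then to compensate it via the separate bounds $|e^{it}-w^{(k)}|\gg\max(1,w^{(k)})$, $|e^{it}-\tilde w^{(k)}|\gg\max(1,|\tilde w^{(k)}|)$. That analysis is correct — the case split on $\cos t\leq 1/2$ and the elementary inequality $\max(1,a)\max(1,b)\geq (a+b)/2$ do the job — but it is superfluous for a bound that is $O(s(w))$. Your refined estimate $|e^{it}-w^{(k)}|\gg\max(1,w^{(k)})$ is really the insight underlying the function $F(x,t)=O(1/x)$ used later to prove the much sharper Proposition \ref{prop:bound_Re(I_f(A))}, so in a sense you have rediscovered the mechanism of that later result, but deploying it here is overkill. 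Your handling of the $a_1=0$ or $a_\ell=0$ case via Remark \ref{rmk:starting_with_T_ending_V} (cyclic conjugation) matches the paper, and the derivation of the finiteness of the $\limsup$ from \eqref{eq:hyp_matrix_seq_intro} is fine.
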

\begin{proof}
We can assume that $A$ is hyperbolic. Moreover, by Remark \ref{rmk:starting_with_T_ending_V} we can assume $a_1\geq 1$ and $a_{\ell}\geq 1$. Then the result follows from Lemma \ref{lem:indentity_cycle_int_BI} by applying the trivial bound
\begin{equation}\label{trivial bound}
\frac{1}{|e^{\pm it}-x|}\leq \frac{2}{\sqrt{3}} \quad \text{for all }t\in \left[\pi/3,2\pi/3\right] \text{ and } x\in \R
\end{equation}
to each term in the sum $K_w(t)$. 
This proves the lemma.
\end{proof}


\subsection{Almost invariance}

In this section we prove   that along a path in the Farey tree, the cycle integrals of the associated hyperbolic matrices are almost invariant under multiplication by matrices in $\SLN$ up to a bounded error term. This will be used later to prove that $\Lambda_f$ is invariant under the action of $\SLN$ on $(0,\infty)$. More precisely we have 
the following theorem.
\begin{theorem}\label{thm:almost_invariance_cycle_integrals}
		Let $(A_n)_{n\geq 0}$ be the sequence of matrices in $\SLN$ associated to a path in the Farey tree $\mathcal{F}_{\frac{0}{1},\frac{1}{0}}$ converging to a real number $x\in (0,\infty)$. Then, for every $M\in \SLN$ and $n\geq 0$ we have
\begin{equation*}
        I_f(MA_n)=I_f(A_n)+\left(\sup_{t\in [\pi/3,2\pi/3]}|f(e^{it})|\right)\cdot O_{x,M}\left(1\right),
  \end{equation*}
  where the term $O_{x,M}(1)$ depends only on $x$ and $M$.
\end{theorem}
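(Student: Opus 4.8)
The plan is to reduce to the case where $M$ is a single generator $T$ or $V$ and then iterate, since a general $M\in\SLN$ is a finite word in $T$ and $V$ of length depending only on $M$. So fix $M\in\{T,V\}$; I want to show $I_f(MA_n)=I_f(A_n)+(\sup_{t}|f(e^{it})|)\cdot O_x(1)$ uniformly in $n$. The key tool is Lemma \ref{lem:indentity_cycle_int_BI}, which expresses $I_f(A)$, for $A$ hyperbolic of the form $T^{a_1}V^{a_2}\cdots T^{a_{\ell-1}}V^{a_\ell}$, as $\int_{\pi/3}^{2\pi/3} f(e^{it})K_w(t)\,ie^{it}\,dt$, where $K_w(t)=\sum_{k=1}^{s(A)}\left(\frac{1}{e^{it}-w^{(k)}}-\frac{1}{e^{it}-\tilde w^{(k)}}\right)$ runs over the cyclic sequence of tails of the purely periodic quadratic irrational $w$ attached to $A$. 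By Remark \ref{rmk:starting_with_T_ending_V} (using \eqref{eq:STS=V} and Lemma \ref{lem:elementary_prop_cycle_integrals}), up to a controlled modification of finitely many initial letters we may assume each $A_n$ (for $n$ large) has this standard shape, and likewise $MA_n$. Thus the whole problem becomes: compare the two sums $K_{w_n}(t)$ and $K_{Mw_n}(t)$, where $w_n,\,Mw_n$ are the quadratic irrationals attached to $A_n$ and $MA_n$.

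The heart of the argument is that passing from $A_n$ to $MA_n$ prepends one letter to the periodic word, so the new cyclic sequence of tails is almost the same as the old one — it is a cyclic sequence of length $s(A_n)+1$ rather than $s(A_n)$, and term-by-term the tails $w_n^{(k)}$ of the old cycle and the tails of the new cycle differ only by the insertion of a few extra continued-fraction steps and by the shift of the base point. The crucial quantitative point is the \emph{exponential contraction of continued-fraction tails}: if two positive irrationals share the first $m$ partial quotients then they are within $O(2^{-m})$ of each other (more precisely within the reciprocal of a product of consecutive denominators), so $\frac{1}{e^{it}-u}-\frac{1}{e^{it}-u'}=O(|u-u'|)$ decays geometrically, uniformly in $t\in[\pi/3,2\pi/3]$ by the trivial bound \eqref{trivial bound} on $\frac{1}{|e^{it}-x|}$. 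Hence when I line up the cycle for $MA_n$ against the cycle for $A_n$, almost all corresponding pairs of terms are within $O(2^{-\min(k,\,s(A_n)-k)})$ of each other, and summing this geometric series over $k$ gives a bound $O(1)$ independent of $n$; only a bounded-in-$M$ number of "boundary" terms (those near the inserted letters, and the $O(|M|)$ genuinely new terms) fail to cancel, and each of those is $O(1)$ by \eqref{trivial bound} as well. Multiplying through by $f(e^{it})$ and integrating over the arc of length $\pi/3$ then yields exactly the claimed estimate, with the factor $\sup_t|f(e^{it})|$ pulled out and an implicit constant depending only on $x$ (through how fast the partial quotients of $x$ grow, which controls how the denominators of the $w_n^{(k)}$ grow) and on $M$.

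I expect the main obstacle to be the bookkeeping in the previous paragraph: precisely matching up the cyclic sequence of tails for $MA_n$ with that for $A_n$, keeping track of the index shift caused by the extra letter, the dependence of $s(A_n)$ on $n$, and the finitely many exceptional terms — and making sure the geometric-decay estimate is genuinely uniform in $n$ (not just pointwise in $n$). A secondary technical point is handling the initial non-hyperbolic $A_n$ and the normalization of Remark \ref{rmk:starting_with_T_ending_V}: one must check that conjugating $A_n$ and $MA_n$ into standard form $T^{a_1}V^{a_2}\cdots$ costs only an $n$-independent, $M$-independent number of altered letters, so the comparison above still applies. Once these are in hand, the estimate assembles routinely from Lemma \ref{lem:indentity_cycle_int_BI} and \eqref{trivial bound}.
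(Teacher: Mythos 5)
Your proposal is correct and follows essentially the same route as the paper: reduce to $M\in\{T,V\}$, invoke Lemma~\ref{lem:indentity_cycle_int_BI} to pass to the kernels $K_w(t)$, and compare the two cycles term-by-term using the exponential contraction of continued-fraction tails (Lemma~\ref{lemacoincide}) together with the trivial bound~\eqref{trivial bound}. The paper packages exactly this geometric-decay comparison into Proposition~\ref{prop:general_comparison_K_u_m_1vsK_v_m_2} and then proves the theorem by a short case analysis on whether the first/last exponent of $A_n$ vanishes (conjugating to standard form when it does), which is the bookkeeping you flagged as the remaining work.
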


We start with the following lemma whose proof can be found  in \cite[Lemma 1.24]{Aig2013}.

\begin{lemma}\label{lemacoincide}
	If the continued fraction expansions of two irrational real numbers $x$ and $y$ coincide in the first $k$ partial quotients, namely $x=[a_1;a_2,\ldots,a_k,\ldots ]$, $y=[b_1;b_2,\ldots,b_k,\ldots ]$ with $a_i=b_i$ for $i=1,\ldots,k$, then
	$$
	|x-y|\leq \dfrac{1}{2^{k-2}}.
	$$
\end{lemma}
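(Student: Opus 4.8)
\textbf{Plan for the proof of Lemma \ref{lemacoincide}.}

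The statement is a standard estimate on the rate of convergence of continued fractions, so the plan is to reduce it to the basic recursion for convergents and then to a simple inequality on the denominators of the convergents. Write $p_k/q_k=[a_1;a_2,\ldots,a_k]$ for the $k$-th convergent. The key classical facts I would invoke are: first, if $x=[a_1;a_2,\ldots,a_k,a_{k+1},\ldots]$ then $x=\frac{p_{k-1}\xi_k+p_{k-2}}{q_{k-1}\xi_k+q_{k-2}}$ where $\xi_k=[a_k;a_{k+1},\ldots]>1$; second, $p_{k-1}q_{k-2}-p_{k-2}q_{k-1}=(-1)^k$; and third, the denominators satisfy $q_{k-1}\geq F_{k-1}$ (the $(k-1)$-st Fibonacci number, with $F_1=F_2=1$), which follows by induction from $q_j=a_jq_{j-1}+q_{j-2}\geq q_{j-1}+q_{j-2}$. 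These are all elementary and can be cited from \cite[Ch.~1]{Aig2013} (indeed the whole lemma is just \cite[Lemma 1.24]{Aig2013}).

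The main step is the explicit computation of $x-y$. Since $x$ and $y$ share the first $k$ partial quotients, they have the same convergents $p_j/q_j$ for $j\leq k$, and we may write $x=\frac{p_k\eta+p_{k-1}}{q_k\eta+q_{k-1}}$ and $y=\frac{p_k\zeta+p_{k-1}}{q_k\zeta+q_{k-1}}$ with $\eta=[a_{k+1};a_{k+2},\ldots]>1$ and $\zeta=[b_{k+1};b_{k+2},\ldots]>1$ the respective tails. Subtracting and using $p_kq_{k-1}-p_{k-1}q_k=(-1)^{k+1}$ gives
\[
x-y=\frac{(p_k\eta+p_{k-1})(q_k\zeta+q_{k-1})-(p_k\zeta+p_{k-1})(q_k\eta+q_{k-1})}{(q_k\eta+q_{k-1})(q_k\zeta+q_{k-1})}=\frac{(-1)^{k+1}(\eta-\zeta)}{(q_k\eta+q_{k-1})(q_k\zeta+q_{k-1})}.
\]
Now I would bound the denominator from below and the numerator from above. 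For the denominator, $q_k\eta+q_{k-1}>q_k\cdot 1+q_{k-1}\geq q_{k-1}+q_{k-1}$ is a slightly lossy route; more cleanly, $q_k\eta+q_{k-1}>q_k+q_{k-1}\geq q_k$, and likewise for the $\zeta$ factor, so the denominator exceeds $q_k^2$; alternatively, and this is what gives the stated constant, one uses $q_k\eta+q_{k-1}>q_k+q_{k-1}=q_{k+1}'\geq q_k$ together with $q_k\geq F_k$. For the numerator, $|\eta-\zeta|$ must be bounded; here I would simply note $\eta,\zeta>1$ does not immediately bound $|\eta-\zeta|$, so instead I would observe that both $\eta$ and $\zeta$ lie in the same interval determined by $a_{k+1}$ only if $a_{k+1}=b_{k+1}$ — but we are only told the \emph{first} $k$ quotients agree, so $\eta$ and $\zeta$ may be quite different. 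The honest fix is to split off one more term: write $\eta-\zeta$ as a ratio coming from the $(k+1)$-st level and iterate, OR — and this is cleaner — bound $|x-y|\leq |x-p_k/q_k|+|p_k/q_k-y|$ and use the standard one-sided estimate $|x-p_k/q_k|<\frac{1}{q_kq_{k+1}}\leq \frac{1}{q_k^2}$.

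So the cleanest route, and the one I would actually write, is: by the triangle inequality through the common convergent $p_k/q_k$,
\[
|x-y|\leq \left|x-\frac{p_k}{q_k}\right|+\left|\frac{p_k}{q_k}-y\right|<\frac{1}{q_k^2}+\frac{1}{q_k^2}=\frac{2}{q_k^2},
\]
using the classical bound $|x-p_k/q_k|<1/q_k^2$ for any irrational $x$ with $k$-th convergent $p_k/q_k$. Then, since $q_k\geq F_k$ and $F_k\geq 2^{(k-2)/2}$... wait, that gives only $2^{1-k}$, not $2^{2-k}$ — actually $q_k\geq 2^{\lfloor (k-1)/2\rfloor}$ is the right elementary bound (from $q_{j}\geq 2q_{j-2}$), and combined with $q_k\geq q_{k-1}+q_{k-2}$ one gets $q_k^2\geq 2^{k-1}$ for $k\geq 2$, whence $|x-y|<2/q_k^2\leq 2\cdot 2^{1-k}=2^{2-k}$. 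I expect the only genuinely delicate point to be getting the constant exactly as stated rather than an extra factor of two; this is purely bookkeeping on the Fibonacci/denominator growth, and since the lemma is quoted verbatim from \cite[Lemma 1.24]{Aig2013} I would simply cite that reference and include the above two-line triangle-inequality argument as a reminder of the idea.
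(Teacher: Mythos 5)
The paper itself gives no proof of this lemma; it simply states the result and refers the reader to \cite[Lemma~1.24]{Aig2013}. Your ``official'' route --- cite that reference --- is therefore exactly what the paper does, and your supplementary sketch (triangle inequality through the common convergent $p_k/q_k$, then a lower bound on $q_k$) is also the standard elementary argument.

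However, the final numerical step of your sketch does not go through as written: the claim $q_k^2\geq 2^{k-1}$ for $k\geq 2$ is \emph{false}. When $a_2=1$ we have $q_2=1$ and $q_2^2=1<2$, so the bound $|x-y|<2/q_k^2$ only yields $|x-y|<2$ at $k=2$, not the required $|x-y|\leq 1$. You correctly flagged the constant as the delicate point, but the issue is not just bookkeeping --- the estimate $|x-p_k/q_k|<1/q_k^2$ is genuinely too weak for small $k$. The fix is to keep the sharper classical bound $|x-p_k/q_k|<1/(q_kq_{k+1})$ and observe that both tails give $q_{k+1}\geq q_k+q_{k-1}$, so
\[
|x-y|<\frac{2}{q_k(q_k+q_{k-1})}\leq \frac{2}{F_kF_{k+1}},
\]
and then $F_kF_{k+1}\geq 2^{k-1}$ holds for all $k\geq 1$: it is an equality at $k=1,2$, and inductively $F_{k+1}F_{k+2}=F_{k+1}(F_{k+1}+F_k)\geq 2F_kF_{k+1}$ since $F_{k+1}\geq F_k$. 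With this replacement your sketch closes.
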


We will also need the following proposition.

\begin{prop}\label{prop:general_comparison_K_u_m_1vsK_v_m_2}
 Let $u,v$ be two real quadratic irrationals with purely periodic continued fraction expansions $u=[\overline{a_1;\ldots ,a_r}]$, $v=[\overline{b_1;\ldots, b_s}]$ 
     where $a_i,b_j\geq 1$ for all $i,j$. Given $1\leq \ell \leq \min\{r,s\}$ put $m_1:=a_1+\ldots+a_\ell$ and $m_2:=b_1+\ldots+b_\ell$. Then, for every $t\in \left[\pi/3,2\pi/3\right]$ we have
\begin{equation*}
   |K_{u,1,m_1}(t)- K_{v,1,m_2}(t)|=O\left(1+\sum_{i=1}^\ell|a_i-b_i|\right)
\end{equation*}
with an absolute implicit constant.
\end{prop}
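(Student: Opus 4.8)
The plan is to estimate the difference of the two sums term by term, exploiting that the cyclic sequences $(u^{(k)})$ and $(v^{(k)})$ track each other closely for as long as the partial quotients of $u$ and $v$ agree. First I would recall from Remark \ref{remark:shape_of_wk_and_conjugates} that each $u^{(k)}$ with index $k = a_1 + \cdots + a_{i-1} + j$ (for $1 \le j \le a_i$) is $[j;\overline{a_{i+1},\ldots,a_i}]$ or $[0;j,\overline{a_{i+1},\ldots,a_i}]$ according to the parity of $i$, and similarly for $v^{(k)}$ with the $b$'s. I would pair off the $k$-th term of $K_{u,1,m_1}$ with the $k$-th term of $K_{v,1,m_2}$. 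Because $m_1 = a_1 + \cdots + a_\ell$ and $m_2 = b_1 + \cdots + b_\ell$ need not be equal, the two sums have different lengths; the number of unmatched terms at the tail is at most $|m_1 - m_2| \le \sum_{i=1}^\ell |a_i - b_i|$, and each unmatched term is $O(1)$ by the trivial bound \eqref{trivial bound}, contributing $O\bigl(\sum |a_i - b_i|\bigr)$.

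Next I would bound each matched pair. For a fixed $k$, both $u^{(k)}$ and $v^{(k)}$ lie in a fixed compact subset of $\R$ bounded away from the arc $\{e^{it}\}$, so the map $z \mapsto \frac{1}{e^{it}-z}$ is Lipschitz there with an absolute constant (its derivative is $\frac{1}{(e^{it}-z)^2}$, bounded by $4/3$ via \eqref{trivial bound}), uniformly in $t$. Hence
\[
\left| \frac{1}{e^{it}-u^{(k)}} - \frac{1}{e^{it}-v^{(k)}} \right| = O\bigl(|u^{(k)} - v^{(k)}|\bigr),
\]
and likewise for the Galois conjugates $\tilde u^{(k)}, \tilde v^{(k)}$. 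So the problem reduces to bounding $\sum_k \bigl(|u^{(k)} - v^{(k)}| + |\tilde u^{(k)} - \tilde v^{(k)}|\bigr)$ over the matched range. Here I would use Lemma \ref{lemacoincide}: if $k$ falls in the block corresponding to index $i$ (so $u^{(k)}$ and $v^{(k)}$ have continued fraction expansions beginning with $j, a_{i+1}, a_{i+2}, \ldots$ and $j, b_{i+1}, b_{i+2}, \ldots$ respectively — note the leading partial quotient $j$ is literally the same integer for both, being the within-block offset), the expansions of $u^{(k)}$ and $v^{(k)}$ agree until the first place where some $a_m \ne b_m$. Since the $a$'s and $b$'s are periodic with periods $r$ and $s$ and agree on $1,\ldots,\ell$, once we are inside the first period they agree for at least $\ell - i$ further steps; more carefully, for $k$ in block $i$ the expansions of $u^{(k)}$ and $v^{(k)}$ coincide in at least $\ell - i + 1$ partial quotients, so $|u^{(k)} - v^{(k)}| = O(2^{-(\ell - i)})$, and the same for the conjugates.

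I would then sum: the block for index $i$ has $a_i$ terms on the $u$-side (and $b_i$ on the $v$-side), and for the overlapping part of the block the contribution is $O(\min(a_i,b_i) \cdot 2^{-(\ell-i)})$, while the leftover $|a_i - b_i|$ terms in that block are handled by the trivial bound, contributing $O(|a_i - b_i|)$. Summing over $i = 1, \ldots, \ell$: the geometric part gives $\sum_i a_i 2^{-(\ell - i)}$, which telescopes to $O\bigl(\max_i a_i\bigr)$ — and here is the subtlety — this is \emph{not} obviously $O(1)$ unless we are more careful, since $a_\ell$ can be large. But $\sum_{i=1}^\ell a_i 2^{-(\ell-i)} = a_\ell + a_{\ell-1}/2 + \cdots$, and the dominant term $a_\ell$ must instead be absorbed differently: for the last block $i = \ell$, the expansions of $u^{(k)}$ and $v^{(k)}$ agree only in the leading quotient $j$ and then in $a_{\ell+1}$ vs $b_{\ell+1}$ (which, by periodicity, are $a_1$ vs $b_1$, hence equal), and continue agreeing — so actually the agreement wraps around the period and the decay continues. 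The correct bookkeeping is that $|u^{(k)} - v^{(k)}| = O(2^{-c(k)})$ where $c(k)$ is the number of steps of the cyclic sequence until one first hits a block $i'$ with $a_{i'} \ne b_{i'}$; summing the geometric series over all $k$ from $1$ to $\min(m_1,m_2)$ then gives $O\bigl(1 + \sum_{i=1}^\ell |a_i - b_i|\bigr)$, because each "mismatch" at index $i'$ contributes $O(1)$ from the terminating geometric tail plus the trivial bound on that block's excess. I expect the main obstacle to be precisely this careful accounting near the mismatch indices and at the wrap-around of the period — making sure the constant is absolute and that large partial quotients $a_i$ only enter through the $|a_i - b_i|$ terms, never through a bare $a_i$. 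Once that is organized, combining the matched-pair estimate with the $O\bigl(\sum|a_i - b_i|\bigr)$ bound for the unmatched tail and the Lipschitz reduction yields the claimed bound.
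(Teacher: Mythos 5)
Your overall strategy of comparing the two cycle sums block by block is the right one, and you correctly sense that the geometric series $\sum_i a_i\,2^{-(\ell-i)}$ threatens to blow up with large partial quotients. But the fix you propose does not work, and this is where the argument genuinely breaks.

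The central problem is your claim that $|u^{(k)}-v^{(k)}|=O(2^{-c(k)})$ where $c(k)$ counts the number of \emph{steps of the cyclic sequence} until the first mismatch. Lemma \ref{lemacoincide} gives exponential decay in the number of agreeing \emph{partial quotients} of $u^{(k)}$ and $v^{(k)}$, and that number is comparable to the number of \emph{blocks} until the first mismatch, not the number of cyclic steps. Concretely, take $u=[\overline{N;1}]$, $v=[\overline{N;2}]$ and $k=1$: you are in block $1$ at offset $1$, so $c(1)=N-1$, yet $u^{(1)}=u$ and $v^{(1)}=v$ agree in only one partial quotient, and indeed $|u-v|\approx\tfrac12$, not $O(2^{-N})$. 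With the correct decay rate, the matched terms in a single block still contribute $O\big(\min(a_i,b_i)\cdot 2^{-(i'-i)}\big)$, which for the block immediately preceding a mismatch is of size $O(a_{i'-1})$ and is \emph{not} controlled by $O\big(1+\sum_i|a_i-b_i|\big)$. So the bare Lipschitz reduction $\big|\frac{1}{e^{it}-u^{(k)}}-\frac{1}{e^{it}-v^{(k)}}\big|=O\big(|u^{(k)}-v^{(k)}|\big)$ followed by Lemma \ref{lemacoincide} is not strong enough, no matter how the wrap-around is accounted for.

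The ingredient you are missing is the paper's refined estimate \eqref{eq:comparison_two_cycles_of_irrationals}, obtained by writing $\frac{1}{e^{it}-u^{(k)}}-\frac{1}{e^{it}-v^{(k)}}$ as a single fraction and using $|e^{\pm it}-x|\ge x/2$ for $x\ge 1$ (passing to $1/u^{(k)},1/v^{(k)}$ when $u^{(k)},v^{(k)}\in(0,1]$). This yields an extra factor $\frac{1}{|u^{(k)}||v^{(k)}|}$ (or its reciprocal analogue), so that the $j$-th term of block $i$ is bounded by roughly $\frac{2^{-p}}{(a_i+1-j)^2}$. The $\frac{1}{(a_i+1-j)^2}$ kills the dependence on $a_i$ \emph{within} the block (summing to $\zeta(2)$), while the $2^{-p}$ provides the geometric decay \emph{across} blocks, with $p$ shrinking by one per block. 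That separation of scales is exactly what produces an absolute $O(1)$ constant before the first mismatch, after which a single block is handled by the trivial bound to give the $|d_1-e_1|$ term. Without the $\frac{1}{|u^{(k)}||v^{(k)}|}$ factor, there is no way to get an $a_i$-independent bound per block, and the statement's absolute implicit constant cannot be achieved.

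A minor further point: pairing "the $k$-th term with the $k$-th term" does not align blocks when $a_i\ne b_i$; you should pair by block and offset, as you implicitly do later, with the $|a_i-b_i|$ leftover terms in each block handled by the trivial bound \eqref{trivial bound}. That part of your accounting is essentially correct.
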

\begin{proof}
Let $0\leq p\leq \ell$ be the largest integer such that $a_i=b_i$ for all $i\in \{1,\ldots,p\}$. We can then write \(u=[\overline{a_1;\ldots ,a_p,d_1,\ldots,d_{r-p}}]\), \(v=[\overline{a_1;\ldots, a_p,e_1,\ldots,e_{s-p}}]\) where $a_i,d_j,e_k\geq 1$ for all $i,j,k$. Put $m:=a_1+\ldots +a_p$. It is enough to prove that for all $t\in \left[\pi/3,2\pi/3\right]$ we have
\begin{equation}\label{eq:difference_K_u,m_and_K_v,m}
   |K_{u,1,m}(t)- K_{v,1,m}(t)|=O(1), 
\end{equation}
and 
\begin{equation}\label{eq:difference_K_u',b1_and_K_v',c1}
|K_{u,m+1,m+d_1}(t)- K_{v,m+1,m+e_1}(t)|=O(|d_1-e_1|) \quad \text{if }p<\ell.  
\end{equation}
    We write $K_{u,1,m}(t)- K_{v,1,m}(t)=K_1(t)+K_2(t)$, where
\begin{align*}
			&K_1(t) := \sum_{k=1}^{m} \dfrac{1}{e^{it}-u^{(k)}} - \dfrac{1}{e^{it}-v^{(k)}}
			, \\
			&K_2(t) := \sum_{k=1}^{m}   \dfrac{1}{e^{it}-\tilde v^{(k)}} - \dfrac{1}{e^{it}-\tilde u^{(k)}}.
		\end{align*}
   Since 
   \begin{equation*}
       |e^{\pm it}-x|\geq \frac{x}{2} \quad \text{for all }t\in \left[\pi/3,2\pi/3\right] \text{ and } x\in [1,\infty),
   \end{equation*}
   and
   		\begin{equation*}
			\label{compuv}
			\left|\dfrac{1}{e^{it}- u^{(k)}} - \dfrac{1}{e^{it}- v^{(k)}}\right| 
			= \dfrac{|u^{(k)}-v^{(k)}|}{|e^{it}-u^{(k)}||e^{it}-v^{(k)}|}
			= \dfrac{\left|\frac{1}{u^{(k)}}-\frac{1}{v^{(k)}}\right|}{\left|e^{-it}-\frac{1}{u^{(k)}}\right| \left|e^{-it}-\frac{1}{v^{(k)}}\right|},
		\end{equation*}
   we get
   		\begin{equation}\label{eq:comparison_two_cycles_of_irrationals}
			\left|\dfrac{1}{e^{it}- u^{(k)}} - \dfrac{1}{e^{it}- v^{(k)}}\right| 
			\leq 4\times  \left\{
   \begin{array}{ll}
      \dfrac{|u^{(k)}-v^{(k)}|}{|u^{(k)}| |v^{(k)}|}   & \text{ if }u^{(k)},v^{(k)}\in [1,\infty), \\
      \dfrac{\left|\frac{1}{u^{(k)}}-\frac{1}{v^{(k)}}\right|}{\left|\frac{1}{u^{(k)}}\right| \left|\frac{1}{v^{(k)}}\right|}& \text{ if } u^{(k)},v^{(k)}\in (0,1].
   \end{array}\right.
		\end{equation}
For $1\leq k\leq a_1$, the terms $u^{(k)}$ and $v^{(k)}$ are in $[a_1+1-k,\infty)$ and share the first $p$ partial quotients, hence by \eqref{eq:comparison_two_cycles_of_irrationals} and Lemma~\ref{lemacoincide} we have
  \[\sum_{k=1}^{a_1-1} \left|\dfrac{1}{e^{it}- u^{(k)}} - \dfrac{1}{e^{it}- v^{(k)}}\right| \leq \frac{4}{2^{p-2}}\sum_{k=1}^{a_1}\frac{1}{(a_1+1-k)^2}\leq \frac{\zeta(2)}{2^{p-4}}.\]
For $a_1+1\leq k\leq a_1+a_2$, the terms $u^{(k)}$ and $v^{(k)}$ are in $\left(0,\frac{1}{a_2+1-(k-a_1)}\right]$ and $1/u^{(k)},$  $1/v^{(k)}$ share the first $p-1$ partial quotients, thus in this case we get
\[\sum_{k=a_1+1}^{a_1+a_2} \left|\dfrac{1}{e^{it}- u^{(k)}} - \dfrac{1}{e^{it}- v^{(k)}}\right| \leq \frac{4}{2^{p-3}}\sum_{k=a_1+1}^{a_1+a_2}\frac{1}{(a_2+1-(k-a_1))^2}\leq \frac{\zeta(2)}{2^{p-5}}.\]
By repeating this argument for the next terms $u^{(k)}$ and $v^{(k)}$ for $a_1+a_2+1\leq k\leq a_1+a_2+a_3$, etc., we conclude
	\begin{equation*}
		|K_1(t)|\leq \zeta(2)\sum_{i=1}^p \frac{1}{2^{p-(3+i)}}=O(1).
	\end{equation*}  
Similarly, for $2\leq k\leq a_1+1$ the terms $-\tilde u^{(k)},-\tilde v^{(k)}$ are in $[k-1,\infty)$ and share the first partial quotient; for $a_1+2\leq k\leq a_1+a_2+1$ they are in $\left(0,\frac{1}{k-a_1-1}\right]$ and  $-1/\tilde u^{(k)},-1/\tilde v^{(k)}$ share the first two partial quotients, etc., hence we get
	\begin{equation*}
	|K_2(t)|\leq\zeta(2)\sum_{i=1}^p \frac{1}{2^{i-4}}=O(1).
	\end{equation*}
This implies \eqref{eq:difference_K_u,m_and_K_v,m}. In order to prove \eqref{eq:difference_K_u',b1_and_K_v',c1} we  first assume $p$ is even. Let \(w:=u^{(m+1)}=[\overline{d_1;\ldots ,d_{r-p},a_1,\ldots,a_p}\) and \(z:=v^{(m+1)}=[\overline{e_1;\ldots ,e_{s-p},a_1,\ldots,a_p}]\). We write
\[K_{u,m+1,m+d_1}(t)-K_{v,m+1,m+e_1}(t)=K_{w,1,d_1}(t)-K_{z,1,e_1}(t)=K_3(t)+K_4(t)+K_5(t)+K_6(t),\]
where
\begin{eqnarray*}
K_3(t) &:=& \sum_{k=0}^{\min\{d_1,e_1\}-1} \dfrac{1}{e^{it}-w^{(d_1-k)}} - \dfrac{1}{e^{it}-z^{(e_1-k)}}
			, \\
K_4(t) &:=& \sum_{k=\min\{d_1,e_1\}}^{d_1-1} \dfrac{1}{e^{it}-w^{(d_1-k)}} -\sum_{k=\min\{d_1,e_1\}}^{e_1-1} \dfrac{1}{e^{it}-z^{(e_1-k)}}
			, \\
   K_5(t) &:=& \sum_{k=2}^{\min\{d_1,e_1\}} \dfrac{1}{e^{it}-\tilde z^{(k)}} - \dfrac{1}{e^{it}-\tilde w^{(k)}}
			, \\
   K_6(t) &:=& \dfrac{1}{e^{it}-\tilde z^{(1)}} - \dfrac{1}{e^{it}-\tilde w^{(1)}} + \sum_{k=\min\{d_1,e_1\}+1}^{e_1} \dfrac{1}{e^{it}-\tilde z^{(k)}} - \sum_{k=\min\{d_1,e_1\}+1}^{d_1}\dfrac{1}{e^{it}-\tilde z^{(k)}}.
		\end{eqnarray*}
For $0\leq k\leq \min\{d_1,e_1\}-1$ the terms $w^{(d_1-k)}$ and $z^{(e_1-k)}$  are in $[k+1,\infty)$ and share the first partial quotient. This implies
\(|K_3(t)|=O(1),\)
again by \eqref{eq:comparison_two_cycles_of_irrationals} and Lemma~\ref{lemacoincide}. Similarly, for $2\leq k\leq \min\{d_1,e_1\}$ the terms $-\tilde w^{(k)}$ and $-\tilde z^{(k)} $  are in $[k-1,\infty)$ and share the first partial quotient, hence we also get
\(|K_5(t)|=O(1).\)
Finally, using the trivial estimate \eqref{trivial bound} we bound $K_4(t)$ and $K_6(t)$ by the number of terms in these sums, obtaining
\(|K_4(t)|+|K_6(t)|=O(|d_1-e_1|).\)
This shows \eqref{eq:difference_K_u',b1_and_K_v',c1} when $p$ is even. The case when $p$ is odd is similar, so we omit the details for brevity. This completes the proof of the proposition.
\end{proof}


We now give the proof of Theorem \ref{thm:almost_invariance_cycle_integrals}.

\begin{proof}[Proof of Theorem \ref{thm:almost_invariance_cycle_integrals}]
    Since $\SLN$ is generated by $T$ and $V$, it is enough to prove the result for $M\in \{T,V\}$. Moreover, we can assume that $n$ is large enough so that $A_n$ is hyperbolic. Write $A_n=T^{a_1}V^{a_2}\cdots T^{a_{\ell-1}}V^{a_{\ell}}$ with $\ell\geq 2$ even, $a_1,a_{\ell}\geq 0$ and $a_2,\ldots,a_{\ell-1}\geq 1$, and let $M=T$.

    If $a_1,a_{\ell}\geq 1$, then the positive fixed point of $A_n$ is $u:=[\overline{a_1;\ldots,a_{\ell}}]$, while the positive fixed point of $TA_n$ is $v:=[\overline{a_1+1;\ldots,a_{\ell}}]$. Then, a direct application of Proposition \ref{prop:general_comparison_K_u_m_1vsK_v_m_2} gives
    \(|K_{u}(t)-K_v(t)|=O(1).\)
    Then, by Lemma \ref{lem:indentity_cycle_int_BI} we obtain the desired result.

     If $a_1\geq 1$ and $a_{\ell}=0$, then $T^{a_{\ell-1}}A_nT^{-a_{\ell-1}}$ and $T^{a_{\ell-1}}(TA_n)T^{-a_{\ell-1}}$ have positive fixed points $[\overline{a_1+a_{\ell-1};\ldots,a_{\ell-2}}]$ and $[\overline{a_1+a_{\ell-1}+1;\ldots,a_{\ell-2}}]$. Then, using the conjugacy invariance of the cycle integrals (Lemma \ref{lem:elementary_prop_cycle_integrals}$(iii)$)  together with Lemma \ref{lem:indentity_cycle_int_BI} and  Proposition \ref{prop:general_comparison_K_u_m_1vsK_v_m_2}, we get
     \[I_f(A_n)-I_f(TA_n)=I_f(T^{a_{\ell-1}}A_nT^{-a_{\ell-1}})-I_f(T^{a_{\ell-1}}(TA_n)T^{-a_{\ell-1}})=O(1)\cdot \sup_{t\in [\pi/3,2\pi/3]}|f(e^{it})|.\]

     If $a_1=0$ and $a_{\ell}\geq 1$, then $V^{-a_2}A_nV^{a_2}$ and $(TV^{a_2})^{-1}(TA_n)(TV^{a_2})$ have positive fixed points $w:=[\overline{a_3;\ldots,a_{\ell-1},a_{\ell}+a_2}]$ and $z:=[\overline{a_3;\ldots,a_{\ell},1,a_2}]$. In this case, Proposition \ref{prop:general_comparison_K_u_m_1vsK_v_m_2} together with the trivial bound \eqref{trivial bound}, imply
     \(|K_{w}(t)-K_z(t)|=O(1+a_2),\) 
     which is an estimate depending only on $x$. Then, the desired result follows as in the previous case by using Lemmas \ref{lem:elementary_prop_cycle_integrals}$(iii)$ and \ref{lem:indentity_cycle_int_BI}.

     Finally, if $a_1=a_{\ell}=0$, then $V^{-a_2}A_nV^{a_2}$ and $(TV^{a_2})^{-1}(TA_n)(TV^{a_2})$ have positive fixed points $[\overline{a_3;\ldots,a_{\ell-1},a_2}]$ and $[\overline{a_3;\ldots,a_{\ell-2},a_{\ell-1}+1,a_2}]$, and the result follows as in the first two cases.

     We conclude that the desired results holds for $M=T$. The case $M=V$ is completely analogous, hence we omit the details. This finishes our proof.
\end{proof}

\subsection{Almost additivity}

In this section we prove the next theorem which is crucial for the proof of Theorems \ref{thm:Lambda_f_on_quad_irrationals} and \ref{thm:values_of_Lambda_f}.  It can be interpreted as  an almost additivity of cycle integrals along Farey paths associated to   badly approximable numbers. Recall that badly approximable numbers are characterized by having bounded partial quotients (see, e.g., \cite[Proposition 1.32]{Aig2013}). 

\begin{theorem}\label{thm:almost_additivity_for_badly_approximable}
		Let $(A_n)_{n\geq 0}$ be the sequence of matrices in $\SLN$ associated to the irrational real number $x\in (0,\infty)$ with infinite continued fraction expansion $x=[a_1;a_2,\ldots ]$ as in \eqref{eq:hyp_matrix_seq_intro}. Assume that the sequence $(a_n)_{n\geq 1}$ is bounded by a constant $C>0$. For $m,n\geq 0$ write $A_{m+n}=A_nB_{m,n}$. Then, we have
\begin{equation*}
        I_f(A_{m+n})=I_f(A_n)+I_f(B_{m,n})+\left(\sup_{t\in [\pi/3,2\pi/3]}|f(e^{it})|\right)\cdot O\left(C\right)
  \end{equation*}
  with an absolute implicit constant.
\end{theorem}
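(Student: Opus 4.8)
The plan is to evaluate all three cycle integrals via Lemma~\ref{lem:indentity_cycle_int_BI} and then compare the resulting sums $K_\bullet(t)$ term by term: the key point is that the cyclic sequence of quadratic irrationals attached to $A_{m+n}$ splits, up to an error that decays geometrically in the index and is controlled by $C$, into the concatenation of the cyclic sequences attached to $A_n$ and to $B_{m,n}$.

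First I would reduce to standard form, following the proof of Theorem~\ref{thm:almost_invariance_cycle_integrals}. Writing $A_{m+n}$ as a word of length $m+n$ in $T,V$ via \eqref{eq:hyp_matrix_seq_intro}, its prefix of length $n$ is the word of $A_n$ and its suffix of length $m$ is the word of $B_{m,n}$. If one of $A_{m+n},A_n,B_{m,n}$ fails to be hyperbolic then its word lies inside a single $T$- or $V$-block, so the corresponding length ($m+n$, $n$ or $m$) is at most $C$ and the cycle integral in question is $0$ by our convention; in that case the statement is trivial, so we may assume all three matrices are hyperbolic. Conjugating each of $A_{m+n},A_n,B_{m,n}$ by a suitable power of $T$ or $V$ — which amounts to cyclically rotating its word, hence changes neither the cycle integral (Lemma~\ref{lem:elementary_prop_cycle_integrals}$(iii)$) nor the word length $s(\cdot)$, and only cyclically permutes the associated cyclic sequence — we put each matrix in the shape required by Lemma~\ref{lem:indentity_cycle_int_BI} and apply it. Since the sum in that lemma ranges over an entire cyclic sequence, it is unaffected by cyclically permuting that sequence; hence we may write $I_f(A_{m+n})=\int_{\pi/3}^{2\pi/3}f(e^{it})K_u(t)\,ie^{it}\,dt$ with $K_u=K_{u,1,m+n}$, where $u^{(1)},\dots,u^{(m+n)}$ is the cyclic sequence obtained from the word of $A_{m+n}$ itself by peeling letters one at a time from the left (so $s(A_{m+n})=m+n$), and similarly $I_f(A_n)=\int f(e^{it})K_v(t)\,ie^{it}\,dt$ with $K_v=K_{v,1,n}$, and $I_f(B_{m,n})=\int f(e^{it})K_z(t)\,ie^{it}\,dt$ with $K_z=K_{z,1,m}$.

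Since $K_u=K_{u,1,n}+K_{u,n+1,m+n}$, it now suffices to prove the two uniform estimates $|K_{u,1,n}(t)-K_{v,1,n}(t)|=O(C)$ and $|K_{u,n+1,m+n}(t)-K_{z,1,m}(t)|=O(C)$ for $t\in[\pi/3,2\pi/3]$; the theorem then follows by writing $I_f(A_{m+n})-I_f(A_n)-I_f(B_{m,n})=\int_{\pi/3}^{2\pi/3}f(e^{it})\bigl(K_u-K_v-K_z\bigr)(t)\,ie^{it}\,dt$, bounding $|K_u-K_v-K_z|=O(C)$, and using $|ie^{it}|=1$ together with the length $\pi/3$ of the interval. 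For the first estimate: for $1\le i\le n$, both $u^{(i)}$ and $v^{(i)}$ are obtained by peeling the first $i-1$ letters of the word of $A_{m+n}$, resp.\ of $A_n$, and these two words agree on their first $n$ letters; since every block has length $\le C$, the continued fraction of $u^{(i)}$ agrees with that of $v^{(i)}$ (or, when both lie in $(0,1)$, the continued fraction of $1/u^{(i)}$ agrees with that of $1/v^{(i)}$) in at least $(n-i)/C-O(1)$ partial quotients. By Lemma~\ref{lemacoincide} together with \eqref{eq:comparison_two_cycles_of_irrationals} — whose right-hand side is $\le 4|u^{(i)}-v^{(i)}|$ in either case, the relevant denominators being $\ge 1$ — this yields $\bigl|\tfrac{1}{e^{it}-u^{(i)}}-\tfrac{1}{e^{it}-v^{(i)}}\bigr|=O\bigl(2^{-(n-i)/C}\bigr)$. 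Likewise, reading the word backwards from position $i$, the Galois conjugates $-\tilde u^{(i)},-\tilde v^{(i)}$ (or their reciprocals) share at least $(i-1)/C-O(1)$ partial quotients, so $\bigl|\tfrac{1}{e^{it}-\tilde u^{(i)}}-\tfrac{1}{e^{it}-\tilde v^{(i)}}\bigr|=O\bigl(2^{-(i-1)/C}\bigr)$. Summing over $1\le i\le n$ and invoking $\sum_{j\ge 0}2^{-j/C}=\tfrac{1}{1-2^{-1/C}}=O(C)$ gives $|K_{u,1,n}(t)-K_{v,1,n}(t)|=O(C)$. The second estimate is proved identically: for $1\le k\le m$ the terms $u^{(n+k)}$ and $z^{(k)}$ come from peeling $n+k-1$, resp.\ $k-1$, letters, and the words of $A_{m+n}$ and $B_{m,n}$ agree on the positions $n+1,\dots,n+m$, so $u^{(n+k)},z^{(k)}$ share $\ge (m-k)/C-O(1)$ partial quotients and their conjugates share $\ge (k-1)/C-O(1)$; the same summation produces $O(C)$.

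I expect the substance of the argument to be bookkeeping rather than ideas. One must make precise the phrase ``$u^{(i)}$ and $v^{(i)}$ share $\gtrsim(n-i)/C$ partial quotients'' in the case where the cut at position $n$ (or the position where the word of $B_{m,n}$ begins or ends) falls in the interior of a block of the continued fraction of $x$, so that the last or first partial quotient of $v$, resp.\ of $z$, is a proper part of a partial quotient of $x$; the $O(C)$ many indices $i$ for which this forces a mismatch of the first partial quotients of $u^{(i)}$ and $v^{(i)}$ are then handled by the trivial bound \eqref{trivial bound} and absorbed into $O(C)$. One also has to carry along the sign and parity cases ($u^{(i)}\in(0,1)$ versus $u^{(i)}>1$, and $i$ even versus odd), but this is exactly the case analysis already done in the proofs of Proposition~\ref{prop:general_comparison_K_u_m_1vsK_v_m_2} and Theorem~\ref{thm:almost_invariance_cycle_integrals}, and it transfers with only cosmetic changes. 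The single genuinely quantitative input, and the reason the bound is linear rather than, say, quadratic in $C$, is the elementary estimate $\tfrac{1}{1-2^{-1/C}}=O(C)$ with an absolute implied constant.
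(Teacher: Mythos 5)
Your proof is correct, and it takes a route that is genuinely different from the paper's. The paper splits $K_u$ at a block boundary (the split point is at $M=a_1+\cdots+a_r$, not at $n$), which allows it to invoke Proposition~\ref{prop:general_comparison_K_u_m_1vsK_v_m_2} twice: after that shift the relevant continued fractions differ in a single partial quotient by at most $C$, so the proposition's $O\bigl(1+\sum_i|a_i-b_i|\bigr)$ bound becomes $O(C)$, with the geometric decay hidden inside the proposition's proof (decay over \emph{blocks} of matched partial quotients, a $\zeta(2)$-type sum absorbing each block). You instead split $K_u$ at $n$ itself and compare term by term: matched letters give $u^{(i)}$ and $v^{(i)}$ roughly $(n-i)/C$ shared partial quotients, Lemma~\ref{lemacoincide} then yields a per-index decay $O\bigl(2^{-(n-i)/C}\bigr)$, and the sum is closed by $\sum_{j\ge 0}2^{-j/C}=\tfrac{1}{1-2^{-1/C}}=O(C)$, which you correctly identify as the one genuinely quantitative input. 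Your decomposition is more symmetric (no displacement of the cut by $a_r-j$, no auxiliary conjugate $w$) at the price of a slower per-term decay, which is precisely why the answer comes out linear in $C$ rather than $O(1)$; the paper's decomposition localizes the $C$-loss to a single block and profits from having Proposition~\ref{prop:general_comparison_K_u_m_1vsK_v_m_2} available anyway for Theorem~\ref{thm:almost_invariance_cycle_integrals}. The boundary bookkeeping you defer — partial quotients lost at the cyclic wrap of $v$ and of $z$, parity of the cut, reciprocal cases — is exactly what the paper packages into its explicit case analysis, and your plan to absorb the $O(C)$ boundary indices via \eqref{trivial bound} is the right way to fill it. One small imprecision: the non-hyperbolic case is not quite ``trivial'' as stated — if, say, $A_n=T^n$, you still need $|I_f(A_{m+n})-I_f(B_{m,n})|=O(C)$, which Lemma~\ref{lem:trivial_bound_I_f(A)} alone does not give — but your own framework covers it, since $K_{u,1,n}$ then has $n\le C$ terms, each $O(1)$ by \eqref{trivial bound}, and the second comparison runs unchanged.
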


\begin{proof}
There are different cases to be considered. For the sake of brevity, we only show how to proceed in two of them.

 As a first case, we assume that both $A_n$ and $B_{m,n}$ are hyperbolic and end with a power of $V$. More precisely, $A_n=T^{a_1}V^{a_2}\cdots T^{a_{r-1}}V^j$ with $1\leq j\leq a_r$ and $B_{m,n}=V^{a_r-j}T^{a_{r+1}}\cdots T^{a_{s-1}}V^k$ with $1\leq k\leq a_s$, where  $r,s$ are even and $2\leq r<s$. Put \( M=a_1+\ldots+a_{r},  N=a_{r+1}+\ldots+a_{s-1}+k\)
and let \(u=[\overline{a_1;\ldots,a_{r-1},a_r,\ldots,a_{s-1},k}], v=\overline{a_1;\ldots,a_{r-1},j}], w=[\overline{a_{r+1};\ldots,a_{s-1},k+a_r-j}]\)
be the positive fixed points of $A_{m+n}$, $A_n$ and $V^{j-a_r}B_{m,n}V^{a_r-j}$, respectively. By Lemma \ref{lem:elementary_prop_cycle_integrals}$(iii)$ we have $I_f(V^{j-a_r}B_{m,n}V^{a_r-j})=I_f(B_{m,n})$. Then, by Lemma \ref{lem:indentity_cycle_int_BI} we get
\begin{eqnarray*}
I_f(A_{m+n})-I_f(A_n)-I_f(B_{m,n})
&=&\int_{\pi /3}^{2\pi /3} f(e^{it}) (K_{u}(t) - K_{v}(t)-K_{w}(t))ie^{it}dt.
\end{eqnarray*}
Now, we write
\(K_{u}(t)=K_{u,1,M}(t)+K_{u',1,N}(t),\)
where $u'=[\overline{a_{r+1},\ldots,a_{s-1},k,a_1,\ldots,a_{r}}]$. Then, by Proposition \ref{prop:general_comparison_K_u_m_1vsK_v_m_2} we have
\[|K_{u,1,M}(t)-K_{v}(t)| = O(1+(a_r-j)),\quad 
|K_{u',1,N}(t)-K_{w}(t)| = O(1+(a_r-j)).\]
Since $a_r-j\leq a_r\leq C$, we conclude
\[I_f(A_{m+n})-I_f(A_n)-I_f(B_{m,n})=\left(\sup_{t\in [\pi/3,2\pi/3]}|f(e^{it})|\right)\cdot O(C),\]
as desired.

As a second case, we assume $A$ ends with a $T$, $B_{m,n}$ starts with a $T$ and ends with a $V$. This is, $A_n=T^{a_1}V^{a_2}\cdots V^{a_{r-1}}T^j$ with $1\leq j<a_r$ and $B_{m,n}=T^{a_r-j}V^{a_{r+1}}\cdots T^{a_{s-1}}V^k$ with $1\leq k\leq a_s$, where  $r$ is odd, $s$ is even and $3\leq r<s$. In this case we put 
\( M=a_1+\ldots+a_{r-1}, N=a_{r}+\ldots+a_{s-1}+k\)
and
\(u=[\overline{a_1;\ldots,a_{r-1},a_r,\ldots,a_{s-1},k}],
v=[\overline{a_1+j;a_2,\ldots,a_{r-1}}],
w=[\overline{a_{r}-j;a_{r+1},\ldots,a_{s-1},k}]\) 
the positive fixed points of $A_{m+n}$, $T^jA_nT^{-j}$ and $B_{m,n}$, respectively. Writing
\(K_{u}(t)=K_{u,1,M}(t)+K_{u',1,N}(t),\)
with $u'=[\overline{a_{r},\ldots,a_{s-1},k,a_1,\ldots,a_{r-1}}]$ we get, by Proposition \ref{prop:general_comparison_K_u_m_1vsK_v_m_2}, the estimates
\[|K_{u,1,M}(t)-K_{v}(t)| = O(1+j),\quad
|K_{u',1,N}(t)-K_{w}(t)| = O(1+j).\]
Since $j<a_r\leq C$, the desired result follows  using Lemmas \ref{lem:elementary_prop_cycle_integrals}$(iii)$ and \ref{lem:indentity_cycle_int_BI} as in the first case. The rest of the cases are very similar and are left to the careful reader. This concludes the proof of the theorem.
\end{proof}

\begin{remark}
A special case of Theorem \ref{thm:almost_additivity_for_badly_approximable}, namely for $x$ quadratic irrational, was proved by Matsusaka in \cite{Matsusaka}.
\end{remark}

\begin{coro}\label{cor:I(A_m+n)-I(A_n)_goes_to_zero}
    Let $(A_n)_{n\geq 0}$ be as in Theorem \ref{thm:almost_additivity_for_badly_approximable} associated to a badly approximable $x\in (0,\infty)$. Then, for every $m\geq 0$ we have
    \[\lim_{n\to \infty}\left(\frac{I_f(A_{n+m})}{n+m}-\frac{I_f(A_{n})}{n}\right)=0.\]
\end{coro}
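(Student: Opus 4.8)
\textbf{Plan for the proof of Corollary \ref{cor:I(A_m+n)-I(A_n)_goes_to_zero}.}
The plan is to derive this limit statement directly from the almost additivity established in Theorem \ref{thm:almost_additivity_for_badly_approximable}. First I would fix $m\geq 0$ and apply Theorem \ref{thm:almost_additivity_for_badly_approximable} with the roles of $m$ and $n$ as stated: writing $A_{n+m}=A_nB_{m,n}$ we get
\[
I_f(A_{n+m})=I_f(A_n)+I_f(B_{m,n})+E_n,\qquad E_n=\left(\sup_{t\in[\pi/3,2\pi/3]}|f(e^{it})|\right)\cdot O(C),
\]
where $C$ is the uniform bound on the partial quotients of $x$; crucially $E_n$ is bounded \emph{uniformly in both $n$ and $m$} by an absolute multiple of $C\sup_t|f(e^{it})|$. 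Next I would control $I_f(B_{m,n})$: the matrix $B_{m,n}$ is a product of at most (roughly) $2mC$ of the generators $T,V$ — more precisely its total exponent sum is bounded by a constant depending only on $m$ and $C$ — so Lemma \ref{lem:trivial_bound_I_f(A)} gives $I_f(B_{m,n})=O_m(1)$, again uniformly in $n$. Hence $I_f(A_{n+m})-I_f(A_n)=O_{m,C,f}(1)$, with the implied constant independent of $n$.

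With that uniform bound in hand, the rest is elementary. I would write
\[
\frac{I_f(A_{n+m})}{n+m}-\frac{I_f(A_n)}{n}
=\frac{I_f(A_{n+m})-I_f(A_n)}{n+m}-I_f(A_n)\left(\frac{1}{n}-\frac{1}{n+m}\right)
=\frac{O_{m,C,f}(1)}{n+m}-\frac{m\,I_f(A_n)}{n(n+m)}.
\]
The first term tends to $0$ as $n\to\infty$ since the numerator is bounded. For the second term I would invoke Lemma \ref{lem:trivial_bound_I_f(A)} (or Lemma \ref{lem:elementary_prop_cycle_integrals} combined with the trivial bound) which gives $I_f(A_n)=O(a_1+\ldots+a_{\ell(n)})=O(n)$, because the exponent sum of $A_n$ equals the index $n$ by construction \eqref{eq:hyp_matrix_seq_intro}; therefore $m\,I_f(A_n)/(n(n+m))=O(mn/(n(n+m)))=O(m/(n+m))\to 0$. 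Adding the two estimates yields the claimed limit.

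I do not expect any serious obstacle here: the corollary is a soft consequence of Theorem \ref{thm:almost_additivity_for_badly_approximable} plus the linear-in-$n$ growth bound for $I_f(A_n)$. The only point requiring a little care is making explicit that the error term $E_n$ in the almost-additivity statement and the bound on $I_f(B_{m,n})$ are both independent of $n$ (they depend only on $m$, $C$, and $f$), so that dividing by $n+m$ genuinely kills them; this is immediate from the forms of the estimates in Theorem \ref{thm:almost_additivity_for_badly_approximable} and Lemma \ref{lem:trivial_bound_I_f(A)}.
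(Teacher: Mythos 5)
Your proposal is correct and follows essentially the same route as the paper: write $A_{n+m}=A_nB_{m,n}$, invoke the almost additivity of Theorem \ref{thm:almost_additivity_for_badly_approximable}, bound $I_f(B_{m,n})$ and $I_f(A_n)$ via Lemma \ref{lem:trivial_bound_I_f(A)}, and let $n\to\infty$; the paper's algebraic rearrangement is a minor reorganization of yours. One small imprecision: the word length of $B_{m,n}$ in the generators $T,V$ is exactly $m$ (since $A_n$ has length $n$ and $A_{n+m}$ has length $n+m$), not "roughly $2mC$" — this actually makes the bound $I_f(B_{m,n})=O(m)$ cleaner than what you wrote, and it is independent of $C$.
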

\begin{proof}
    Writing $A_{n+m}=A_nB_{m,n}$ we have
\begin{align*}
    \frac{I_f(A_{n+m})}{n+m}-\frac{I_f(A_{n})}{n}&=\frac{n(I_f(A_n)+I_f(B_{m,n})+O_{f}(C))-(n+m)I_f(A_{n})}{(n+m)n}\\
    &= \frac{I_f(B_{m,n})+O_{f}(C)}{n+m}-\frac{mI_f(A_{n})}{(n+m)n}
\end{align*}    
where the notation is as in Theorem \ref{thm:almost_additivity_for_badly_approximable} and the implicit constant in $O_f(C)$ depends only on $f$. But Lemma \ref{lem:trivial_bound_I_f(A)} gives $I_f(B_{m,n})=O(m)$ and $I_f(A_{n})=O(n)$ with absolute implicit constants. Hence, letting $n\to \infty$ we get the desired result.
\end{proof}

\subsection{A formula for $\Re(I_f(A))$}

When $f(e^{it})$ is real for $t\in [\pi/3,2\pi/3]$, a computation of  $\Re(I_f(A))$ using Lemma \ref{lem:indentity_cycle_int_BI} naturally leads to  the function $F:\R\times [\pi/3,2\pi/3]\to \R$ defined by
\begin{equation}\label{eq:def_of_F(x,t)}
    F(x,t):=\frac{1}{\sin(t)}\Re\left(\frac{i e^{it}}{e^{it}-x}\right)=\frac{x}{1+x^2-2x\cos(t)},
\end{equation}
since
\begin{equation}\label{eq:identity_function_F}
\Re\left(\left(\frac{1}{e^{it}-x}-\dfrac{1}{e^{it}-y}\right)ie^{it}\right)=\sin(t)\big(F(x,t)-F(y,t)\big).
\end{equation}

The basic properties of the function $F(x,t)$ are summarized in the following lemma, whose proof we omit since they follow from direct computations.

\begin{lemma}\label{lem:prop_of_F}
For all $x\in \R$ and $t\in [\pi/3,2\pi/3]$ we have the following properties:
\begin{enumerate}
\item[$(i)$] $F(-x,t)=-F(x,\pi-t)$.
\item[$(ii)$] If $x\neq 0$ then $F(1/x,t)=F(x,t)$.
\item[$(iii)$] $\partial_x F(x,t)=\dfrac{1-x^2}{(1 + x^2 - 
  2 x \cos(t))^2}$.
\end{enumerate}
In particular, for every fixed $t\in [\pi/3,2\pi/3]$ the function $x\mapsto F(x,t)$ is increasing in $[0,1]$ and decreasing in $[1,\infty)$.   
\end{lemma}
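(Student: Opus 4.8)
The plan is to verify all three identities by direct manipulation of the closed form $F(x,t)=\frac{x}{1+x^2-2x\cos(t)}$ from \eqref{eq:def_of_F(x,t)}, after first recording a positivity fact that is used throughout: writing $1+x^2-2x\cos(t)=(x-\cos t)^2+\sin^2 t$, the denominator is $\geq \sin^2 t>0$ for every $x\in\R$ and every $t\in[\pi/3,2\pi/3]$. In particular $F(\cdot,t)$ is a well-defined smooth function of $x$ on the whole real line for each such $t$, and this is exactly the point where the restriction to $t\in[\pi/3,2\pi/3]$ is used.

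For $(i)$ I would substitute $-x$ for $x$, giving $F(-x,t)=\frac{-x}{1+x^2+2x\cos(t)}$, and then use $\cos(\pi-t)=-\cos(t)$ to get $F(x,\pi-t)=\frac{x}{1+x^2+2x\cos(t)}$; comparing the two expressions yields $F(-x,t)=-F(x,\pi-t)$. For $(ii)$, assuming $x\neq 0$, I would multiply numerator and denominator of $F(1/x,t)=\frac{1/x}{1+1/x^2-2\cos(t)/x}$ by $x^2$ to obtain $\frac{x}{x^2+1-2x\cos(t)}=F(x,t)$.

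For $(iii)$ I would apply the quotient rule with $g(x):=1+x^2-2x\cos(t)$, so $g'(x)=2x-2\cos(t)$, and compute
\[
\partial_x F(x,t)=\frac{g(x)-x\,g'(x)}{g(x)^2}=\frac{(1+x^2-2x\cos t)-(2x^2-2x\cos t)}{g(x)^2}=\frac{1-x^2}{(1+x^2-2x\cos(t))^2}.
\]
Since $g(x)^2>0$ by the opening observation, $\partial_x F(x,t)$ has the same sign as $1-x^2$ for each fixed $t\in[\pi/3,2\pi/3]$, so $x\mapsto F(x,t)$ is increasing on $[0,1]$ and decreasing on $[1,\infty)$, with maximum at $x=1$.

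There is no genuine obstacle here: everything is a one-line computation, and the only point requiring a little care is the non-vanishing of the denominator, which is precisely why the statement is phrased for $t\in[\pi/3,2\pi/3]$ (equivalently $\sin t\geq \sqrt3/2$). For completeness one might also double-check the second equality in \eqref{eq:def_of_F(x,t)} by writing $\Re\!\big(ie^{it}/(e^{it}-x)\big)=\Re\!\big(ie^{it}(e^{-it}-x)\big)/|e^{it}-x|^2=\Re\!\big(i-ixe^{it}\big)/(1+x^2-2x\cos t)=x\sin(t)/(1+x^2-2x\cos t)$, which agrees with $\sin(t)F(x,t)$; dividing by $\sin t$ then recovers the stated formula for $F$.
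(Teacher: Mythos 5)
Your computations are correct and complete; they are exactly the direct verifications that the paper declares it omits ("they follow from direct computations"). Nothing further is needed.
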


The plots of $x\mapsto F(x,t)$ for $t\in \left\{\pi/3,\pi/2,2\pi/3\right\}$ are shown in Figure \ref{fig:plot of F}.

\begin{figure}[h!]
    \centering
    \includegraphics[width=0.5\linewidth]{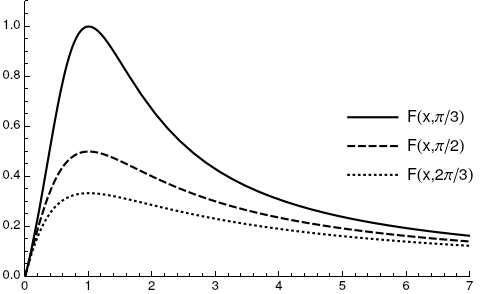}
    \caption{Plot of $x\mapsto F(x,t)$ for $t\in \left\{\pi/3,\pi/2,2\pi/3\right\}$.} \label{fig:plot of F}
\end{figure}

In order to state our next result, we introduce the following notation: For a quadratic irrationality $w$ with purely periodic continued fraction expansion $w=[\overline{a_1;a_2,\ldots,a_\ell}]$  and $t\in [\pi/3,2\pi/3]$ we define
\begin{equation}\label{eq:def_of_S(w,t)}
S(w,t):=\sum_{i=1}^{\ell}\sum_{j=1}^{a_i}F([j;\overline{a_{i+1},\ldots,a_\ell,a_1,\ldots,a_i}],t).
\end{equation}
Using the well known fact that
\(w^{\op}:=[\overline{a_\ell;a_{\ell-1},\ldots,a_1}]=-1/\tilde{w}\) 
(see, e.g., \cite[Lemma 1.28]{Aig2013}) we get
\begin{equation}\label{def_of_S(-1/tilde w,t)}
S(-1/\tilde{w},t)=S(w^{\op},t)=\sum_{i=1}^\ell\sum_{j=1}^{a_i}F([j;\overline{a_{i},\ldots,a_1,a_\ell,\ldots,a_{i+1}}],t).
\end{equation}

\begin{prop}\label{prop:formula_for_Re(I_f(A))}
Let $f$ be a weakly holomophic modular function for $\SL$ with real Fourier coefficients. Then, for $A=T^{a_1}V^{a_2}\cdots T^{a_{\ell-1}}V^{a_\ell}$ with $\ell \geq 2$ even, $a_i\geq 1$ for all $i=1,\ldots,\ell$ and $w=[\overline{a_1;\ldots,a_\ell}]$, we have
\[\Re(I_f(A))=\int_{\pi/3}^{2\pi/3}f(e^{it})\sin(t)\big(S(w,t)+S(w^\op,t)\big)dt.\]
\end{prop}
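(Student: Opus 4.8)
The plan is to start from the integral representation of $I_f(A)$ provided by Lemma~\ref{lem:indentity_cycle_int_BI}, namely
\[I_f(A)=\int_{\pi/3}^{2\pi/3} f(e^{it})\,K_w(t)\,ie^{it}\,dt,\]
and take real parts. Since $f$ has real Fourier coefficients, Remark~\ref{rmk:mod_functions_with_real_fourier_coefficients} tells us that $f(e^{it})\in\R$ for $t\in[\pi/3,2\pi/3]$, so $\Re(I_f(A))=\int_{\pi/3}^{2\pi/3} f(e^{it})\,\Re(K_w(t)\,ie^{it})\,dt$. Thus the whole problem reduces to showing
\[\Re\big(K_w(t)\,ie^{it}\big)=\sin(t)\big(S(w,t)+S(w^{\op},t)\big).\]

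Next I would unfold the definition $K_w(t)=K_{w,1,s(w)}(t)=\sum_{k=1}^{s(w)}\big(\tfrac{1}{e^{it}-w^{(k)}}-\tfrac{1}{e^{it}-\tilde w^{(k)}}\big)$ and apply the identity \eqref{eq:identity_function_F}: for each $k$,
\[\Re\left(\left(\frac{1}{e^{it}-w^{(k)}}-\frac{1}{e^{it}-\tilde w^{(k)}}\right)ie^{it}\right)=\sin(t)\big(F(w^{(k)},t)-F(\tilde w^{(k)},t)\big).\]
Summing over $k$ gives $\Re(K_w(t)ie^{it})=\sin(t)\sum_{k=1}^{s(w)}\big(F(w^{(k)},t)-F(\tilde w^{(k)},t)\big)$. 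Now I use Remark~\ref{remark:shape_of_wk_and_conjugates}, which lists exactly the $s(w)=a_1+\cdots+a_\ell$ terms $w^{(k)}$: as $(i,j)$ ranges over $1\le i\le\ell$, $1\le j\le a_i$, the value $w^{(k)}$ equals $[j;\overline{a_{i+1},\dots,a_\ell,a_1,\dots,a_i}]$ when $i$ is odd and $[0;j,\overline{a_{i+1},\dots,a_\ell,a_1,\dots,a_i}]$ when $i$ is even. The key observation is that by Lemma~\ref{lem:prop_of_F}$(ii)$, $F([0;j,\overline{\cdots}],t)=F(1/[0;j,\overline{\cdots}],t)=F([j;\overline{\cdots}],t)$, so in both parities $F(w^{(k)},t)=F([j;\overline{a_{i+1},\dots,a_\ell,a_1,\dots,a_i}],t)$. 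Therefore $\sum_{k=1}^{s(w)}F(w^{(k)},t)=\sum_{i=1}^{\ell}\sum_{j=1}^{a_i}F([j;\overline{a_{i+1},\dots,a_i}],t)=S(w,t)$, matching \eqref{eq:def_of_S(w,t)}.

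It remains to show $-\sum_{k=1}^{s(w)}F(\tilde w^{(k)},t)=S(w^{\op},t)$. Here I apply Lemma~\ref{lem:prop_of_F}$(i)$: $F(-x,t)=-F(x,\pi-t)$; however, $\pi-t$ again ranges over $[\pi/3,2\pi/3]$ as $t$ does, and the integral $\int_{\pi/3}^{2\pi/3}$ is symmetric under $t\mapsto\pi-t$ only if $f(e^{it})$ is. To avoid that pitfall I would instead use $-F(\tilde w^{(k)},t)=-F(-(-\tilde w^{(k)}),t)=F(-\tilde w^{(k)},\pi-t)$—no wait, this forces a change of variable that does not obviously close up. The cleaner route: by Remark~\ref{remark:shape_of_wk_and_conjugates}, $\tilde w^{(k)}=-[a_i-j;\overline{a_{i-1},\dots,a_1,a_\ell,\dots,a_i}]$ (odd $i$) or $-[0;a_i-j,\overline{a_{i-1},\dots,a_i}]$ (even $i$), so $-\tilde w^{(k)}>0$ and, using $(ii)$ again to clear the leading $0$ in the even case, $-\tilde w^{(k)}$ has the form $[a_i-j;\overline{a_{i-1},\dots,a_1,a_\ell,\dots,a_i}]$ up to $x\mapsto 1/x$, hence $F(-\tilde w^{(k)},t)=F([a_i-j;\overline{a_{i-1},\dots,a_i}],t)$. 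Now I need $-F(\tilde w^{(k)},t)=F(-\tilde w^{(k)},t)$; but $(i)$ only gives $F(\tilde w^{(k)},t)=-F(-\tilde w^{(k)},\pi-t)$. The resolution is that the final statement does \emph{not} claim the integrand identity I wrote above; rather, one must symmetrize the integral. Concretely, since $f(e^{it})=f(\overline{e^{-it}})=\overline{f(e^{i(\pi-t)})}$... actually $e^{i(\pi-t)}=-e^{-it}$ and $f(-\bar\tau)=\overline{f(\tau)}$ from Remark~\ref{rmk:mod_functions_with_real_fourier_coefficients}$(ii)$ with $\tau=e^{it}$ gives $f(-\overline{e^{it}})=f(-e^{-it})=f(e^{i(\pi-t)})=\overline{f(e^{it})}=f(e^{it})$, so $f(e^{it})$ \emph{is} symmetric under $t\mapsto\pi-t$. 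Hence in the integral we may replace $\int f(e^{it})(-F(\tilde w^{(k)},t))\sin(t)\,dt$ by $\int f(e^{i(\pi-t)})(-F(\tilde w^{(k)},\pi-t))\sin(\pi-t)\,dt=\int f(e^{it})F(-\tilde w^{(k)},t)\sin(t)\,dt$ using $(i)$ and $\sin(\pi-t)=\sin t$. Summing over all $k$ and applying the identification of $-\tilde w^{(k)}$ above yields precisely $\int f(e^{it})\sin(t)S(w^{\op},t)\,dt$, by comparing with \eqref{def_of_S(-1/tilde w,t)} and the fact $w^{\op}=-1/\tilde w$. Adding the two contributions gives the proposition.

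The main obstacle I anticipate is the bookkeeping in the second half: correctly matching the indices of the Galois conjugates $\tilde w^{(k)}$ listed in Remark~\ref{remark:shape_of_wk_and_conjugates} to the summation pattern defining $S(w^{\op},t)$ in \eqref{def_of_S(-1/tilde w,t)}, and handling the even/odd parity of $i$ uniformly via Lemma~\ref{lem:prop_of_F}$(ii)$. One must check that as $(i,j)\mapsto(i,a_i-j)$ the set $\{[a_i-j;\overline{a_{i-1},\dots,a_1,a_\ell,\dots,a_i}]: 1\le i\le\ell,\ 1\le j\le a_i\}$ is exactly the set of terms appearing in \eqref{def_of_S(-1/tilde w,t)}, which is a reindexing identity for the cyclic word $(a_\ell,a_{\ell-1},\dots,a_1)$; and one must be careful that the symmetry $t\mapsto\pi-t$ is genuinely available, which it is precisely because $f$ has real Fourier coefficients (Remark~\ref{rmk:mod_functions_with_real_fourier_coefficients}). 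Everything else is a direct application of \eqref{eq:identity_function_F} and Lemma~\ref{lem:prop_of_F}.
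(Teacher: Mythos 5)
Your argument is correct and follows essentially the same route as the paper: Lemma \ref{lem:indentity_cycle_int_BI}, identity \eqref{eq:identity_function_F}, Remark \ref{remark:shape_of_wk_and_conjugates} with Lemma \ref{lem:prop_of_F}$(ii)$ for the $w^{(k)}$ sum, Lemma \ref{lem:prop_of_F}$(i)$ together with the substitution $t\mapsto\pi-t$ and the symmetry $f(e^{i(\pi-t)})=f(e^{it})$ for the $\tilde w^{(k)}$ sum, and finally the reindexing that produces $S(w^\op,t)$. The only cosmetic difference is organizational: the paper converts the full integrand to $\sum_k(F(w^{(k)},t)+F(-\tilde w^{(k)},\pi-t))$ at the outset and performs a single change of variable on $S(w^\op,\pi-t)$ at the very end, whereas you apply the substitution inside the integral term-by-term; these are the same computation. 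Your mid-proof false start (``this forces a change of variable that does not obviously close up'') is in fact exactly the manipulation the paper uses, and it does close up once the $t\mapsto\pi-t$ symmetry of $f(e^{it})$ on the arc is established — which you do correctly via Remark \ref{rmk:mod_functions_with_real_fourier_coefficients}$(ii)$ (the paper derives the same symmetry from $f(-1/\tau)=f(\tau)$). The reindexing you flag as ``bookkeeping'' — matching $\{[a_i-j;\overline{a_{i-1},\ldots,a_1,a_\ell,\ldots,a_i}]:0\le a_i-j\le a_i-1\}$ with the terms of \eqref{def_of_S(-1/tilde w,t)} by shifting the $j=0$ boundary term into the neighboring block via Lemma \ref{lem:prop_of_F}$(ii)$ — is also exactly what the paper calls ``a simple rearrangement,'' so there is no gap.
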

\begin{proof}
Lemmas \ref{lem:indentity_cycle_int_BI} and \ref{lem:prop_of_F}$(i)$, together with Remark \ref{rmk:mod_functions_with_real_fourier_coefficients} and \eqref{eq:identity_function_F}, imply that
 \[\Re(I_f(A))=\int_{\pi/3}^{2\pi/3} f(e^{it})\sin(t)\sum_{k=1}^{s(A)} \Big(F(w^{(k)},t)+F(-\tilde w^{(k)},\pi-t)\Big)dt.\]
 Now, recall that each term $w^{(k)}$ is of the form \eqref{eq:w^k}. Using Lemma \ref{lem:prop_of_F}$(ii)$, we can write
\[F(w^{(k)},t)=F([j;\overline{a_{i+1},a_{i+2},\ldots,a_\ell,a_1,\ldots,a_i}],t) \quad \text{with }1\leq j\leq a_i.\]
Hence \eqref{eq:def_of_S(w,t)} gives
\[\sum_{k=1}^{s(A)} F(w^{(k)},t)=S(w,t).\]
Similarly, using  \eqref{eq:tilde_w^k} and Lemma \ref{lem:prop_of_F}$(ii)$, but changing $j$ to $a_i-j$, we get
\[F(-\tilde w^{(k)},\pi-t)=F([j;\overline{a_{i-1},\ldots,a_1,a_\ell,\ldots,a_i}],\pi-t) \quad \text{with }0\leq j\leq a_i-1,\]
thus
\[\sum_{k=1}^{s(A)}F(-\tilde w^{(k)},\pi-t)=\sum_{i=1}^n\sum_{j=0}^{a_i-1}F([j;\overline{a_{i+1},\ldots,a_1,a_\ell,\ldots,a_i}],\pi-t).\]
Then, a simple rearrangement of this sum, together with Lemma \ref{lem:prop_of_F}$(ii)$ and \eqref{def_of_S(-1/tilde w,t)}, give
\[\sum_{k=1}^{s(A)} F(-\tilde w^{(k)},\pi-t)=S(w^\op,\pi-t).\]
Finally, using the change of variables $t\mapsto \pi-t$ and \(f(e^{i(\pi-t)})=f(-1/e^{it})=f(e^{it})\) we obtain
\[\int_{\pi/3}^{2\pi/3} f(e^{it})\sin(t)S(w^\op,\pi-t)=\int_{\pi/3}^{2\pi/3} f(e^{it})\sin(t)S(w^\op,t).\]
This proves the result. 
\end{proof}


\begin{coro}\label{cor:Lambda_f(x)_is_non_negative}
    $\Lambda_f(x)\in [0,\infty)$ for all real $x\in (0,\infty)$.
\end{coro}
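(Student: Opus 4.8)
\textbf{Proof plan for Corollary \ref{cor:Lambda_f(x)_is_non_negative}.}

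The plan is to reduce the non-negativity of $\Lambda_f(x)$ to a pointwise non-negativity statement for the integrand appearing in Proposition \ref{prop:formula_for_Re(I_f(A))}. First I would recall that by Theorem \ref{thm:first_properties_Lambda_f} we already know $\Lambda_f(x) = 0$ when $x$ is rational, so we may assume $x$ is irrational, and by the $\GL$-invariance (again Theorem \ref{thm:first_properties_Lambda_f}) together with Remark \ref{rmk:starting_with_T_ending_V} we may arrange that the relevant matrices $A_n$ in the Farey sequence have the form $T^{a_1}V^{a_2}\cdots T^{a_{\ell-1}}V^{a_\ell}$ with all $a_i\geq 1$ and $\ell$ even, so that Proposition \ref{prop:formula_for_Re(I_f(A))} applies to each $A_n$ once $n$ is large.

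The key observation is that in the formula
\[\Re(I_f(A))=\int_{\pi/3}^{2\pi/3}f(e^{it})\sin(t)\big(S(w,t)+S(w^\op,t)\big)dt,\]
every factor in the integrand is non-negative on $[\pi/3,2\pi/3]$: by hypothesis $f(e^{it})\geq 0$ there, clearly $\sin(t)>0$ there, and $S(w,t)+S(w^\op,t)\geq 0$ because $S(w,t)$ is by definition \eqref{eq:def_of_S(w,t)} a sum of values $F([j;\ldots],t)$ with $j\geq 1$, hence with first argument $\geq 1 > 0$, and $F(x,t)=x/(1+x^2-2x\cos t)>0$ for $x>0$ on this range of $t$ (the denominator $1+x^2-2x\cos t \geq 1 + x^2 - x = (x-\tfrac12)^2 + \tfrac34 > 0$). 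Therefore $\Re(I_f(A_n))\geq 0$ for all large $n$, and dividing by $n>0$ and taking the limit superior in \eqref{eq:def_lambda_f_intro} gives $\Lambda_f(x)\geq 0$. The finiteness of the limit superior is already guaranteed by Lemma \ref{lem:trivial_bound_I_f(A)}, so $\Lambda_f(x)\in[0,\infty)$.

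The only point requiring a little care is the legitimacy of applying Proposition \ref{prop:formula_for_Re(I_f(A))} to the matrices $A_n$ attached to $x$: as written, those matrices are of the form \eqref{eq:hyp_matrix_seq_intro}, which may begin or end with a pure power of $T$ or with $k < a_{i+1}$, so one should first pass to a conjugate (using Lemma \ref{lem:elementary_prop_cycle_integrals}$(iii)$ and the identity \eqref{eq:STS=V}, as in Remark \ref{rmk:starting_with_T_ending_V}) to put $A_n$ in the normalized shape required by the proposition, noting that conjugation and the change $T\leftrightarrow V$ leave $\Re(I_f(A_n))$ unchanged. I do not expect any genuine obstacle here; the argument is essentially a one-line consequence of Proposition \ref{prop:formula_for_Re(I_f(A))} once the sign of each factor in the integrand is checked.
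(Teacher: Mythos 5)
Your proposal is correct and follows essentially the same route as the paper: both proofs rest on the positivity of $F(x,t)$ for $x>0$ and $t\in[\pi/3,2\pi/3]$, which makes $S(w,t)+S(w^{\op},t)>0$, then invoke Remark \ref{rmk:starting_with_T_ending_V} to reduce to matrices of the form $T^{a_1}\cdots V^{a_\ell}$ with all $a_i\geq 1$, and apply Proposition \ref{prop:formula_for_Re(I_f(A))} together with the non-negativity of $f$ on the arc to conclude $\Re(I_f(A_n))\geq 0$, with finiteness supplied by Lemma \ref{lem:trivial_bound_I_f(A)}. Your extra paragraph addressing the normalization of the $A_n$ makes explicit a point the paper handles more briefly via the same remark.
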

\begin{proof}
  Since $F(x,t)>0$ for all $x\in (0,\infty)$ and $t\in [\pi/3,2\pi/3]$, we see that $S(w,t)$ and $S(w^{\op},t)$ are both positive for all $w\in \R$ quadratic irrational with purely continued fraction expansion. By Remark~\ref{rmk:starting_with_T_ending_V} and Proposition \ref{prop:formula_for_Re(I_f(A))} 
  we conclude $\Re(I_f(A))> 0$ for any hyperbolic matrix $A\in \GLN$. 
  Hence, using Lemma \ref{lem:trivial_bound_I_f(A)} and the definition of $\Lambda_f(x)$ we get $\Lambda_f(x)\in [0,\infty)$ for any $x\in (0,\infty)$.
\end{proof}

\subsection{An upper bound for $\Re(I_f(A))$}

Using Proposition \ref{prop:formula_for_Re(I_f(A))} we will prove the following upper bound for $\Re(I_f(A))$.

\begin{prop}\label{prop:bound_Re(I_f(A))}
  Let $f$ be a weakly holomophic modular function for $\SL$ with real Fourier coefficients. Then, for $A=T^{a_1}V^{a_2}\cdots T^{a_{\ell-1}}V^{a_\ell}$ with $\ell\geq 2$ even, $a_1,a_\ell\geq 0$ and $a_2,\ldots,a_{\ell-1}\geq 1$, we have
\[\Re(I_f(A))=\left(\sup_{t\in [\pi/3,2\pi/3]}|f(e^{it})|\right)\cdot O\left(\ell+\sum_{i=1}^\ell\log(a_i)\right)\]
with an absolute implicit constant.
\end{prop}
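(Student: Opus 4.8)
The strategy is to start from the formula in Proposition \ref{prop:formula_for_Re(I_f(A))} (after reducing to the case $a_1,a_\ell\geq 1$ via Remark \ref{rmk:starting_with_T_ending_V}, which only changes the claimed bound by $O(1)$), bound $f(e^{it})$ trivially by $\sup_{t\in[\pi/3,2\pi/3]}|f(e^{it})|$, and pull this factor out; it then suffices to show $\int_{\pi/3}^{2\pi/3}\sin(t)\big(S(w,t)+S(w^\op,t)\big)\,dt=O\big(\ell+\sum_{i=1}^\ell\log(a_i)\big)$ with an absolute implicit constant. Since $\sin(t)\le 1$ and both $S(w,t)$ and $S(w^\op,t)$ have the same structural form (the latter coming from the opposite word $[\overline{a_\ell;\ldots,a_1}]$, so it is of the same type and obeys the same bound), it is enough to prove $\int_{\pi/3}^{2\pi/3}S(w,t)\,dt=O\big(\ell+\sum_{i=1}^\ell\log(a_i)\big)$.

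Next I would exchange the (finite) sum defining $S(w,t)$ in \eqref{eq:def_of_S(w,t)} with the integral, reducing to estimating $\sum_{i=1}^{\ell}\sum_{j=1}^{a_i}\int_{\pi/3}^{2\pi/3}F([j;\overline{a_{i+1},\ldots,a_i}],t)\,dt$. The key point is that for a real number $y=[j;\ldots]$ with integer part $j\ge 1$ one has $j\le y< j+1$, and a direct computation of $\int_{\pi/3}^{2\pi/3}F(y,t)\,dt$ — using $F(y,t)=\tfrac{y}{1+y^2-2y\cos t}$ — gives a bound of the shape $O(1/y)$ uniformly, hence $O(1/j)$, once $y\geq 1$ (for $j=1$ we just use the trivial bound $F(y,t)=O(1)$ from \eqref{trivial bound}, which is harmless since $\int_{\pi/3}^{2\pi/3}dt$ is bounded). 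Concretely, since $1+y^2-2y\cos t\ge 1+y^2-y = (y-\tfrac12)^2+\tfrac34 \gg y^2$ for $y\ge 1$, we get $F(y,t)\ll 1/y\le 1/j$. Summing over $j$ from $1$ to $a_i$ yields $\sum_{j=1}^{a_i}O(1/j)=O(1+\log a_i)$, and then summing over $i=1,\ldots,\ell$ gives $O\big(\ell+\sum_{i=1}^\ell\log a_i\big)$, as desired. The same computation applied to $w^\op$ contributes an identical bound.

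The only mildly delicate points — and where I would spend the most care — are: (i) making the constant in $\int_{\pi/3}^{2\pi/3}F(y,t)\,dt=O(1/y)$ genuinely absolute, which is routine because $\cos t\in[-\tfrac12,\tfrac12]$ on the relevant range so the denominator is pinched between absolute multiples of $1+y^2$; and (ii) handling the reduction to $a_1,a_\ell\ge1$ cleanly, i.e. checking that conjugating to move an initial or terminal zero exponent (as in Remark \ref{rmk:starting_with_T_ending_V}) changes $\ell$ and $\sum\log a_i$ only by $O(1)$ — indeed it merges at most two blocks, so $\ell$ drops by at most $2$ and $\log a_i$ terms only combine, which can only decrease the sum up to $O(1)$. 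No real obstacle is expected; this is essentially a careful bookkeeping argument on top of Proposition \ref{prop:formula_for_Re(I_f(A))} and the elementary estimate for $\int F(y,t)\,dt$.
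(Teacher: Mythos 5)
Your proposal is correct and follows essentially the same route as the paper: reduce to $a_1,a_\ell\geq 1$ via Remark \ref{rmk:starting_with_T_ending_V}, invoke Proposition \ref{prop:formula_for_Re(I_f(A))}, and then use $F(y,t)=O(1/y)$ for $y\geq 1$ together with $[j;\ldots]\geq j$ to get $\sum_{j=1}^{a_i}F\ll\sum_{j=1}^{a_i}1/j=O(1+\log a_i)$. The only cosmetic difference is that you integrate over $t$ before bounding, while the paper bounds $S(w,t)$ pointwise in $t$ and then notes the $t$-integral is over a fixed compact interval; this is immaterial.
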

\begin{proof}
Using Remark \ref{rmk:starting_with_T_ending_V}, we can assume without loss of generality that $a_1\geq 1$ and $a_\ell\geq 1$. Using Proposition \ref{prop:formula_for_Re(I_f(A))} we see that it is enough to prove the estimate
\[S(w,t)=O\left(\ell+\sum_{i=1}^\ell\log(a_i)\right) \quad \text{for }w=[\overline{a_1;a_2,\ldots,a_\ell}],\]
since the sum $S(w^\op,t)$ is of a similar shape. Moreover, from the definition of  $S(w,t)$ in \eqref{eq:def_of_S(w,t)} we see that is is enough to prove
\[\sum_{j=1}^{a_i}F([j;\overline{a_{i+1},\ldots,a_\ell,a_1,\ldots,a_i}],t)=O\big(1+\log(a_i)\big) \quad \text{for }i=1,\ldots,\ell.\]
From the definition of $F(x,t)$ in \eqref{eq:def_of_F(x,t)} is easy to see that
\(F(x,t)=O\left(1/x\right)\) for all \(x\geq 1\) and \(t\in [\pi/3,2\pi/3]\).
Since $[j;\overline{a_{i+1},\ldots,a_\ell,a_1,\ldots,a_i}]\geq j$, we get
\[\sum_{j=1}^{a_i}F([j;\overline{a_{i+1},\ldots,a_\ell,a_1,\ldots,a_i}],t)=O\left(\sum_{j=1}^{a_i}\frac{1}{j}\right)=O(1+\log(a_i)).\]
This completes the proof.
\end{proof}

\section{First properties of $\Lambda_f$}\label{sec:first_properties_of_Lambda}

In this section we apply the results proven in Section \ref{sec:cycle_integrals} to deduce some basic properties of the $f$-Lyapunov exponent. We start with the following proposition.

\begin{prop}\label{prop:Lambda_f_is_zero_on_positive_rationals}
$\Lambda_f(x)=0$ for every rational $x\in (0,\infty)$.
\end{prop}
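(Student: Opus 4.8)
The plan is to fix a rational number $x = p/q \in (0,\infty)$ and analyze the sequence of matrices $(A_n)_{n \geq 0}$ associated to the chosen path in the Farey tree that terminates at $p/q$ and then makes infinitely many right turns, i.e.\ is eventually a power of $V$. Concretely, after finitely many steps we reach $p/q$ with some matrix $A_{n_1}$, and for $n \geq n_1$ we have $A_n = A_{n_1} V^{n - n_1}$. The goal is to show $\Re(I_f(A_n))/n \to 0$, which immediately gives $\Lambda_f(x) = \limsup_n \Re(I_f(A_n))/n = 0$.

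The key computation is to control $I_f(A_{n_1} V^{k})$ as $k \to \infty$. First I would note that $A_{n_1}$ is a fixed matrix in $\SLN$, say $A_{n_1} = T^{a_1} V^{a_2} \cdots$ ending in some power of $V$ (or handle the short exceptional cases directly), and write $A_{n_1} V^k = T^{a_1} V^{a_2} \cdots T^{a_{\ell-1}} V^{a_\ell + k}$, a word with $\ell \geq 2$ even, bounded initial partial quotients $a_1, \ldots, a_{\ell-1}$, and one large partial quotient $a_\ell + k$. Then Proposition~\ref{prop:bound_Re(I_f(A))} applies directly: it gives
\[
\Re(I_f(A_{n_1} V^k)) = \left(\sup_{t \in [\pi/3, 2\pi/3]} |f(e^{it})|\right) \cdot O\left(\ell + \sum_{i=1}^{\ell} \log(a_i)\right) = O_{x,f}\big(\log k\big),
\]
since all $a_i$ for $i < \ell$ are bounded in terms of $x$ and the only growing term is $\log(a_\ell + k) = O(\log k)$. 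Because $n = n_1 + (k - a_\ell)$ grows linearly in $k$ while $\Re(I_f(A_n))$ grows only logarithmically, dividing by $n$ and letting $n \to \infty$ yields $\Re(I_f(A_n))/n \to 0$, hence $\Lambda_f(x) = 0$.

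An alternative (and perhaps cleaner) route, if one wants to avoid case analysis on the shape of $A_{n_1}$, is to use almost-invariance: write $A_{n_1} = M$ and apply Theorem~\ref{thm:almost_invariance_cycle_integrals} repeatedly, or more directly observe that for the purely-$V$ tail one can conjugate so that $A_{n_1}V^k$ is conjugate to something of the form $V^{k + a_\ell} \cdot (\text{bounded word})$; Lemma~\ref{lem:elementary_prop_cycle_integrals}$(ii)$ controls pure powers ($I_f(V^m) = 0$ since $V$ is parabolic, not hyperbolic — and indeed $I_f$ vanishes on non-hyperbolic matrices by convention), and one estimates the defect of the conjugation. Either way, the essential point is that the numerator $\Re(I_f(A_n))$ stays $O(\log n)$ while the denominator is $\Theta(n)$.

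The main obstacle I anticipate is purely bookkeeping: making sure the matrix $A_{n_1}$ is put into the normal form $T^{a_1}V^{a_2}\cdots T^{a_{\ell-1}}V^{a_\ell}$ required by Proposition~\ref{prop:bound_Re(I_f(A))} (handling small cases where the path hasn't yet become hyperbolic, or where $A_{n_1}$ starts or ends with the "wrong" generator), and confirming that appending $V^k$ really does just increase the final partial quotient by $k$ rather than changing the combinatorial structure of the word — this is where one uses that the chosen path is a left turn into $p/q$ followed by all right turns, so the tail is genuinely $V^k$ and multiplication on the right by $V^k$ corresponds to $a_\ell \mapsto a_\ell + k$ in the continued-fraction encoding \eqref{eq:hyp_matrix_seq_intro}. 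None of this is deep, but it requires care to state cleanly. Once the normal form is secured, the logarithmic bound from Proposition~\ref{prop:bound_Re(I_f(A))} does all the work.
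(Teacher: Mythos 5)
Your proposal is correct and matches the paper's proof: both observe that for rational $x$ the associated Farey path eventually consists only of right turns, so $A_n = T^{b_1}V^{b_2}\cdots T^{b_{\ell-1}}V^k$ with the $b_i$ fixed and $k$ growing linearly in $n$, and then invoke Proposition~\ref{prop:bound_Re(I_f(A))} to get $\Re(I_f(A_n)) = O(\log n)$, which vanishes after dividing by $n$. The only thing worth tightening is the claim ``$A_n = A_{n_1}V^{n-n_1}$'': the chosen path makes one left turn at $p/q$ before the infinite string of right turns, so it is $A_{n_1}TV^{n-n_1-1}$; this does not affect the argument and you flag the issue yourself.
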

\begin{proof}
Let $x\in (0,\infty)$ be a rational number. By our choice of the Farey path for a rational number, the sequence $(A_n)_{n\geq 0}$ associated to $x$ is of the form
\[A_n=T^{b_1}V^{b_2}\cdots T^{b_{\ell-1}}V^{k} \quad \text{ for all }n=b_1+\ldots+b_{\ell-1}+k \text{ with }k\geq 0,\]
for certain integers $b_1,\ldots,b_{\ell-1}\geq 0$. Thus, by Proposition \ref{prop:bound_Re(I_f(A))} we have
\[\Re(I_f(A_n))=O\big(\ell+\log(b_1+1)+\log(b_2)+\ldots+\log(b_{\ell-1})+\log(k)\big)\]
for all $n$ as above. Since $\ell$ is fixed and $k\geq n$, we conclude
\[\Lambda_f(x)=\limsup_{n\to \infty}\frac{\Re(I_f(A_n))}{n}=0.\]
This proves the result.
\end{proof}

\begin{remark}\label{rmk:vanishing_almost_everywhere}
Using Proposition \ref{prop:formula_for_Re(I_f(A))} we see that
\[0\leq \Lambda_f(x)\leq \Lambda_1(x)\left(\sup_{t\in [\pi/3,2\pi/3]}|f(e^{it})|\right) \quad \text{for all }x\in (0,\infty),\]
hence $\Lambda_f(x)=0$ whenever $\Lambda_1(x)=0$. In particular, by \cite[Theorem 4]{SV17} we deduce that $\Lambda_f(x)=0$ for almost all $x\in (0,\infty)$. Alternatively, one can show this property for $\Lambda_f$ by replicating the proof given in \cite[Theorem 4]{SV17} for $\Lambda_1$.
\end{remark}


For the proof of Theorem \ref{thm:first_properties_Lambda_f} we need the following lemma where we use the matrix
\(\tilde{S}:=\left(\begin{smallmatrix}
    0 & 1 \\ 1 & 0
\end{smallmatrix}\right)\in \GLN.\)
Recall that conjugation by $\tilde{S}$ has the effect of interchanging $T$ and $V$ (see \eqref{eq:STS=V}). In particular, if $A=T^{a_1}V^{a_2}\cdots V^{a_{\ell}}$ then $\tilde{S}A\tilde{S}=V^{a_1}T^{a_2}\cdots T^{a_{\ell}}$.

\begin{lemma}\label{lem:matrices_of_Mx_1/x_and_1-x}
Let $x\in(0,\infty)$ with $(A_n)_{n\geq 0}$ the associated sequence of matrices in $\SLN$ defined by the path in the Farey tree $\Far$ converging to $x$. Then, the following properties hold:
\begin{enumerate}
\item[$(i)$] For $M\in \{T,V\}$, the sequence $(B_n)_{n\geq 0}$ of matrices associated to $M(x)$ is given by
\begin{equation}\label{eq:matrices_of_Mx}
B_0=I, \quad \quad B_n=MA_{n-1} \quad \text{for }n\geq 1.
\end{equation}
\item[$(ii)$] If $x$ is irrational, then the sequence $(B_n)_{n\geq 0}$ of matrices associated to $\tilde{S}x=\frac{1}{x}$ is given by
\begin{equation}\label{eq:matrices_of_1/x_irrational}
    B_n:=\tilde{S}A_n\tilde{S}.
\end{equation}
\item[$(iii)$] If $x\in (0,1/2)$ and $x$ is irrational, then the sequence $(B_n)_{n\geq 0}$ of matrices associated to $1-x$ is given by
\begin{equation}\label{eq:matrices_of_1-x}
B_0=I, \quad \quad B_n=VT^{-1} \tilde S A_n \tilde S \quad \text{for }n\geq 1.
\end{equation}
\end{enumerate}
\end{lemma}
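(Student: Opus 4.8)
The three parts share one mechanism, which I isolate first. By the discussion preceding Definition~\ref{def:Lambda_f}, any sequence $(C_n)_{n\geq 0}$ with $C_0=I$ and $C_n=C_{n-1}g_n$ for some $g_n\in\{T,V\}$ consists of matrices in $\SLN$, has $C_n(1)$ a direct descendant of $C_{n-1}(1)$, and has $C_n$ equal to the matrix associated to the Farey fraction $C_n(1)$; in particular $(C_n(1))_{n\geq 0}$ is a descending path in $\Far$ from $\frac11$ and converges to some $y$. The plan is, in each case, to exhibit the claimed $(B_n)$ as a sequence of this form and to compute $B_n(1)$ explicitly, making convergence of $B_n(1)$ to the asserted limit immediate. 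When $x$, and hence $y$, is irrational, uniqueness of the Farey path of $y$ (stated in the same discussion) forces $(B_n(1))$ to be that path and $(B_n)$ to be its associated matrix sequence; when $x$ is rational one must in addition check that the tail convention of Definition~\ref{def:Lambda_f}---one left turn followed by infinitely many right turns after $y$ is reached---is respected.

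For part~(i) I would put $B_0=I$ and $B_n:=MA_{n-1}$ for $n\geq1$. Then $B_n\in\SLN$ since $\SLN$ is a monoid, $B_1=M\in\{T,V\}$, and writing $A_n=A_{n-1}g_n$ gives $B_{n+1}=MA_n=(MA_{n-1})g_n=B_ng_n$; thus $(B_n)$ has the required form, with generator word $M,g_1,g_2,\dots$, which has infinitely many $T$'s and $V$'s whenever $x$ is irrational. Moreover $B_n(1)=M(A_{n-1}(1))=M(x_{n-1})\to M(x)$. If $x$ is irrational, the principle above concludes. If $x=p/q$ and $A_{n_0}(1)=p/q$, then $B_{n_0+1}(1)=M(p/q)$ and $B_{n_0+1+k}=MA_{n_0+k}=B_{n_0+1}\bigl(A_{n_0}^{-1}A_{n_0+k}\bigr)$, so the generator tail of $(B_n)$ beyond $M(p/q)$ equals the generator tail of $(A_n)$ beyond $p/q$; since the latter is a fixed word in $T$ and $V$ independent of $p/q$, and the finite descending path to a prescribed Farey fraction is unique, $(B_n(1))$ is the canonical path of $M(p/q)$.

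Parts~(ii) and~(iii) I would treat the same way, using $\tilde{S}^2=I$, the relation \eqref{eq:STS=V}, and the involution $g\mapsto\bar g$ of $\{T,V\}$ that interchanges $T$ and $V$. For (ii), with $x$ irrational, set $B_n:=\tilde{S}A_n\tilde{S}$; then $B_0=I$ and $B_n=\tilde{S}A_{n-1}g_n\tilde{S}=(\tilde{S}A_{n-1}\tilde{S})(\tilde{S}g_n\tilde{S})=B_{n-1}\bar g_n$, while $A_n=T^{a_1}V^{a_2}\cdots$ gives $B_n=V^{a_1}T^{a_2}\cdots\in\SLN$, and $B_n(1)=\tilde{S}(A_n(1))=1/x_n\to 1/x$, which is irrational. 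For (iii), with $x\in(0,1/2)$ irrational one has $a_1=0$ in \eqref{eq:hyp_matrix_seq_intro}, so $g_1=V$ and every $A_n$ with $n\geq1$ begins with $V$; hence $\tilde{S}A_n\tilde{S}$ begins with a positive power of $T$ and $B_n:=VT^{-1}\tilde{S}A_n\tilde{S}\in\SLN$ for $n\geq1$, with $B_0:=I$. A direct computation gives $B_1=VT^{-1}\tilde{S}V\tilde{S}=VT^{-1}T=V=B_0V$, while for $n\geq2$ the same algebra as in (ii) yields $B_n=B_{n-1}\bar g_n$; so $(B_n)$ is again of the required form. Since $VT^{-1}(z)=1-1/z$ and $\tilde{S}A_n\tilde{S}(1)=1/x_n$, one gets $B_n(1)=1-x_n\to 1-x$, again irrational, and the principle applies.

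The step that needs genuine care is the mechanism recorded in the first paragraph---that an admissible matrix path is the associated sequence of its limit---and, inside part~(i), verifying that the rational tail convention of Definition~\ref{def:Lambda_f} is preserved by $M\in\{T,V\}$. Everything else reduces to routine manipulation of the M\"obius actions of $T$, $V$ and $\tilde{S}$ on $\P^1(\R)$ and of the relation \eqref{eq:STS=V}.
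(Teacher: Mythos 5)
Your proposal is correct and follows essentially the same route as the paper: both hinge on the observation that the associated sequence $(A_n)$ is uniquely characterized by the conditions $A_0=I$, $A_n^{-1}A_{n+1}\in\{T,V\}$, the tail convention, and $A_n(1)\to x$, and then verify these for the three claimed sequences by direct computation of $B_{n-1}^{-1}B_n$ and $B_n(1)$. The only difference is one of detail: the paper dispatches the verification of the first two properties as "a straightforward computation," whereas you spell out the rational-tail check in part (i) (showing the generator tail of $(B_n)$ beyond $M(p/q)$ matches that of $(A_n)$ beyond $p/q$), which is a worthwhile explicit check but not a different method.
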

\begin{proof}
It follows from the uniqueness of the path in $\Far$ converging to $x$ (with the convention that this path eventually becomes a sequence of infinitely many consecutive left turns when $x$ is rational) that the sequence of matrices  $(A_n)_{n\geq 0}$ is uniquely determined by the following three properties:
\begin{itemize}[leftmargin=*]
    \item For every $n\geq 0$, $A_n^{-1}A_{n+1}\in \{T,V\}$.
    \item The sequence $(A_n^{-1}A_{n+1})_{n\geq 0}$ consists of infinitely many $T$'s and $V$'s if $x$ is irrational, or is eventually constant equal to $V$ if $x$ is rational.
    \item $A_n(1)\to x$ as $n\to \infty$.
\end{itemize}
A straightforward computation shows that the sequences \eqref{eq:matrices_of_Mx}, \eqref{eq:matrices_of_1/x_irrational} and \eqref{eq:matrices_of_1-x} satisfy the first two properties above. Now, for the sequence \eqref{eq:matrices_of_Mx} we have
\[\lim_{n\to \infty}B_n(1)=\lim_{n\to \infty}MA_{n-1}(1)=M(x).\]
Similarly,  the sequence \eqref{eq:matrices_of_Mx} satisfies
\[\lim_{n\to \infty}B_n(1)=\lim_{n\to \infty}\tilde {S}A_{n}(1)=\tilde{S}(x)=\frac{1}{x},\]
while the sequence \eqref{eq:matrices_of_1-x} has
\[\lim_{n\to \infty}B_n(1)=\lim_{n\to \infty}\tilde V\tilde S V^{-1}A_n(1)=V\tilde S V^{-1}(x)=1-x,\]
hence these are the sequences associated to $M(x)$ (for $M\in \{T,V\}$), $\frac{1}{x}$ and $1-x$, respectively. This proves the lemma.
\end{proof}


\subsection{Proof of Theorem \ref{thm:first_properties_Lambda_f}}

We are now in position to prove Theorem \ref{thm:first_properties_Lambda_f}. The fact that the limit superior in \eqref{eq:def_lambda_f_intro} is finite is given in Lemma \ref{lem:trivial_bound_I_f(A)}. We extend $\Lambda_f$ to a function $\Lambda_f:\P^1(\R)\to \R$ by defining
\[\Lambda_f(x):=\Lambda_f(-x) \text{ for }x\in (-\infty,0] \text{ and }\Lambda_f(\infty)=0.\]
By Proposition \ref{prop:Lambda_f_is_zero_on_positive_rationals} we conclude that $\Lambda_f(x)=0$ for all $x\in \P^1(\mathbb{Q})$. Now, we have to prove that $\Lambda_f(M(x))=\Lambda_f(x)$ for all $M\in \GL$ and $x\in \P^1(\R)$. This is trivially true on $\P^1(\mathbb{Q})$, so we only need to check this invariance property on irrational numbers $x\in \R$. For such $x$ we have $\Lambda_f(M(x))=\Lambda_f(x)$ for $M=\left(\begin{smallmatrix}
    -1 & 0 \\ 0 & 1
\end{smallmatrix}\right)$ by construction. Since $\GL$ is generated by $\tilde{S}$, $\left(\begin{smallmatrix}
    -1 & 0 \\ 0 & 1
\end{smallmatrix}\right)$ and $T$, we are left with the cases $M\in \{\tilde{S},T\}$. Assume $x\in (0,\infty)$ and let $(A_n)_{n\geq 0}$ and $(B_n)_{n\geq 0}$ denote the sequences of matrices associated to $x$ and $M(x)$. If $M=T$ then by Lemma \ref{lem:matrices_of_Mx_1/x_and_1-x}$(i)$ we have $B_n=TA_{n-1}$ for all $n\geq 1$. Then, by Theorem \ref{thm:almost_invariance_cycle_integrals} we get
\[\Lambda_f(T(x))=\limsup_{n\to \infty} \frac{\Re(I_f(B_n))}{n}=\limsup_{n\to \infty} \left(\frac{\Re(I_f(A_{n-1}))}{n-1}\right)\left(\frac{n-1}{n}\right)=\Lambda_f(x).\]
Similarly, if $M=\tilde{S}$ then by Lemmas \ref{lem:matrices_of_Mx_1/x_and_1-x}$(ii)$ and \ref{lem:elementary_prop_cycle_integrals}$(iv)$ we have
\[\Lambda_f(\tilde{S}(x))=\limsup_{n\to \infty} \frac{\Re(I_f(\tilde{S} A_n \tilde{S}))}{n}=\limsup_{n\to \infty} \frac{\Re(I_f( A_n ))}{n}=\Lambda_f(x).\]
This proves that $\Lambda_f(x)$ is invariant under the action of $T$ and $\tilde{S}$ for $x\in(0,\infty)$. Invariance under $M=\tilde{S}$ extends to $x\in (-\infty,0)$ since $\tilde{S}$ preserves the sign. Similarly, the invariance of $\Lambda_f(x)$ under $M=T$ extends to $x\in (-\infty,-1)$. 
Now, assume $x\in (-1,0)$. Since by definition $\Lambda_f(x)=\Lambda_f(-x)$, putting $y:=-x\in (0,1),$ we need to check that $\Lambda_f(1-y)=\Lambda_f(y)$. Changing $y$ to $1-y$ if necessary, we can assume $y\in (0,1/2)$. If we now denote by $(A_n)_{n\geq 0}$ the sequence of matrices associated to $y$, then the sequence of matrices $(B_n)_{n\geq 0}$ associated to $1-y$ is given by \eqref{eq:matrices_of_1-x} in Lemma \ref{lem:matrices_of_Mx_1/x_and_1-x}$(iii)$. Hence, we have
\[\Lambda_f(1-y)=\limsup_{n\to \infty}\frac{\Re(I_f(VT^{-1} \tilde S A_n \tilde S))}{n}=\limsup_{n\to \infty}\frac{\Re(I_f( A_n ))}{n}=\Lambda_f(y)\]
by Theorem \ref{thm:almost_invariance_cycle_integrals} and Lemma \ref{lem:elementary_prop_cycle_integrals}$(iv)$. This completes the proof of the theorem.

\subsection{Proof of Theorem \ref{thm:Lambda_f_on_quad_irrationals}} 

We now proceed to prove Theorem \ref{thm:Lambda_f_on_quad_irrationals}. First, we recall that for an irrational number $x\in (0,\infty)$ with continued fraction  expansion $x=[a_1; a_2,a_3,\ldots]$, the associated sequence $(A_n)_{n\geq 0}$ is given by \eqref{eq:hyp_matrix_seq_intro}.

Let $x=[a_1;a_2,\ldots,a_r,\overline{b_1,\ldots,b_\ell}]$ be a quadratic irrationality with $\ell\geq 2$ even and put
$y:=[\overline{b_1;\ldots,b_\ell}]$. Since $x$ and $y$  are $\GL$-equivalent, we have $\Lambda_f(x)=\Lambda_f(y)$ by Theorem \ref{thm:first_properties_Lambda_f}. 
Now, let $(B_n)_{n\geq 0}$ be the sequence of matrices associated to $y$, define $B:=T^{b_1}V^{b_2}\cdots T^{b_{\ell-1}}V^{b_\ell}$ and put $m:=s(B)=b_1+\ldots+b_\ell$. It follows from \eqref{eq:hyp_matrix_seq_intro} that for every $n=mq+k$ with $q\geq 0$ and $0\leq k<m$, we have
\(B_n=B^qB_k\). 
Then, by Theorem \ref{thm:almost_additivity_for_badly_approximable} and the fact that $I_f(B^q)=qI_f(B)$ (Lemma \ref{lem:elementary_prop_cycle_integrals}$(ii)$) we get
\[I_f(B_n)=qI_f(B)+I_f(B_k)+\left(\sup_{t\in [\pi/3,2\pi/3]}|f(e^{it})|\right)\cdot O\left(C\right)\]
where $C:=\max\{b_1,\ldots,b_\ell\}$. Since $q=\lfloor \frac{n}{m}\rfloor$, this implies
\[\Lambda_f(y)=\lim_{n\to \infty}\frac{\lfloor \frac{n}{m}\rfloor \Re(I_f(B))}{n}=\frac{\Re(I_f(B))}{m}=\frac{\Re(I_f(T^{b_1}V^{b_2}\cdots T^{b_{\ell-1}}V^{b_\ell}))}{b_1+\ldots+b_\ell},\]
as desired. This completes the proof of Theorem \ref{thm:Lambda_f_on_quad_irrationals}.
 
\section{Values attained by $\Lambda_f$}\label{sec:Values attained by Lambda_f}

This section is devoted to the proof of Theorem \ref{thm:values_of_Lambda_f}. Note that by Theorem \ref{thm:first_properties_Lambda_f} and Corollary \ref{cor:Lambda_f(x)_is_non_negative} we have $\Lambda_f(\P^1(\R))\subseteq [0,\infty)$, while our goal is  to prove that $\Lambda_f(\P^1(\R))=[0,\Lambda_f(\phi)]$. The proof of $\Lambda_f(\P^1(\R))\subseteq [0,\Lambda_f(\phi)]$ reduces to the following key result.

\begin{prop}\label{prop:S(w,t)_leq_golden_ratio}
For every purely periodic quadratic irrationality $w=[\overline{a_1;a_2,\ldots,a_\ell}]$ with $\ell\geq 2$ even, we have
  \[S(w,t)\leq \frac{s(w)}{2}S(\phi,t) \quad \text{for all }t\in [\pi/3,2\pi/3],\]
  where $S(w,t)$ and $s(w)$ are defined in \eqref{eq:def_of_S(w,t)} and \eqref{sn}, respectively. Moreover, the inequality can be made strict if $w\neq \phi$.
\end{prop}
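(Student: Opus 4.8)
The plan is to pass from the double sum defining $S(w,t)$ to a sum over the cyclic sequence $(w^{(k)})_{1\le k\le s(w)}$ of \eqref{eq:w_cyclic_sequence}, and then run a ``packing'' argument on that cycle. Combining Remark~\ref{remark:shape_of_wk_and_conjugates} with Lemma~\ref{lem:prop_of_F}$(ii)$ --- exactly as in the proof of Proposition~\ref{prop:formula_for_Re(I_f(A))} --- one has $S(w,t)=\sum_{k=1}^{s(w)}F(w^{(k)},t)$; moreover $S(\phi,t)=2F(\phi,t)$, so the assertion is equivalent to
\[
\frac{1}{s(w)}\sum_{k=1}^{s(w)}F(w^{(k)},t)\ \le\ F(\phi,t),
\]
i.e. the $F(\cdot,t)$-average over the cycle of $w$ under the map $\sigma$ (namely $x\mapsto x-1$ on $(1,\infty)$ and $x\mapsto \tfrac{x}{1-x}$ on $(0,1)$, which governs \eqref{eq:w_cyclic_sequence}) is at most its value at the golden ratio.

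Call an index $k$ \emph{bad} if $F(w^{(k)},t)>F(\phi,t)$. By Lemma~\ref{lem:prop_of_F}, $x\mapsto F(x,t)$ is unimodal on $(0,\infty)$ with maximum at $1$ and satisfies $F(1/\phi,t)=F(\phi,t)$; hence $k$ is bad exactly when $w^{(k)}\in(1/\phi,\phi)$, otherwise $F(w^{(k)},t)\le F(\phi,t)$, with equality forcing $w^{(k)}\in\{1/\phi,\phi\}$. The first point I would establish is that \emph{no two consecutive indices are bad}: since $w^{(k)}\neq 1$ (it is irrational), either $w^{(k)}\in(1,\phi)$, and then $w^{(k+1)}=w^{(k)}-1\in(0,1/\phi)$, or $w^{(k)}\in(1/\phi,1)$, and then $w^{(k+1)}=\tfrac{w^{(k)}}{1-w^{(k)}}\in(\phi,\infty)$ (one checks $\tfrac{1/\phi}{1-1/\phi}=\phi$); in both cases $w^{(k+1)}\notin(1/\phi,\phi)$.

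The second, and main, ingredient is a \emph{pairing inequality}: for every bad $k$,
\[
F(w^{(k)},t)+F(w^{(k+1)},t)\ <\ 2F(\phi,t).
\]
If $w^{(k)}\in(1,\phi)$ this is the case $x=w^{(k)}$ of $g(x)<2F(\phi,t)$, where $g(x):=F(x,t)+F(x-1,t)$; if $w^{(k)}\in(1/\phi,1)$, then using $F(y,t)=F(1/y,t)$ one rewrites the left side as $g(1/w^{(k)})$ with $1/w^{(k)}\in(1,\phi)$. So it suffices to prove $g(x)<g(\phi)=2F(\phi,t)$ for $x\in(1,\phi)$. Writing $F(x,t)=\bigl(x+\tfrac1x-2c\bigr)^{-1}$ with $c=\cos t\in[-\tfrac12,\tfrac12]$ and clearing denominators, this reduces to $2pq-\sqrt{5}\,(p+q)>2c\,(p+q-2\sqrt{5})$ with $p=x+\tfrac1x$ and $q=(x-1)+\tfrac{1}{x-1}$; being affine in $c$ it is enough to check the two endpoints $c=\pm\tfrac12$, which factor as $(p-\phi)(q-\phi)>\phi^{-2}$ and $(p-1/\phi)(q-1/\phi)>\phi^{2}$ respectively --- two elementary one-variable inequalities on $x\in(1,\phi)$ that degenerate to equalities at $x=\phi$, provable e.g. by checking $g'\ge 0$ on $[1,\phi]$ through Lemma~\ref{lem:prop_of_F}$(iii)$. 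I expect this last verification to be the only genuine computation, hence the main --- though modest --- obstacle.

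Finally I would assemble the pieces. By the first point the bad indices are pairwise non-adjacent in the cyclic order, so $k\mapsto k+1$ sends them injectively into good indices, and $\{1,\dots,s(w)\}$ partitions into the pairs $\{k,k+1\}$ over bad $k$ together with the remaining (necessarily good) singletons. Summing the pairing inequality over the pairs and $F(w^{(k)},t)\le F(\phi,t)$ over the singletons yields $\sum_k F(w^{(k)},t)\le s(w)\,F(\phi,t)$, i.e. $S(w,t)\le\tfrac{s(w)}{2}S(\phi,t)$. For the strict inequality when $w\neq\phi$: if $w$ has a bad index the corresponding pairing inequality is already strict; and if $w$ has no bad index, equality would force $w^{(k)}\in\{1/\phi,\phi\}$ for all $k$, hence by $\sigma(\phi)=1/\phi$ and $\sigma(1/\phi)=\phi$ the whole cycle equals $\{1/\phi,\phi\}$, so $w=w^{(1)}\in\{1/\phi,\phi\}$; since $w\ge 1$ this gives $w=\phi$, a contradiction.
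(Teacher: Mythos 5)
Your proposal is correct and, once the change of variables is unwound, is essentially the paper's argument in a cleaner dress: writing $S(w,t)=\sum_k F(w^{(k)},t)$, observing that bad indices ($w^{(k)}\in(1/\phi,\phi)$) are never adjacent under the Gauss-type map $\sigma$, and pairing each bad $k$ with $k+1$ reproduces exactly the paper's matching of the $j=1$ term in $S_i$ with the purely periodic term in $S_{i+1}$ (indeed $g(x)=F(x,t)+F(x-1,t)$ with $x=w^{(k)}\in(1,\phi)$ is $F(\Phi(y),t)+F(y,t)$ for $y=1/(x-1)>\phi$, so your key inequality $g(x)<g(\phi)$ is precisely the paper's Lemma~\ref{lem:key_ineq_FT} applied on $(\phi,\infty)$). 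The one place where you stop short is the last elementary verification, the two factored one-variable inequalities $(p-\phi)(q-\phi)>\phi^{-2}$ and $(p-1/\phi)(q-1/\phi)>\phi^{2}$ on $(1,\phi)$; your reduction to the endpoints $c=\pm\tfrac12$ by affineness is valid and slightly different in bookkeeping from the paper's (which differentiates $Z(x,t)=F(x,t)+F(\Phi(x),t)$ in $x$ and then argues monotonicity in $\cos t$ to land on a single endpoint and two explicit polynomial positivity checks), but you do not actually carry it out, and the hint ``check $g'\ge 0$ via Lemma~\ref{lem:prop_of_F}$(iii)$'' is a pointer to an alternative rather than a proof of your factored form --- this remaining computation is exactly what the paper's Appendix A.1 supplies.
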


Let \(\Phi:=\left(\begin{smallmatrix}
    1 & 1 \\ 1 & 0
\end{smallmatrix}\right)\) which is the generator  of the stabilizer in $\GLN$ of the golden ratio $\phi$. Note that $\Phi$ acts as $\Phi(x)=1+\frac{1}{x}$ for $x\in \P^1(\R)$.
To give the proof of Proposition \ref{prop:S(w,t)_leq_golden_ratio} we start with the following lemma about the function $F(x,t)$ defined in \eqref{eq:def_of_F(x,t)}. The proof of this lemma reduces to straightforward computations and it is given in the appendix.

\begin{lemma}\label{lem:key_ineq_FT}
   For every $t \in [\pi/3,2\pi/3]$ the function
   \(x\mapsto F(x,t)+F(\Phi(x),t)\)
   is decreasing in $[4/3,\infty)$. In particular:
   \begin{equation}\label{eq:F(x,t)+F(Phix,t)_less_than_2F(phi,t)}
   F(x,t)+F(\Phi(x),t)< 2F(\phi,t) \quad \text{for all }x\in (\phi,\infty).      
   \end{equation}
\end{lemma}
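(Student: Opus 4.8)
The plan is to prove the stronger pointwise statement that, for each fixed $t\in[\pi/3,2\pi/3]$, the function $g(x):=F(x,t)+F(\Phi(x),t)$ satisfies $g'(x)<0$ on $(4/3,\infty)$. The displayed inequality \eqref{eq:F(x,t)+F(Phix,t)_less_than_2F(phi,t)} then follows at once: $\Phi$ fixes $\phi$, so $g(\phi)=2F(\phi,t)$, and since $\phi>4/3$ the strict monotonicity of $g$ on $[4/3,\infty)$ gives $g(x)<g(\phi)=2F(\phi,t)$ for every $x\in(\phi,\infty)$.

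\textbf{Step 1: compute $g'$.} Write $D(x):=1+x^2-2x\cos t$, so $F(x,t)=x/D(x)$ by \eqref{eq:def_of_F(x,t)}. Using $\Phi(x)=(x+1)/x$ one checks $1+\Phi(x)^2-2\Phi(x)\cos t=E(x)/x^2$ with $E(x):=x^2+(x+1)^2-2x(x+1)\cos t$, so $F(\Phi(x),t)=x(x+1)/E(x)$. Applying Lemma \ref{lem:prop_of_F}$(iii)$ together with the chain rule and $\Phi'(x)=-1/x^2$, and simplifying, gives
\[
g'(x)=\frac{1-x^2}{D(x)^2}+\frac{2x+1}{E(x)^2}.
\]
Since $D(x),E(x)>0$ and $x^2-1>0$ for $x>4/3$, the inequality $g'(x)<0$ is equivalent to $(x^2-1)E(x)^2>(2x+1)D(x)^2$, i.e. to
\[
\frac{E(x)}{D(x)}>\sqrt{\frac{2x+1}{x^2-1}}.
\]

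\textbf{Step 2: reduce to the endpoints of the $t$-range.} The key observation is that, for fixed $x$, the left-hand side is a ratio of two affine functions of the parameter $c:=\cos t$, hence monotone on any interval avoiding its pole; that pole is located at $c=(x^2+1)/(2x)$, which is $\ge 1$ by the AM--GM inequality and therefore lies outside $[-1/2,1/2]$. Consequently $\min_{c\in[-1/2,1/2]}E(x)/D(x)$ is attained at $c=1/2$ or at $c=-1/2$, and it suffices to verify the inequality in these two cases. For $c=1/2$ one has $D(x)=x^2-x+1$ and $E(x)=x^2+x+1$; clearing denominators reduces the claim to $x^6+5x^4-4x^3-x^2-2x-2>0$ for $x\ge 4/3$, which holds at $x=4/3$ by direct evaluation and stays true because the derivative $6x^5+20x^3-12x^2-2x-2=6x^2(x^3-2)+(20x^3-2x-2)$ is positive on $[4/3,\infty)$ (note $x^3\ge 64/27>2$ there). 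The case $c=-1/2$, where $D(x)=x^2+x+1$ and $E(x)=3x^2+3x+1$, is handled identically: reduce to a one-variable polynomial inequality, check its value at $x=4/3$, and bound its derivative below by $0$ on $[4/3,\infty)$.

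\textbf{Conclusion and main difficulty.} Combining Steps 1--2 yields $g'<0$ on $(4/3,\infty)$, hence $g$ is strictly decreasing on $[4/3,\infty)$, and the ``in particular'' statement \eqref{eq:F(x,t)+F(Phix,t)_less_than_2F(phi,t)} follows by evaluating at $x=\phi$. The differentiation and simplification in Step 1 are mechanical; the genuine obstacle is Step 2. One must argue carefully that the reduction to the endpoints $c=\pm1/2$ is legitimate — which rests on the monotonicity of $c\mapsto E(x)/D(x)$, i.e. on locating its pole away from $[-1/2,1/2]$ — and then confirm that the resulting single-variable polynomial inequalities genuinely hold all the way down to $x=4/3$. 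The threshold $4/3$ is close enough to where $g'$ changes sign that crude majorizations of $E$ and $D$ (for instance $D\le E$ on this range) fail near the left endpoint, so one really needs the exact value at $x=4/3$ together with the monotonicity of the polynomial.
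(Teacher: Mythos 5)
Your proof is correct, and while the underlying computation (differentiate, set $c=\cos t$, clear denominators, reduce to polynomial inequalities in $x$) is essentially the same as the paper's, your reduction step is organized differently and is arguably cleaner. The paper computes the $c$-derivative of $D/E$ explicitly, notes its sign depends on $x^2-x-1$, and splits into the two cases $x\ge\phi$ and $x\in(4/3,\phi)$, each of which is then settled by exactly one of the endpoint polynomial inequalities. You instead observe that $E/D$ is a fractional-linear function of $c$ whose unique pole $c=(1+x^2)/(2x)\ge1$ lies outside $[-1/2,1/2]$, so it is monotone there regardless of the sign of that derivative, and you verify \emph{both} endpoint inequalities for every $x\ge 4/3$. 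This sidesteps the case analysis entirely. The price is that you must verify both inequalities on the full range $[4/3,\infty)$ rather than on complementary sub-intervals; this works because both polynomials happen to be positive there (your $c=1/2$ polynomial $x^6+5x^4-4x^3-x^2-2x-2$ is the paper's $p(x)$, and the $c=-1/2$ case you sketch as ``identical'' gives the paper's $q(x)=9x^6+16x^5+x^4-20x^3-21x^2-10x-2$, which is indeed positive on $[4/3,\infty)$ as the paper's own factorization $q(x)=(\text{positive})(x-4/3)+5122/243$ shows). One minor remark: you should be careful to frame the endpoint reduction in terms of the \emph{minimum} of $E/D$ over $c\in[-1/2,1/2]$ (not its maximum), since the target inequality is a lower bound on $E/D$; this is implicit in your write-up and checking both endpoints covers it, but it is worth stating explicitly.
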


\begin{remark}\label{rmk:4/3}
In our proof of Proposition \ref{prop:S(w,t)_leq_golden_ratio} below we only use Lemma \ref{lem:key_ineq_FT} in the form of \eqref{eq:F(x,t)+F(Phix,t)_less_than_2F(phi,t)}. The stronger statement regarding the monotonicity of $F(x,t)+F(\Phi(x),t)$ will be used later when studying the values of $\Lambda_f$ at Markov irrationalities. The relevance of the constant $4/3$ above is the following: any irrational number with continued fraction expansion $x=[a_1;a_2,a_3,\ldots]$ with $a_i\in \{1,2\}$ for all $i$ satisfies $x\in [4/3,\infty)$. Indeed, since $a_3\geq 1$, $a_2\leq 2$ and $a_1\geq 1$ we have \(x\geq 1+\frac{1}{2+\frac{1}{1}}=\frac{4}{3}\). 
This fact will be used to estimate the value of $F(x,t)$ for $x$ in the cycle of a Markov irrationality.
\end{remark}

For the proof of Proposition \ref{prop:S(w,t)_leq_golden_ratio} we also need the following well known result which can be found, e.g., in \cite[Lemma 1.24]{Aig2013}.

\begin{lemma}\label{lem:comparison_continued_fractions}
    Let $u=[a_1;a_2,\ldots]$, $v=[b_1;b_2,\ldots]$ be two different real numbers. Then $u<v$ if and only if $(-1)^{i+1}a_i<(-1)^{i+1}b_i$ where $i\geq 1$ is the first index for which $a_i\neq b_i$.
\end{lemma}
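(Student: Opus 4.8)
The plan is to strip off the partial quotients that $u$ and $v$ have in common, keeping track of one parity, and so reduce to the case where the expansions already disagree at the first partial quotient. For a simple continued fraction $[c_1;c_2,\dots]$ and $k\ge 1$ write $\xi_k:=[c_k;c_{k+1},\dots]$ for its $k$-th complete quotient, so that $\xi_k=c_k+1/\xi_{k+1}$ whenever there is a $(k+1)$-st term. Since every partial quotient after the first is a positive integer, $\xi_k\ge 1$ for $k\ge 2$, and in fact $\xi_k\in(c_k,c_k+1)$ when the expansion continues past position $k$. I will use the usual normalization that a finite expansion of length $\ge 2$ does not end in $1$; this makes the expansion unique and the ``first index $i$ with $a_i\neq b_i$'' unambiguous. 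In the situations where the lemma is applied in this paper $u$ and $v$ are irrational, so their expansions are infinite and these conventions are automatic.

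The only analytic ingredient is that for each integer $a$ the map $g_a(x)=a+1/x$ is a strictly \emph{decreasing} bijection from $(0,\infty)$ onto $(a,\infty)$. Let $i\ge 1$ be the first index with $a_i\neq b_i$, so $a_j=b_j$ for $j<i$, and put $\xi_i:=[a_i;a_{i+1},\dots]$ and $\eta_i:=[b_i;b_{i+1},\dots]$. If $i\ge 2$ then $a_i,b_i\ge 1$, so $\xi_i,\eta_i\ge 1>0$; and since the first $i-1$ partial quotients of $u$ and $v$ agree, $u=(g_{a_1}\circ\cdots\circ g_{a_{i-1}})(\xi_i)$ and $v=(g_{a_1}\circ\cdots\circ g_{a_{i-1}})(\eta_i)$ with the \emph{same} composition $g$. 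Being a composition of $i-1$ strictly decreasing bijections, $g$ is strictly increasing when $i-1$ is even and strictly decreasing when $i-1$ is odd; in either case (and trivially when $i=1$, where $g$ is the identity and $\xi_1=u$, $\eta_1=v$) injectivity of $g$ gives $u\neq v\iff\xi_i\neq\eta_i$ together with
\[u<v\quad\Longleftrightarrow\quad(-1)^{i+1}\xi_i<(-1)^{i+1}\eta_i.\]
Thus the lemma reduces to the case $i=1$: it suffices to prove that if $\alpha=[c_1;c_2,\dots]$ and $\beta=[d_1;d_2,\dots]$ are distinct with $c_1\neq d_1$, then $\alpha<\beta\iff c_1<d_1$.

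For this I argue by trichotomy. Suppose first $c_1<d_1$, hence $c_1+1\le d_1$ since both are integers. From $\xi_2\ge 1$ (or the expansion of $\alpha$ being just $[c_1]$) we get $\alpha\le c_1+1$, and likewise $\beta\ge d_1$; therefore $\alpha\le c_1+1\le d_1\le\beta$, and as $\alpha\neq\beta$ at least one of these inequalities is strict, so $\alpha<\beta$. By the symmetric argument $c_1>d_1$ forces $\alpha>\beta$. Since $c_1\neq d_1$ precisely one of the two alternatives holds, which proves both implications. Applying this with $\alpha=\xi_i$, $\beta=\eta_i$ (whose leading partial quotients are $a_i$, $b_i$, and which are distinct by the previous paragraph) and inserting the result into the displayed equivalence yields $u<v\iff(-1)^{i+1}a_i<(-1)^{i+1}b_i$, as claimed.

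The argument is essentially bookkeeping; the two spots that merit care are the parity count in the reduction step — which I would sanity-check against the sign $(-1)^{i+1}$ on the cases $i=1,2,3$ — and the conventions that make ``the first differing index'' meaningful. The only analytic facts used are the monotonicity of $g_a$ and the crude bound $\xi_k\in[1,\infty)$ for complete quotients; one could instead phrase the $i=1$ case through the identity $c_1=\lfloor\alpha\rfloor$ (valid in the normalized expansion) together with monotonicity of the floor, but the trichotomy above makes even that unnecessary.
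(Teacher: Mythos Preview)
Your proof is correct. The paper does not supply its own argument for this lemma; it merely cites \cite[Lemma~1.24]{Aig2013} as a reference for this well-known fact. Your approach---peeling off the common initial partial quotients via the order-reversing maps $g_a(x)=a+1/x$, tracking the parity of the number of such maps applied, and then settling the base case $c_1\neq d_1$ through the chain $\alpha\le c_1+1\le d_1\le\beta$---is essentially the standard proof one finds in the literature (including Aigner's book), so there is nothing substantive to compare.
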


Recall that, given a quadratic irrationality $w=[\overline{a_1;a_2,\ldots,a_\ell}]$ with $\ell\geq 2$ even and minimal, the sum $S(w,t)$ for $t\in [\pi/3,2\pi/3]$ is defined in \eqref{eq:def_of_S(w,t)}. It can be rewritten  as
\begin{equation}\label{eq:def_of_S_i(w,t)}
  S(w,t)=\sum_{i=1}^{\ell}S_i(w,t) \quad \text{with }  S_i(w,t):=\sum_{j=1}^{a_i}F([j;\overline{a_{i+1},\ldots,a_\ell,a_1,\ldots,a_i}],t).
\end{equation}
We extend the definition of $S_i(w,t)$ to $i\in \Z$ by putting $S_i(w,t):=S_{i_0}(w,t)$ where $i_0$ is the unique integer satisfying $1\leq i_0\leq \ell$ and $i\equiv i_0$ mod $\ell$.

\begin{proof}[Proof of Proposition \ref{prop:S(w,t)_leq_golden_ratio}]
    Clearly we can assume $w\neq \phi$. Fix $t\in [\pi/3,2\pi/3]$. We introduce the following notation: a term with $1\leq j\leq a_i$ in $S_i(w,t)$ is called \textit{bad} if 
\[F([j;\overline{a_{i+1},\ldots, a_\ell,a_1,\ldots,a_{i}}],t) >F(\phi,t),\]
and it is called \textit{good} otherwise. Using  the fact that $x\mapsto F(x,t)$ is decreasing for $x\in[1,\infty)$ (Lemma \ref{lem:prop_of_F}), together with $\phi=[\overline{1;1}]\in (1,2)$, we see that the bad terms are exactly those with $[j;\overline{a_{i+1},\ldots, a_\ell,a_1,\ldots,a_{i}}]\in (1,\phi)$. By  Lemma \ref{lem:comparison_continued_fractions} this happens only when $j=1$ and the first partial quotient greater than 1 in the periodic part $\overline{a_{i+1},\ldots,a_\ell,a_1,\ldots,a_i}$ is located in an odd position when counting from left to right and starting with $a_{i+1}$ (ignoring their indices). In particular, the $i$-th sum $S_i(w,t)$ in \eqref{eq:def_of_S_i(w,t)} has at most one bad term. 

We will prove that if we have a bad term in $S_i(t,w)$, then we can pair it with a good term in $S_{i+1}(w,t)$ in such a way that we can apply \eqref{eq:F(x,t)+F(Phix,t)_less_than_2F(phi,t)}.  So, assume we have a bad term $F([1;\overline{a_{i+1},\ldots, a_\ell,a_1,\ldots,a_{i}}],t)$. 
Then, there is an odd integer $k\geq 1$ such that $a_{i+1}=\ldots =a_{i+k-1}=1$ and $a_{i+k}>1$  (the $k$-th coefficient in the periodic part $\overline{a_{i+1},\ldots, a_\ell,a_1,\ldots,a_{i}}$ is the first partial quotient greater than 1). We claim that we can pair this bad term with the good term
\[F([a_{i+1};\overline{a_{i+2},\ldots,a_\ell,a_1,\ldots ,a_i,a_{i+1}}],t)=F([\overline{a_{i+1};\ldots,a_{i}}],t)\]
appearing in $S_{i+1}(w,t)$. Indeed, put $x:=[\overline{a_{i+1};\ldots,a_{i}}]$ and note that $x>\phi$ (this follows from Lemma \ref{lem:comparison_continued_fractions} and the fact that $k$ is odd). By monotonicity, $F(x,t)<F(\phi,t)$ and hence $F(x,t)$ is a good term. Since
\[[1;\overline{a_{i+1},\ldots, a_\ell,a_1,\ldots,a_{i}}]=1+\frac{1}{x}=\Phi(x),\]
we can apply \eqref{eq:F(x,t)+F(Phix,t)_less_than_2F(phi,t)} and get $F(x,t)+F(\Phi(x),t)< 2F(\phi,t)$.

After matching all the possible bad terms in $S(w,t)$ with the corresponding good terms as above, we conclude \(S(w,t)< s(w)F(\phi,t)=\frac{s(w)}{2}S(\phi,t)\). This proves the proposition.
\end{proof}

In the proof that every value in $[0,\Lambda_f(\phi)]$ is attained by $\Lambda_f$, we use the following lemma which is a direct consequence of Theorem \ref{thm:almost_additivity_for_badly_approximable} and Lemma \ref{lem:elementary_prop_cycle_integrals}$(ii)$.

\begin{lemma}\label{lem:I_f(AB^m)_goes_to_I_f(B)}
    Let $A=T^{a_1}V^{a_2}\ldots T^{a_{r-1}}V^{a_r}$ and $B=T^{b_1}V^{b_2}\ldots T^{b_{s-1}}V^{b_s}$ with $r,s\geq 2$ both even and $a_i,b_j\geq 1$ for all $i,j$. Then
    \[\lim_{m\to \infty}\frac{I_f(AB^m)}{s(A)+ms(B)}=\frac{I_f(B)}{s(B)}.
    \]
\end{lemma}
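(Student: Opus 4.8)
\textbf{Proof plan for Lemma \ref{lem:I_f(AB^m)_goes_to_I_f(B)}.}

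The plan is to express $AB^m$ as an element of the form $A_{n}$ in the Farey sequence attached to a suitable badly approximable number, and then quote Theorem \ref{thm:almost_additivity_for_badly_approximable}. Concretely, I would first consider the eventually periodic irrational $x:=[a_1;a_2,\ldots,a_{r-1},a_r,\overline{b_1,\ldots,b_s}]$ (note $r,s$ are even, so the exponent pattern alternates $T,V$ correctly, and the partial quotients are bounded by $C:=\max\{a_i,b_j\}$, so $x$ is badly approximable). By the explicit description \eqref{eq:hyp_matrix_seq_intro}, the associated matrix sequence $(A_n)_{n\geq 0}$ satisfies $A_{s(A)}=A$ and more generally $A_{s(A)+ms(B)}=AB^m$ for every $m\geq 0$, since appending a full block $V^{b_s}$ after the previous block closes off a copy of $B=T^{b_1}\cdots V^{b_s}$. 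Writing $N_m:=s(A)+ms(B)$, the quantity we must control is $I_f(A_{N_m})/N_m$.

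Next I would apply Theorem \ref{thm:almost_additivity_for_badly_approximable} with the decomposition $A_{N_m}=A_{s(A)}\cdot B^m$, where $B^m$ plays the role of $B_{ms(B),\,s(A)}$ in the notation of that theorem. This yields
\[
I_f(A_{N_m})=I_f(A)+I_f(B^m)+\Bigl(\sup_{t\in[\pi/3,2\pi/3]}|f(e^{it})|\Bigr)\cdot O(C),
\]
with an absolute implicit constant, hence an error term that is $O_f(C)$ uniformly in $m$. By Lemma \ref{lem:elementary_prop_cycle_integrals}$(ii)$ we have $I_f(B^m)=m\,I_f(B)$. Dividing by $N_m=s(A)+ms(B)$ and letting $m\to\infty$, the fixed contributions $I_f(A)$ and the bounded error $O_f(C)$ are swallowed, while $m\,I_f(B)/(s(A)+ms(B))\to I_f(B)/s(B)$. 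This gives the claimed limit.

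There is essentially no hard obstacle here; the lemma is a clean corollary of the almost additivity theorem, much in the spirit of Corollary \ref{cor:I(A_m+n)-I(A_n)_goes_to_zero} and of the proof of Theorem \ref{thm:Lambda_f_on_quad_irrationals}. The one point that requires a little care is verifying that $AB^m$ really occurs as $A_{N_m}$ in the Farey sequence of the chosen $x$ — i.e.\ that concatenating $A$ (which ends in $V^{a_r}$) with $B=T^{b_1}\cdots V^{b_s}$ respects the alternating $T$/$V$ structure of \eqref{eq:hyp_matrix_seq_intro}. This is exactly where the evenness of $r$ and $s$ is used: $A$ ending in a power of $V$ is correctly followed by a power of $T$, and each block $B$ both starts with a power of $T$ and ends with a power of $V$, so arbitrarily many copies can be appended. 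Alternatively, one can avoid reference to the Farey sequence altogether and invoke Theorem \ref{thm:almost_additivity_for_badly_approximable} directly in the form proved by Matsusaka for quadratic irrationals, but spelling out the matrix sequence is the most transparent route.
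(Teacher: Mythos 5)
Your proposal is correct and matches the paper's intended argument: the paper states the lemma is "a direct consequence of Theorem \ref{thm:almost_additivity_for_badly_approximable} and Lemma \ref{lem:elementary_prop_cycle_integrals}$(ii)$," and you have spelled out exactly that derivation, choosing $x=[a_1;a_2,\ldots,a_r,\overline{b_1,\ldots,b_s}]$ so that $A_{s(A)+ms(B)}=AB^m$, applying almost additivity with the decomposition $A_{N_m}=A\cdot B^m$, using $I_f(B^m)=m\,I_f(B)$, and taking the limit. Your check that evenness of $r$ and $s$ ensures the $T/V$ alternation closes up properly is also the right point to flag.
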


\subsection{Proof of Theorem \ref{thm:values_of_Lambda_f}}


Let $x$ be an irrational number with associated sequence of hyperbolic matrices $(A_n)_{n\geq 0}$. The upper bound $\Lambda_f(x)\leq \Lambda_f(\phi) $ 
is a consequence of the inequality $\Re(I_f(A_n))\leq n \Lambda_f(\phi)$ for all $n\geq 0$. 
This in return,  follows from Theorem \ref{thm:Lambda_f_on_quad_irrationals} and Propositions~\ref{prop:formula_for_Re(I_f(A))} and~\ref{prop:S(w,t)_leq_golden_ratio}.

We will now prove that any value $\lambda_0\in [0,\Lambda_f(\phi)]$ is attained by $\Lambda_f$. Since $0=\Lambda_f(1)$, we can assume $\lambda_0\in (0,\Lambda_f(\phi))$. We will construct a sequence of matrices $(A_n)_{n\geq 0}$ with $A_0=I$ and $A_n^{-1}A_{n+1}\in \{T,V\}$ for all $n\geq 1$, for which the sequence $\frac{\Re(I_f(A_n))}{n}$ takes values in $(0,\lambda_0)$ and in $(\lambda_0,\Lambda_f(\phi))$ infinitely many times, in such a way that
\[\limsup_{n\to \infty}\frac{\Re(I_f(A_n))}{n}=\lambda_0.\]
We start with $A_0=I$. It follows from Proposition \ref{prop:Lambda_f_is_zero_on_positive_rationals} that
\[\lim_{n\to \infty}\frac{\Re(I_f(TV^{n-1}))}{n}=\Lambda_f(1)=0,\]
hence we can choose $a\geq 2$ such that
\begin{equation}\label{eq:choice of a}
\frac{\Re(I_f(TV^{a}))}{a+1} < \lambda_0.
\end{equation}
We then define
\(A_1=T, A_2=TV,A_3=TV^2, \ldots ,A_{1+a}=TV^a.\)
Now, by Lemma \ref{lem:I_f(AB^m)_goes_to_I_f(B)} we have
\[\lim_{m\to \infty}\frac{\Re(I_f(A_{1+a}(TV)^m))}{(1+a)+2m}=\frac{\Re(I_f(TV))}{2}=\Lambda_f(\phi)>\lambda_0,\]
hence there exists a minimal $m_1\geq 1$ such that
\begin{equation*}
\frac{\Re(I_f(A_{1+a}(TV)^{m_1}))}{(1+a)+2m_1}>\lambda_0.
\end{equation*}
We then define
\(A_{2+a}=TV^aT, A_{3+a}=TV^aTV, \ldots ,A_{(1+a)+2m_1}=TV^a(TV)^{m_1}.\)
Now, we use Lemma \ref{lem:I_f(AB^m)_goes_to_I_f(B)}  to deduce that
\[\lim_{m\to \infty}\frac{\Re(I_f(A_{(1+a)+2m_1}(TV^a)^m))}{(1+a)+2m_1+(1+a)m}=\frac{\Re(I_f(TV^a))}{1+a}<\lambda_0,\]
hence there exists a minimal $m_2\geq 1$ such that
\begin{equation*}
\frac{\Re(I_f(A_{(1+a)+2m_1}(TV^a)^{m_2}))}{(1+a)+2m_1+(1+a)m_2}<\lambda_0,   
\end{equation*}
and we put
\(A_{(1+a)+2m_1+1}=TV^a(TV)^{m_1}T,A_{(1+a)+2m_1+2}=TV^a(TV)^{m_1}TV, \ldots\)
until we reach $A_{(1+a)+2m_1+(1+a)m_2}=TV^a(TV)^{m_1}(TV^a)^{m_2}$. Repeating this process  we obtain a sequence of matrices $(A_n)_{n\geq 0}$ satisfying the following property: there is a sequence of positive integers $(m_i)_{i\geq 1}$ such that for all $i\geq 2$ even we have
\begin{equation*}
    \frac{\Re(I_f(A_{\ell_i}))}{\ell_i}<\lambda_0\leq \frac{\Re(I_f(A_{\ell_i-(1+a)}))}{\ell_i-(1+a)}
\end{equation*}
for $\ell_i:=(1+a)+2m_1+(1+a)m_2+\ldots+2m_{i-1}+(1+a)m_i$, while for all $i\geq 3$ odd we have
\begin{equation}\label{eq:r_i}
\frac{\Re(I_f(A_{r_i-2}))}{r_i-2}\leq \lambda_0<\frac{\Re(I_f(A_{r_i}))}{r_i}
\end{equation}
for $r_i:=(1+a)+2m_1+(1+a)m_2+\ldots+(1+a)m_{i-1}+2m_i$. Indeed, these inequalities follow from the minimality of the $m_i$'s.

The sequence $(A_n)_{n\geq 0}$ defines a path in the Farey tree $\Far$ converging to a badly approximable real number $x_0$  of the form
\(x_0=[1;a,1,a,\ldots,1,a,1,1,\ldots ,1,1,1,a,1,a,\ldots]\).
We claim that
\begin{equation}\label{eq:Lambda(x_0)=lambda_0}
\Lambda_f(x_0)=\limsup_{n\to \infty}\frac{\Re(I_f(A_n))}{n}=\lambda_0.    
\end{equation}
Since by construction there are infinitely many indices $n$ for which $\frac{\Re(I_f(A_n))}{n}>\lambda_0$, namely for $n=r_i$ with $i$ odd, we have
\(\Lambda_f(x_0)\geq \lambda_0.\)
Now, let $\epsilon>0$. We will prove that for all large enough $n$  the inequality
\begin{equation}\label{eq:Lambda(x_0)<lambda_0+3eps}
\frac{\Re(I_f(A_n))}{n}\leq \lambda_0+3\epsilon 
\end{equation}
holds. This will imply that $\Lambda_f(x_0)\leq \lambda_0+3\epsilon$ which in turn completes the proof of \eqref{eq:Lambda(x_0)=lambda_0}. 

In order to prove \eqref{eq:Lambda(x_0)<lambda_0+3eps} observe that by Corollary \ref{cor:I(A_m+n)-I(A_n)_goes_to_zero} there exists $N_1\in \N$ such that
\begin{equation}\label{eq:I_f(A_n+m)-I_f(A_n)}
    \left|\frac{\Re(I_f(A_{n+m}))}{n+m}-\frac{\Re(I_f(A_{n}))}{n}\right|\leq \epsilon \quad \text{for all }n\geq N_1 \text{ and }0\leq m\leq 1+a.
\end{equation}
In particular, choosing $m=2$ and $i$ odd and large enough  so that $n=r_i-2>N_1$, we  deduce from \eqref{eq:r_i}  that
\begin{equation}\label{eq:ineq_ri}
    \frac{\Re(I_f(A_{r_i}))}{r_i}\leq \lambda_0+\epsilon .
\end{equation}
By Theorem \ref{thm:almost_additivity_for_badly_approximable} and Lemma \ref{lem:elementary_prop_cycle_integrals}$(iii)$, for all $k\geq 0$ and $i$ odd and large enough, we have 
\[\frac{\Re(I_f(A_{r_i}(TV^a)^k)}{r_i+k(1+a)}=\frac{\Re(I_f(A_{r_i}))}{r_i}\frac{r_i}{r_i+k(1+a)}+\frac{\Re(I_f(TV^a))}{1+a}\frac{k(1+a)}{r_i+k(1+a)}+\epsilon.\]
In particular, 
using \eqref{eq:ineq_ri} and \eqref{eq:choice of a} we get
\begin{equation}\label{eq:A_r_i(TV^a)^k)}
    \frac{\Re(I_f(A_{r_i}(TV^a)^k)}{r_i+k(1+a)}\leq \left(\lambda_0+\epsilon\right)\frac{r_i}{r_i+k(1+a)}+\lambda_0 \frac{k(1+a)}{r_i+k(1+a)}+\epsilon\leq \lambda_0+2\epsilon.
\end{equation}
Denote by $i_0$ a large enough odd integer satisfying $r_{i_0}\geq N_1$ and let $n\geq r_{i_0}$. Since $r_1<\ell_2<r_3<\ell_4<\ldots$, there is a unique index $j \geq i_0$ such that
\begin{itemize}[leftmargin=*]
    \item $r_j\leq n<\ell_{j+1}=r_j+(1+a)m_{j+1}$ and $j$ is odd, or
    \item $\ell_j\leq n<r_{j+1}=\ell_j+2m_{j+1}$ and $j$ is even.
\end{itemize}
In the first case, we write $n=r_j+(1+a)k+m$ with $0\leq k<m_{j+1}$ and $0\leq m<1+a$. Then, combining \eqref{eq:I_f(A_n+m)-I_f(A_n)} and \eqref{eq:A_r_i(TV^a)^k)} we get
\[\frac{\Re(I_f(A_{n}))}{n}\leq \frac{\Re(I_f(A_{r_j+(1+a)k}))}{r_j+(1+a)k}+\epsilon=\frac{\Re(I_f(A_{r_i}(TV^a)^k)}{r_i+k(1+a)}+\epsilon\leq \lambda_0+3\epsilon.\]
In the second case, we write $n=\ell_j+2k+m$ with $0\leq k<m_{j+1}$ and $0\leq m<2$. Then, using \eqref{eq:I_f(A_n+m)-I_f(A_n)} and the minimality of $m_{j+1}$ we get
\[\frac{\Re(I_f(A_{n}))}{n}\leq \frac{\Re(I_f(A_{\ell_j+2k}))}{\ell_j+2k}+\epsilon\leq \lambda_0+\epsilon.\]
This proves \eqref{eq:Lambda(x_0)<lambda_0+3eps} in both cases and completes the proof of Theorem \ref{thm:values_of_Lambda_f}.

\begin{remark}\label{rmk:badly_approx_have_Lambda>0_and_golden_ratio_strict_maximum}
    It follows from our proof of Theorem \ref{thm:values_of_Lambda_f} that in fact all values in $(0,\Lambda_f(\phi)]$ are attained by $\Lambda_f$ on badly approximable numbers. Moreover, the following converse result holds:  every badly approximable number $x\in \R$ has $\Lambda_f(x)>0$. Indeed, using Theorem \ref{thm:first_properties_Lambda_f}, Proposition \ref{prop:formula_for_Re(I_f(A))} and the fact that $u\mapsto F(u,t)$ is decreasing for $u\in [1,\infty)$ for every fixed $t\in [\pi/3,2\pi/3]$ (Lemma \ref{lem:prop_of_F}), we have that every badly approximable $x=[a_1;a_2,a_3,\ldots]$ with $C:=\max\{a_i:i\geq 2\}$ has $\Lambda_f(x)$ bounded below by
    \[ 2\int_{\pi/3}^{2\pi/3}f(e^{it})\sin(t)F(C+1,t)dt\geq 2\left(\frac{C+1}{C^2+3C+3}\right)\int_{\pi/3}^{2\pi/3}f(e^{it})\sin(t)dt>0,\]
where in the second inequality we used that $F(x,t)\geq \frac{x}{1+x+x^2}.$
\end{remark}

\section{Minimum over Markov irrationalities}\label{sec:Minimum over Markov irrationalities}

The goal of this section is to prove Theorem \ref{thm:Lambda_f(M)}, which states that $\Lambda_f(\M)\subset [\Lambda_f(\psi),\Lambda_f(\phi)]$, where $\psi=1+\sqrt{2}=[\overline{2;2}]$ is the silver ratio. Since $\Lambda_f(\M)\subseteq \Lambda_f(\P^1(\R))= [0,\Lambda_f(\phi)]$ by Theorem \ref{thm:values_of_Lambda_f}, we are left to prove the lower bound $\Lambda_f(w)\geq \Lambda_f(\psi)$ for all $w\in \M$. The full proof is given at the end of this section. It reduces to the following key proposition.

\begin{prop}\label{prop:S(w,t)_geq_silver_ratio}
For every Markov irrationality $w\in \M$ we have
\[S(w,t),S(w^{\op},t)\geq \frac{s(w)}{4}S(\psi,t) \quad \text{for all }t\in [\pi/3,2\pi/3].\]
Moreover, the inequality can be made strict if $w\neq \psi$.
\end{prop}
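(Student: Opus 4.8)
The plan is to mirror the strategy used for Proposition \ref{prop:S(w,t)_leq_golden_ratio}, but now we need a \emph{lower} bound, so instead of pairing ``bad'' terms with ``good'' terms we will pair terms in a way that extracts a contribution of at least $\tfrac12 S(\psi,t)$ from each ``block'' of partial quotients in the continued fraction expansion of $w$. The starting observation is that every Markov irrationality $w\in \M$ has a purely periodic continued fraction expansion whose partial quotients all lie in $\{1,2\}$, and in fact the expansion decomposes into blocks of the shape $2,2,1,1,\ldots,1,1$ (each $2$-block of length $2$ followed by a $1$-block of even length, possibly empty), this being exactly how the Markov tree is generated via the conjunction operation $\odot$ starting from $[\overline{1_2}]$ and $[\overline{2_2}]$. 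Since all partial quotients are in $\{1,2\}$, by Remark \ref{rmk:4/3} every argument $[j;\overline{\cdots}]$ appearing in $S(w,t)$ (with $j\in\{1,2\}$) lies in $[4/3,\infty)$, so we are squarely in the range where Lemma \ref{lem:key_ineq_FT} gives monotonicity of $x\mapsto F(x,t)+F(\Phi(x),t)$.

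The key computation I would isolate first is the value of $S(\psi,t)$: since $\psi=[\overline{2;2}]$, we have $s(\psi)=4$ and
\[
S(\psi,t)=2\big(F([1;\overline{2;2}],t)+F([2;\overline{2;2}],t)\big)=2\big(F(\sqrt2-1,t)+F(1+\sqrt2,t)\big)=4F(\psi,t),
\]
using Lemma \ref{lem:prop_of_F}$(ii)$ (note $[1;\overline{2;2}]=1/\psi$). So the target inequality is $S(w,t)\geq s(w)\,F(\psi,t)$, i.e.\ on average each of the $s(w)$ summands in $S(w,t)$ should contribute at least $F(\psi,t)$. Now I would argue block by block. For a $2$-block contributing indices $i,i+1$ with $a_i=a_{i+1}=2$, the four summands it generates are $F(x_1,t),F(x_2,t)$ from position $i$ and $F(x_3,t),F(x_4,t)$ from position $i+1$, where the $x$'s are continued fractions of the form $[1;\ldots]$ or $[2;\ldots]$ whose tails are controlled; using the monotonicity of $F$ on $[1,\infty)$ (Lemma \ref{lem:prop_of_F}) and comparing tails via Lemma \ref{lem:comparison_continued_fractions}, one shows each such block contributes at least $4F(\psi,t)$ — essentially because the $[2;\ldots]$ terms are close to $\psi$ and the $[1;\ldots]$ terms pair with them through $\Phi$ and Lemma \ref{lem:key_ineq_FT} run the other direction, or more simply because $F$ is not too far from $F(\psi,t)$ on the relevant intervals. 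For a $1$-block of even length $2m$, the terms alternate between values slightly above and slightly below $\phi>\psi$; here I would pair the $j=1$ term at an odd-position within the block (which is $>\phi$, hence $F$-value $<F(\phi,t)$ but we need a lower bound) with a neighbouring term of the form $[\overline{\cdots}]$ that is $<\phi$... actually the cleaner route: since all such arguments lie in the interval $(1/\psi',\ldots)$ bounded away from the singularity, each $F$-value is bounded below by $F$ evaluated at the worst (largest) possible argument, which for tails in $\{1,2\}$ is $\leq$ the silver ratio's conjugate regime, giving each term $\geq F(\psi,t)$ directly; so $1$-blocks are in fact the easy case. The delicate case is therefore the $2$-blocks, where one must verify the block sum does not dip below $4F(\psi,t)$, and this is where I would invoke Lemma \ref{lem:key_ineq_FT} carefully, writing the $[1;\ldots]$ entries as $\Phi$ applied to the corresponding $[\overline{2;\ldots}]$ entry.

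The same argument applies verbatim to $w^{\op}=[\overline{a_\ell;a_{\ell-1},\ldots,a_1}]$, since the class of Markov irrationalities is closed under taking $\op$ (reversing a block decomposition of the Markov type again yields a block decomposition of the Markov type), so $S(w^{\op},t)\geq \tfrac{s(w)}{4}S(\psi,t)=s(w)F(\psi,t)$ follows by the identical block analysis; note $s(w^{\op})=s(w)$. Finally, strictness when $w\neq\psi$: if $w\neq\psi$ then the block decomposition of $w$ contains at least one $1$-block (equivalently, $w\neq[\overline{2;2}]$ up to the doubling convention), and for any term coming from a $1$-block the relevant argument is strictly $>\psi$ — wait, rather one should locate a single term where the inequality $F(\cdot,t)\geq F(\psi,t)$ is strict, which happens as soon as some argument is $\neq\psi$ and $\neq 1/\psi$, i.e.\ always unless $w=\psi$; tracking one such strict term through the block sums gives $S(w,t)>s(w)F(\psi,t)$ and likewise for $w^{\op}$.

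\textbf{Main obstacle.} The hardest part will be the bookkeeping in the $2$-block case: one has to show that the sum of the four $F$-values attached to a consecutive pair of $2$'s in the expansion is at least $4F(\psi,t)$ \emph{uniformly in the surrounding partial quotients} and \emph{uniformly in $t\in[\pi/3,2\pi/3]$}. This requires pinning down how much the tails (which are arbitrary $\{1,2\}$-sequences coming from the rest of the Markov word) can move the arguments away from $\psi$, combining Lemma \ref{lem:comparison_continued_fractions} to bound the arguments in nested intervals with the explicit monotonicity regions of $F$ from Lemma \ref{lem:prop_of_F} and the compound monotonicity of $F(\cdot,t)+F(\Phi(\cdot),t)$ on $[4/3,\infty)$ from Lemma \ref{lem:key_ineq_FT}. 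I expect that once the worst-case tail configurations are identified (the extreme arguments being quadratic irrationals with tail $\overline{1;1}$ or $\overline{1;2}$ or $\overline{2;1}$ appended), the inequality reduces to finitely many one-variable inequalities in $t$ that can be checked by the elementary estimates already developed for $F$.
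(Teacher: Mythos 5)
Your proposal contains a concrete arithmetic error that derails the whole strategy, and it misses the second key monotonicity lemma that the paper's proof requires.

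The error is in your computation of $S(\psi,t)$. You claim $[1;\overline{2;2}]=1/\psi$ and conclude $S(\psi,t)=4F(\psi,t)$, but in fact $[1;\overline{2;2}]=1+\tfrac{1}{\psi}=\Phi(\psi)=\sqrt{2}$, not $1/\psi=\sqrt{2}-1$. Hence the correct identity (which is \eqref{eq:S(psi,t)} in the paper) is
\[
S(\psi,t)=2\bigl(F(\psi,t)+F(\Phi(\psi),t)\bigr)=2\bigl(F(\psi,t)+F(\sqrt{2},t)\bigr),
\]
and since $1<\sqrt{2}<\psi$ and $x\mapsto F(x,t)$ is decreasing on $[1,\infty)$, we have $F(\sqrt{2},t)>F(\psi,t)$, so $S(\psi,t)>4F(\psi,t)$. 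Consequently your target inequality $S(w,t)\geq s(w)F(\psi,t)$ is strictly weaker than the one you need to prove, and the remainder of your argument is calibrated against the wrong threshold. In particular, your observation that the $1$-block terms each exceed $F(\psi,t)$ does not help, because the true per-term average you must beat is $\tfrac{1}{2}(F(\psi,t)+F(\sqrt{2},t))$, and individual terms from a $2$-block (with argument close to $\psi$) fall below it, so genuine cancellation between paired terms is required, not a term-by-term bound.

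The second gap is structural: you only invoke Lemma~\ref{lem:key_ineq_FT} (monotonicity of $x\mapsto F(x,t)+F(\Phi(x),t)$ on $[4/3,\infty)$), but the paper's proof essentially hinges on Lemma~\ref{lem:key_ineq_FT2}, which gives the decreasing behavior of $x\mapsto F(x,t)+F(\Phi(x),t)+F(\Psi(x),t)$ on $[\phi,\infty)$ and of $x\mapsto F(\Phi\circ\Psi(x),t)$ on $(0,\infty)$. The way the paper handles a $2$-block is to re-index and collect the four associated $F$-values into the single package
\[
F(u,t)+F(\Phi(u),t)+F(\Psi(u),t)+F(\Phi\circ\Psi(u),t),
\]
with $u$ a purely periodic Markov cyclic shift satisfying $\phi\leq u<\psi$, then compare with the same package at $u=\psi$ via Lemma~\ref{lem:key_ineq_FT2}; the $1$-block contributions are packaged as $F(u,t)+F(\Phi(u),t)$ and compared via Lemma~\ref{lem:key_ineq_FT}. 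Without the three-term and $\Phi\circ\Psi$ monotonicity from Lemma~\ref{lem:key_ineq_FT2}, the $2$-block estimate cannot be closed, and you yourself flag this as ``the hardest part.'' Finally, the paper does not treat $w^{\op}$ by a symmetry argument as you suggest, but directly: the expression $S(w^{\op},t)$ in \eqref{def_of_S(-1/tilde w,t)} is of exactly the same shape as $S(w,t)$ (sums over cyclic shifts with all partial quotients in $\{1,2\}$), so the identical decomposition applies; your ``same argument verbatim'' claim is fine in spirit but should be justified via this explicit form rather than by an unproved assertion that $\M$ is closed under $\op$.
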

\begin{remark}
        Note that, since $\psi-1=[1;\overline{2}]=\Phi(\psi)$, we have
\begin{equation}\label{eq:S(psi,t)}
   S(\psi,t)=S(\psi^{\op},t)=2(F(\psi,t)+F(\Phi(\psi),t)). 
\end{equation}
\end{remark}

Proposition \ref{prop:S(w,t)_geq_silver_ratio} is the analogue of Proposition \ref{prop:S(w,t)_leq_golden_ratio} in Section \ref{sec:Values attained by Lambda_f} in terms of comparing Markov irrationalities with the silver ratio instead of the golden ratio. As in the case of Proposition \ref{prop:S(w,t)_leq_golden_ratio}, we need to first prove a special property of the function $F(x,t)$. To this purpose, we define \(\Psi:=\left(\begin{smallmatrix}
    2 & 1 \\ 1 & 0
\end{smallmatrix}\right)\)
which is the generator  of the stabilizer in $\GLN$ of the silver ratio $\psi$. 

\begin{lemma}\label{lem:key_ineq_FT2}
   For every $t \in [\pi/3,2\pi/3]$ the function
   \(x\mapsto F(x,t)+F(\Phi(x),t)+F(\Psi(x),t)\)
   is decreasing for $x\in[\phi,\infty)$ and the function
\(x\mapsto F(\Phi\circ \Psi(x),t)\)
   is decreasing for $x\in (0,\infty)$.
\end{lemma}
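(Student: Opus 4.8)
\emph{Plan.} Write $c=\cos t\in[-\tfrac12,\tfrac12]$ and recall from Lemma~\ref{lem:prop_of_F} the reformulation $F(y,t)=(y+1/y-2c)^{-1}$ for $y>0$, that $F(\cdot,t)$ is (strictly) decreasing on $[1,\infty)$, and that $\partial_xF(x,t)=\dfrac{1-x^2}{(1+x^2-2cx)^2}$. The two assertions will be treated separately.

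\emph{Second function.} A direct computation gives $\Phi\circ\Psi(x)=1+\dfrac1{2+1/x}=\dfrac{3x+1}{2x+1}$, a Möbius map with derivative $(2x+1)^{-2}>0$, hence strictly increasing on $(0,\infty)$ with image $(1,\tfrac32)\subset[1,\infty)$. Composing this increasing map (which lands in $[1,\infty)$) with the decreasing map $F(\cdot,t)$ shows that $x\mapsto F(\Phi\circ\Psi(x),t)$ is decreasing on $(0,\infty)$; this uses nothing beyond Lemma~\ref{lem:prop_of_F}.

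\emph{First function.} Put $G(x,t):=F(x,t)+F(\Phi(x),t)+F(\Psi(x),t)$; the aim is $\partial_xG(x,t)\le0$ for $x\ge\phi$ and $c\in[-\tfrac12,\tfrac12]$. Since $\Phi'(x)=\Psi'(x)=-x^{-2}$, $1-\Phi(x)^2=-(2x+1)/x^2$, $1-\Psi(x)^2=-(3x+1)(x+1)/x^2$, and $1+\Phi(x)^2-2c\Phi(x)=N_1/x^2$, $1+\Psi(x)^2-2c\Psi(x)=N_2/x^2$ with $N_1:=2x^2+2x+1-2cx(x+1)$ and $N_2:=5x^2+4x+1-2cx(2x+1)$, the chain rule together with Lemma~\ref{lem:prop_of_F}$(iii)$ gives, writing $D_0:=1+x^2-2cx$,
\[
\partial_xG(x,t)=\frac{1-x^2}{D_0^{\,2}}+\frac{2x+1}{N_1^{\,2}}+\frac{(3x+1)(x+1)}{N_2^{\,2}}.
\]
On $\{x\ge\phi\}\times[-\tfrac12,\tfrac12]$ the quantities $D_0,N_1,N_2$ are positive (e.g.\ $N_1\ge 2x^2+2x+1-x(x+1)=x^2+x+1>0$, and likewise for $D_0,N_2$), so $\partial_xG\le0$ is equivalent to
\[
P(x,c):=(x^2-1)N_1^{\,2}N_2^{\,2}-(2x+1)D_0^{\,2}N_2^{\,2}-(3x+1)(x+1)D_0^{\,2}N_1^{\,2}\ \ge\ 0
\]
for all $x\ge\phi$ and $c\in[-\tfrac12,\tfrac12]$. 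Equivalently, using $F(y,t)=F(1/y,t)$ and the substitution $u=1/x$, one reduces to showing that $u\mapsto F(u,t)+F(1+u,t)+F(2+u,t)$ is increasing on $(0,1/\phi]$, which removes the chain-rule factors and makes the threshold $1/\phi=\phi-1$ appear naturally.

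\emph{Main obstacle.} What remains is to verify $P(x,c)\ge0$; this is where all the work lies and it is purely computational ($P$ has degree $10$ in $x$ and degree $4$ in $c$). I would first dispose of $c$: either by showing that, for each fixed $x\ge\phi$, the quartic $c\mapsto P(x,c)$ attains its minimum on $[-\tfrac12,\tfrac12]$ at an endpoint, or, if that fails, by directly estimating the two "wrong-sign" terms of $\partial_xG$ using $c\le\tfrac12$ while bounding the leading term with $c\ge-\tfrac12$ and retaining the relevant cross-terms. One is then left with a one-variable polynomial positivity on $[\phi,\infty)$, which I would settle by substituting $x=\phi+s$ with $s\ge0$ and using $\phi^2=\phi+1$ to reduce the coefficients to the form $a+b\phi$, each of which is checked to be $\ge0$. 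Note that the threshold $\phi$ cannot be lowered to the $4/3$ of Lemma~\ref{lem:key_ineq_FT}: the pair $F(x,t)+F(\Phi(x),t)$ alone is decreasing on $[4/3,\infty)$, but the extra increasing term $F(\Psi(x),t)$ destroys the monotonicity of $G$ just above $4/3$, so the larger threshold is genuinely needed. I expect the endpoint reduction in $c$ and the coefficient bookkeeping in $x$ to be the only real effort, both of them routine.
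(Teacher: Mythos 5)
Your treatment of the second function is complete and matches the paper exactly: $\Phi\circ\Psi$ is an increasing M\"obius map with image in $[1,\infty)$, and $F(\cdot,t)$ is decreasing there, so the composite is decreasing.

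For the first function, your setup is correct — the chain-rule computation giving
\[
\partial_xG(x,t)=\frac{1-x^2}{D_0^{\,2}}+\frac{2x+1}{N_1^{\,2}}+\frac{(3x+1)(x+1)}{N_2^{\,2}}
\]
with $N_1=2x^2+2x+1-2cx(x+1)$, $N_2=5x^2+4x+1-2cx(2x+1)$ is right, and so is the observation that one needs the polynomial $P(x,c)\ge 0$. But at exactly this point the proposal stops: you state what must be shown, outline two possible strategies (endpoint reduction in $c$, or a split bound using $c\le 1/2$ on the bad terms and $c\ge -1/2$ on the good term), and declare them ``routine'' — without carrying either one out. That verification \emph{is} the lemma; nothing nontrivial has yet been proved. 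What is missing is concretely the following. The paper splits the dominant term $\tfrac12(x^2-1)/D_0^2$ against each bad term separately, reducing to the two inequalities $\tfrac12\tfrac{x^2-1}{2x+1}>\left(\tfrac{D_0}{N_1}\right)^2$ and $\tfrac12\tfrac{x^2-1}{(x+1)(3x+1)}>\left(\tfrac{D_0}{N_2}\right)^2$; for each it examines the sign of the derivative of $y\mapsto D_0/N_i$ on $[-1/2,1/2]$, which for the $\Psi$ term is governed by $x^2-2x-1$ and hence changes sign at $x=1+\sqrt2$, forcing a case split you have not anticipated; after evaluating $\cos t$ at the correct endpoint in each case, it is left with three explicit sextic/quintic polynomial positivities on intervals, each settled by exhibiting a factorization of the form $q(x)(x-a)+r$ with $q\ge 0$, $r>0$ and $a$ below the relevant threshold. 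None of this bookkeeping is present in your proposal, and it is not obviously routine: the monolithic $P(x,c)$ you propose to bound is degree $10$ in $x$ and degree $4$ in $c$, and showing its minimum over $c$ is at an endpoint is itself a nontrivial claim that you have not justified. Until that computation (or an equivalent) is actually written down, the first half of the lemma is not proved. Your heuristic remark about why the threshold must exceed $4/3$ is correct and helpful context, but it does not substitute for the verification.
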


The proof of Lemma \ref{lem:key_ineq_FT2} is given in the appendix.

\subsection{Proof of Proposition \ref{prop:S(w,t)_geq_silver_ratio}}

We can assume $w\neq \psi$. 
We  start by collecting the purely periodic terms in $S(w,t)$ in \eqref{eq:def_of_S(w,t)} to write $S(w,t)=S_{\text{pp}}(w,t)+S_{\text{np}}(w,t)$ where
\begin{eqnarray*}
S_{\text{pp}}(w,t)&:=&\sum_{1\leq i\leq \ell}F([\overline{a_i;a_{i+1},\ldots, a_\ell,a_1,\ldots,a_{i-1}}],t),\\
S_{\text{np}}(w,t)&:=&\sum_{\substack{1\leq i\leq \ell\\ a_i=2}}F([1;\overline{a_{i+1},\ldots, a_\ell,a_1,\ldots,a_{i}}],t).
\end{eqnarray*}
Let $a_0:=a_\ell$ and $a_{-1}:=a_{\ell-1}$. The sum $S_{\text{pp}}(w,t)$ equals
\begin{eqnarray*}
\sum_{\substack{1\leq i\leq \ell\\ a_{i-1}=2}}F([\overline{a_i;a_{i+1},\ldots, a_\ell,a_1,\ldots,a_{i-1}}],t)+\sum_{\substack{1\leq i\leq \ell\\ a_{i-1}=1}}F([\overline{a_i;a_{i+1},\ldots, a_\ell,a_1,\ldots,a_{i-1}}],t).
\end{eqnarray*}
Letting $i=j-1$, and using that for $j$ odd $a_{j-2}=a_{j-1}$, we rewrite 
\begin{eqnarray*}
\sum_{\substack{1\leq j\leq \ell\\ a_{j-2}=2,j \text{ odd}}}F([\overline{a_{j-1};a_{j},\ldots, a_\ell,a_1,\ldots,a_{j-2}}],t) = \sum_{\substack{1\leq j\leq \ell\\ a_{j-1}=2,j \text{ odd}}}F([\overline{a_{j-1};a_{j},\ldots, a_\ell,a_1,\ldots,a_{j-2}}],t).
\end{eqnarray*}
This implies that $S_{\text{pp}}(w,t)$ equals
\begin{eqnarray*}
&& \sum_{\substack{1\leq i\leq \ell\\ a_{i-1}=2, i \text{ odd}}}\bigg(F([\overline{a_i;a_{i+1},\ldots, a_\ell,a_1,\ldots,a_{i-1}}],t)+ F([\overline{a_{i-1};a_{i},\ldots, a_\ell,a_1,\ldots,a_{i-2}}],t)\bigg)\\
& & +\sum_{\substack{1\leq i\leq \ell\\ a_{i-1}=1}}F([\overline{a_i;a_{i+1},\ldots, a_\ell,a_1,\ldots,a_{i-1}}],t).
\end{eqnarray*}
Now, note that when $a_{i-1}=2$ we have \(\Psi([\overline{a_i;a_{i+1},\ldots,a_{i-1}}])=[\overline{a_{i-1};a_{i},\ldots,a_{i-2}}]\). This shows that $S_{\text{pp}}(w,t)$ equals
\begin{eqnarray*}
&& \sum_{\substack{1\leq i\leq \ell\\ a_{i-1}=2, i \text{ odd}}}\bigg(F([\overline{a_i;a_{i+1},\ldots, a_\ell,a_1,\ldots,a_{i-1}}],t)+F(\Psi([\overline{a_i;a_{i+1},\ldots, a_\ell,a_1,\ldots,a_{i-1}}]),t)\bigg)\\
& & +\sum_{\substack{1\leq i\leq \ell\\ a_{i-1}=1}}F([\overline{a_i;a_{i+1},\ldots, a_\ell,a_1,\ldots,a_{i-1}}],t).
\end{eqnarray*}
Similarly, $S_{\text{np}}(w,t)$ equals
\begin{eqnarray*}
& & \sum_{\substack{1\leq i\leq \ell\\ a_{i-1}=2}}F([1;\overline{a_{i},\ldots, a_\ell,a_1,\ldots,a_{i-1}}],t)\\
&=&  \sum_{\substack{1\leq i\leq \ell\\ a_{i-1}=2,i \text{ odd}}}\bigg(F([1;\overline{a_{i},\ldots, a_\ell,a_1,\ldots,a_{i-1}}],t)+F([1;\overline{a_{i-1},\ldots, a_\ell,a_1,\ldots,a_{i-2}}],t)\bigg).
\end{eqnarray*}
We conclude that $S(w,t)=S_{\text{pp}}(w,t)+S_{\text{np}}(w,t)$ equals
\begin{eqnarray*}
&& \sum_{\substack{1\leq i\leq \ell\\ a_{i-1}=2,i \text{ odd}}}\bigg(F([\overline{a_i;a_{i+1},\ldots, a_\ell,a_1,\ldots,a_{i-1}}],t)+F(\Psi([\overline{a_i;a_{i+1},\ldots, a_\ell,a_1,\ldots,a_{i-1}}]),t)\\
& & \quad +F(\Phi([\overline{a_i;a_{i+1},\ldots, a_\ell,a_1,\ldots,a_{i-1}}]),t)+F(\Phi\circ \Psi([\overline{a_i;a_{i+1},\ldots, a_\ell,a_1,\ldots,a_{i-1}}]),t) \bigg)\\
& & +\sum_{\substack{1\leq i\leq \ell\\ a_{i-1}=1}}F([\overline{a_i;a_{i+1},\ldots, a_\ell,a_1,\ldots,a_{i-1}}],t).
\end{eqnarray*}
Now, for all~$i$ odd the first appearance of a 1, from left to right, among the partial quotients of $ [\overline{a_i;a_{i+1},\ldots, a_\ell,a_1,\ldots,a_{i-1}}]$ occurs in an odd position, and the same holds for the first appearance of a 2. Hence, by Lemma \ref{lem:comparison_continued_fractions}, we have
\begin{equation}\label{eq: compare irrational with phi and psi}
\phi=[\overline{1;1}]\leq [\overline{a_i;a_{i+1},\ldots, a_\ell,a_1,\ldots,a_{i-1}}]< [\overline{2;2}]=\psi,
\end{equation}
thus by Lemma \ref{lem:key_ineq_FT2}  we get
\begin{eqnarray*}
S(w,t)&>& \sum_{\substack{1\leq i\leq \ell\\ a_{i-1}=2,i \text{ odd}}}\bigg(F(\psi,t)+F(\Psi(\psi),t)+F(\Phi(\psi),t)+F(\Phi\circ \Psi(\psi),t) \bigg)\\
& & +\sum_{\substack{1\leq i\leq \ell\\ a_{i-1}=1}}F([\overline{a_i;a_{i+1},\ldots, a_\ell,a_1,\ldots,a_{i-1}}],t).
\end{eqnarray*}
Setting $X:=\{i \in \{1,\ldots,\ell\}:a_{i-1}=2 \text{ and $i$ is odd}\}$ and using \eqref{eq:S(psi,t)} we conclude
\begin{eqnarray*}
S(w,t)&>& \#X \cdot S(\psi,t)+\sum_{\substack{1\leq i\leq \ell\\ a_{i-1}=1}}F([\overline{a_i;a_{i+1},\ldots, a_\ell,a_1,\ldots,a_{i-1}}],t).
\end{eqnarray*} 

Now, define~$Y:=\{i \in \{1,\ldots,\ell\}:a_{i-1}=1 \text{ and $i$ is odd}\}$. Then, arguing as above,  and using 
      that when $a_{i-1}=1$ we have \(\Phi([\overline{a_i;a_{i+1},\ldots,a_{i-1}}])=[\overline{a_{i-1};a_{i},\ldots, a_{i-2}}]\), we obtain
\begin{eqnarray*}
  & &   \sum_{\substack{1\leq i\leq \ell\\ a_{i-1}=1}}F([\overline{a_i;a_{i+1},\ldots, a_\ell,a_1,\ldots,a_{i-1}}],t) \\
    &=& \sum_{i\in Y}\bigg( F([\overline{a_i;a_{i+1},\ldots, a_\ell,a_1,\ldots,a_{i-1}}],t)+F([\overline{a_{i-1};a_i,\ldots, a_\ell,a_1,\ldots,a_{i-2}}],t)\bigg)\\
    &=& \sum_{i\in Y}\bigg( F([\overline{a_i;a_{i+1},\ldots, a_\ell,a_1,\ldots,a_{i-1}}],t)+F(\Phi([\overline{a_i;a_{i+1},\ldots, a_\ell,a_1,\ldots,a_{i-1}}]),t)\bigg).
\end{eqnarray*}
For all $i\in Y$ we have \eqref{eq: compare irrational with phi and psi} hence by Lemma \ref{lem:key_ineq_FT} and \eqref{eq:S(psi,t)} we conclude
\[ \sum_{\substack{1\leq i\leq \ell\\ a_{i-1}=1}}F([\overline{a_i;a_{i+1},\ldots, a_\ell,a_1,\ldots,a_{i-1}}],t) >
     \sum_{i\in Y}\bigg( F(\psi,t)+F(\Phi(\psi),t)\bigg) = \frac{\#Y}{2} \cdot S(\psi,t).\]
We then obtain
\( S(w,t)> \left(\#X+\frac{1}{2}\# Y\right) S(\psi,t)\). 
But since the continued fraction expansion of $x$ is composed of pairs of 2's and 1's, we see that $\# X=\frac{1}{2}\#\{i\in \{1,\ldots,\ell\}: a_i=2\}$ and $\# Y=\frac{1}{2}\#\{i\in \{1,\ldots,\ell\}: a_i=1\}$, and this implies $\#X+\frac{1}{2}\# Y=\frac{s(w)}{4}$. This completes the proof of Proposition \ref{prop:S(w,t)_geq_silver_ratio}.

\subsection{Proof of Theorem \ref{thm:Lambda_f(M)}} As mentioned at the beginning of this section, the proof of Theorem \ref{thm:Lambda_f(M)} reduces to showing that $\Lambda_f(w)\geq \Lambda_f(\psi)$ for all $w\in \M$. This, in turn, follows directly from Proposition \ref{prop:S(w,t)_geq_silver_ratio} together with Proposition \ref{prop:formula_for_Re(I_f(A))} and Theorem \ref{thm:Lambda_f_on_quad_irrationals}.

\begin{remark}\label{rmk:silver_ratio_strict_minimum}
    It follows from Theorem \ref{thm:Lambda_f_on_quad_irrationals} and Proposition \ref{prop:S(w,t)_geq_silver_ratio} that actually we have $\Lambda_f(w)>\Lambda_f(\psi)$ for all $w\in \M\setminus \{\psi\}$.
\end{remark}

\section{Convexity, monotonicity and continuous extension of $\tilde{\Lambda}_f$}\label{sec:convexity_monotonicity_extension}

The purpose of this section is to prove Theorem \ref{thm:Lambda_f_convexity}. We first  reduce the proof  to a convexity property for $\Lambda_f$ on the Markov tree by using  general results about convex and monotonic functions which are collected in the appendix. Then, we will relate the latter to a triangle inequality satisfied by the sums $S(w,t)$ and $S(w^\op,t)$ for $w\in \M$. The proof of Theorem \ref{thm:Lambda_f_convexity} is then given at the end of this section.

\subsection{A convexity criteria for $\tilde{\Lambda}_f$}\label{sec: convexity criteria}

In this section we give a criteria for  the convexity of $\tilde{\Lambda}_f:[0,1/2]\cap \mathbb{Q}\to \R$ in terms of a related convexity   property for the restriction of $\Lambda_f$ to Markov irrationalities. First, recall from the introduction that we parametrize Markov irrationalities by Farey fractions in $\Fartwo$ (see Figure \ref{fig:Farey parametrization}).

\begin{figure}[h!]
\centering
\begin{forest}
[,phantom [$\frac{0}{1}$,name=p1] [] [] []  
[
[$\frac{1}{3}$,name=p2, no edge,tikz={\draw (p2.north)--(p1.south); \draw [thick, <->] (3.5,-2.5)--(4.5,-2.5);} 
[
$\frac{1}{4}$ 
[$\ldots$]
[$\ldots$]]
[
$\frac{2}{5}$
[$\ldots$]
[$\ldots$]
]
]]
[] [] []  [$\frac{1}{2}$,name=p3] tikz={\draw (p2.north)--(p3.south);}
]
\end{forest}
\hspace{0.5cm}
\begin{forest}
[,phantom [{$[\overline{1_2}]$},name=p1] [] [] []  
[
[{$[\overline{2_2,1_2}]$},name=p2, no edge,tikz={\draw (p2.north)--(p1.south);}
[
{$[\overline{2_2,1_4}]$}
[$\ldots$]
[$\ldots$]]
[
{$[\overline{2_4,1_2}]$}
[$\ldots$]
[$\ldots$]
]
]]
[] [] []  [{$[\overline{2_2}]$},name=p3] tikz={\draw (p2.north)--(p3.south);}
]
\end{forest}
\caption{Farey parametrization of Markov irrationalities.}\label{fig:Farey parametrization}
\end{figure}
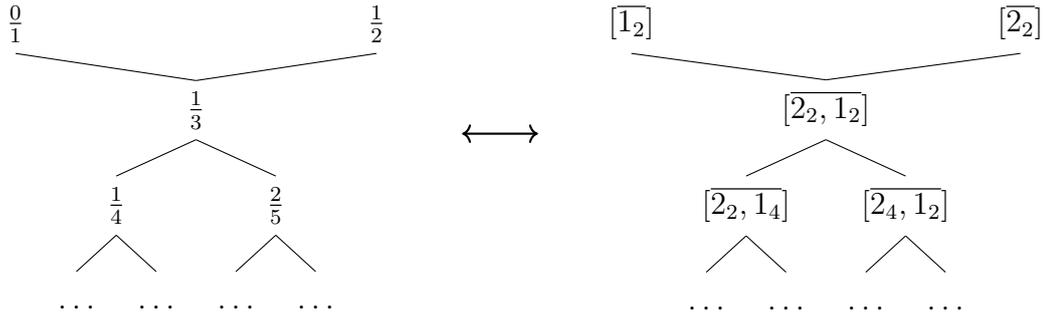

We now define
\[ \Fartwo^{(0)}  :=  \left\{ \frac{0}{1}, \frac{1}{2} \right\}, \quad  \Fartwo^{(1)} := \left\{ \frac{0}{1}, \frac{1}{3}, \frac{1}{2} \right\},  \quad \Fartwo^{(2)}  :=  \left\{ \frac{0}{1}, \frac{1}{4}, \frac{1}{3},\frac{2}{5}, \frac{1}{2} \right\}, \quad \ldots\]
so that, for every integer $n\geq 0$, $\Fartwo^{(n)}$ is the collection of all Farey fractions that appear in the Farey tree $\Fartwo$ up to $n$ levels down on this tree. Every rational number $p/q\in[0,1/2]$ appears in $\Fartwo^{(n)}$ for the first time for some  $n\geq 0$ and we call such $n$ the \emph{minimal level} of $p/q$ in $\Fartwo$. For instance, $1/2$ has minimal level $0$ while $2/5$ has minimal level $2$. We call $\Fartwo^{(n)}$ the \emph{$n$-th level of the Farey tree $\Fartwo$}. 
Accordingly, we define the \emph{$n$-th level $\M^{(n)}$ in the Markov tree} as the set of Markov irrationalities parametrized by Farey fractions in $\Fartwo^{(n)}$. For instance
\[ \M^{(0)}  =  \left\{ [\overline{1_2}],[\overline{2_2}] \right\}, \M^{(1)}  =  \left\{ [\overline{1_2}],[\overline{2_2,1_2}],[\overline{2_2}] \right\}, \M^{(2)} =  \left\{[\overline{1_2}],[\overline{2_2,1_4}],[\overline{2_2,1_2}],[\overline{2_4,1_2}],[\overline{2_2}]\right\}.\]
As in the case of Farey fractions, we call two Markov irrationalities, $w_1$ and $w_2,$ \emph{neighbors} if they appear as consecutive elements in some level $\M^{(n)}$. Equivalently, $w_1, w_2$ are neighbors if the corresponding Farey fractions $\frac{p_1}{q_1}$ and $\frac{p_2}{q_2}$ are neighbors in the Farey tree as defined in the introduction (i.e. $p_1q_2-p_2q_1=-1$). For instance, $[\overline{1_2}]$ and $[\overline{2_2,1_2}]$ are neighbors, but $[\overline{1_2}]$ and $[\overline{2_4,1_2}]$ are not.

Note that, if $\frac{a}{b}<\frac{c}{d}$ are consecutive Farey fractions in $\Fartwo^{(n)}$, then for every integer $m\geq 0$ the Farey fractions
\begin{eqnarray*}
    \frac{a}{b}\oplus \left(\frac{c}{d}\right)^{\oplus m} &:=&\left(\left(\cdots \left(\frac{a}{b} \oplus \frac{c}{d}\right)\oplus \frac{c}{d} \oplus \cdots \right)\oplus \frac{c}{d}\right)\oplus \frac{c}{d},\\ 
        \left(\frac{a}{b}\right)^{\oplus m}\oplus \frac{c}{d} &:=&\frac{a}{b}\oplus\left( \frac{a}{b}\oplus\left(\cdots \oplus \frac{a}{b} \oplus\left(\frac{a}{b} \oplus \frac{c}{d}\right) \cdots \right)\right),
\end{eqnarray*}
are in $\Fartwo^{(n+m)}$. Similarly, if $u<v$ are Markov irrationalities that are consecutive in $\M^{(n)}$, then we can form $v^{\odot m}\odot u,v\odot u^{\odot m}\in \M^{(n+m)}$ by iterative conjunction. 

The following lemma will be used in the proof of Theorem \ref{thm:triangluar_ineq_for_consecutive} in Section \ref{sec: triangle ineq for S(w,t)}.

\begin{lemma}\label{lem: monotonicity Markov tree and opposites}
    Let $u<v$ be real quadratic irrationals with purely periodic continued fraction expansion. Then, for all integer $k\geq 1$ we have
    \[u<v\odot u^{\odot k} , \quad  v^{\odot k}\odot u<v, \quad u<u^{\odot k}\odot v, \quad u\odot v^{\odot k}<v.\]
    Moreover, if we further assume that $u,v$ are Markov irrationalities that are neighbors, then $u^\op<v^\op$ and the above inequalities also hold when replacing $u$ and $v$ by $u^\op$ and $v^\op$, respectively.
\end{lemma}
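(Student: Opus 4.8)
The plan is to work directly with the continued fraction expansions and exploit Lemma \ref{lem:comparison_continued_fractions}, which reduces any comparison of two distinct real numbers to comparing the first partial quotient where they differ (with the usual alternating-sign convention). Write $u=[\overline{a_1;a_2,\ldots,a_r}]$ and $v=[\overline{b_1;b_2,\ldots,b_s}]$ with all partial quotients $\geq 1$, and recall $u\odot v=[\overline{a_1;\ldots,a_r,b_1,\ldots,b_s}]$. The first task is to settle the four inequalities for general purely periodic quadratic irrationals $u<v$. Consider $u$ versus $u^{\odot k}\odot v$: their expansions agree on the first $kr$ partial quotients (namely $k$ copies of the period of $u$) and then diverge, the expansion of $u$ continuing with $a_1$ while that of $u^{\odot k}\odot v$ continues with $b_1$. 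Since $kr$ is a multiple of $r$, the parity of the first differing position is fixed, and one must check that the sign of $a_1-b_1$ (read through Lemma \ref{lem:comparison_continued_fractions}) is consistent with $u<v$. Here the subtlety is that $a_1$ versus $b_1$ alone does not determine $u$ versus $v$; one instead compares $[\overline{a_1;\ldots,a_r}]$ with $[b_1;b_2,\ldots]$ by noting that the tails, $[\overline{a_1;\ldots}]$ and $[\overline{b_1;\ldots}]=v$ (after the common block), are exactly $u$ and $v$ up to the shift, so the inequality $u<v$ propagates. The cleanest way I would phrase this: after stripping the common block of length $kr$, what remains to compare is $u$ against $v$ directly (when the common block has even length) or their images under $x\mapsto$ "flip" (when it has odd length); in the Markov case all periods have even length (pairs of $1$'s and $2$'s), so this case distinction collapses and the common block length $kr$ is automatically even, making the argument uniform. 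I would handle the four inequalities in pairs: $u<u^{\odot k}\odot v$ and $u\odot v^{\odot k}<v$ come from prepending powers of $u$ (resp. $v$) and diverging into the "$u$ vs $v$" comparison; $u<v\odot u^{\odot k}$ and $v^{\odot k}\odot u<v$ are the analogous statements prepending powers of $v$ (resp. $u$) — note these compare the full strings against $v\odot u^{\odot k}$ etc., where again one strips the maximal common prefix and reduces to $u<v$.

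For the second part, assume $u,v$ are neighbors in the Markov tree. I would first establish $u^{\op}<v^{\op}$, where $w^{\op}=[\overline{a_\ell;\ldots,a_1}]$ is the reversed period (equal to $-1/\tilde w$ as recalled in the paragraph after \eqref{eq:def_of_S(w,t)}). This is where the neighbor hypothesis is essential, since in general reversing a period does not preserve order (for arbitrary $u<v$ one can have $u^{\op}>v^{\op}$). The Markov tree structure gives the combinatorial description: moving along a level, consecutive Markov words are related by the $\odot$-operation built from $[\overline{1_2}]$ and $[\overline{2_2}]$, and the reversal operation corresponds to the symmetry of the Farey tree $\Fartwo$ exchanging left and right (equivalently, $p/q\mapsto$ the "mirror" fraction), under which the monotone bijection $w:[0,1/2]\cap\Q\to\M$ is compatible. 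Concretely, I would prove $u^{\op}<v^{\op}$ by induction on the level at which the pair $(u,v)$ first appears as neighbors: the base case is the neighbor pairs inside $\M^{(1)}=\{[\overline{1_2}],[\overline{2_2,1_2}],[\overline{2_2}]\}$, checked by hand (and $[\overline{1_2}]^{\op}=[\overline{1_2}]$, $[\overline{2_2}]^{\op}=[\overline{2_2}]$, $[\overline{2_2,1_2}]^{\op}=[\overline{1_2,2_2}]$, which one orders directly); the inductive step uses that a new neighbor pair is obtained from an old one $u<v$ as either $(u, v\odot u)$ or $(v\odot u, v)$, and that reversal turns $v\odot u$ into $u^{\op}\odot v^{\op}$, so the new reversed pair is $(u^{\op}, u^{\op}\odot v^{\op})$ or $(u^{\op}\odot v^{\op}, v^{\op})$ — and these are ordered correctly by the first part of the lemma applied to $u^{\op}<v^{\op}$. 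Once $u^{\op}<v^{\op}$ is in hand, the four inequalities for $u^{\op},v^{\op}$ follow by applying the already-proven first part of the lemma to the pair $u^{\op}<v^{\op}$ directly, since $u^{\op}$ and $v^{\op}$ are themselves purely periodic quadratic irrationals.

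The main obstacle I anticipate is the bookkeeping in the first part: verifying that after stripping the maximal common prefix, the residual comparison genuinely reduces to "$u$ versus $v$" with the correct parity, rather than to some shifted or reversed version that could flip the inequality. The even-period feature of Markov words (and, for the general statement, doubling periods if necessary so that $kr$ is even — or simply noting that Lemma \ref{lem:comparison_continued_fractions} is applied at position $kr+1$ and tracking $(-1)^{kr}$) is what keeps this under control, and I would make that parity tracking explicit rather than leaving it implicit. The induction in the second part is conceptually the delicate point but is short once the reversal-versus-conjunction identity $(v\odot u)^{\op}=u^{\op}\odot v^{\op}$ and the base cases are recorded.
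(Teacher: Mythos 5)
Your plan for the second part---inducting on the level at which $(u,v)$ first appear as neighbors in the Markov tree, using the identity $(v\odot u)^{\op}=u^{\op}\odot v^{\op}$ and feeding $u^{\op}<v^{\op}$ back into the first part of the lemma---is essentially the paper's argument. The first part, however, has a genuine gap. To compare $u$ with $u^{\odot k}\odot v$ you strip the common prefix of length $kr$ and then, invoking Lemma~\ref{lem:comparison_continued_fractions}, claim that the remaining comparison is ``$u$ versus $v$.'' But the tail of $u^{\odot k}\odot v$ beyond position $kr$ is $b_1,\ldots,b_s,a_1,\ldots,a_r,\ldots$, which is the number $v\odot u^{\odot k}$, not $v$; so the reduction merely converts one of the four target inequalities into another and is circular. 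Nor is the issue only the parity you flag: trying instead to locate the first differing partial quotient does not work cleanly, because the first index at which the periodic expansions of $u$ and $v$ disagree need not lie in $\{1,\ldots,s\}$ (or even in $\{1,\ldots,\max(r,s)\}$), and once it passes $s$ the residual comparison is between a cyclic shift of $u$ and $v\odot u^{\odot k}$, not between $u$ and $v$.

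The paper sidesteps all of this with a short dynamical argument. Let $A,B\in\SLN$ be the hyperbolic matrices whose attracting positive fixed points are $u$ and $v$; every matrix in $\SLN$ acts as an increasing map on $\R^+$, and the attracting-fixed-point structure gives $u<A(v)<v$ and $u<B(u)<v$. Composing, $u<BA(u)<BA(v)<v$ and $u<AB(u)<AB(v)<v$, so $BA$ and $AB$ map $[u,v]$ into $(u,v)$, forcing their attracting fixed points $v\odot u$ and $u\odot v$ to lie in $(u,v)$; the case of general $k\geq 1$ follows by iterating the $k=1$ statement. If you wish to keep a purely continued-fraction proof of the first part, you would need a cleaner formulation (e.g.\ the alternating lexicographic order on the infinite periodic sequences together with a proof that conjunction respects it), because the ``strip the common block and compare $u$ with $v$'' step, as written, is not a valid reduction.
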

\begin{proof}
    For the first inequalities it is enough to consider the case $k=1$ since the general result follows from iterative application of this case. Write $u=[\overline{a_1;a_2,\ldots,a_r}]$ and $v=[\overline{b_1;b_2,\ldots,b_s}]$ with $r,s\geq 2$ even and minimal. Define
    \(A:=T^{a_1}V^{a_2}\cdots V^{a_r}, B:=T^{b_1}V^{b_2}\cdots V^{b_r}.\)
    Then $A$, $B$ and $BA$ as elements of $\SLN$ act on $\R^+$ with unique fixed points $u,v,v\odot u$, respectively, and these are all attracting fixed points. In particular, $u<A(v)<v$ and $u<B(u)<v$.  Moreover, $BA$  preserves order in $\R^+$, since any matrix in $\SLN$ does.     We conclude
    \(u<B(u)=BA (u)<BA(v)<B(v)=v.\)
    This implies that $v\odot u$, being the attractive fixed point of $BA$, lies in $(u,v)$. This shows that $u<v\odot u<v$. Similarly, using that $A$ and $B$ also preserve  order in $\R^+$ we have
    \(u=A(u)<AB(u)<AB(v)=A(v)<v.\)
    This implies that $u\odot v$ lies in $(u,v)$ because it is the attracting fixed point of $AB$ in $\R^+$, and shows that $u<u\odot v<v$.

Now, assume that $u,v$ are Markov irrationalities that are neighbors. We want to prove that $u^{\op}<v^{\op}$. This follows by induction on the smallest level $n\geq 0$ containing both $u$ and $v$. Indeed, for $n=0$ this is obvious since $u=[\overline{1;1}]=u^\op$ and $v=[\overline{2;2}]=v^\op$ in this case. Assume that $n\geq 1$. Then, either $u$ is a direct descendant of $v$ or $v$ is a direct descendant of $u$. In the first case we write $u=v\odot w$ with $w$ a left neighbour of $v$ in level $n-1$. By inductive hypothesis we have $w^\op<v^\op$. Then, by the first part of the lemma with $k=1$ we get $w^\op \odot v^\op<v^\op$. Since $u^\op=w^\op \odot v^\op$, we conclude $u^\op<v^\op$ as desired. The proof in the second case is analogous, so we omit the details. Finally, the last statement follows directly from the first inequalities applied to $u^{\op}<v^{\op}$. This completes the proof of the lemma.
\end{proof}

\begin{remark}
    Note that for general purely periodic quadratic irrationalities $u,v$ with $u<v$, it is not true that $u^\op<v^\op.$
    For example take $u=[\overline{1;2}]$ and $v=[\overline{1;1}]$.
\end{remark}
The next proposition gives a criteria for the convexity of $\tilde\Lambda_f$  on any three consecutive Farey fractions $\frac{p_1}{q_1}<\frac{p_2}{q_2}<\frac{p_3}{q_3}$ in a given level $\Fartwo^{(n)}$. It also gives a simple description of the Farey fractions $\frac{p_1}{q_1}$ and $\frac{p_3}{q_3}$ in terms of $\frac{p_2}{q_2}$ and its direct parents.

\begin{prop}\label{prop: convexity reduction}
    Let $\frac{p_1}{q_1}<\frac{p_2}{q_2}<\frac{p_3}{q_3}$ be three consecutive Farey fractions in $\Fartwo^{(n)}$ and let $w_1<w_2<w_3$ be the corresponding Markov irrationalities. Let $k\geq 0$ be the unique integer such that  $\frac{p_2}{q_2}$ has minimal level $n-k$. Then, the following properties hold:
\begin{enumerate}
    \item[$(i)$] If we denote by $\frac{P_1}{Q_1}$ and $\frac{P_3}{Q_3}$ be the left and right neighbors of $\frac{p_2}{q_2}$ in $\Fartwo^{(n-k)}$, respectively, then
    \[\frac{p_2}{q_2}=\frac{P_1}{Q_1}\oplus \frac{P_3}{Q_3}, \quad \frac{p_1}{q_1}=\frac{P_1}{Q_1} \oplus \left( \frac{p_2}{q_2} \right)^{\oplus k}, \quad \frac{p_3}{q_3}=\left( \frac{p_2}{q_2} \right)^{\oplus k} \oplus  \frac{P_3}{Q_3}.\]
    \item[$(ii)$] We have $p_1+p_3=(2k+1)p_2$, $q_1+q_3=(2k+1)q_2$ and in particular 
    \[\frac{p_2}{q_2}=\frac{p_1+p_3}{q_1+q_3}.\]
    \item[$(iii)$] Let $W_1$ and $W_2$ be the Markov irrationalities corresponding to $\frac{P_1}{Q_1}$ and $\frac{P_3}{Q_3}$. Then
    \[w_2=W_3\odot W_1, \quad w_1=w_2^{\odot k}\odot W_1, \quad w_3=W_3\odot w_2^{\odot k}.\]
\end{enumerate}
Moreover, the following two statements are equivalent:
\begin{enumerate}
    \item[$(a)$] $\tilde\Lambda_f$ is convex in $\left\{\frac{p_1}{q_1},\frac{p_2}{q_2},\frac{p_3}{q_3}\right\}$.
    \item[$(b)$] We have \[\Lambda_f(w_2)\leq \frac{s(w_1)}{(2k+1)s(w_2)}\Lambda_f(w_1)+\frac{s(w_3)}{(2k+1)s(w_2)}\Lambda_f(w_3).\]
\end{enumerate}
\end{prop}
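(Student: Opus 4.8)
# Proof Proposal for Proposition \ref{prop: convexity reduction}

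The plan is to establish the structural identities $(i)$, $(ii)$, $(iii)$ first, since these are combinatorial facts about the Farey tree $\Fartwo$ and the Markov tree, and then deduce the equivalence of $(a)$ and $(b)$ by unwinding the definitions of convexity and of $\tilde\Lambda_f$.

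For part $(i)$, I would argue as follows. By definition of minimal level, $\frac{p_2}{q_2}$ first appears in $\Fartwo^{(n-k)}$, so it has two direct parents there; call them $\frac{P_1}{Q_1}<\frac{P_3}{Q_3}$, giving $\frac{p_2}{q_2}=\frac{P_1}{Q_1}\oplus\frac{P_3}{Q_3}$. After $\frac{p_2}{q_2}$ is created, its two immediate neighbors in each successive level $\Fartwo^{(n-k+1)},\ldots,\Fartwo^{(n)}$ are obtained by repeatedly taking Farey medians with $\frac{p_2}{q_2}$ on one side: the left neighbor evolves as $\frac{P_1}{Q_1}\to\frac{P_1}{Q_1}\oplus\frac{p_2}{q_2}\to\cdots$, and after $k$ steps becomes $\frac{P_1}{Q_1}\oplus\left(\frac{p_2}{q_2}\right)^{\oplus k}$; symmetrically the right neighbor becomes $\left(\frac{p_2}{q_2}\right)^{\oplus k}\oplus\frac{P_3}{Q_3}$. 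Since $\frac{p_1}{q_1}$ and $\frac{p_3}{q_3}$ are precisely the neighbors of $\frac{p_2}{q_2}$ in $\Fartwo^{(n)}$, this proves $(i)$. Part $(ii)$ then follows by a direct computation: writing $\frac{P_1}{Q_1}\oplus\left(\frac{p_2}{q_2}\right)^{\oplus k}$ out using $\frac{a}{b}\oplus\frac{c}{d}=\frac{a+c}{b+d}$ gives numerator $P_1+kp_2$ and denominator $Q_1+kq_2$; likewise $\frac{p_3}{q_3}$ has numerator $kp_2+P_3$ and denominator $kq_2+Q_3$. Adding and using $p_2=P_1+P_3$, $q_2=Q_1+Q_3$ yields $p_1+p_3=(2k+1)p_2$ and $q_1+q_3=(2k+1)q_2$. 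Part $(iii)$ is the translation of $(i)$ under the order-preserving bijection $w:[0,1/2]\cap\mathbb Q\to\M$: applying $w$ to each Farey identity and using the defining property $w\!\left(\frac{a}{b}\oplus\frac{c}{d}\right)=w\!\left(\frac{c}{d}\right)\odot w\!\left(\frac{a}{b}\right)$ converts each $\oplus$ into an $\odot$ with the factors reversed, giving $w_2=W_3\odot W_1$, $w_1=w_2^{\odot k}\odot W_1$, $w_3=W_3\odot w_2^{\odot k}$. (The order claim $w_1<w_2<w_3$ uses Lemma \ref{lem: monotonicity Markov tree and opposites}, or the monotonicity of $w$.)

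For the equivalence of $(a)$ and $(b)$, recall that convexity of $\tilde\Lambda_f$ on the three points $\frac{p_1}{q_1}<\frac{p_2}{q_2}<\frac{p_3}{q_3}$ means
\[
\tilde\Lambda_f\!\left(\tfrac{p_2}{q_2}\right)\le \frac{\frac{p_3}{q_3}-\frac{p_2}{q_2}}{\frac{p_3}{q_3}-\frac{p_1}{q_1}}\,\tilde\Lambda_f\!\left(\tfrac{p_1}{q_1}\right)+\frac{\frac{p_2}{q_2}-\frac{p_1}{q_1}}{\frac{p_3}{q_3}-\frac{p_1}{q_1}}\,\tilde\Lambda_f\!\left(\tfrac{p_3}{q_3}\right).
\]
Using $p_iq_j-p_jq_i=-1$ for neighbors, the differences simplify: $\frac{p_3}{q_3}-\frac{p_2}{q_2}=\frac{1}{q_2q_3}$, $\frac{p_2}{q_2}-\frac{p_1}{q_1}=\frac{1}{q_1q_2}$, and $\frac{p_3}{q_3}-\frac{p_1}{q_1}=\frac{q_1+q_3}{q_1q_2q_3}=\frac{(2k+1)q_2}{q_1q_2q_3}$ by $(ii)$. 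Hence the two barycentric weights become $\frac{q_1}{(2k+1)q_2}$ and $\frac{q_3}{(2k+1)q_2}$. Now I invoke the property recorded just before the definition of $\tilde\Lambda_f$ in the introduction: if $w(p/q)=[\overline{a_1;\ldots,a_\ell}]$ with $\ell$ even and minimal then $a_1+\cdots+a_\ell=2q$, i.e. $s(w(p/q))=2q$. Therefore $q_i=\tfrac12 s(w_i)$ for $i=1,2,3$, and substituting $\tilde\Lambda_f(p_i/q_i)=\Lambda_f(w_i)$ turns the convexity inequality into exactly
\[
\Lambda_f(w_2)\le \frac{s(w_1)}{(2k+1)s(w_2)}\Lambda_f(w_1)+\frac{s(w_3)}{(2k+1)s(w_2)}\Lambda_f(w_3),
\]
which is statement $(b)$. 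Since every step is an equivalence, $(a)\Leftrightarrow(b)$.

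I expect the main obstacle to be the careful bookkeeping in part $(i)$: one must verify precisely how the left and right neighbors of a fraction evolve as one descends the tree, and in particular confirm that the two neighbors of $\frac{p_2}{q_2}$ in $\Fartwo^{(n)}$ are exactly the two fractions displayed, with the correct exponent $k$ matching the gap between the minimal level $n-k$ of $\frac{p_2}{q_2}$ and the ambient level $n$. This is where one could make an off-by-one error; once $(i)$ is pinned down, parts $(ii)$, $(iii)$ and the $(a)\Leftrightarrow(b)$ equivalence are routine algebra using $p_iq_j-p_jq_i=\pm 1$ and the relation $s(w(p/q))=2q$.
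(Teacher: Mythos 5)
Your proposal is correct and follows essentially the same approach as the paper's own proof: both establish $(i)$--$(iii)$ by tracking the immediate left/right neighbors of $\frac{p_2}{q_2}$ down the $k$ levels from its minimal level, then deduce $(a)\Leftrightarrow(b)$ from the barycentric representation of $\frac{p_2}{q_2}$ together with the identity $s(w(p/q))=2q$. The only cosmetic difference is that you verify the barycentric weights $\frac{q_1}{(2k+1)q_2}$ and $\frac{q_3}{(2k+1)q_2}$ by computing the Farey differences $p_iq_j-p_jq_i=\pm 1$ term by term, whereas the paper writes down the convex combination $\frac{p_2}{q_2}=\frac{q_1}{q_1+q_3}\cdot\frac{p_1}{q_1}+\frac{q_3}{q_1+q_3}\cdot\frac{p_3}{q_3}$ directly and invokes $(ii)$; these are the same computation.
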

\begin{proof}
First, note that $\frac{P_1}{Q_1}$ and $\frac{P_3}{Q_3}$ must have minimal level $n-k-1$, hence they are the two direct parents of $\frac{p_2}{q_2}$, i.e. $\frac{p_2}{q_2}=\frac{P_1}{Q_1}\oplus \frac{P_3}{Q_3}$. Now, for each $0\leq i\leq k$, let $x_i$ and $y_i$ be the immediate left and right neighbors of $\frac{p_2}{q_2}$ at level $n-i$, so that in particular $x_0=\frac{p_1}{q_1},y_0=\frac{p_3}{q_3}$ and $x_k=\frac{P_1}{Q_1},y_k=\frac{P_3}{Q_3}$. Now, by construction of the Farey tree, we have
\[x_i =x_{i+1}\oplus \frac{p_2}{q_2},\quad y_i =\frac{p_2}{q_2}\oplus y_{i+1} \quad \text{for all }i=0,\ldots,k-1.\]
This implies item $(i)$. We also conclude
\(p_1 =P_1+kp_2, q_1 =Q_1+kq_2,  p_3 =P_3+kp_2,  q_3 =Q_3+kq_2.\)
Since $p_2=P_1+P_3$, we get
\(p_1+p_3=(P_1+kp_2)+(P_3+kp_2)=(2k+1)p_2.\)
Similarly, $q_1+q_3=(2k+1)q_2$. This proves item $(ii)$. Item $(iii)$ is a direct consequence of $(i)$, the construction of the Markov tree and the parametrization of Markov irrationalities by Farey fractions. Finally, note that
\(\frac{p_2}{q_2}= \left(\frac{q_1}{q_1+q_3}\right)\frac{p_1}{q_1}+\left(\frac{q_3}{q_1+q_3}\right)\frac{p_3}{q_3},\)
hence convexity of $\tilde{\Lambda}_f$ on $\left\{\frac{p_1}{q_1},\frac{p_2}{q_2},\frac{p_3}{q_3}\right\}$ means
\[\tilde{\Lambda}_f\left(\frac{p_2}{q_2}\right)\leq \frac{q_1}{q_1+q_3}\tilde{\Lambda}_f\left(\frac{p_1}{q_1}\right)+\frac{q_3}{q_1+q_3}\tilde{\Lambda}_f\left(\frac{p_3}{q_3}\right).\]
Then, the fact that $(a)$ and $(b)$ are equivalent follows from the formulas $s(w_i)=2q_i$ for $i=1,2,3$, item $(ii)$ and the fact that $w_i$ corresponds to $\frac{p_i}{q_i}$ for $i=1,2,3$. This completes the proof of the proposition.

\end{proof}

\begin{coro}\label{cor:equivalent_formulations_convexity_for_Lambda_f}
    The following two statements are equivalent:
    \begin{enumerate}
        \item[$(i)$] The function $\tilde \Lambda_f:[0,1/2]\cap \mathbb{Q}\to \R$ is convex.
        \item[$(ii)$] For all Markov irrationalities $W_1<w_2<W_3$ with $W_1,W_3$ neighbors and $w_2=W_3\odot W_1$,  and for all integers $k\geq 0$, we have
 \[\Lambda_f(w_2)\leq \frac{s(w_2^{\odot k}\odot W_1)}{(2k+1)s(w_2)}\Lambda_f(w_2^{\odot k}\odot W_1)+\frac{s(W_3\odot w_2^{\odot k})}{(2k+1)s(w_2)}\Lambda_f(W_3\odot w_2^{\odot k}).\]       
    \end{enumerate}
\end{coro}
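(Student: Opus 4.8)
The plan is to route the argument through the three-point characterization of convexity on the Farey tree $\Fartwo$ and then invoke Proposition \ref{prop: convexity reduction}. First I would recall the general criterion collected in the appendix: a function $g:[0,1/2]\cap\mathbb{Q}\to\mathbb{R}$ is convex (equivalently, extends to a convex function on $[0,1/2]$) if and only if, for every $n\geq 1$ and every triple $\frac{p_1}{q_1}<\frac{p_2}{q_2}<\frac{p_3}{q_3}$ of consecutive Farey fractions in $\Fartwo^{(n)}$, the discrete convexity inequality
\[g\left(\frac{p_2}{q_2}\right)\leq \frac{q_1}{q_1+q_3}\,g\left(\frac{p_1}{q_1}\right)+\frac{q_3}{q_1+q_3}\,g\left(\frac{p_3}{q_3}\right)\]
holds, using $\frac{p_2}{q_2}=\frac{q_1}{q_1+q_3}\cdot\frac{p_1}{q_1}+\frac{q_3}{q_1+q_3}\cdot\frac{p_3}{q_3}$ (part $(ii)$ of Proposition \ref{prop: convexity reduction}). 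Applying this to $g=\tilde\Lambda_f$ reduces statement $(i)$ to the validity of condition $(a)$ of Proposition \ref{prop: convexity reduction} for every consecutive Farey triple in every level, which by the equivalence $(a)\iff(b)$ in that proposition is in turn equivalent to the inequality in $(b)$ holding for every such triple.

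Next I would set up the dictionary between consecutive Farey triples and the data $(W_1,W_3,k)$ appearing in $(ii)$. Given a consecutive triple $\frac{p_1}{q_1}<\frac{p_2}{q_2}<\frac{p_3}{q_3}$ in $\Fartwo^{(n)}$, let $k\geq 0$ be such that $\frac{p_2}{q_2}$ has minimal level $n-k$; by part $(i)$ of Proposition \ref{prop: convexity reduction} the direct parents $\frac{P_1}{Q_1}<\frac{P_3}{Q_3}$ of $\frac{p_2}{q_2}$ are neighbors, so the corresponding Markov irrationalities $W_1<W_3$ are neighbors with $w_2=W_3\odot W_1$, and part $(iii)$ gives $w_1=w_2^{\odot k}\odot W_1$ and $w_3=W_3\odot w_2^{\odot k}$; hence inequality $(b)$ is precisely the inequality in $(ii)$. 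Conversely, given neighbors $W_1<W_3$ in $\M$, say consecutive in $\M^{(n_0)}$, with $w_2=W_3\odot W_1$, and an integer $k\geq 0$, the Farey fractions $\frac{P_1}{Q_1},\frac{P_3}{Q_3}$ corresponding to $W_1,W_3$ are consecutive in $\Fartwo^{(n_0)}$, and setting $\frac{p_2}{q_2}:=\frac{P_1}{Q_1}\oplus\frac{P_3}{Q_3}$, $\frac{p_1}{q_1}:=\frac{P_1}{Q_1}\oplus\left(\frac{p_2}{q_2}\right)^{\oplus k}$, $\frac{p_3}{q_3}:=\left(\frac{p_2}{q_2}\right)^{\oplus k}\oplus\frac{P_3}{Q_3}$ produces a consecutive triple in $\Fartwo^{(n_0+1+k)}$ in which $\frac{p_2}{q_2}$ has minimal level exactly $n_0+1=(n_0+1+k)-k$. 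Thus the family of inequalities $(b)$ over all consecutive Farey triples and the family $(ii)$ over all admissible $(W_1,W_3,k)$ coincide term by term.

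Combining the two reductions yields the stated equivalence:
\[(i)\iff \text{``$(a)$ for all consecutive Farey triples''}\iff\text{``$(b)$ for all consecutive Farey triples''}\iff (ii).\]
The only delicate points — and the main, minor, obstacle — are bookkeeping ones: confirming that the appendix criterion is formulated for functions defined merely on $\mathbb{Q}\cap[0,1/2]$ (this is exactly why a Farey-level formulation is used rather than a dyadic midpoint one), and checking that no degenerate triple is overlooked, in particular that the endpoints $\frac{0}{1},\frac{1}{2}$ never occur as the middle fraction $\frac{p_2}{q_2}$, so that part $(i)$ of Proposition \ref{prop: convexity reduction} always supplies genuine direct parents $W_1,W_3$. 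Both become routine once the Farey-tree/Markov-tree correspondence of Proposition \ref{prop: convexity reduction} is in hand.
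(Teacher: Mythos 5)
Your proposal is correct and follows essentially the same route as the paper: reduce convexity on $[0,1/2]\cap\mathbb{Q}$ to convexity on each finite level $\Fartwo^{(n)}$, apply Lemma~\ref{lem:3-convexity_implies_convexity} to reduce this to discrete convexity on consecutive triples, and then invoke Proposition~\ref{prop: convexity reduction}$(iii)$ together with the equivalence of $(a)$ and $(b)$ there. The only difference is stylistic: you spell out explicitly (which the paper leaves implicit) that the dictionary between consecutive Farey triples $\bigl(\frac{p_1}{q_1},\frac{p_2}{q_2},\frac{p_3}{q_3}\bigr)$ and the data $(W_1,W_3,k)$ is a bijection, verifying the surjectivity by constructing from any $(W_1,W_3,k)$ the triple $\frac{P_1}{Q_1}\oplus(\frac{p_2}{q_2})^{\oplus k} < \frac{p_2}{q_2} < (\frac{p_2}{q_2})^{\oplus k}\oplus\frac{P_3}{Q_3}$ in $\Fartwo^{(n_0+1+k)}$, and you flag the (true and easy) fact that the endpoints $\frac01,\frac12$ never occur as the middle fraction, so no degenerate triples arise.
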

\begin{proof}
Clearly, convexity of $\tilde{\Lambda}_f$ on $[0,1/2]\cap \mathbb{Q}$ is equivalent to convexity of $\tilde{\Lambda}_f$ on each  level $\Fartwo^{(n)}$ with  $n\geq 1$ (the case $n=0$ being trivial). Now, according to Lemma \ref{lem:3-convexity_implies_convexity} in the appendix, given $n\geq 1$ convexity on $\Fartwo^{(n)}$ is equivalent to convexity on $\left\{\frac{p_1}{q_1},\frac{p_2}{q_2},\frac{p_3}{q_3}\right\}$ for all possible consecutive fractions $\frac{p_1}{q_1}<\frac{p_2}{q_2}<\frac{p_3}{q_3}$ in that level. Then, the result follows from item $(iii)$ and the equivalence between items $(a)$ and $(b)$ in Proposition \ref{prop: convexity reduction}. This proves the corollary.
\end{proof}

In the next section we prove a triangle inequality for the sums $S(w_2,t)$ and $S(w_2^\op,t)$ which in return will imply statement $(ii)$ in Corollary \ref{cor:equivalent_formulations_convexity_for_Lambda_f}  for $\Lambda_f$.

\subsection{A triangle inequality for $S(w,t)$}\label{sec: triangle ineq for S(w,t)}

In this section we prove a triangle inequality property for the sums $S(w,t)$ for $w\in \M$. This is the main step in our proof of the convexity of $\tilde \Lambda_f:[0,1/2]\cap \mathbb{Q}\to \R$ via Corollary  \ref{cor:equivalent_formulations_convexity_for_Lambda_f}. 

\begin{theorem}\label{thm:triangluar_ineq_for_consecutive}
Let $W_1<w_2<W_3$ be Markov irrationalities with $W_1,W_3$ neighbors and $w_2=W_3\odot W_1$. Let $k\geq 0$ be an integer and define $w_1=w_2^{\odot k}\odot W_1$ and $w_3=W_3\odot w_2^{\odot k}$. Then, for all $t\in [\pi/3,2\pi/3]$ we have
    \[(2k+1)S(w_2,t)< S(w_1,t)+S(w_3,t) \text{ and }(2k+1)S(w^\op,t)< S(w_1^\op,t)+S(w_3^\op,t).\]
\end{theorem}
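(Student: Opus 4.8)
The plan is to turn the statement into a purely combinatorial comparison of cycle sums and then to prove it by a termwise matching, in the spirit of the proofs of Propositions~\ref{prop:S(w,t)_leq_golden_ratio} and~\ref{prop:S(w,t)_geq_silver_ratio}. Write $W_1=[\overline{\alpha}]$ and $W_3=[\overline{\beta}]$, with $\alpha$ and $\beta$ the minimal (even) periods. Then $w_2=[\overline{\beta\alpha}]$, $w_1=[\overline{(\beta\alpha)^k\alpha}]$, and $w_3$ has period cyclically equal to $(\beta\alpha)^k\beta$; since by its definition $S(w,t)$ depends only on the cyclic word of the period, we may work with this representative for $w_3$. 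Fix $t\in[\pi/3,2\pi/3]$. By Proposition~\ref{prop: convexity reduction}(ii) we have $s(w_1)+s(w_3)=(2k+1)s(w_2)$, so $S(w_1,t)+S(w_3,t)$ and $(2k+1)S(w_2,t)$ are sums of the same number of terms $F(\cdot,t)$, one for each digit slot $(i,j)$, $1\le j\le a_i$, of the respective periodic word (see~\eqref{eq:def_of_S(w,t)}). Thus it suffices to organize the digit slots of $w_1$ and $w_3$ into groups, match each group with a group of digit slots of $2k+1$ copies of $w_2$ of equal total weight, and show that within each group the contribution coming from $w_1,w_3$ dominates that coming from the copies of $w_2$, strictly for at least one group (which is automatic here since $W_1\neq W_3$).

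\textbf{The matching.} I would cut $w_1$'s period $(\beta\alpha)^k\alpha$ into its $k$ leading copies of $\beta\alpha$ followed by one trailing copy of $\alpha$, and $w_3$'s period $\beta(\beta\alpha)^k$ into one leading copy of $\beta$ followed by its $k$ trailing copies of $\beta\alpha$; the $2k$ copies of $\beta\alpha$ occurring here are matched, in order, with $2k$ of the $2k+1$ copies of $w_2$, and the trailing $\alpha$ of $w_1$ together with the leading $\beta$ of $w_3$ (which between them carry $s(W_1)+s(W_3)=s(w_2)$ digit slots) are matched with the last copy of $w_2$. For a slot lying inside one of the $2k$ copies of $\beta\alpha$, the continued fraction attached to it and the one attached to its partner in $w_2$ have a long common initial segment, and when they first disagree the discrepancy amounts to comparing a digit of $\alpha$ against a digit of $\beta$; since $W_1<W_3$ are Markov numbers with even periods, Lemma~\ref{lem:comparison_continued_fractions} pins down the sign of this discrepancy, so monotonicity of $x\mapsto F(x,t)$ on $[1,\infty)$ (Lemma~\ref{lem:prop_of_F}) controls the two $F$-values, and the slots on which the comparison comes out the ``wrong'' way are re-grouped with a neighbouring slot and absorbed via the monotonicity of $x\mapsto F(x,t)+F(\Phi(x),t)$ on $[4/3,\infty)$ (Lemma~\ref{lem:key_ineq_FT}), which applies since every continued fraction that occurs has all partial quotients in $\{1,2\}$ and hence lies in $[4/3,\infty)$ by Remark~\ref{rmk:4/3}. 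For the boundary group — the trailing $\alpha$ of $w_1$ and the leading $\beta$ of $w_3$ against the last copy of $w_2$ — the continued fractions involved no longer disagree just once; here one argues as in the proof of Proposition~\ref{prop:S(w,t)_geq_silver_ratio}, namely the purely periodic values that occur lie in $[\phi,\psi]$ by Lemma~\ref{lem:comparison_continued_fractions}, and grouping them into configurations of the shape $\{x,\Phi(x)\}$ and $\{x,\Phi(x),\Psi(x),\Phi\circ\Psi(x)\}$ with $x\in[\phi,\psi)$ lets one conclude, via the monotonicity statements in Lemmas~\ref{lem:key_ineq_FT} and~\ref{lem:key_ineq_FT2}, that this group's contribution is at least one copy of $S(w_2,t)$. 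Adding the $2k+1$ groups gives $(2k+1)S(w_2,t)<S(w_1,t)+S(w_3,t)$.

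\textbf{The opposites.} This follows from the same argument applied to a mirrored configuration. Indeed, $S(w^{\op},t)$ is the cycle sum attached to the reversed period, see~\eqref{def_of_S(-1/tilde w,t)}, and by Lemma~\ref{lem: monotonicity Markov tree and opposites} taking opposites of Markov neighbours reverses the order and sends $w_2=W_3\odot W_1$, $w_1=w_2^{\odot k}\odot W_1$, $w_3=W_3\odot w_2^{\odot k}$ to $w_2^{\op}=W_1^{\op}\odot W_3^{\op}$, $w_1^{\op}=W_1^{\op}\odot (w_2^{\op})^{\odot k}$, $w_3^{\op}=(w_2^{\op})^{\odot k}\odot W_3^{\op}$. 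This is the left--right mirror image of the original configuration, with the copies of $\beta\alpha$ now forming a suffix of $w_1^{\op}$ and a prefix of $w_3^{\op}$ rather than the other way around, so the same matching (with ``leading'' and ``trailing'' interchanged) yields $(2k+1)S(w_2^{\op},t)<S(w_1^{\op},t)+S(w_3^{\op},t)$.

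\textbf{Main obstacle.} The difficulty lies in the bulk comparison: one must control the sign of each termwise discrepancy between a slot of $w_1$ or $w_3$ and its partner in $w_2$ — this sign depends on the parity of the position at which the two attached continued fractions first differ, via Lemma~\ref{lem:comparison_continued_fractions} — and then choose the re-grouping of the ``deficit'' slots with nearby ``surplus'' slots so that every application of Lemma~\ref{lem:key_ineq_FT} (and, in the boundary group, Lemma~\ref{lem:key_ineq_FT2}) is legitimate, uniformly in $k$ and in $t$. The evenness of the periods $\alpha,\beta$ and the rigid block structure of Markov words in the alphabet $\{1,2\}$, which are exactly the features exploited in the proof of Proposition~\ref{prop:S(w,t)_geq_silver_ratio}, are what make this bookkeeping go through.
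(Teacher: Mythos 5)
Your outline correctly identifies the ingredients — the $F$-monotonicity lemmas \ref{lem:prop_of_F}, \ref{lem:key_ineq_FT}, \ref{lem:key_ineq_FT2}, the continued-fraction comparison Lemma \ref{lem:comparison_continued_fractions}, Remark \ref{rmk:4/3}, and the mirror argument via Lemma \ref{lem: monotonicity Markov tree and opposites} and \eqref{def_of_S(-1/tilde w,t)} — and your decomposition (chunks of $S(w_1,t)$ and $S(w_3,t)$ each dominating a copy of $S(w_2,t)$) is the natural dual of what the paper does ($(2k+1)S(w_2,t)$ cut into two consecutive ranges, one dominated by $S(w_3,t)$ and the other by $S(w_1,t)$, see \eqref{eq:first_sum<S(w_3,t)}–\eqref{eq: first splitting proof}). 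But the proposal has a genuine gap: the ``bulk comparison'' and the ``re-grouping of deficit slots with surplus slots,'' which you explicitly flag as the main obstacle, are precisely what the paper's Lemma~\ref{lem:triang_ineq_on_pairs} establishes, and your sketch gives no analogous statement, let alone a proof. This is not a detail one can wave at: the sign of each termwise discrepancy alternates with the parity of the first disagreeing partial quotient (Lemmas~\ref{lem:comparison_continued_fractions} and \ref{lem: comparison of cyclic permutations}), so inside each of your $\beta\alpha$ groups roughly half the slots go the wrong way, and one has to exhibit a pairing that falls into a configuration to which $F + F\circ\Phi$ (or $F+F\circ\Phi+F\circ\Psi$, or $F\circ\Phi\circ\Psi$) monotonicity applies. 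Lemma~\ref{lem:triang_ineq_on_pairs} does this by aligning the cycles at the back (so that the disagreement is pushed to a single controlled location via Lemma~\ref{lem:back_matching_formula}) and then pairing adjacent $S_i$-blocks two at a time with a three-block boundary case; your groups are aligned at the front and the first disagreement position moves from slot to slot, so the pairing you would need is different and you never specify it.

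The boundary-group claim has a sharper problem. For $k=0$ your entire decomposition reduces to the boundary group, so the claim ``this group's contribution is at least one copy of $S(w_2,t)$'' \emph{is} the theorem with $k=0$: it cannot be delegated to the proof of Proposition~\ref{prop:S(w,t)_geq_silver_ratio}. Worse, the route you suggest actually points in the wrong direction. Proposition~\ref{prop:S(w,t)_geq_silver_ratio} lower-bounds $S(W_1,t)+S(W_3,t)$ by $\frac{s(W_1)+s(W_3)}{4}S(\psi,t)=\frac{s(w_2)}{4}S(\psi,t)$, while Proposition~\ref{prop:S(w,t)_leq_golden_ratio} upper-bounds $S(w_2,t)$ by $\frac{s(w_2)}{2}S(\phi,t)$; comparing these requires $S(\psi,t)\geq 2S(\phi,t)$, i.e. $F(\psi,t)+F(\Phi(\psi),t)\geq 2F(\phi,t)$, which is \emph{false} (it is the opposite inequality by Lemma~\ref{lem:key_ineq_FT}, since $\psi>\phi$). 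So the boundary case genuinely requires the direct comparison of the cycles of $W_1$, $W_3$, and $w_2$ that Lemma~\ref{lem:triang_ineq_on_pairs} encodes via the back-matching formula of Lemma~\ref{lem:back_matching_formula}; without some version of that lemma your argument does not close.
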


For the proof of Theorem \ref{thm:triangluar_ineq_for_consecutive} we need to introduce additional notation and prove a few lemmas. 
First, given a  real quadratic irrationality $w$ with purely periodic continued fraction expansion $w=[\overline{a_1,\ldots,a_\ell}]$ with $\ell\geq 2$ even and minimal, we put
\(\ell(w):=\ell.\)

Now, given  $x=[\overline{a_1;\ldots,a_r}]$ and~$y=[\overline{b_1;\ldots,b_s}]$ with $r,s\geq 2$ even and such that $x\neq y$ we can repeat their periods (if necessary) and assume $r=s$. Then, we define their \emph{back matching} $\bmatch(x,y)$ and \emph{front matching} $\fmatch(x,y)$ as the number of equal partial quotients $a_i=b_i$ counted from back to front and from front to back, respectively. More precisely, we have
\[\bmatch(x,y)=k \Leftrightarrow a_r=b_r,\ldots,a_{r-k+1}=b_{r-k+1},a_{r-k}\neq b_{r-k},\]
and
\[\fmatch(x,y)=k \Leftrightarrow a_1=b_1,\ldots,a_{k}=b_{k},a_{k+1}\neq b_{k+1},\]
where it is understood that $\bmatch(x,y)=0$ if $a_r\neq b_r$, and $\fmatch(x,y)=0$ if $a_0\neq b_0$. For instance, we have
\[\bmatch([\overline{1;1}],[\overline{2;2,1,1,1,1}])=4, \quad \fmatch([\overline{1;1}],[\overline{2;2,1,1,1,1}])=0.\]

Recall that, for a purely periodic continued fraction $[\overline{a_1;a_2,\ldots,a_\ell}]$ with $a_i\in \{1,2\}$ for all $i$, we have
\[[\overline{a_{i};a_{i+1},\ldots,a_\ell,a_1,\ldots,a_{i-1}}]=\begin{cases}
    \Phi([\overline{a_{i+1};a_{i+2},\ldots,a_\ell,a_1,\ldots,a_{i}}]) & \text{if }a_i=1,\\
    \Psi([\overline{a_{i+1};a_{i+2},\ldots,a_\ell,a_1,\ldots,a_{i}}]) & \text{if }a_i=2.\\
\end{cases}\]
In particular, one can obtain all the cyclic permutations of $[\overline{a_1;a_2,\ldots,a_\ell}]$ by iterative application of $\Phi$ and $\Psi$, where at each step one uses $\Phi$ or $\Psi$ according to the value of the last partial quotient in the period. Hence, for two different purely periodic continued fractions $x$ and $y$ their back matching $\bmatch(x,y)$ gives us the maximal length of a sequence of $\Phi$ and $\Psi$ that can be applied simultaneously to $x$ and $y$,  to obtain cyclic permutations $x'$ and $y'$  of $x$ and $y$, respectively, with $\bmatch(x',y')=0.$ We will use this idea to pairs of Markov irrationalities $w_1,w_2\in \M$ and their opposites $w_1^{\op},w_2^{\op}$. Since $\bmatch(w_1^{\op},w_2^{\op})=\fmatch(w_1,w_2)$, we start by computing the back and front matching between pairs of Markov irrationalities. The next lemma 
gives useful formulas in the case of Markov irrationalities that are neighbors (i.e., consecutive in some level $\M^{(n)}$).

\begin{lemma}\label{lem:back_matching_formula}
Let $u<v$ be two Markov irrationalities that are neighbors. Then, we have
 \(\bmatch(u,v)=\ell(v)-2\) 
 and 
 \(\fmatch(u,v)=\ell(u)-2\).
\end{lemma}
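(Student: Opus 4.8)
The plan is to prove both identities simultaneously, by induction on $n$, for every pair $u<v$ of Markov irrationalities that are consecutive in the level $\M^{(n)}$; since "neighbors" means "consecutive in some $\M^{(n)}$", this suffices. Write $C_w$ for the period word of $w$ (the block of length $\ell(w)$), so that $\bmatch(u,v)$ and $\fmatch(u,v)$ are the lengths of the longest common suffix, resp.\ prefix, of the periodic sequences $C_u^{\infty}$ and $C_v^{\infty}$. For the base case $n=0$ the pair is $[\overline{1_2}]<[\overline{2_2}]$, whose period words $(1,1)$ and $(2,2)$ disagree in both their first and their last entry, so $\bmatch=0=\ell([\overline{2_2}])-2$ and $\fmatch=0=\ell([\overline{1_2}])-2$.

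For the inductive step I will use that $\M^{(n+1)}$ is obtained from $\M^{(n)}$ by inserting in each gap the conjunction of its two endpoints, so that in a consecutive pair $u<v$ of $\M^{(n+1)}$ exactly one of $u,v$ is new (in $\M^{(n+1)}\setminus\M^{(n)}$) and the other lies in $\M^{(n)}$ — if both lay in $\M^{(n)}$ they were already consecutive there, and the case is covered by the inductive hypothesis. There are two symmetric cases. If $v$ is the new one, then $v=W\odot u$, where $W$ is the right neighbor of $u$ in $\M^{(n)}$; here $u<W$ are consecutive in $\M^{(n)}$, so the inductive hypothesis gives $\bmatch(u,W)=\ell(W)-2$ and $\fmatch(u,W)=\ell(u)-2$, while $C_v=C_WC_u$ (concatenation), and in particular $\ell(v)=\ell(W)+\ell(u)$. (Here I invoke the standard fact that the period words of Markov irrationalities are primitive, which makes the lengths add; this is implicit throughout the paper.) If $u$ is the new one, then $u=v\odot W'$, where $W'$ is the left neighbor of $v$ in $\M^{(n)}$, the inductive hypothesis gives $\bmatch(W',v)=\ell(v)-2$ and $\fmatch(W',v)=\ell(W')-2$, and $C_u=C_vC_{W'}$. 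It is important for the order of concatenation to recall that $w\bigl(\tfrac ab\oplus\tfrac cd\bigr)=w\bigl(\tfrac cd\bigr)\odot w\bigl(\tfrac ab\bigr)$ for $\tfrac ab<\tfrac cd$, as in Section~\ref{sec: convexity criteria}.

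The heart of the proof is a purely combinatorial statement about words that I would isolate as a sub-lemma: \emph{if $D,E$ are nonempty finite words with $|D|\ge 2$ and the longest common prefix of $D^{\infty}$ and $E^{\infty}$ has length exactly $|D|-2$, then so does the longest common prefix of $D^{\infty}$ and $(ED)^{\infty}$.} One proves this by comparing the letters in positions $1,\dots,|D|-1$ by hand: any index exceeding $|E|$ that shows up is reduced modulo $|E|$ using the periodicity of $E^{\infty}$, which brings it into the range $\le|D|-2$ where $D^{\infty}$ and $E^{\infty}$ are known to coincide, resp.\ into position $|D|-1$ where they are known to differ. Granting the sub-lemma, the two cases close quickly. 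When $v=W\odot u$: the last $\ell(u)$ entries of the cycle of $v$ equal those of the cycle of $u$, after which the comparison enters the single block $C_W$; since $\bmatch(u,W)=\ell(W)-2<\ell(W)$ the first disagreement falls strictly inside this block, so $\bmatch(u,v)=\ell(u)+\ell(W)-2=\ell(v)-2$ directly, whereas $\fmatch(u,v)=\ell(u)-2$ is precisely the sub-lemma applied with $D=C_u$ and $E=C_W$. The case $u=v\odot W'$ is the mirror image: $\fmatch(u,v)=\ell(v)+\ell(W')-2=\ell(u)-2$ comes directly (the front block $C_v$ matches, and the first disagreement then falls inside $C_{W'}$ because $\fmatch(W',v)=\ell(W')-2<\ell(W')$), while $\bmatch(u,v)=\ell(v)-2$ is the sub-lemma applied to the reversed words $D=\mathrm{rev}(C_v)$, $E=\mathrm{rev}(C_{W'})$.

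I expect the only real obstacle to be the sub-lemma, and within it the regime where the known common prefix length $|D|-2$ is already $\ge|E|$. This regime genuinely occurs here — for instance with $u=[\overline{1_2}]$ and $W$ a long right neighbor, so that $\fmatch(u,W)$ greatly exceeds $\ell(W)$ — so it cannot be avoided; the periodicity reduction takes care of it, but one must check carefully that the first disagreement between $D^{\infty}$ and $(ED)^{\infty}$ lands exactly at position $|D|-1$, not earlier and not later. The remaining work — deciding which old irrationality is the left parent and which is the right parent of the new one, so as not to interchange $\bmatch$ and $\fmatch$ — is routine bookkeeping with the tree structure recalled in Section~\ref{sec: convexity criteria}.
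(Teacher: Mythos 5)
Your proposal is correct, and the overall architecture (induction on the level $n$, with the two cases ``$v$ is new, $v=W\odot u$'' and ``$u$ is new, $u=v\odot W'$'') is the same as the paper's. Where you differ is in how the induction is closed. The paper does not separate out a word-combinatorics lemma; instead it proves, by the same induction, the \emph{strengthened} statement $\bmatch(u,v)=\ell(v)-2=\bmatch(v\odot u,v)$ for neighbors $u<v$. The extra clause $\bmatch(v\odot u,v)=\ell(v)-2$ is then exactly what is needed to dispatch the case $u=v\odot w$ in one line ($\bmatch(u,v)=\bmatch(v\odot w,v)$ is the inductive hypothesis applied to $w<v$), and the strengthened clause is re-established at each level by a ``the first disagreement occurs within the last $\ell(v\odot w)$ entries, which are the same in $v\odot v\odot w$ as in $v\odot w$'' argument together with the identity $\bmatch(u,w\odot u)=\ell(u)+\bmatch(u,u\odot w)$. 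Your route replaces this bootstrapping by the stand-alone sub-lemma ``if $D^\infty$ and $E^\infty$ share a prefix of length exactly $|D|-2$, then so do $D^\infty$ and $(ED)^\infty$''; applied once directly and once to reversed words, it yields the two sub-cases that don't reduce to a single block. Notice that this sub-lemma is precisely the content of the paper's strengthened inductive clause (take $D=\mathrm{rev}(C_v)$, $E=\mathrm{rev}(C_u)$), so the two proofs are doing the same thing but packaging it differently: the paper proves it by a secondary induction interleaved with the main one, whereas you prove it as a self-contained combinatorics-on-words fact, including the genuinely nontrivial regime $|E|<|D|-1$, which the paper sidesteps by its choice of reduction. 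Your version is a bit more modular and makes the word-combinatorial core explicit; the paper's is terser and avoids the regime $|E|<|D|-1$ at the cost of carrying an auxiliary clause through the induction. Both work. (One point of hygiene: your appeal to ``periodicity modulo $|E|$'' in the sub-lemma should really be the reduction $k\mapsto k-|E|$ combined with the hypothesis that $D^\infty$ and $E^\infty$ agree up to position $|D|-2$; when written out this is exactly the calculation $D^\infty[k]=E^\infty[k]=E^\infty[k-|E|]=D^\infty[k-|E|]=(ED)^\infty[k]$ for $|E|<k\le|D|-2$, and the corresponding disagreement at $k=|D|-1$, which is sound.)
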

\begin{proof}
In order to prove the first formula we will in fact  prove the formulas
\begin{equation}\label{eq:bmatch(u,v)_ind}
 \bmatch(u,v)=\ell(v)-2=\bmatch(v\odot u,v) 
\end{equation}
by induction on the smallest level $n\geq 0$ of the Markov tree  containing $u$ and $v$, starting with $n=0$ where $u=[\overline{1;1}]$ and $v=[\overline{2;2}]$ (see Figure \ref{fig:Markov_tree}). In this case we have 
\(\bmatch(u,v)=0\) and \(  \bmatch(v\odot u,v)=0, \)
which verifies the desired formulas since $\ell(v)=2$. Now, assume that the smallest level of the Markov tree containing $u$ and $v$ is $n\geq 1$. Since $u$ and $v$ are neighbour and $n\geq 1$, we have either $u=v\odot w$ with $w$ a left neighbour of $v$ in level $n-1$, or $v=w\odot u$ with $w$ a right  neighbour of $u$ in level $n-1$. Let us first assume $u=v\odot w$. 
\[\begin{tikzcd}
  w  \arrow[rr,  dotted, no head, "neighbors" description]  & & v \arrow[ld, no head] \\
   & u=v\odot w & 
 \end{tikzcd}\]
Then
\(
    \bmatch(u,v)  =  \bmatch(v\odot w,v).
\)
By induction, we know that $\bmatch(w,v)=\bmatch(v\odot w,v)=\ell(v)-2$. This already proves that~$\bmatch(u,v)=\ell(v)-2$. We also have
\(
    \bmatch(v\odot u,v)  =  \bmatch(v\odot v\odot w,v).
\)
We know that $\bmatch(v\odot w,v)=\ell(v)-2<\ell(v)$, and this implies that when comparing the partial quotients of $v\odot w$ and $v$ from back to front, we don't  go beyond $\ell(v)$ steps from back to front. Since $\ell(v\odot w)>\ell(v)$, we conclude $\bmatch(v\odot v\odot w,v)=\bmatch(v\odot w,v)$. This implies
\(
    \bmatch(v\odot u,v)   =  \bmatch(v\odot v\odot w,v)= \bmatch(v\odot w,v)=\ell(v)-2.
\)
This proves \eqref{eq:bmatch(u,v)_ind} in the case where $v$ is a parent of $u$. 

We now treat the case $v=w\odot u$. 
\[\begin{tikzcd}
u \arrow[rd, no head] \arrow[rr, dotted, no head, "neighbors" description] & & w  \\
   & v=w\odot u & 
 \end{tikzcd}\]
We first compute \(
    \bmatch(u,v)  =  \bmatch(u,w\odot u) = \ell(u)+\bmatch(u,u\odot w).
\)
Since $u,w$ appear in a previous level, we have by induction $\bmatch(u,w)=\ell(w)-2<\ell(w)$. Hence $\bmatch(u,u\odot w)=\bmatch(u, w)$ and we have 
\(
    \bmatch(u,v)  = \ell(u)+\bmatch(u, w)= \ell(u)+\ell(w)-2=\ell(v)-2.
\)
Similarly, we compute
\(
    \bmatch(v\odot u,v)  = \bmatch(w\odot u\odot u,w\odot u)=\ell(u)+\bmatch(u\odot w\odot u,u\odot w). 
\)
By induction we also know that $\bmatch(w\odot u,w)=\ell(w)-2<\ell(w)$, which implies
\(\bmatch(v\odot u,v)=\ell(u)+\ell(w)-2=\ell(v)-2\)
as desired. This completes the proof of \eqref{eq:bmatch(u,v)_ind} and of the first stated formula. The corresponding result for the front matching is obtained in an analogous way, so we omit the details. 
\end{proof}

Given a quadratic irrationality $w=[\overline{a_1;a_2,\ldots,a_\ell}]$ with $\ell\geq 2$ even and minimal, we extend the definition of $a_i$ to any $i\in \Z$ by putting $a_{i}:=a_{i_0}$ if $i\equiv i_0$ mod $\ell$ with $1\leq i_0\leq \ell$. Moreover, for $i\in \Z$ we put
\(w_{(i)}:=[\overline{a_{i+1};\ldots,a_{\ell},a_1,\ldots,a_{i-1},a_i}].\)
For example $w_{(\ell)}= w$, $w_{(\ell-1)}= [\overline{a_\ell;a_1,\ldots,a_{\ell-1}}], \ldots, w_{(1)}=  [\overline{a_2;a_3,\ldots,a_\ell,a_1}]$, $w_{(0)}= w$.

\begin{lemma}\label{lem: comparison of cyclic permutations}
Let $u<v$ be real quadratic irrationals with purely periodic continued fraction expansion and $0\leq i \leq \bmatch (u,v)$ an integer. Then, $u_{(\ell(u)-i)}<v_{(\ell(v)-i)}$ if $i$ is even, and $u_{(\ell(u)-i)}>v_{(\ell(v)-i)}$ if $i$ is odd. 
\end{lemma}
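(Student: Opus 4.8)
The plan is to reduce the statement to a single fact about cyclic rotations of periodic strings and then run a one-line induction. First I would put $u$ and $v$ over a common (even) period length $r$, a common multiple of $\ell(u)$ and $\ell(v)$, writing $u=[\overline{a_1;\ldots,a_r}]$ and $v=[\overline{b_1;\ldots,b_r}]$, so that by the definition of $\bmatch$ the last $k:=\bmatch(u,v)$ partial quotients coincide, i.e. $a_{r-j}=b_{r-j}$ for $0\le j\le k-1$, and $k<r$ since $u\ne v$. Periodicity of the partial quotients gives $u_{(\ell(u)-i)}=u_{(r-i)}$ and $v_{(\ell(v)-i)}=v_{(r-i)}$, so it is enough to compare $x_i:=u_{(r-i)}$ with $y_i:=v_{(r-i)}$ for $0\le i\le k$. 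Note $x_0=u<v=y_0$, and that every $x_i,y_i$ is a purely periodic continued fraction, hence lies in $(0,\infty)$ (in fact is $>1$). If $k=0$ there is nothing to prove beyond $u<v$, so assume $k\ge 1$.

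The key ingredient is the one-step rotation identity $w_{(j-1)}=a_j+\tfrac{1}{w_{(j)}}$, which I would read off directly from the definition of $w_{(j)}$: passing from $w_{(j)}=[\overline{a_{j+1};a_{j+2},\ldots,a_j}]$ to $w_{(j-1)}=[\overline{a_j;a_{j+1},\ldots,a_{j-1}}]$ is exactly prepending the partial quotient $a_j$. Applying this with $j=r-i$ to $u$ and to $v$ gives $x_{i+1}=a_{r-i}+\tfrac{1}{x_i}$ and $y_{i+1}=b_{r-i}+\tfrac{1}{y_i}$; moreover $a_{r-i}=b_{r-i}$ for $0\le i\le k-1$, which is precisely where the hypothesis $i\le\bmatch(u,v)$ is used. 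Since $z\mapsto c+\tfrac1z$ is strictly decreasing on $(0,\infty)$, the inequality between $x_i$ and $y_i$ is reversed when passing from step $i$ to step $i+1$. Starting from $x_0<y_0$, an immediate induction then yields $x_i<y_i$ for $i$ even and $x_i>y_i$ for $i$ odd, for all $0\le i\le k$, which is exactly the assertion of the lemma. (One could instead apply Lemma~\ref{lem:comparison_continued_fractions} directly to the two rotations, which agree in their first $i$ partial quotients; but pinning down the sign of the first subsequent disagreement essentially re-derives the same inductive relation, so the ``prepend'' argument seems cleanest.)

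I do not expect a genuine obstacle here. The only point requiring care is the bookkeeping around the common period length $r$ and the check that the prepended partial quotients $a_{r-i}$ and $b_{r-i}$ coincide exactly for $0\le i\le k-1$ and for no further index: the failure of coincidence at the next step ($a_{r-k}\ne b_{r-k}$) is exactly why the statement is restricted to $i\le\bmatch(u,v)$, so this is not an artifact of the proof but a feature of the claim. Verifying positivity of all the rotations and strict monotonicity of $z\mapsto c+\tfrac1z$ is routine.
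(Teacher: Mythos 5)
Your proof is correct and follows essentially the same route as the paper's: both rely on the one-step rotation identity $w_{(j-1)}=a_j+\tfrac1{w_{(j)}}$ (the paper phrases it via the decreasing map $\eta^{(a)}:x\mapsto a+\tfrac1x$ on $\R^+$), apply it iteratively to the common tail of length $\bmatch(u,v)$, and invoke strict monotonicity to flip the inequality at each step. The paper writes this as a single composition $\eta^{(a_{N-i+1})}\circ\cdots\circ\eta^{(a_N)}$ rather than an explicit induction, but the argument is identical.
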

\begin{proof}
We can assume $i\geq 1$. For every integer $a\geq 1$ the matrix $\eta^{(a)}:=\left(\begin{smallmatrix}
    a & 1 \\ 1 & 0
\end{smallmatrix}\right)$ acts on $\R^+$ as $x\mapsto a+\frac{1}{x}$ and the map $\eta^{(a)}:\R^+\to \R^+$ is decreasing. Moreover, for a purely periodic continued fraction $[\overline{a_1;a_2,\ldots,a_\ell}]$ we have
\(\eta^{(a_\ell)}([\overline{a_1;a_2,\ldots,a_\ell}])=[\overline{a_\ell;a_1,\ldots,a_{\ell-1}}].\)
Then, writing $u=[\overline{c_1;c_2,\ldots,c_r,a_1,\ldots,a_N}]$, $v=[\overline{d_1;d_2,\ldots,d_s,a_1,\ldots,a_N}]$ with $N:=\bmatch (u,v)$,  the result follows from the formulas
\(u_{(\ell(u)-i)}= \eta^{(a_{N-i+1)}}\circ \cdots \circ \eta^{(a_N)}(u)\), \( v_{(\ell(v)-i)}= \eta^{(a_{N-i+1})}\circ \cdots \circ \eta^{(a_N)}(v).\) 
This proves the lemma.
\end{proof}

Recall that the sum $S(w,t)$ for $t\in [\pi/3,2\pi/3]$ is defined in \eqref{eq:def_of_S(w,t)} and can be rewritten  as
\(S(w,t)=\sum\limits_{i=1}^{\ell(w)}S_i(w,t)\)
with $S_i(w,t)$ defined in \eqref{eq:def_of_S_i(w,t)}. Moreover, we extended the definition of $S_i(w,t)$ to $i\in \Z$ by putting $S_i(w,t):=S_{i_0}(w,t)$ where $i_0$ is the unique integer satisfying $1\leq i_0\leq \ell$ and $i\equiv i_0$ mod $\ell$.

The idea of the proof of Theorem \ref{thm:triangluar_ineq_for_consecutive} for the first inequality \[(2k+1)S(w_2,t)<S(u,t)+S(v,t),\]
is to find a convenient splitting of $(2k+1)S(w_2,t)$ of the form
\[(2k+1)S(w_2,t)=\sum_{i=n_1}^{n_2}S_i(w_2,t)+\sum_{i=m_1}^{m_2}S_i(w_2,t),\]
for some integers $n_1\leq n_2$ and $m_1\leq m_2$, such that
\begin{equation}\label{eq:convenient splitting}
\sum_{i=n_1}^{n_2}S_i(w_2,t)<S(u,t) \quad \text{and} \quad \sum_{i=m_1}^{m_2}S_i(w_2,t)<S(v,t) \quad \text{for all }t\in [\pi/3,2\pi/3].
\end{equation}
Analogously, for the second inequality we will find a convenient splitting of $(2k+1)S(w_2^{\op},t)$.

In our proof of Theorem \ref{thm:triangluar_ineq_for_consecutive} the following  lemma will be used to obtain comparisons of the form \eqref{eq:convenient splitting}. 

\begin{lemma}\label{lem:triang_ineq_on_pairs}
   Let $x=[\overline{c_1;c_2,\ldots,c_r}]$, $y=[\overline{d_1;d_2,\ldots,d_s}]$ with $ x<y$, $2\leq r,s$ with $r,s$ even and minimal, and $c_i,d_j\in \{1,2\}$ for all $i,j$. Then, the following properties hold for all $t\in [\pi/3,2\pi/2]$:
\begin{enumerate}
    \item[$(i)$] If $\bmatch(x,y)=r-1$, $c_i=c_{i+1}$ and $d_i=d_{i+1}$ for all $i$ even, and  $c_1=2$ (equivalently $d_{s-r+1}=1$), then
\(\sum\limits_{i=s-r}^{s}S_{i}(y,t)< S(x,t).\)
\item[$(ii)$] If $\bmatch(x,y)=r-2$, $c_i=c_{i+1}$  and $d_i=d_{i+1}$ for all $i$ odd, and $c_1=1$ (equivalently $d_{s-r+1}=2$), then
\(\sum\limits_{i=s-(r-2)}^{s}S_{i}(y,t)< S(x,t).\)
\end{enumerate}
\end{lemma}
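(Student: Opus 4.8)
The plan is to recast both sides in terms of the cyclic permutations $w_{(i)}$ and the fractional--linear maps $\eta^{(a)}\colon z\mapsto a+\frac1z$ from the proof of Lemma~\ref{lem: comparison of cyclic permutations} (so that $\eta^{(1)}=\Phi$ and $\eta^{(2)}=\Psi$ act on $\R^{+}$), and then to match the $F$--values occurring in $\sum_{i}S_{i}(y,t)$ against those occurring in $S(x,t)$ group by group, for each fixed $t\in[\pi/3,2\pi/3]$. The starting point is the identity $S_{i}(w,t)=\sum_{j=1}^{a_{i}}F(\eta^{(j)}(w_{(i)}),t)$ from \eqref{eq:def_of_S_i(w,t)} together with $w_{(i-1)}=\eta^{(a_{i})}(w_{(i)})$. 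Since $\bmatch(x,y)=r-1$, after repeating periods the last $r-1$ partial quotients of $y$ are $c_{2},\dots,c_{r}$, the one immediately before them being $d_{s-r+1}=1\neq c_{1}=2$, and by the pairing hypothesis also $d_{s-r}=1$. Writing $\alpha_{k}:=\eta^{(c_{r-k+1})}\circ\cdots\circ\eta^{(c_{r})}$ for $0\le k\le r-1$ (with $\alpha_{0}=\mathrm{id}$), one then has $x_{(r-k)}=\alpha_{k}(x)$ and $y_{(s-k)}=\alpha_{k}(y)$, and $\alpha_{k}$ is increasing for $k$ even and decreasing for $k$ odd; in particular $\alpha_{k}(x)\lessgtr\alpha_{k}(y)$, which is precisely Lemma~\ref{lem: comparison of cyclic permutations}.

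I would then expand $S(x,t)=\sum_{i=1}^{r}S_{i}(x,t)$ and the window sum $\sum_{i=s-r}^{s}S_{i}(y,t)$ over these permutations. A short count using the pairing hypothesis shows both sides carry the same number $s(x)$ of $F$--terms, and for $k=0,\dots,r-2$ the blocks $S_{r-k}(x,t)$ and $S_{s-k}(y,t)$ have equal length $c_{r-k}$. Using $c_{i}=c_{i+1}$ for even $i$, I would group the matched blocks into pairs in which both $x$--blocks carry a common quotient $c:=c_{r-k}=c_{r-k-1}\in\{1,2\}$ (and likewise the $y$--blocks), leaving a single \emph{unbalanced} group formed by the blocks indexed $k=0$ and $k=r-1$ (they pair cyclically, since $c_{r}$ and $c_{1}$ are cyclically adjacent) together with the surplus $y$--block $S_{s-r}(y,t)=F(\Phi^{2}(\alpha_{r-1}(y)),t)$.

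For a balanced pair with common quotient $c$, setting $p=\alpha_{k}(y)$ and $q=\alpha_{k}(x)$ with $k$ odd (so $p<q$) and using $\alpha_{k+1}=\eta^{(c)}\circ\alpha_{k}$, the inequality to be proven becomes $G_{c}(p)<G_{c}(q)$, where
\[
G_{c}(z):=\sum_{j=1}^{c}\Big(F(\eta^{(j)}(z),t)+F(\eta^{(j)}(\eta^{(c)}(z)),t)\Big),
\]
so it suffices that $z\mapsto G_{c}(z)$ is strictly increasing on the set of values taken by the cyclic permutations in play. That set consists of numbers with partial quotients in $\{1,2\}$ (by Lemma~\ref{lem:comparison_continued_fractions}, as in the proof of Proposition~\ref{prop:S(w,t)_geq_silver_ratio}), hence lies in $[4/3,3]$. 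For $c=1$ one substitutes $w=\Phi(z)\ge4/3$ and uses that $F(w,t)+F(\Phi(w),t)$ is decreasing in $w$ (Lemma~\ref{lem:key_ineq_FT}), so $G_{1}$ increases in $z$; for $c=2$ one substitutes $w=\Psi(z)\ge2$ and applies Lemma~\ref{lem:key_ineq_FT2} to $F(w,t)+F(\Phi(w),t)+F(\Psi(w),t)$, the leftover term $F(\Phi(z),t)$ also being increasing in $z$ since $\Phi(z)\in(1,2)$ and $F(\cdot,t)$ is decreasing there (Lemma~\ref{lem:prop_of_F}), so $G_{2}$ increases. The unbalanced group is handled along the same lines: expressing $S_{r}(x,t)+S_{1}(x,t)$ and $S_{s}(y,t)+S_{s-r+1}(y,t)+S_{s-r}(y,t)$ in terms of the seeds $\alpha_{r-1}(x)$ and $\alpha_{r-1}(y)$ (here one uses $c_{1}=2$, so that $x=\Psi(\alpha_{r-1}(x))$), the surplus term $F(\Phi^{2}(\cdot),t)$ is absorbed using the second assertion of Lemma~\ref{lem:key_ineq_FT2}, that $z\mapsto F(\Phi\circ\Psi(z),t)$ is decreasing on $(0,\infty)$. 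Adding up the pair inequalities together with the unbalanced--group inequality yields $(i)$; strictness is automatic because $x\neq y$ forces the comparisons in Lemma~\ref{lem: comparison of cyclic permutations} to be strict, so each $G_{c}$ is evaluated at two distinct points.

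Part $(ii)$ runs along the same lines with all parities shifted by one: $\bmatch(x,y)=r-2$, the divergence quotient is $c_{1}=1$, the window $\{s-r+2,\dots,s\}$ has $r-1$ terms, the pairing is taken over odd $i$, and on the $x$--side one keeps a leftover block $S_{1}(x,t)=F(\Phi(\alpha_{r-1}(x)),t)$ rather than a surplus $y$--block; at the divergence point only the combination $F+F\circ\Phi$ of Lemma~\ref{lem:key_ineq_FT} is needed. The hard part is the bookkeeping in the last two steps: the pairs must be arranged so that the parity of $k$ makes the ordering $\alpha_{k}(x)\lessgtr\alpha_{k}(y)$ of Lemma~\ref{lem: comparison of cyclic permutations} simultaneously compatible with the monotonicity of $F(\cdot,t)$ on $[1,\infty)$ and with that of the combination functions $G_{c}$, and one must correctly isolate and handle the single unbalanced group with its surplus $F$--term; this is the pair--level analogue of the good/bad term matching used in Propositions~\ref{prop:S(w,t)_leq_golden_ratio} and~\ref{prop:S(w,t)_geq_silver_ratio}. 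A routine but necessary verification is that every argument of $F$ that appears along the way stays in the interval where the monotonicity lemma being applied is valid, which holds because partial quotients in $\{1,2\}$ confine every such number, and its images under $\Phi$ and $\Psi$, to $[4/3,3]$.
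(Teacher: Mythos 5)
Your plan mirrors the paper's strategy at a high level: express each block $S_i(w,t)$ in terms of a cyclic seed $w_{(i)}$ and the maps $\eta^{(a)}$, pair up adjacent blocks, and compare $x$ against $y$ using the monotonicity of the combinations $F+F\circ\Phi$ and $F+F\circ\Phi+F\circ\Psi$ (Lemmas~\ref{lem:key_ineq_FT}, \ref{lem:key_ineq_FT2}) together with Lemma~\ref{lem: comparison of cyclic permutations}. The pairing is shifted from the paper's, however: you group $(S_{r-k}(x),S_{r-k-1}(x))$ for $k$ odd so that $c_{r-k}=c_{r-k-1}$, which leaves a wrap-around group $\{S_r,S_1\}$ plus the surplus block $S_{s-r}(y)$, whereas the paper pairs $(S_{\ell-i},S_{\ell-i-1})$ for $i\in\{0,2,\dots,r-4\}$ and isolates a boundary triple $\{S_{s-r},S_{s-r+1},S_{s-r+2}\}$ against $\{S_1,S_2\}$. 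The balanced-pair part of your argument is sound, but the unbalanced-group step has a concrete gap. You invoke the relation ``$x=\Psi(\alpha_{r-1}(x))$'' (which is correct, since $\alpha_{r-1}(x)=x_{(1)}$ and $c_1=2$) as if its $y$-analogue were available, but $\alpha_{r-1}(y)=y_{(s-r+1)}$ and $d_{s-r+1}=1\neq 2$, so $y=\Psi(\alpha_{r-1}(y))$ is false. Expanding the $y$-group around $q_y:=\alpha_{r-1}(y)$ therefore produces $F(q_y-1,t)+F(q_y,t)+F(\Phi(q_y),t)+F(\Phi^2(q_y),t)$, and the claimed absorption of the surplus $F(\Phi^2(q_y),t)$ via the monotonicity of $z\mapsto F(\Phi\circ\Psi(z),t)$ does not apply, because $\Phi^2\neq\Phi\circ\Psi$ as M\"obius maps. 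At this point the inequality between the $x$- and $y$-groups no longer follows from the stated monotonicity lemmas.

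It is worth flagging a deeper concern: when one writes out, for even $i$, $S_{\ell-i}(z,t)+S_{\ell-i-1}(z,t)=\sum_{j=1}^{a_{\ell-i}}F(\eta^{(j)}(z_{(\ell-i)}),t)+\sum_{j=1}^{a_{\ell-i-1}}F(\eta^{(j)}(z_{(\ell-i-1)}),t)$ using the definition in \eqref{eq:def_of_S_i(w,t)}, one obtains $F(\Phi(u),t)+F(\Phi^2(u),t)$ with $u=z_{(\ell-i)}$ in the case $c_{r-i}=c_{r-i-1}=1$, rather than $F(u,t)+F(\Phi(u),t)$; the paper's displayed formulas in the proof of this lemma therefore appear to carry an off-by-one shift in the cyclic index, and the boundary-block expansion is off in the same way. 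Taking this at face value, a direct numerical check of the conclusion of part $(i)$ with $x=\psi=[\overline{2;2}]$, $y=[\overline{2;1,1,2}]$, $r=2$, $s=4$, $t=\pi/2$ (all hypotheses of $(i)$ are met: $\bmatch(x,y)=1=r-1$, $c_1=2$, $d_3=1$, $c_2=c_3$, $d_2=d_3$, $d_4=d_1$) yields $\sum_{i=2}^{4}S_i(y,\pi/2)\approx 1.738$ while $S(x,\pi/2)=S(\psi,\pi/2)=7\sqrt{2}/6\approx 1.650$, violating the stated inequality. So the gap you leave in the unbalanced-group step is not merely a bookkeeping issue that can be repaired along the proposed lines; either the lemma needs a different block decomposition (non-contiguous, since no contiguous split of the $S_i(y)$ into two cyclic runs dominated by $S(x)$ and the complementary block exists in this example) or a corrected statement, and the proof must change accordingly.
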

\begin{proof}
The proof of this lemma is based on an iterative application of Lemmas \ref{lem:key_ineq_FT} and \ref{lem:key_ineq_FT2}. For $(i)$ it is enough to prove that
\begin{equation}\label{eq:S_{i}(y,t)+S_{i-1}(y,t)_i_even}
   S_{s-i}(y,t)+S_{s-i-1}(y,t) < S_{r-i}(x,t)+S_{r-i-1}(x,t) \quad \text{for all }0\leq i\leq r-4\text{ even},   
\end{equation}
and
\begin{equation}\label{eq:S_{s-r}(y,t)+S_{s-r+1}(y,t)+S_{s-r+2}(y,t)}
   S_{s-r}(y,t)+S_{s-r+1}(y,t)+S_{s-r+2}(y,t) < S_1(x,t)+S_2(x,t).     
\end{equation}
In order to prove \eqref{eq:S_{i}(y,t)+S_{i-1}(y,t)_i_even}, for $z\in\{x,y\}$ and $\ell=\ell(z)\in \{r,s\}$ we write $z=[\overline{a_1;a_2,\ldots,a_\ell}]$.  Let $i$ be an even integer with $0\leq i\leq r-4$ and note that $4/3<x_{(r-i)}<y_{(s-i)}$ (by Remark \ref{rmk:4/3} and Lemma \ref{lem: comparison of cyclic permutations}) and $c_{r-i}=d_{s-i}$, $c_{r-i-1}=d_{s-i-1}$. For the computation of $S_{\ell-i}(z,t)+S_{\ell-i-1}(z,t)$ for $z\in \{x,y\}$ we consider four cases according to the values of $c_{r-i}$ and $c_{r-i-1}$:
\begin{itemize}[leftmargin=*]
    \item If $c_{r-i}=c_{r-i-1}=1$, then
\[     S_{\ell-i}(z,t)+S_{\ell-i-1}(z,t)= F(z_{(\ell-i)},t)+F(\Phi(z_{(\ell-i)}),t).\]
\item If $c_{r-i}=c_{r-i-1}=2$, then
\[S_{\ell-i}(z,t)+S_{\ell-i-1}(z,t)= F(z_{(\ell-i)},t)+F(\Phi(z_{(\ell-i)}),t)+F(\Psi(z_{(\ell-i)}),t)+F(\Phi\circ \Psi(z_{(\ell-i)}),t).\]
Moreover, since also $c_{r-i+1}=2$ we have  $2<x_{(r-i)}<y_{(s-i)}$.
    \item If $c_{r-i}=1$ and $c_{r-i-1}=2$, then
\[S_{\ell-i}(z,t)+S_{\ell-i-1}(z,t)=F(z_{(\ell-i)},t)+F(\Phi(z_{(\ell-i)}),t)+F(\Phi\circ \Phi(z_{(\ell-i)}),t).\]
    \item If $c_{r-i}=2$ and $c_{r-i-1}=1$, then  
\[S_{\ell-i}(z,t)+S_{\ell-i-1}(z,t)=F(z_{(\ell-i)},t)+F(\Phi(z_{(\ell-i)}),t)+F(\Psi(z_{(\ell-i)}),t).\]
Moreover, we also have $c_{r-i+1}=2$ and $2< x_{(r-i)}<y_{(s-i)}$, as in the second case above.
\end{itemize}
In each case the inequality \eqref{eq:S_{i}(y,t)+S_{i-1}(y,t)_i_even} follows from the facts that $u\mapsto F(u,t)+F(\Phi(u),t)$ is decreasing for $u\in [4/3,\infty)$, $u\mapsto F(u,t)+F(\Phi(u),t)+F(\Psi(u),t)$ is decreasing for $u\in [\phi,\infty)$ and $u\mapsto F(u,t)$ is decreasing for $u\in [1,\infty)$ (Lemmas \ref{lem:key_ineq_FT}, \ref{lem:key_ineq_FT2} and \ref{lem:prop_of_F}, respectively). This proves \eqref{eq:S_{i}(y,t)+S_{i-1}(y,t)_i_even}.

We now turn to the proof of \eqref{eq:S_{s-r}(y,t)+S_{s-r+1}(y,t)+S_{s-r+2}(y,t)}. Recall that $c_1=2$ and note that $x_{(2)}<y_{(s-r+2)}$ (by Lemma \ref{lem: comparison of cyclic permutations}). We consider two cases depending on the value of $c_2=c_3$:
\begin{itemize}[leftmargin=*]
    \item If $c_2=1$, then
    \(x_{(2)}=[\overline{1;c_4,\ldots,c_{r-1},2,2,1}]<y_{(s-r+2)}=[\overline{1;d_{s-r+4},\ldots,d_{s-r-1},1,1,1}]\)
    and
\begin{eqnarray*}
    S_{s-r}(y,t)+S_{s-r+1}(y,t)+S_{s-r+2}(y,t)&=& F(y_{(s-r+2)},t)+F(\Phi(y_{(s-r+2)}),t)\\
    & & +F(\Phi\circ \Phi(y_{(s-r+2)}),t), \\
    S_1(x,t)+S_2(x,t) &=& F(x_{(2)},t)+F(\Phi(x_{(2)}),t) +F(\Phi\circ \Phi(x_{(2)}),t).
\end{eqnarray*}
\item If $c_2=2$, then
\(x_{(2)}=[\overline{2;c_4,\ldots,c_{r-1},2,2,2}]<y_{(s-r+2)}=[\overline{2;d_{s-r+4},\ldots,d_{s-r-1},1,1,2}]\)
and
\begin{eqnarray*}
    S_{s-r}(y,t)+S_{s-r+1}(y,t)+S_{s-r+2}(y,t)&=& F(y_{(s-r+2)},t)+F(\Phi(y_{(s-r+2)}),t)\\
    & & +F(\Psi(y_{(s-r+2)}),t)+F(\Phi\circ \Psi(y_{(s-r+2)}),t), \\
    S_1(x,t)+S_2(x,t) &=& F(x_{(2)},t)+F(\Phi(x_{(2)}),t)\\
    & & +F(\Psi(x_{(2)}),t)+F(\Phi\circ \Psi(x_{(2)}),t).
\end{eqnarray*}
\end{itemize}
In each case \eqref{eq:S_{s-r}(y,t)+S_{s-r+1}(y,t)+S_{s-r+2}(y,t)} follows from Lemmas \ref{lem:key_ineq_FT}, \ref{lem:key_ineq_FT2} and \ref{lem:prop_of_F}, as argued in the proof of \eqref{eq:S_{i}(y,t)+S_{i-1}(y,t)_i_even}.

The proof of $(ii)$ is similar. It reduces to the proof of \eqref{eq:S_{i}(y,t)+S_{i-1}(y,t)_i_even} and of
\begin{equation}\label{eq:S_{s-r+2}(y,t)}
   S_{s-r+2}(y,t) < S_1(x,t)+S_2(x,y).     
\end{equation}
For \eqref{eq:S_{i}(y,t)+S_{i-1}(y,t)_i_even} we can use the same proof given above for $(i)$ noting that $\phi \leq x_{(i)}<y_{(i)}$ for all $i$ even (by Lemma \ref{lem: comparison of cyclic permutations}), although now only the cases $c_{r-i}=c_{r-i-1}=1$ and $c_{r-i}=c_{r-i-1}=2$ can occur. For \eqref{eq:S_{s-r+2}(y,t)} we note that
\(\frac{4}{3}<x_{(2)}=[\overline{c_3;\ldots,c_r,1,1}]<y_{(s-r+2)}=[\overline{d_{s-r+3};\ldots,d_{s-r},2,2}]\)
and
\begin{eqnarray*}
    S_{s-r+2}(y,t)&=& F(y_{(s-r+2)},t)+F(\Phi(y_{(s-r+2)}),t),\\
    S_1(x,t)+S_2(x,t) &=& F(x_{(2)},t)+F(\Phi(x_{(2)}),t).
\end{eqnarray*}
Hence, \eqref{eq:S_{i}(y,t)+S_{i-1}(y,t)_i_even} follows from Lemma \ref{lem:key_ineq_FT}. This completes the proof of the lemma.
\end{proof}

Note that for every quadratic irrationality $w$ with purely periodic continued fraction expansion we have
\begin{equation}\label{eq: sum S_i(w_(l+k),t)}
    \sum_{i=n}^mS_i(w_{(k)},t)=\sum_{i=n+k}^{m+k}S_i(w,t) \quad \text{for all }n,m,k\in \Z \text{ with }n\leq m,
\end{equation}
and
\begin{equation}\label{eq: sum S(w_i,t)}
    S(w_{(i)},t)=S(w,t) \quad \text{for all }i\in \Z.
\end{equation}

\medskip

We now present our proof of Theorem \ref{thm:triangluar_ineq_for_consecutive}.

\begin{proof}[Proof of Theorem \ref{thm:triangluar_ineq_for_consecutive}]
Fix $t\in [\pi/3,2\pi/3]$. We start by sketching the idea of the proof in a simple case. Namely, we assume first that $k=0$, so that $W_1=w_1=[\overline{a_1;\ldots,a_n}]$ and $W_3=w_3=[\overline{b_1;\ldots,b_n}]$ are neighbors and $w_2=w_3\odot w_1=[\overline{b_1;\ldots,b_m,a_1,\ldots,a_n}]$. Moreover, we assume $n>m$ for simplicity. As usual, we have
\(w_1<w_2<w_3\)
and we want to prove that
\begin{equation}\label{eq: simple case first triang ineq}
    S(w_2,t)<S(w_1,t)+S(w_3,t).
\end{equation}
Since $x\mapsto F(x,t)$ is decreasing for $x\geq 1$ the terms $F(w_2,t)$ and $F(w_3,t)$ in $S(w_2,t)$ and $S(w_3,t)$, respectively, satisfy $F(w_2,t)>F(w_3,t)$. Clearly, this does not help in proving \eqref{eq: simple case first triang ineq}. To go around this problem we note that $b_1=2$ and use the map $\Psi^{-1}$, which reverses order in $\R^+$, to get
\[\Psi^{-1}(w_2)=[\overline{b_2;\ldots,b_m,a_1,\ldots,a_n,b_1}]>\Psi^{-1}(w_3)=[\overline{b_2;\ldots,b_m,b_1}].\]
Calling $x:=[\overline{b_2;\ldots,b_m,b_1}]$ and $y:=[\overline{b_2;\ldots,b_m,a_1,\ldots,a_n,b_1}]$, we compute
\(\bmatch(x,y)=1+\bmatch(w_2,w_3)=1+(m-2)=m-1.\) 
This shows that Lemma \ref{lem:triang_ineq_on_pairs}$(i)$ is applicable, giving the inequality
\[\sum_{i=n}^{n+m}S_i(\Psi^{-1}(w_2),t)<S(\Psi^{-1}(w_3),t),\]
or equivalently
\begin{equation}\label{eq: final first ineq easy case}
\sum_{i=n+1}^{n+m+1}S_i(w_2,t)<S(w_3,t).    
\end{equation}
Now, we look at $w_1<w_2$. Although this is the right direction of inequality, in order to obtain an inequality that complements \eqref{eq: final first ineq easy case} we find it necessary to apply a cyclic permutations to $w_1$ and $w_2$. More precisely, we consider
\[(w_1)_{(n-m)}=[\overline{a_{n-m+1},\ldots,a_n,a_1,\ldots,a_{n-m}}]< (w_2)_{(n)}=[\overline{a_{n-m+1},\ldots,a_n,b_1,\ldots,b_m,a_1,\ldots,a_{n-m}}].\]
Since $\bmatch(w_1,w_2)=n+m-2$, we have
\(\bmatch((w_1)_{(n-m)},(w_2)_{(n)})=(n+m-2)-m=n-2.\) 
So, Lemma \ref{lem:triang_ineq_on_pairs}$(ii)$ is applicable with $x=(w_1)_{(n-m)},y=(w_2)_{(n)}$ and we obtain
\[\sum_{i=m+2}^{n+m}S_i((w_2)_{(n)},t)<S((w_1)_{(n-m)},t),\]
or equivalently
\[\sum_{i=n+m+2}^{2n+m}S_i(w_2,t)<S(w_1,t).\]
Finally, as we will see in the general case below, we have 
\[\sum_{i=n+1}^{n+m+1}S_i(w_2,t)+\sum_{i=n+m+2}^{2n+m}S_i(w_2,t)=S(w_2,t).\]

We now move to the general case. Let $W_1<w_2<W_3$ be Markov irrationalities with $W_1,W_3$ neighbors  and $w_2=W_3\odot W_1$. Let $k\geq 0$, $w_1:=w_2^{\odot k}\odot W_1$, $w_3:=W_3\odot w_2^{\odot k}$. Put $n:=\ell(W_1)$ and $m:=\ell(W_3)$ so that $\ell(w_2)=n+m$. We first claim that
 \begin{eqnarray}
\sum_{i=n+1-k(n+m)}^{n+m+1}S_{i}(w_2,t)&<&S(w_3,t), \label{eq:first_sum<S(w_3,t)} \\
\sum_{i=(1-k)(n+m)+2}^{2n+m}S_{i}(w_2,t)&<&S(w_1,t), \label{eq:second_sum<S(w_1,t)}
 \end{eqnarray}
 and
 \begin{equation}\label{eq: first splitting proof}
(2k+1)S(w_2,t)=\sum_{i=n+1-k(n+m)}^{n+m+1}S_{i}(w_2,t)+\sum_{i=2-k(n+m)}^{n}S_{i}(w_2,t).
\end{equation}

Clearly these properties imply the first inequality
\begin{equation}\label{eq: first ineq proof convexity}
(2k+1)S(w_2,t)<S(w_1,t)+S(w_3,t).    
\end{equation}

 In order to prove \eqref{eq:first_sum<S(w_3,t)} note that  $w_2<w_3$ and both $w_2,w_3$ have first partial quotient equal to $2$. Hence, $\Psi^{-1}(w_3)<\Psi^{-1}(w_2)$ and $\Psi^{-1}(w_3),\Psi^{-1}(w_2)$ have last partial quotient in their periods equal to $2$. We will apply Lemma \ref{lem:triang_ineq_on_pairs}$(i)$ to $x=\Psi^{-1}(w_3)$, $y=\Psi^{-1}(w_2)$ with $r=k(n+m)+m=\ell(w_3)=\ell(x)$ and $s=n+m=\ell(w_2)=\ell(y)$. In order to do that we compute
 \(\bmatch(\Psi^{-1}(w_3),\Psi^{-1}(w_2))=1+\bmatch(w_2,w_3)=\ell(w_3)-1=\ell(x)-1,\) 
where in the last equality we used Lemma \ref{lem:back_matching_formula} noting that $w_2<w_3$ are neighbors. Thus, the hypotheses in Lemma \ref{lem:triang_ineq_on_pairs}$(i)$ are satisfied and we obtain 
\[\sum_{i=n-k(n+m)}^{n+m}S_{i}(\Psi^{-1}(w_2),t)<S(\Psi^{-1}(w_3),t).\]
Since $\Psi^{-1}(w_2)=(w_2)_{(1)}$ and $\Psi^{-1}(w_3)=(w_3)_{(1)}$, using \eqref{eq: sum S_i(w_(l+k),t)} and \eqref{eq: sum S(w_i,t)} we get
\[\sum\limits_{i=n+1-k(n+m)}^{n+m+1}S_{i}(w_2,t)<S(w_3,t).\]
This proves \eqref{eq:first_sum<S(w_3,t)}. 

Now, define $U:=W_1\odot w_2^k$ and $W:=w_2$. We will apply Lemma \ref{lem:triang_ineq_on_pairs}$(ii)$ to $x=U_{(\ell(U)-m)}$, $y=W_{(n)}$ with $r=k(n+m)+n$ and  $s=n+m$. For this, we first note that the partial quotient of $W_{(n)}$ of index $r-s+1=k(n+m)-m+1$ is just the first partial quotient of $w_2$, hence it is equal to $2$. Now, we compute (using Lemma \ref{lem:back_matching_formula})
\(\bmatch(U,W)=k(n+m)+\bmatch(w_2^{\odot k}\odot w_1,w_2)=k(n+m)+n+m-2.\)
This implies 
\(\bmatch(U_{(\ell(U)-m)},W_{(n)})=k(n+m)+n-2=\ell(U)-2.\)
Moreover, we have $U<W$ (by Lemma \ref{lem: monotonicity Markov tree and opposites}) hence $U_{(\ell(U)-m)}<W_{(n)}$ (by Lemma \ref{lem: comparison of cyclic permutations}). This shows that the hypotheses in Lemma \ref{lem:triang_ineq_on_pairs}$(ii)$ are satisfied and we obtain
\[\sum_{i=m-k(n+m)+2}^{n+m}S_{i}(W_{(n)},t)<S(U_{(\ell(U)-m)},t)=S(U,t).\]
Since $U=(w_1)_{(\ell(w_1)-n)}$, by \eqref{eq: sum S_i(w_(l+k),t)} we get
\[\sum_{i=(1-k)(n+m)+2}^{2n+m}S_{i}(w_2,t)<S(w_1,t).\]
This proves \eqref{eq:second_sum<S(w_1,t)}. 

We now have to prove \eqref{eq: first splitting proof}. Using that $S_i(w_2,t)$ is periodic in $i$ of period $n+m$, we get
\begin{eqnarray*}
& & \sum_{i=n+1-k(n+m)}^{n+m+1}S_{i}(w_2,t)+\sum_{i=(1-k)(n+m)+2}^{2n+m}S_{i}(w_2,t)\\
&=&\sum_{i=n+1}^{(k+1)(n+m)+1}S_{i}(w_2,t)+\sum_{i=2}^{k(n+m)+n}S_{i}(w_2,t)\\
&=&\left(kS(w_2,t)+\sum_{i=n+1}^{n+m+1}S_{i}(w_2,t)\right)+\left(kS(w_2,t)+\sum_{i=2}^{n}S_{i}(w_2,t)\right)\\
&=& (2k+1)S(w_2,t).
\end{eqnarray*}
This proves  \eqref{eq: first splitting proof} and completes the proof of \eqref{eq: first ineq proof convexity}.

The proof of the second inequality
\begin{equation}\label{eq: second ineq proof convexity}
  (2k+1)S(w_2^{\op},t)<S(w_1^{\op},t)+S(w_3^{op},t)  
\end{equation}
is similar and it reduces to the proof of
 \begin{eqnarray}
\sum_{i=(1-k)(n+m)+1}^{n+2m+1}S_i(w_2^{\op},t)&<&S(w_3^{\op},t), \label{eq:first_sum<S(w_3op,t)} \\
\sum_{i=m-k(n+m)+2}^{n+m}S_i(w_2^{\op},t)&<&S(w_1^{\op},t), \label{eq:second_sum<S(w_1op,t)}
 \end{eqnarray}
 and
\begin{equation}\label{eq: second splitting proof}
(2k+1)S(w_2^\op,t)<\sum_{i=1-k(n+m)}^{m+1}S_i(w_2^{\op},t)+\sum_{i=n-k(n+m)+2}^{n+m}S_i(w_2^{\op},t).
\end{equation}

To prove \eqref{eq:first_sum<S(w_3op,t)} we start by observing that  $w_2^{\op}<W_3^{\op}\odot (w_2^{\op})^{\odot k}$ (by Lemma \ref{lem: monotonicity Markov tree and opposites}). In this case we put $Z:=w_2^{\op}$ and $V:=W_3^{\op}\odot (w_2^{\op})^{\odot k}$. We will apply Lemma \ref{lem:triang_ineq_on_pairs}$(i)$ to $x=V_{(\ell(V)-n+1)}$, $y=Z_{(m+1)}$ with $r=k(n+m)+m$ and $s=n+m$. A simple computation shows that the partial quotient of $Z_{(m+1)}$ of index $s-r+1$ equals the second partial quotient of $w_2^{\op}$, which is equal to 1. Now, computing (using Lemma \ref{lem:back_matching_formula})
\(\bmatch(Z,V)=\fmatch(w_2,w_2^{\odot k}\odot W_3)=(k+1)(n+m)-2,\)
we deduce that
\(\bmatch(Z_{(m+1)},V_{(\ell(V)-n+1)})=k(n+m)+m-1=\ell(V_{(\ell(V)-n+1)})-1.\)
Moreover, since $Z<V$ we have $Z_{(m+1)}>V_{(\ell(V)-n+1)}$ (by Lemma \ref{lem: comparison of cyclic permutations}). This shows that all the hypotheses in Lemma \ref{lem:triang_ineq_on_pairs}$(i)$ are satisfied, so we deduce that
\begin{equation*}
\sum_{i=n-k(n+m)}^{n+m}S_i(Z_{(m+1)},t)<S(V_{(\ell(V)-n+1)},t)=S(V,t)=S(w_3^{\op},t),
\end{equation*}
where in the last two equalities we used \eqref{eq: sum S(w_i,t)} and the fact that $V=(w_3^{\op})_{(\ell(w_3)-m)}$. Then, by \eqref{eq: sum S_i(w_(l+k),t)} the left hand side equals $\sum\limits_{i=(1-k)(n+m)+1}^{n+2m+1}S_i(w_2^{\op},t)$ and this shows \eqref{eq:first_sum<S(w_3op,t)}.

For the proof of \eqref{eq:second_sum<S(w_1op,t)} note that $w_1^{\op}<Z$ (by Lemma \ref{lem: monotonicity Markov tree and opposites}). We will use Lemma \ref{lem:triang_ineq_on_pairs}$(ii)$ with $x=w_1^{\op}$, $y=Z$, $r=k(n+m)+n$ and $s=n+m$. For this, observe that the first partial quotient of $w_1^{\op}$ is the last partial quotient in the period of $w_1$, and this is equal to $1$. Next, we compute
\(\bmatch(w_1^{\op},Z)=\fmatch(w_2^{\odot k}\odot W_1,w_2)=k(n+m)+n-2=\ell(w_1^{\op})-2\)
by Lemma \ref{lem:back_matching_formula}. By Lemma \ref{lem:triang_ineq_on_pairs}$(ii)$ we conclude
\begin{equation*}
\sum_{i=m-k(n+m)+2}^{n+m}S_i(Z,t)<S(w_1^{\op},t).
\end{equation*}
The left hand side equals $\sum\limits_{i=m-k(n+m)+2}^{n+m}S_i(w_2^{\op},t)$ and this proves \eqref{eq:second_sum<S(w_1op,t)}. Finally, since
\begin{eqnarray*}
&& \sum_{i=(1-k)(n+m)+1}^{n+2m+1}S_i(w_2^{\op},t)+\sum_{i=m-k(n+m)+2}^{n+m}S_i(w_2^{\op},t)\\
&=& \sum_{i=1}^{k(n+m)+m+1}S_i(w_2^{\op},t)+\sum_{i=m+2}^{k(n+m)+n+m}S_i(w_2^{\op},t)\\
&=& \left(kS(w_2^\op,t)+\sum_{i=1}^{m+1}S_i(w_2^{\op},t)\right)+\left(kS(w_2^\op,t)+\sum_{i=m+2}^{n+m}S_i(w_2^{\op},t)\right)\\
&=& (2k+1)S(w_2^{\op},t),
\end{eqnarray*}
we get \eqref{eq: second splitting proof} and conclude that the inequality \eqref{eq: second ineq proof convexity} holds. This completes the proof of the theorem.
\end{proof}

\subsection{Proof of Theorem \ref{thm:Lambda_f_convexity}}

We will apply Lemma \ref{lem:extension_of_convex} in the appendix to $G=\tilde{\Lambda}_f:[0,1/2]\cap \mathbb{Q}\to \R^+$. By Theorem \ref{thm:triangluar_ineq_for_consecutive}, Proposition \ref{prop:formula_for_Re(I_f(A))} and Corollary \ref{cor:equivalent_formulations_convexity_for_Lambda_f} we have that $\tilde \Lambda_f:[0,1/2]\cap \mathbb{Q}\to \R^+$ is convex. Moreover, $\tilde \Lambda_f(x)$ has a global maximum at $x=0$ and a global minimum at $x=1/2$, by Theorem \ref{thm:Lambda_f(M)}. Finally, defining
\(x_n:=\left(\frac{0}{1}\right)^{\oplus n}\oplus  \frac{1}{2},\)
we get
\[\Lambda_f(x_n)=\frac{\Re(I_f(T^2V^2(TV)^n)}{4+2n}\to \frac{\Re(I_f(TV))}{2}=\Lambda_f(\phi)=\tilde \Lambda_f(0)\]
by Lemma \ref{lem:I_f(AB^m)_goes_to_I_f(B)}. Hence, we can apply Lemma \ref{lem:extension_of_convex} to obtain a unique continuous, convex, non-increasing extension $\tilde \Lambda_f:[0,1/2]\to \R^+$. Finally, by Remark \ref{rmk:silver_ratio_strict_minimum} we have that $\tilde \Lambda_f(x)>\tilde \Lambda_f(1/2)$ for all $x\in [0,1/2)\cap \mathbb{Q}$, and this extends to all $x\in [0,1/2)$ since $\tilde \Lambda_f$ is non-increasing. Thus, by Lemma \ref{lem:conv_and_global_min_implies_decreasing} in the appendix we deduce that $\tilde \Lambda_f$ is actually decreasing. This proves Theorem \ref{thm:Lambda_f_convexity}.

\section*{Appendix}

\subsection*{A.1.~Proof of Lemma \ref{lem:key_ineq_FT}}

Define 
 \(Z(x,t):=F(x,t)+F(\Phi(x),t).\)
 We need to check that $\partial_x Z(x,t)< 0$ for $x\in (4/3,\infty)$. A computation using Lemma \ref{lem:prop_of_F}$(iii)$ shows that $\partial_x Z(x,t)< 0$ is equivalent to
 \begin{eqnarray*}
 \frac{x^2-1}{2x+1} > \left(\frac{1+x^2-2x\cos(t)}{2x^2+2x+1-2x(x+1)\cos(t)}\right)^2.
 \end{eqnarray*}
For fixed $x\in (4/3,\infty)$ the function
\begin{equation}\label{eq:function_of_t}
    y \mapsto \frac{1+x^2-2xy}{2x^2+2x+1-2x(x+1)y} \quad \text{ for }y\in [-1/2,1/2]
\end{equation}
has derivative
\begin{equation}\label{eq:der_wrt_t}
-\frac{2x^2(x^2-x-1)}{(2x^2+2x+1-2x(x+1)y)^2}.
\end{equation}
We distinguish two cases: If $x\in [\phi,\infty)$, then $x^2-x-1\geq 0$ and \eqref{eq:der_wrt_t} is non-positive, which implies that \eqref{eq:function_of_t} is non-increasing and therefore
    \[\frac{1+x^2-2x\cos(t)}{2x^2+2x+1-2x(x+1)\cos(t)}\leq \frac{1+x^2-x}{x^2+x+1}.\]
Hence, in this case, it is enough to prove the inequality
\[\frac{x^2-1}{2x+1} >\left(\frac{1+x^2-x}{x^2+x+1}\right)^2 \quad \text{for }x\geq \phi.\]
A straightforward computation shows that this is equivalent to
\begin{equation*}
    p(x):=x^6 + 5 x^4 - 4 x^3 - x^2 - 2 x - 2> 0 \quad \text{for }x\geq \phi.
\end{equation*}
Since $p(x)= \left(x^5 + \frac{3}{2} x^4 + \frac{29}{4} x^3 + \frac{55}{8} x^2 + \frac{149}{16} x + \frac{383}{32}\right)\left(x - \frac{3}{2}\right) + \frac{1021}{64}$
and $\phi \geq \frac{3}{2}$, we conclude the desired result. If $x\in (4/3,\phi)$, then $x^2-x-1< 0$ and \eqref{eq:der_wrt_t} is positive, which implies that \eqref{eq:function_of_t} is increasing and therefore
    \[\frac{1+x^2-2x\cos(t)}{2x^2+2x+1-2x(x+1)\cos(t)}\leq \frac{1+x^2+x}{3x^2+3x+1}.\]
    Arguing as above, the problem is  reduced to proving 
\[q(x):=9 x^6 + 16 x^5 + x^4 - 20 x^3 - 21 x^2 - 10 x - 2>0 \quad \text{for }x\in(4/3,\phi).\]
But
\(q(x)= \left(9 x^5 + 28 x^4 + \frac{115 x^3}{3} + \frac{280 x^2}{9} + \frac{553 x}{27} + \frac{1402}{81}\right)\left(x - \frac{4}{3}\right) + \frac{5122}{243},\)
hence the result follows. 

\subsection*{A.2.~Proof of Lemma \ref{lem:key_ineq_FT2}}
Let $t\in [\pi/3,2\pi/3]$ be fixed. The fact that \(x\mapsto F(\Phi\circ \Psi(x),t)\)
is decreasing for $x\in (0,\infty)$ follows directly from the facts that $\Phi\circ \Psi(x)$ is increasing for $x\in (0,\infty)$, $x\mapsto F(x,t)$ is decreasing for $x\in [1,\infty)$ (Lemma \ref{lem:prop_of_F}) and $\Phi\circ \Psi ((0,\infty))\subset [1,\infty)$. Now, we want to prove that
 \( W(x,t):=F(x,t)+F(\Phi(x),t)+F(\Psi(x),t)   \)
    is decreasing for $x\in[\phi,\infty)$.  
  We will check that $\partial_x W(x,t)< 0$ for $x\in (\phi,\infty)$. A simple computation using 
 Lemma \ref{lem:prop_of_F}$(iii)$ shows that $\partial_x W(x,t)< 0$ is equivalent to 
 \begin{eqnarray*}
     \frac{(x^2-1)}{(1+x^2-2x\cos(t))^2} &>&  \frac{(2x+1)}{(2x^2+2x+1-2x(x+1)\cos(t))^2}\\
     &&  \quad \quad \quad \quad \quad \quad +\frac{ (x + 1) (3 x + 1)}{(5 x^2 + 4 x + 1- 2x( 2 x +1)\cos(t) )^2}.
 \end{eqnarray*}
We will prove the following two inequalities, which together imply the desired result:
 \begin{eqnarray*}
\frac{1}{2}\frac{(x^2-1)}{(1+x^2-2x\cos(t))^2} & > & \frac{(2x+1)}{(2x^2+2x+1-2x(x+1)\cos(t))^2},\\
\frac{1}{2} \frac{(x^2-1)}{(1+x^2-2x\cos(t))^2} & > & \frac{ (x + 1) (3 x + 1)}{(5 x^2 + 4 x + 1- 2x( 2 x +1)\cos(t) )^2}. 
 \end{eqnarray*}
These inequalities are equivalent to
 \begin{eqnarray}
\frac{1}{2}\frac{(x^2-1)}{(2x+1)} & >& \left(\frac{1+x^2-2x\cos(t)}{2x^2+2x+1-2x(x+1)\cos(t)}\right)^2, \label{ine1} \\
\frac{1}{2} \frac{(x^2-1)}{(x + 1) (3 x + 1) } & > & \left(\frac{1+x^2-2x\cos(t) }{5 x^2 + 4 x + 1- 2x( 2 x +1)\cos(t)}\right)^2.  \label{ine2}
 \end{eqnarray}
For fixed $x\in (\phi,\infty)$, the function
\(y\mapsto \frac{1+x^2-2xy}{2x^2+2x+1-2x(x+1)y}\)  for \(y\in [-1/2,1/2]\)
has derivative $\frac{2x^2(x^2-x-1)}{(2x^2+2x+1-2x(x+1)y)^2}> 0$, hence it is increasing. This implies that
\[\frac{1+x^2-2x\cos(t)}{2x^2+2x+1-2x(x+1)\cos(t)}\leq \frac{1+x^2-x}{2x^2+2x+1-x(x+1)}=\frac{1+x^2-x}{x^2+x+1}\]
since $\cos(t)\leq \frac{1}{2}$. Hence, in order to prove \eqref{ine1} we just need to check that
\[\frac{1}{2}\frac{(x^2-1)}{(2x+1)}  > \left(\frac{1+x^2-x}{x^2+x+1}\right)^2 \quad \text{for }x>\phi.\]
This is equivalent to
\[\tilde p(x):=x^6 - 2 x^5 + 8 x^4 - 8 x^3 - 2 x - 3>0 \quad \text{for }x>\phi.\]
Writing
\(\tilde p(x)= \left(x^4 \left(x - \frac{1}{2}\right) + \frac{29x^3}{4} + \frac{23 x^2}{8} + \frac{69 x}{16} + \frac{143}{32}\right)\left(x - \frac{3}{2}\right) + \frac{237}{64}\)
we see that $\tilde p(x)>0$ for $x>3/2$ and in particular $\tilde p(x)>0$ for $x\in (\phi,\infty)$. This proves \eqref{ine1}. For the inequality~\eqref{ine2} we first note that the function
\(y\mapsto \frac{1+x^2-2xy }{5 x^2 + 4 x + 1- 2x( 2 x +1)y}\) for \(y\in [-1/2,1/2]\)
has derivative $\frac{4x^2(x^2-2x-1)}{(5 x^2 + 4 x + 1- 2x( 2 x +1)y)^2}$. We then distinguish two cases: If~$x\in (\phi,1+\sqrt{2}]$ then $\frac{4x^2(x^2-2x-1)}{(5 x^2 + 4 x + 1- 2x( 2 x +1)y)^2}\leq 0$ and  \eqref{ine2} reduces to 
    \[\frac{1}{2} \frac{(x^2-1)}{(x + 1) (3 x + 1) }  > \left(\frac{1+x^2+x }{5 x^2 + 4 x + 1+x( 2 x +1)}\right)^2.\]
    This is equivalent to
    \[\tilde q(x):=43 x^5 + 7 x^4 - 53 x^3 - 47 x^2 - 19 x - 3> 0 \quad \text{for }x\in (\phi,1+\sqrt{2}].\]
    Writing
    \(\tilde q(x)=\left(43 x^4 + \frac{143 x^3}{2} + \frac{217 x^2}{4} + \frac{275 x}{8} + \frac{521}{16}\right)\left(x - \frac{3}{2}\right) + \frac{1467}{32}\)
    we conclude that $\tilde q(x)>0$ and \eqref{ine2} is proven. 
If $x\in [1+\sqrt{2},\infty)$ then $\frac{4x^2(x^2-2x-1)}{(5 x^2 + 4 x + 1- 2x( 2 x +1)y)^2}\geq 0$ and \eqref{ine2} reduces to 
    \[\frac{1}{2} \frac{(x^2-1)}{(x + 1) (3 x + 1) }  >  \left(\frac{1+x^2-x }{5 x^2 + 4 x + 1-x( 2 x +1)}\right)^2.\]
    This is equivalent to
    \[h(x):=3 x^5 + 19 x^4 - 17 x^3 - 3 x^2 - 7 x - 3> 0 \quad \text{for }x\in [1+\sqrt{2},\infty).\]
    Then, writing
    \(h(x) = (3 x^4 + 25 x^3 + 33 x^2 + 63 x + 119)(x - 2) + 235\)
    we deduce that $h(x)>0$ for $x>2$, and in particular $h(x)>0$ for $x\in [1+\sqrt{2},\infty)$. This proves \eqref{ine2}.

\subsection*{A.3.~Properties of convex and monotonic functions}


    Let $X\subseteq \R$ be a non-empty subset and $G:X\to \R$. Recall that $G$ is convex in $X$ if for all $a,b,c\in X$ with $a\leq b\leq c$ and $a\neq c$ we have
    \(G(b)\leq tG(a)+(1-t)G(c)\)
    where $t\in [0,1]$ is uniquely determined by $b=ta+(1-t)c$, namely $t=\frac{c-b}{c-a}$.

\begin{lemma}\label{lem:3-convexity_implies_convexity}
    Let $x_1<x_2<\ldots<x_m$ be real numbers with $m\geq 3$ and $X=\{x_i:1\leq i\leq m\}$. Assume that $G:X\to \mathbb{R}$ satisfies the following property: for all $1\leq i\leq m-2$ the function $G$ is convex in $\{x_i,x_{i+1},x_{i+2}\}$. Then, $G$ is convex in $X$.
\end{lemma}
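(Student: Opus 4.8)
The plan is to prove the statement by induction on $m$. The case $m=3$ is exactly the hypothesis. For the inductive step, suppose the result holds for all sets of $m-1$ points and that $G$ is convex on every triple of consecutive points among $x_1<\cdots<x_m$. By the inductive hypothesis applied to the points $x_1,\ldots,x_{m-1}$, we know $G$ is convex on $X'=\{x_1,\ldots,x_{m-1}\}$; similarly $G$ is convex on $X''=\{x_2,\ldots,x_m\}$. So the only new triples to check are those of the form $\{x_i,x_j,x_m\}$ with $i<j<m$ and $i\geq 1$, $j\geq 2$ — but actually, by convexity on $X'$ it suffices to show that for any $a<b$ in $X'$ the point $(b,G(b))$ lies on or below the chord from $(a,G(a))$ to $(x_m,G(x_m))$; equivalently, that the three-point convexity condition holds for every triple $\{a,b,x_m\}$ with $a<b<x_m$ and $a,b\in X'$.

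The key device is the standard slope/divided-difference characterization of convexity on a finite ordered set: $G$ is convex on $x_1<\cdots<x_m$ if and only if the consecutive slopes $s_i := \dfrac{G(x_{i+1})-G(x_i)}{x_{i+1}-x_i}$ satisfy $s_1\leq s_2\leq \cdots \leq s_{m-1}$. First I would record this equivalence (it is elementary: convexity on $\{x_i,x_{i+1},x_{i+2}\}$ is precisely $s_i\leq s_{i+1}$, and the general three-point inequality follows by telescoping the slope monotonicity, since for $a=x_p<b=x_q<c=x_r$ the slope $\frac{G(b)-G(a)}{b-a}$ is a weighted average of $s_p,\ldots,s_{q-1}$, hence $\leq$ the weighted average $s_q,\ldots,s_{r-1}$ which equals $\frac{G(c)-G(b)}{c-b}$). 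Given this, the hypothesis ``$G$ convex on each consecutive triple'' says exactly $s_i\leq s_{i+1}$ for $1\leq i\leq m-2$, i.e. the full chain $s_1\leq s_2\leq\cdots\leq s_{m-1}$, which by the same equivalence gives convexity of $G$ on all of $X$.

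So in fact the cleanest route avoids induction on $m$ altogether: prove once and for all the two directions of the slope criterion, observe that the hypothesis is the statement $s_1\leq\cdots\leq s_{m-1}$, and conclude. The only mild computation is verifying that for $a<b<c$ in $X$ one has $\frac{G(b)-G(a)}{b-a}\leq\frac{G(c)-G(b)}{c-b}$ is equivalent to $G(b)\leq \frac{c-b}{c-a}G(a)+\frac{b-a}{c-a}G(c)$, which is a one-line algebraic manipulation, and that a slope over a multi-step interval is a convex combination of the unit-step slopes it spans (clear the denominators: $G(x_q)-G(x_p)=\sum_{i=p}^{q-1}(x_{i+1}-x_i)s_i$ and divide by $x_q-x_p=\sum_{i=p}^{q-1}(x_{i+1}-x_i)$). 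I do not anticipate a genuine obstacle here; the main thing to be careful about is bookkeeping with the weights in the convex-combination step so that the inequality between the two weighted averages of the $s_i$ is correctly deduced from the monotonicity $s_i\leq s_{i+1}$ (one uses that every slope appearing in the left average has index $\leq$ every slope appearing in the right average).
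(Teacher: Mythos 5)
Your proof is correct, but it takes a genuinely different route from the paper's.

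The paper proves the lemma by induction on $m$: for $m+1$ points $y_1 < \cdots < y_{m+1}$ it applies the inductive hypothesis to $\{y_1,\ldots,y_m\}$ and $\{y_2,\ldots,y_{m+1}\}$, reducing to the single triple $\{y_1, y_j, y_{m+1}\}$; it then combines the three-point convexity on $\{y_{j-1},y_j,y_{j+1}\}$, $\{y_1,y_{j-1},y_j\}$, and $\{y_j,y_{j+1},y_{m+1}\}$ by an explicit algebraic manipulation with weights $t_0,\alpha,\beta$ to close the gap. Your argument instead passes through the standard slope characterization: with $s_i = \frac{G(x_{i+1})-G(x_i)}{x_{i+1}-x_i}$, the hypothesis is exactly the chain $s_1 \le \cdots \le s_{m-1}$, and for any $x_p < x_q < x_r$ the divided difference on $[x_p,x_q]$ is a convex combination of $s_p,\ldots,s_{q-1}$ (all $\le s_{q-1} \le s_q$), while the one on $[x_q,x_r]$ is a convex combination of $s_q,\ldots,s_{r-1}$ (all $\ge s_q$), so the former is at most the latter, which is equivalent to the three-point convexity inequality for $\{x_p,x_q,x_r\}$. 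Your approach avoids induction entirely and makes the monotone-slope structure explicit, which is arguably cleaner and more transparent; the paper's induction is self-contained but requires tracking the combined weights and verifying they normalize correctly. Either way, the content is the same elementary fact; your write-up should just be sure to isolate the two directions of the slope equivalence (three-point convexity on a consecutive triple $\Leftrightarrow$ $s_i \le s_{i+1}$, and general three-point convexity $\Leftrightarrow$ the corresponding slope inequality) and the convex-combination identity $\frac{G(x_q)-G(x_p)}{x_q-x_p} = \frac{\sum_{i=p}^{q-1}(x_{i+1}-x_i)s_i}{\sum_{i=p}^{q-1}(x_{i+1}-x_i)}$ as you indicate.
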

\begin{proof}
    We use induction on $m\geq 3$. The case $m=3$ is trivial, so assume that the property is satisfied for $m\geq 3$. Let $y_1<\ldots<y_{m+1}$ be real numbers, $Y=\{y_i:1\leq i\leq m+1\}$ and $G:Y\to \mathbb{R}$ a function that is convex in $\{y_i,y_{i+1},y_{i+2}\}$ for all $1\leq i\leq m-1$. By induction, $G$ is convex in $A:=\{y_1,\ldots,y_m\}$ and also in $B:=\{y_2,\ldots,y_{m+1}\}$. We have to prove that $G$ is convex in $Y=A\cup B$, so let us take $1\leq i<j<k\leq m+1$ and consider the numbers $y_i<y_j<y_k$ in $Y$. Then, the inequality
    $$G(y_j)\leq tG(y_i)+(1-t)G(y_k) \quad  \text{with }t=\frac{y_k-y_j}{y_k-y_i}$$
    follows, in the case $2\leq i$, from the convexity of $G$ in $B$ since $\{y_i,y_j,y_k\}\subseteq B$. Similarly, in the case $k\leq m$, the result follows from the convexity of $G$ in $A$. Hence, we can assume $1=i<j<k=m+1$. Since by hypothesis $G$ is convex in  $\{y_{j-1},y_j,y_{j+1}\}$, we have
    \begin{equation}\label{eq:3-convexity-consecutive}
    G(y_j)\leq t_0G(y_{j-1})+(1-t_0)G(y_{j+1}) \quad  \text{with } t_0=\frac{y_{j+1}-y_j}{y_{j+1}-y_{j-1}}.        
    \end{equation}
    Now, we use convexity of $G$ in $\{y_1,y_{j-1},y_j\}\subseteq A$ (which in the case $j=2$ is trivially true\footnote{Indeed, if $j=2$ then $\{y_1,y_{j-1},y_j\}=\{y_1,y_1,y_j\}$ and since $y_1=1\cdot y_1+0\cdot y_j$ the convexity on this triple becomes $G(y_1)\leq 1\cdot G(y_1)+0\cdot G(y_j)$, which holds trivially.}) and the convexity in $\{y_{j},y_{j+1},y_{n+1}\}\subseteq B$ (trivially true if $j=n$) to get
            \begin{align}
         G(y_{j-1})\leq \alpha G(y_{1})+(1-\alpha)G(y_{j}) \quad  \text{with }\alpha=\frac{y_{j}-y_{j-1}}{y_{j}-y_{1}}, \label{eq:3-convexity-in-A} \\
         G(y_{j+1})\leq \beta G(y_{j})+(1-\beta)G(y_{n+1}) \quad  \text{with }\beta=\frac{y_{n+1}-y_{j+1}}{y_{n+1}-y_{j}}. \label{eq:3-convexity-in-B}
        \end{align}
        From \eqref{eq:3-convexity-consecutive}, \eqref{eq:3-convexity-in-A} and \eqref{eq:3-convexity-in-B} we obtain
\[G(y_j) \leq t_0\alpha G(y_{1})+(t_0(1-\alpha)+(1-t_0)\beta)G(y_j)+(1-t_0)(1-\beta)G(y_{n+1}). \]
Observe that $(t_0(1-\alpha)+(1-t_0)\beta)\in [0,1)$ since $t_0\in (0,1)$, $\alpha\in (0,1]$ and $\beta\in [0,1)$. Hence, we get
\begin{equation}\label{eq:3-convexity-extremes}
G(y_j) \leq \frac{t_0\alpha}{1-(t_0(1-\alpha)+(1-t_0)\beta)} G(y_{1})+\frac{(1-t_0)(1-\beta)}{1-(t_0(1-\alpha)+(1-t_0)\beta)}G(y_{n+1}).
\end{equation}
A straightforward computation shows that \eqref{eq:3-convexity-extremes} is exactly the convexity of $G$ in $\{y_1,y_j,y_{n+1}\}$. This completes the proof of the lemma.
\end{proof}

\begin{lemma}\label{lem:conv_and_global_min_implies_decreasing}
   Let $a<b$ be real numbers and $X\subseteq [a,b]$ containing $a$ and $b$. If $G:X\to \mathbb{R}$ is a  convex function satisfying $G(x)\geq G(b)$ for all $x\in X$, then $G$ is non-increasing. Moreover, if $G(x)> G(b)$ for all $x\in X\setminus \{b\}$ then $G$ is decreasing. 
\end{lemma}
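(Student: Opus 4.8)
The plan is to argue by contradiction in both parts, in each case feeding the right endpoint $b$ into the three-point convexity inequality so that the hypothesis on $G(b)$ can be exploited.

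For the first assertion, I would suppose that $G$ is not non-increasing, so that there are points $x_1<x_2$ in $X$ with $G(x_1)<G(x_2)$. If $x_2=b$ this already contradicts $G(x_1)\geq G(b)$, so we may assume $x_1<x_2<b$. Applying convexity to the triple $x_1\leq x_2\leq b$ (legitimate since $x_1\neq b$) gives
\[
G(x_2)\leq t\,G(x_1)+(1-t)\,G(b),\qquad t=\frac{b-x_2}{b-x_1}\in(0,1).
\]
Since $G(b)\leq G(x_1)$ and $t+(1-t)=1$, the right-hand side is at most $G(x_1)$, which is strictly less than $G(x_2)$ — a contradiction. Hence $G$ is non-increasing.

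For the second assertion, I would assume in addition that $G(x)>G(b)$ for all $x\in X\setminus\{b\}$, and suppose $G$ is not strictly decreasing. Since the first part already gives that $G$ is non-increasing, there must then be points $x_1<x_2$ in $X$ with $G(x_1)=G(x_2)$. If $x_2=b$ then $G(x_1)=G(b)$ with $x_1\neq b$, contradicting the strict hypothesis; so again $x_1<x_2<b$. Convexity applied to $x_1\leq x_2\leq b$ yields $G(x_2)\leq t\,G(x_1)+(1-t)\,G(b)$ with $t\in(0,1)$, and now $G(b)<G(x_1)$ strictly (as $x_1\neq b$), so the right-hand side is strictly below $G(x_1)=G(x_2)$, a contradiction.

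I do not anticipate a genuine obstacle here: the only points needing care are checking that the three arguments fed to the convexity inequality are correctly ordered with distinct outer ones (so that the definition of convexity used in the excerpt actually applies), and handling the degenerate case $x_2=b$ separately, since there the convexity inequality carries no information and one must instead invoke the hypothesis on $G(b)$ directly.
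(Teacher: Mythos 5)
Your proof is correct and fills in exactly the details the paper omits (the paper's own proof of this lemma reads only ``This follows directly from the definition of convexity, hence we omit the details''). You apply the three-point convexity inequality with the right endpoint $b$ and use $G(b)\leq G(x_1)$ (resp.\ $G(b)<G(x_1)$) to derive a contradiction, which is precisely the argument the authors have in mind.
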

\begin{proof} This follows directly from the definition of convexity, hence we omit the details.
    \end{proof}

\begin{lemma}\label{lem:extension_of_convex}
    Let~$a<b$ be rationals and~$G:[a,b]\cap \mathbb{Q}\to \mathbb{R}$ a convex function. Assume that $G(x)$ has a global maximum at $x=a$ and a global minimum at $x=b$. Then $G$ extends uniquely to a function $\overline{G}:[a,b]\to \mathbb{R}$ which is continuous in $(a,b]$ and both convex and non-increasing in $[a,b]$. Moreover, if $G(x_n)\to G(a)$ for at least one sequence $(x_n)_n$ in $(a,b]\cap \mathbb{Q}$ with $x_n\to a$, then $\overline{G}$ is continuous in $[a,b]$.
\end{lemma}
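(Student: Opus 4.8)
The plan is to build $\overline{G}$ from one-sided limits and then verify convexity by passage to the limit, extracting from the convexity hypothesis mainly the monotonicity it forces. First, applying Lemma~\ref{lem:conv_and_global_min_implies_decreasing} with $X=[a,b]\cap\mathbb{Q}$ (which contains $a$ and $b$), the convexity of $G$ together with its global minimum at $b$ shows that $G$ is non-increasing on $[a,b]\cap\mathbb{Q}$; combined with the global maximum at $a$ this gives $G(b)\le G(q)\le G(a)$ for all rational $q$, so $G$ is bounded. Hence for each $x\in(a,b)$ the one-sided limits $L^-(x):=\lim_{q\uparrow x,\,q\in\mathbb{Q}}G(q)$ and $L^+(x):=\lim_{q\downarrow x,\,q\in\mathbb{Q}}G(q)$ exist (monotone bounded nets), and $L^+(x)\le L^-(x)$ by monotonicity. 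I will set $\overline{G}(q):=G(q)$ for rational $q\in[a,b]$ and $\overline{G}(x):=L^-(x)=L^+(x)$ for irrational $x\in(a,b)$, once the two limits are shown to agree.

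The heart of the matter is to upgrade monotonicity to continuity on $(a,b]$ by exploiting convexity. For $x\in(a,b)$ and rationals $q_1<q_2<x<q_3$, convexity of $G$ on $\{q_1,q_2,q_3\}$ gives
\[G(q_2)\le\frac{q_3-q_2}{q_3-q_1}\,G(q_1)+\frac{q_2-q_1}{q_3-q_1}\,G(q_3).\]
Keeping $q_1,q_3$ fixed and letting $q_2\uparrow x$, then letting $q_3\downarrow x$, the coefficient of the bounded term $G(q_1)$ tends to $0$, and one obtains $L^-(x)\le L^+(x)$; with $L^+(x)\le L^-(x)$ from monotonicity this forces $L^-(x)=L^+(x)$, and when $x$ is rational this common value equals $G(x)$ (monotonicity already gives $L^+(x)\le G(x)\le L^-(x)$). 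Thus $\overline{G}$ is continuous at every point of $(a,b)$. An analogous estimate on a triple $\{q_1,q_2,b\}$ with $q_1<q_2<b$, letting $q_2\uparrow b$, yields $\lim_{q\uparrow b}G(q)\le G(b)$, hence $=G(b)$ by the minimum at $b$: left-continuity at $b$. By contrast, the same computation at $x=a$ only gives $\lim_{q\downarrow a}G(q)\le G(a)$, so a downward jump at $a$ is not excluded — this is precisely why continuity is asserted on $(a,b]$ and not $[a,b]$. Standard $\varepsilon$–$\delta$ reasoning, using density of the rationals together with the limits just computed, then promotes all of this to genuine continuity of $\overline{G}$ on $(a,b]$.

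It remains to check that $\overline{G}$ is convex and non-increasing on $[a,b]$; monotonicity passes to the limit at once. For convexity, take $x<y<z$ in $[a,b]$ with $y=tx+(1-t)z$ (so $y\in(a,b)$ automatically), pick rationals $x_n\to x$, $y_n\to y$, $z_n\to z$ with $x_n<y_n<z_n$ for $n$ large, all lying in $[a,b]$, and with $x_n\equiv a$ if $x=a$ and $z_n\equiv b$ if $z=b$; apply convexity of $G$ on $\{x_n,y_n,z_n\}$ and pass to the limit, using the continuity already established at the interior points together with the exact values $G(a)$, $G(b)$ at the endpoints, to obtain $\overline{G}(y)\le t\overline{G}(x)+(1-t)\overline{G}(z)$. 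Uniqueness is immediate: any extension continuous on $(a,b]$ must equal $\lim_{q\to x}G(q)$ at each irrational $x\in(a,b)$ and must equal $G$ at the rational endpoints $a,b$. For the \emph{moreover} statement, if $G(x_n)\to G(a)$ along some rational sequence $x_n\downarrow a$, then $G(x_n)\le\lim_{q\downarrow a}G(q)\le G(a)$ forces $\lim_{q\downarrow a}G(q)=G(a)=\overline{G}(a)$, so $\overline{G}$ is continuous at $a$ and hence on all of $[a,b]$. I expect the only genuinely delicate point to be the endpoint analysis: recognizing that convexity prohibits a downward jump at the right endpoint $b$ (which forces left-continuity there) while tolerating one at the left endpoint $a$, where the maximum sits, and arranging the endpoint cases of the limiting convexity argument so that this asymmetry does no harm.
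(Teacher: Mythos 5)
Your proof is correct, and it takes a genuinely different route from the paper's. The paper dispatches the construction of $\overline{G}$ on the open interval in one line by invoking the standard fact that a convex function is locally Lipschitz on the interior of its domain (citing a textbook exercise), whereas you avoid this black box entirely: you first extract monotonicity via Lemma \ref{lem:conv_and_global_min_implies_decreasing}, use it to produce the one-sided limits $L^{\pm}(x)$, and then use a three-point convexity estimate (sending $q_2\uparrow x$, then $q_3\downarrow x$) to force $L^-(x)\le L^+(x)$, hence equality. This is more elementary and more self-contained, at the modest cost of some extra bookkeeping. The two proofs converge on the endpoint analysis: both use the convexity split across $\{q_1,q_2,b\}$ to get $\limsup_{q\uparrow b}G(q)\le G(b)$ and the global minimum for the reverse inequality, and both observe that the global maximum at $a$ only yields the one-sided bound $\lim_{q\downarrow a}G(q)\le G(a)$, so the extra hypothesis is needed for continuity at $a$. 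Your derivation of convexity of $\overline{G}$ by passage to the limit along rational triples, with the endpoint cases pinned to $a$ and $b$ themselves, is essentially what the paper leaves implicit ("It is easy to check that $\overline{G}$ is convex"), so you have actually spelled out a step the paper glosses over. One small remark: the "moreover" hypothesis is stated for an arbitrary sequence $x_n\to a$ in $(a,b]\cap\mathbb{Q}$, not necessarily monotone; your argument assuming $x_n\downarrow a$ still covers this, since monotonicity of $G$ gives $G(x_n)\to\lim_{q\downarrow a}G(q)$ for any such sequence, but it would be cleaner to say so.
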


\begin{remark}
    The function $\overline{G}:[-1,0]$ given by $\overline{G}(x)=x^2$ if $x>-1$ and $\overline{G}(-1)=2$ is convex in $[-1,0]$, has global maximum at $x=-1$ and global minimum at $x=0$, but it is only continuous in $(-1,0]$. This shows that the first conditions in Lemma \ref{lem:extension_of_convex} are not sufficient to ensure continuity in the whole interval $[a,b]$.
\end{remark}

 \begin{proof}[Proof of Lemma \ref{lem:extension_of_convex}]
It is well known that the convexity of $G:[a,b]\cap \mathbb{Q}\to \R$ implies that $G$ is locally Lipschitz in $(a,b)\cap \mathbb{Q}$ (see, e.g., \cite[Exercise 17.37]{HS65}). It follows that $G|_{(a,b)\cap \mathbb{Q}}$ has a unique continuous extension $\overline{G}$ to $(a,b)$. It is defined, as usual, by
\[\overline{G}(t):=\lim_{n\to \infty} G(t_n) \quad \text{ for }t\in (a,b) \text{ and }t_n\in(a,b)\cap \mathbb{Q} \text{ with } t_n \to t.\]
We extend $\overline{G}$ to $[a,b]$ by putting $\overline{G}(x):=G(x)$ for $x=a,b$. It is easy to check that $\overline{G}:[a,b]\to \R$ is convex. By construction $\overline{G}(x)$ has a global maximum at $x=a$ and a global minimum at $x=b$. In particular, by Lemma \ref{lem:conv_and_global_min_implies_decreasing} we deduce that $G$ is non-increasing. We claim that $\overline{G}(x)$ is continuous at $x=b$. Since $x=b$ is a global minimum, we have 
\(\liminf\limits_{x\to b^-}\overline{G}(x)\geq \overline{G}(b).\)
On the other hand, for $x<b$ we write $x=ta+(1-t)b$ with $t=\frac{b-x}{b-a}$, and by convexity we get
$$\overline{G}(x)\leq \left(\frac{b-x}{b-a}\right) \overline{G}(a)+\left(\frac{x-a}{b-a}\right)\overline{G}(b).$$
Taking limits we obtain
\(\limsup\limits_{x\to b^-}\overline{G}(x)\leq \overline{G}(b).\)
This implies $\overline{G}(x)\to \overline{G}(b)$ as $x\to b^-$ and shows that $\overline{G}$ is continuous at $x=b$. Clearly, $\overline{G}$ is the unique extension of $G$ satisfying the first properties stated in the lemma. Finally, note that $x=a$ being a global maximum of $\overline{G}(x)$ implies
\(\limsup_{x\to a^+}\overline{G}(x)\leq \overline{G}(a).\)
If we further assume that $G(x_n)\to G(a)$ for at least one sequence $(x_n)_{n\geq 0}$ in $(a,b]\cap \mathbb{Q}$ with $x_n\to a$, then the same property holds for $\overline{G}$. Together with the monotonicity of $\overline{G}$ we conclude
\(\liminf_{x\to a^+}\overline{G}(x)= \overline{G}(a).\)
This shows that $\overline{G}(x)\to \overline{G}(a)$ as $x\to a^+$ and proves that $\overline{G}$ is continuous at $x=a$. This completes the proof of the lemma.
\end{proof}

\bibliographystyle{alpha}

\begin{thebibliography}{MR95b}

\bibitem[Aig13]{Aig2013}
Martin Aigner.
\newblock {\em Markov's theorem and 100 years of the uniqueness conjecture}.
\newblock Springer, Cham, 2013.
\newblock A mathematical journey from irrational numbers to perfect matchings.

\bibitem[BI19]{BI19}
Paloma Bengoechea and \"Ozlem Imamoglu.
\newblock Cycle integrals of modular functions, {M}arkov geodesics and a
  conjecture of {K}aneko.
\newblock {\em Algebra Number Theory}, 13(4):943--962, 2019.

\bibitem[BI20]{BI20}
P.~Bengoechea and \"O. Imamoglu.
\newblock Values of modular functions at real quadratics and conjectures of
  {K}aneko.
\newblock {\em Math. Ann.}, 377(1-2):249--266, 2020.

\bibitem[Coh55]{Coh55}
Harvey Cohn.
\newblock Approach to {M}arkoff's minimal forms through modular functions.
\newblock {\em Ann. of Math. (2)}, 61:1--12, 1955.

\bibitem[FG07]{FG07}
Vladimir~V. Fock and Alexander~B. Goncharov.
\newblock Dual {T}eichm\"uller and lamination spaces.
\newblock In {\em Handbook of {T}eichm\"uller theory. {V}ol. {I}}, volume~11 of
  {\em IRMA Lect. Math. Theor. Phys.}, pages 647--684. Eur. Math. Soc.,
  Z\"urich, 2007.

\bibitem[Foc98]{Foc98}
V.~V. Fock.
\newblock Dual teichm\" uller spaces, 1998.

\bibitem[Gor81]{Gor77}
D.~S. Gorshkov.
\newblock Geometry of {Lobachevskii} in connection with certain questions of
  arithmetic.
\newblock {\em J. Sov. Math.}, 16:788--820, 1981.

\bibitem[HS65]{HS65}
Edwin Hewitt and Karl Stromberg.
\newblock {\em Real and abstract analysis. {A} modern treatment of the theory
  of functions of a real variable}.
\newblock Springer-Verlag, New York, 1965.

\bibitem[Kan09]{Kan09}
Masanobu Kaneko.
\newblock Observations on the `values' of the elliptic modular function
  {$j(\tau)$} at real quadratics.
\newblock {\em Kyushu J. Math.}, 63(2):353--364, 2009.

\bibitem[LZ97]{LZ97}
J.~Lewis and D.~Zagier.
\newblock Period functions and the {S}elberg zeta function for the modular
  group.
\newblock In {\em The mathematical beauty of physics ({S}aclay, 1996)},
  volume~24 of {\em Adv. Ser. Math. Phys.}, pages 83--97. World Sci. Publ.,
  River Edge, NJ, 1997.

\bibitem[Mat18]{Matsusaka}
Toshiki Matsusaka.
\newblock Notes on the val function.
\newblock {\em Private communication}, 2018.

\bibitem[McS91]{McS91}
Greg McShane.
\newblock {\em A remarkable identity for lengths of curves}.
\newblock ProQuest LLC, Ann Arbor, MI, 1991.
\newblock Thesis (Ph.D.)--University of Warwick (United Kingdom).

\bibitem[MR95a]{MR95b}
Greg McShane and Igor Rivin.
\newblock A norm on homology of surfaces and counting simple geodesics.
\newblock {\em Internat. Math. Res. Notices}, (2):61--69, 1995.

\bibitem[MR95b]{MR95a}
Greg McShane and Igor Rivin.
\newblock Simple curves on hyperbolic tori.
\newblock {\em C. R. Acad. Sci. Paris S\'er. I Math.}, 320(12):1523--1528,
  1995.

\bibitem[Mur21]{Mur21}
Yuya Murakami.
\newblock A continuity of cycle integrals of modular functions.
\newblock {\em Ramanujan J.}, 55(3):1177--1187, 2021.

\bibitem[Ser85a]{Ser85a}
Caroline Series.
\newblock The geometry of {M}arkoff numbers.
\newblock {\em Math. Intelligencer}, 7(3):20--29, 1985.

\bibitem[Ser85b]{Ser85b}
Caroline Series.
\newblock The modular surface and continued fractions.
\newblock {\em J. Lond. Math. Soc., II. Ser.}, 31:69--80, 1985.

\bibitem[SV17]{SV17}
K.~Spalding and A.~P. Veselov.
\newblock Lyapunov spectrum of {M}arkov and {E}uclid trees.
\newblock {\em Nonlinearity}, 30(12):4428--4453, 2017.

\bibitem[SV18]{SV18}
K.~Spalding and A.~P. Veselov.
\newblock Growth of values of binary quadratic forms and {C}onway rivers.
\newblock {\em Bull. Lond. Math. Soc.}, 50(3):513--528, 2018.

\bibitem[SV19]{SoV19}
A.~Sorrentino and A.~P. Veselov.
\newblock Markov numbers, {M}ather's {$\beta$} function and stable norm.
\newblock {\em Nonlinearity}, 32(6):2147--2156, 2019.

\end{thebibliography}

\end{document}